\documentclass{amsart}
\usepackage{amsmath}
  \usepackage{paralist}
  \usepackage{amssymb}
 \usepackage{amsthm}
\usepackage[colorlinks=true]{hyperref}
\hypersetup{urlcolor=blue, citecolor=red}

\newtheorem{theorem}{Theorem}[section]

\newtheorem{lemma}[theorem]{Lemma}
\newtheorem{proposition}[theorem]{Proposition}
\theoremstyle{definition}
\newtheorem{definition}[theorem]{Definition}
\newtheorem{remark}[theorem]{Remark}

\numberwithin{equation}{section}

\title[MFG model with mixed interactions]
      {Existence and multiplicity of solutions to the mean-field games model with mixed interactions}

\author[Xinfu Li, Xiangqing Liu,  Juncheng Wei \& Yuanze Wu]{}

 \keywords{mean-field games model; existence of solutions; multiplicity of solutions; mixed interactions;  Sobolev critical exponent.}

\thanks{*Corresponding author.}

\thanks{Email Addresses:  lxylxf@tjcu.edu.cn; liuxiangqing@ynnu.edu.cn; wei@math.cuhk.edu.hk; yuanze.wu@ynnu.edu.cn.}

\begin{document}
\maketitle

\centerline{\scshape   Xinfu Li$^{a}$, Xiangqing Liu$^{b,c}$, Juncheng Wei$^{d}$ \& Yuanze Wu$^{b,c*}$}

\medskip
{\footnotesize
\centerline{$^a$School of Science, Tianjin University of Commerce, Tianjin, 300134,
China}
\centerline{$^b$School of Mathematics, Yunnan Normal University, Kunming, 650500, China}

\centerline{$^c$Yunnan Key Laboratory of Modern Analytical Mathematics and Applications, Kunming, 650500, China}

\centerline{$^d$Department of Mathematics, Chinese University of Hong Kong,
Shatin, NT, Hong Kong}

\bigskip
 %\centerline{(Communicated by the associate editor name)}

\begin{abstract}
In this paper, we consider the stationary version of the Mean-Field Games (MFG) models.  Inspired by \cite{Albuquerque-Silva2020, Bieganowski-Mederski2021, Lin-Wei05, Mederski-Schino2021}, we develop the minimization method on the Pohozaev manifold introduced in \cite{Soave20JDE, Soave20JFA} for the existence theory of the stationary version of the Mean-Field Games (MFG) models with $2$-homogeneous hamiltonians and mixed interactions.  As applications, we prove the existence and multiplicity of radial solutions of the Mean-Field Games (MFG) models with general $p$-homogeneous hamiltonians and mixed interactions under more general conditions, some of which are even new for $2$-homogeneous hamiltonians.  We hope that our techniques and ideas introduced in this paper would be helpful in understanding the optimal value of the total mass in the existence theory of radial solutions to the Mean-Field Games (MFG) models with general $p$-homogeneous hamiltonians and mixed interactions, as well as that of other models.

\vskip0.12in

\medskip

\textbf{2020 Mathematics Subject Classification}:  35B09; 35B33; 35J20; 35J92.
\end{abstract}

\section{Introduction}

\setcounter{section}{1}
\setcounter{equation}{0}

\subsection{Background.}\quad
In this paper, we consider the Mean-Field Games (MFG) models, which are introduced by Lasry and Lions in \cite{Lasry-Lions} and Huang et al. in \cite{Huang-Malhame-Caines} independently around twenty years ago.  These models read as
\begin{eqnarray}\label{MFG}
\left\{\aligned
&v_t=-\Delta v+H\left(\nabla v\right)-V(x)-f(m), \quad x\in\mathbb{R}^N,\ \ t>0,\\
&m_t=\Delta m+\nabla\cdot\left(\nabla H\left(\nabla v\right) m\right), \quad x\in\mathbb{R}^N,\ \ t>0,\\
&v_{t=T}=v_T,\quad m_{t=0}=m_0,
\endaligned\right.
\end{eqnarray}
where $m$ represents the population density, $v$ denotes the value function of a typical player, $m_0$ is the initial data of density, $v_{T}$ is the terminal data of value function $v$, $H: \mathbb{R}^N\to \mathbb{R}$ is the Hamiltonian, $V(x)$ denotes a potential function and $f$ is the interaction between agents.  The stationary version of \eqref{MFG}, which models the long time (collective) behavior of the
population of players in \eqref{MFG}, is given by
\begin{eqnarray}\label{sMFG}
\left\{\aligned
&-\Delta v+H\left(\nabla v\right)+\lambda=V(x)+f(m), \quad x\in\mathbb{R}^N,\\
&-\Delta m=\nabla\cdot\left(\nabla H\left(\nabla v\right) m\right), \quad x\in\mathbb{R}^N,\\
&\int_{\mathbb{R}^N}m(x)dx=M>0,
\endaligned\right.
\end{eqnarray}
where $\lambda>0$ is a part of unknowns as a Lagrange multiplier and $M>0$ is the total mass of population.

\vskip0.12in

We are interested in the radial solution of \eqref{sMFG}.  In this setting, it is pointed out in \cite{Cirant-Kong-Wei-Zeng} that \eqref{sMFG} can be reduced to a single equation under some suitable choice of the Hamiltonian $H$.  Let
\begin{eqnarray}\label{Hamiltonian}
H(t)=C_H|t|^{p'},\quad p'>1\text{ and }C_H>0,
\end{eqnarray}
then the second equation of \eqref{sMFG}, i.e., the Fokker-Planck equation, reads as
\begin{eqnarray*}
\nabla m+p'm C_H|\nabla v|^{p'-2}\nabla v=0,\quad\text{a.e. in }\mathbb{R}^N
\end{eqnarray*}
either in the radial setting or $p'=2$.
If we further write $u:=m^{\frac{1}{p}}$, where $p$ is the conjugate number of $p'$, then the first equation of \eqref{sMFG}, i.e., the Hamilton-Jacobi equation, reads as
\begin{equation}\label{pSchrodinger}
\begin{cases}
-\varrho\Delta_pu+V(x)u^{p-1}=\lambda u^{p-1}+f\left(u^{p}\right)u^{p-1},\ \text{in}\  \mathbb{R}^N,\\
u(x)>0,\ \text{in}\  \mathbb{R}^N,\\
\int_{\mathbb{R}^N}|u(x)|^pdx=M,
\end{cases}
\end{equation}
where $\Delta_pu=\text{div}(|\nabla u|^{p-2}\nabla u)$ is the $p$-Laplacian operator with $1<p<N$ and $\varrho=\left(\frac{p-1}{C_H}\right)^{p-1}$.

\vskip0.12in

It is known that if the cost of \eqref{sMFG} is monotone decreasing then it may admit many solutions.  Moreover, the analysis of existence becomes more challenging if the cost is further unbounded.  In particular, it is proved in the recent papers \cite{Soave20JDE, Soave20JFA} by the variational methods that if $N\geq3$, $p'=2$, $V(x)=0$ and $f(t)=\mu t^{\frac{q_1}{2}-1}+t^{\frac{q_2}{2}-1}$ with $2<q_1<2+\frac{4}{N}<q_2\leq 2^*=\frac{2N}{N-2}$ and $\mu>0$, then \eqref{pSchrodinger} has two radial solutions for $q_2<2^*$ and one radial solution for $q_2=2^*$ under some suitable assumptions on $\mu$ and $a$, where $2^*$ is the Sobolev critical exponent.  To precisely state them, we need to introduce some necessary notations and definitions.  Without loss of generality, we first rewrite \eqref{pSchrodinger} into the following $p$-Laplacian equation with the $L^p$-norm constraint
\begin{equation}\label{e1.1}
\begin{cases}
-\Delta_pu=\lambda |u|^{p-2}u+\mu |u|^{q_1-2}u+|u|^{q_2-2}u,\ \text{in}\  \mathbb{R}^N,\\
u(x)>0,\ \text{in}\  \mathbb{R}^N,\\
\int_{\mathbb{R}^N}|u(x)|^pdx=a^p,\ u\in W^{1,p}\left(\mathbb{R}^N\right)
\end{cases}
\end{equation}
by a suitable scaling,
where $a, \mu>0$ are parameters, $p<q_1<p+\frac{p^2}{N}<q_2\leq p^*=\frac{pN}{N-p}$ with $p^*$ the Sobolev critical exponent, $\lambda\in \mathbb{R}$ is a part of unknowns which serves as a Lagrange multiplier and $W^{1,p}\left(\mathbb{R}^N\right)$ is the Sobolev space given by
\begin{eqnarray*}
W^{1,p}\left(\mathbb{R}^N\right)=\left\{u\in L^{p}\left(\mathbb{R}^N\right)\mid \int_{\mathbb{R}^N}|\nabla u|^pdx<+\infty\right\}.
\end{eqnarray*}
We next recall the
Sobolev and Gagliardo-Nirenberg inequalities.  For any $N\geq 2$ and $1<p<N$, there exists an
optimal constant $S > 0$ depending only on $N$ and $p$ such that
\begin{equation*}\label{e1.5}
S\|u\|_{p^*}^p\leq \|\nabla u\|_p^p,\ \forall u\in D^{1,p}\left(\mathbb{R}^N\right),\ \  \text{(Sobolev\  inequality)}
\end{equation*}
where $D^{1,p}\left(\mathbb{R}^N\right)$ denotes the completion of $C_0^{\infty}\left(\mathbb{R}^N\right)$
with respect to the norm $\|u\|_{D^{1,p}}:=\|\nabla u\|_p$ and $\|\cdot\|_p$ is the usual norm in the Lebesgue space $L^p\left(\mathbb{R}^N\right)$ for $p>1$.
It is well known (cf. \cite{Talenti 1976}) that the optical constant $S$ is attained by
\begin{equation}\label{e2.1}
U_{\epsilon,y}=d_{N,p}\epsilon^{\frac{N-p}{p(p-1)}}\left(\epsilon^{\frac{p}{p-1}}+|x-y|^{\frac{p}{p-1}}\right)^{\frac{p-N}{p}},
\end{equation}
where $\epsilon>0$, $y\in \mathbb{R}^N$, $d_{N,p}>0$ depends on $N$ and $p$ such that $U_{\epsilon,y}$ satisfies the equation
\begin{equation*}
\begin{cases}
-\Delta_pu=u^{p^*-1},\  \text{in}\ \mathbb{R}^N,\\
u(x)>0,\  \text{in}\ \mathbb{R}^N
\end{cases}
\end{equation*}
and
\begin{equation*}
\|\nabla U_{\epsilon,y}\|_p^p=\|U_{\epsilon,y}\|_{p^*}^{p^*}=S^{\frac{N}{p}}.
\end{equation*}
If $q\in (p,p^*)$, we recall that there exists an optimal constant
$C_{N,p,q}$ depending on $N,p$ and $q$ such that
\begin{equation*}\label{e1.7}
\|u\|_{q}\leq C_{N,p,q}\|\nabla u\|_p^{\gamma_q}\|u\|_p^{1-\gamma_q},\ \forall u\in W^{1,p}\left(\mathbb{R}^N\right),\ \  \text{(Gagliardo-Nirenberg \ inequality)}
\end{equation*}
where
\begin{equation*}\label{e1.6}
\gamma_{q}:=\frac{N(q-p)}{pq}=\frac{N}{p}-\frac{N}{q}
\end{equation*}
with
\begin{equation}\label{e1.8}
\gamma_{q}
\left\{\begin{array}{ll}
<1,&\ \text{if}\ p<q<p^*,\\
=1,&\ \text{if}\ p=p^*,\\
\end{array}
\right.
\quad\text{and}\quad
q\gamma_{q}
\left\{\begin{array}{ll}
<p,&\ \text{if}\ p<q<p+\frac{p^2}{N},\\
=p,&\ \text{if}\ q=p+\frac{p^2}{N},\\
>p,&\ \text{if}\ p+\frac{p^2}{N}<q\leq p^*.\\
\end{array}
\right.
\end{equation}
It is also well known that solutions of \eqref{e1.1} corresponds to critical points of the following energy functional
\begin{equation}\label{e1.2}
\Psi_{\mu}(u):=\frac{1}{p}\|\nabla u\|_p^p-\frac{\mu}{q_1}\|u\|_{q_1}^{q_1}-\frac{1}{q_2}\|u\|_{q_2}^{q_2}
\end{equation}
constrained on the $L^p$-sphere
\begin{equation}\label{e1.4}
\mathcal{S}_a:=\left\{u\in W^{1,p}\left(\mathbb{R}^N\right)\mid\|u\|_p=a\right\}.
\end{equation}
Clearly, $\Psi_{\mu}(u)$ is of class $C^1$.  Since $q_2>p+\frac{p^2}{N}$, it is well known that $\Psi_{\mu}(u)$ is unbounded on $\mathcal{S}_a$.  Thus, a natural constraint to find critical points of $\Psi_{\mu}(u)$ is the following $L^p$-Pohozaev
manifold, which is introduced in \cite{Bartsch-Soave2017, Jeanjean97, Soave20JDE, Soave20JFA}:
\begin{equation}\label{e1.3}
\mathcal{P}_{a,\mu}:=\left\{u\in \mathcal{S}_a \mid \|\nabla u\|_p^p=\mu\gamma_{q_1}\|u\|_{q_1}^{q_1}+\gamma_{q_2}\|u\|_{q_2}^{q_2}\right\}.
\end{equation}
By using the fibering maps
\begin{equation}\label{e1.9}
\Phi_{\mu,u}(s):=\Psi_{\mu}\left((u)_s\right)=\frac{1}{p}s^p\|\nabla u\|_p^p-\frac{\mu}{q_1}s^{q_1\gamma_{q_1}}\|u\|_{q_1}^{q_1}-\frac{1}{q_2}s^{q_2\gamma_{q_2}}\|u\|_{q_2}^{q_2},
\end{equation}
which is introduced by Jeanjean in \cite{Jeanjean97}, $\mathcal{P}_{a,\mu} $ can be naturally divided into three parts:
\begin{equation*}\label{e1.10}
\begin{split}
\mathcal{P}_{a,\mu}^+:=\left\{u\in \mathcal{P}_{a,\mu} \mid p\|\nabla u\|_p^p>\mu q_1\gamma_{q_1}^2\|u\|_{q_1}^{q_1}+q_2\gamma_{q_2}^2\|u\|_{q_2}^{q_2}\right\},\\
\mathcal{P}_{a,\mu}^0:=\left\{u\in \mathcal{P}_{a,\mu} \mid p\|\nabla u\|_p^p=\mu q_1\gamma_{q_1}^2\|u\|_{q_1}^{q_1}+q_2\gamma_{q_2}^2\|u\|_{q_2}^{q_2}\right\},\\
\mathcal{P}_{a,\mu}^-:=\left\{u\in \mathcal{P}_{a,\mu} \mid p\|\nabla u\|_p^p<\mu q_1\gamma_{q_1}^2\|u\|_{q_1}^{q_1}+q_2\gamma_{q_2}^2\|u\|_{q_2}^{q_2}\right\},
\end{split}
\end{equation*}
where $(u)_s:=s^{\frac{N}{p}}u(sx)$ for $x\in \mathbb{R}^N$ and $s>0$ is the trajectory on $\mathcal{S}_a$ generated by $u$.
Let
\begin{equation}\label{e1.11}
m^{\pm}(a,\mu):=\inf_{u\in \mathcal{P}_{a,\mu}^{\pm}}\Psi_{\mu}(u).
\end{equation}
Then following \cite{Soave20JDE, Soave20JFA}, we say that $\hat{u}$ is a ground-state solution of the equation (\ref{e1.1}), if $\hat{u}$ solves (\ref{e1.1}) for
a suitable $\lambda\in  \mathbb{R}$ and $\Psi_{\mu}(\hat{u})=\inf_{u\in \mathcal{P}_{a,\mu}}\Psi_{\mu}(u)$.  We say that $\tilde{u}$ is a mountain-pass solution of the equation (\ref{e1.1}), if $\tilde{u}$ solves
(\ref{e1.1}) for a suitable $\lambda \in \mathbb{R}$ and $\Psi_{\mu}(\tilde{u})=m^{-}(a,\mu)$.
Now, Soave' theorems in \cite{Soave20JDE, Soave20JFA} can be stated as follows.
\begin{theorem}\label{thmSoave20JDE}
(\cite[Theorem~1.3]{Soave20JDE})\quad Let $N\ge 3$, $2<q_1<2+\frac{4}{N}<q_2<2^*$ and $a,\mu>0$.  If
\begin{equation*}
\frac{\left(\mu a^{q_1(1-\gamma_{q_1})}\right)^{q_2\gamma_{q_2}-2}\left(a^{q_2(1-\gamma_{q_2})}\right)^{2-q_1\gamma_{q_1}}}{\left(\frac{q_2(2-q_1\gamma_{q_1})}{2C_{N,q_2}^{q_2}(q_2\gamma_{q_2}-q_1\gamma_{q_1})}\right)^{2-q_1\gamma_{q_1}}\left(\frac{q_1(q_2\gamma_{q_2}-2)}{2C_{N,q_1}^{q_1}(q_2\gamma_{q_2}-q_1\gamma_{q_1})}\right)^{q_2\gamma_{q_2}-2}}<1,
\end{equation*}
where $C_{N,q}:=C_{N,2,q}$ is the optimal constant of the Gagliardo-Nirenberg inequality, then the equation~\eqref{e1.1} in the semilinear case $p=2$ has a ground-state solution $\hat{u}_{a,\mu}$ and a mountain-pass solution $\tilde{u}_{a,\mu}$.
\end{theorem}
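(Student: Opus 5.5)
The plan is to follow Soave's fibering-map strategy on the Pohozaev manifold $\mathcal{P}_{a,\mu}$ for $p=2$. For $u\in\mathcal{S}_a$, the Gagliardo--Nirenberg inequality gives
\[
\Psi_\mu(u)\ge h(\|\nabla u\|_2), \qquad h(t):=\frac12t^2-\frac{\mu}{q_1}C_{N,q_1}^{q_1}a^{q_1(1-\gamma_{q_1})}t^{q_1\gamma_{q_1}}-\frac{1}{q_2}C_{N,q_2}^{q_2}a^{q_2(1-\gamma_{q_2})}t^{q_2\gamma_{q_2}}.
\]
Since $q_1\gamma_{q_1}<2<q_2\gamma_{q_2}$, the stated condition is exactly what makes $h$ attain a strictly positive maximum: $h$ is negative near $0^+$, positive on an interval $(R_0,R_1)$, and negative beyond $R_1$. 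This is the first step, and it is a routine computation optimizing $t^{2-q_1\gamma_{q_1}}$ against the two power terms.

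Next I would analyze the fibers $\Phi_{\mu,u}(s)=\Psi_\mu((u)_s)$. The shape of $h$ shows that for every $u\in\mathcal{S}_a$, $\Phi_{\mu,u}$ has exactly two critical points $s_u<t_u$: a local minimum at level $<0$ with $\|\nabla (u)_{s_u}\|_2<R_0$, and a global maximum at level $>0$ with $\|\nabla (u)_{t_u}\|_2>R_1$. It follows that $\mathcal{P}^0_{a,\mu}=\emptyset$, that $\mathcal{P}_{a,\mu}$ is a $C^1$ submanifold of codimension 1 in $\mathcal{S}_a$ and a natural constraint (constrained critical points are free critical points on $\mathcal{S}_a$), and that $s_u,t_u$ depend continuously (in fact $C^1$) on $u$ by the implicit function theorem. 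I will also show $m^+(a,\mu)=\inf_{\{\|\nabla u\|_2<R_0\}}\Psi_\mu<0$ and $m^-(a,\mu)=\inf_{u}\max_s\Phi_{\mu,u}(s)>0$.

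For the ground state, I take a minimizing sequence for $\Psi_\mu$ on $A_{R_0}=\{u\in\mathcal S_a:\|\nabla u\|_2<R_0\}$. Replacing elements by their Schwarz symmetrizations keeps the sequence in $A_{R_0}$ and does not increase energy, so it may be taken radial and nonincreasing. I then apply Ekeland's principle and obtain a Palais--Smale sequence that also satisfies the Pohozaev identity asymptotically. Compactness comes from three facts: the embedding $H^1_{rad}\hookrightarrow L^{q}$ is compact for $2<q<2^*$, the sequence is bounded, and the negative level gives $\lambda<0$. The $L^2$ mass is then recovered from the equation tested against $u$, using $\lambda_n\to\lambda<0$, which yields strong $H^1$ convergence. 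Positivity follows from replacing $u$ by $|u|$ and applying the strong maximum principle.

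For the mountain-pass solution, I use Soave's (Ghoussoub-type) minimax over paths in $\mathcal{S}_a\cap H^1_{rad}$ joining $A_{k}$ to $\{\Psi_\mu<2m^+\}$, applied to the augmented functional $\tilde\Psi(s,u)=\Psi_\mu((u)_s)$ on $\mathbb{R}^+\times\mathcal S_a$. This produces a PS sequence at level $m^-$ with $P(u_n)\to0$ and $u_n^-\to0$. I expect this step to be the main obstacle: the convergence argument now needs a positive level. The Pohozaev identity gives boundedness together with $\lambda_n\to\lambda$. I must then rule out $\lambda\ge0$, which would lose mass, by combining $P(u)=0$ with the Liouville-type fact that nonnegative solutions of $-\Delta u\ge \lambda u$ with $\lambda\geq 0$ in $L^2$, $N\ge3$, vanish. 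With $\lambda<0$, strong convergence follows as before, and the limit lies in $\mathcal{P}^-_{a,\mu}$ at level $m^-(a,\mu)$.
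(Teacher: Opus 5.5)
The paper only quotes this theorem from Soave and gives no proof of it, so there is nothing here to match line by line. Your outline is essentially Soave's original argument and is sound in structure, apart from two local errors discussed in the second paragraph.

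\textbf{Comparison with the paper.} Inside the paper, the statement is a special case of Theorem~\ref{thm1.1}(1) with $p=2$. Your first step shows that every fiber lies above $h(s\|\nabla u\|_2)$, so it has a local minimum and a positive global maximum. Hence $\mu<\mu(u)$ for all $u\in\mathcal{S}_a$, and because the hypothesis is open in $\mu$ this gives $\mu<\mu_a^*$. Propositions~\ref{pro3.1} and \ref{pro3.21} then apply. The two routes differ as follows:
\begin{itemize}
\item You exploit the separation $\inf_{A_{R_0}}\Psi_\mu=m^+<0\le\inf_{\partial A_{R_0}}\Psi_\mu$. The open set $A_{R_0}$ is invariant under Schwarz symmetrization, and the global-maximum shape of every fiber lets a Ghoussoub minimax produce a Palais--Smale sequence at level $m^-$ with $P(u_n)\to0$.
\item The paper never uses $h$. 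It minimizes directly on $\mathcal{P}^{\pm}_{a,\mu}$, which are not symmetrization-invariant, so it needs the auxiliary parameter of Lemma~\ref{lem3.3} to show that projecting a symmetrized sequence back onto $\mathcal{P}^{\pm}_{a,\mu}$ does not raise the energy. It recovers the mass from the strict decrease of $m^{\pm}(\cdot,\mu)$ (Lemma~\ref{lem3.1}). On $\mathcal{P}^-_{a,\mu}$ it uses no minimax at all, only the fact that this set is a natural constraint.
\end{itemize}
Your route is shorter but needs $\max h>0$. The paper's route works for every $\mu<\mu_a^*$, where fibers may have only a local maximum and no separating set like $A_{R_0}$ exists.

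\textbf{Two claims need correcting.}

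(i) The claim $\|\nabla (u)_{t_u}\|_2>R_1$ is false in general. Write $h(t)=\frac12t^2-At^{q_1\gamma_{q_1}}-Bt^{q_2\gamma_{q_2}}$ and $\Phi_{\mu,u}(s)=f_u(s\|\nabla u\|_2)$, where $f_u$ has the same form with coefficients $A_u\le A$ and $B_u\le B$. Then $f_u'\ge h'$, and this only gives $\|\nabla (u)_{t_u}\|_2\ge\rho^*$, the maximum point of $h$, with $\rho^*\in(R_0,R_1)$. For a counterexample, take $u$ an optimizer of the $q_2$-Gagliardo--Nirenberg inequality and let $\mu\to0$. Then $\|\nabla (u)_{t_u}\|_2^{q_2\gamma_{q_2}-2}\to(q_2\gamma_{q_2}B)^{-1}$, while $R_1^{q_2\gamma_{q_2}-2}\to(2B)^{-1}$, which is larger. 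The error is harmless: later you only need $\rho^*>R_0$ (so paths starting in $A_k$, $k\le R_0$, cross $\mathcal{P}^-_{a,\mu}$) and $\max_s\Phi_{\mu,u}\ge\max h>0$.

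(ii) The Liouville fact you invoke is false for $\lambda=0$ and $N\ge5$: the Talenti bubble is a nonnegative $L^2$ function with $-\Delta U\ge0$. It is also unnecessary, so the step you flag as the main obstacle is not one.
\begin{itemize}
\item From $\lambda_na^2=\langle\Psi_\mu'(u_n),u_n\rangle+o(1)$ and $P(u_n)\to0$ you get $\lambda_na^2=\mu(\gamma_{q_1}-1)\|u_n\|_{q_1}^{q_1}+(\gamma_{q_2}-1)\|u_n\|_{q_2}^{q_2}+o(1)$.
\item Radial compactness passes this identity to the limit, so $\lambda\le0$, with equality only if $u=0$.
\item The case $u=0$ is excluded: $P(u_n)\to0$ would force $\|\nabla u_n\|_2\to0$ and hence $\Psi_\mu(u_n)\to0\neq m^-$.
\end{itemize}
This is exactly how the paper obtains $\lambda_0<0$ in Propositions~\ref{pro3.1} and \ref{pro3.21}. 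It works at any nonzero level, so it covers both of your solutions.
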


\begin{theorem}\label{thmSoave20JFA}
(\cite[(1) of Theorem~1.1]{Soave20JFA})\quad Let $N\ge 3$, $2<q_1<2+\frac{4}{N}$, $q_2=2^*$ and $a,\mu>0$.  Then there exists $\alpha_{N,q_1}=\min\{C_1,C_2\}$, where
\begin{eqnarray*}
C_1=\left(\frac{2^*S^{\frac{2^*}{2}}(2-q_1\gamma_{q_1})}{2(2^*-q_1\gamma_{q_1})}\right)^{\frac{2-q_1\gamma_{q_1}}{2^*-2}}\frac{q_1(2^*-2)}{2C_{N,q_1}^{q_1}(2^*-q_1\gamma_{q_1})}
\end{eqnarray*}
and
\begin{eqnarray*}
C_2=\frac{22^*}{N\gamma_{q_1}C_{N,q_1}^{q_1}(2^*-q_1\gamma_{q_1})}\left(\frac{q_1\gamma_{q_1}S^{\frac{N}{2}}}{2-q_1\gamma_{q_1}}\right)^{\frac{2-q_1\gamma_{q_1}}{2}},
\end{eqnarray*}
with $C_{N,q_1}:=C_{N,2,q_1}$ and $S$ the optimal constants of the Gagliardo-Nirenberg and Sobolev inequalities, respectively, such that if $0<\mu a^{\frac{q_1-q_1\gamma_{q_1}}{2}}<\alpha_{N,q_1}$ then the equation~\eqref{e1.1} in the semilinear case $p=2$ has a ground-state solution $\hat{u}_{a,\mu}$.
\end{theorem}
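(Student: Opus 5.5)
We follow Soave's strategy for the Sobolev critical case $p=2$, $q_2=2^*$: minimize $\Psi_\mu$ on $\mathcal{P}^+_{a,\mu}$, restricted to radial functions. The first step is the geometry of the fibering maps. Using the Gagliardo--Nirenberg inequality for the $q_1$-term and the Sobolev inequality for the critical term, we get on $\mathcal{S}_a$
\[
\Psi_\mu(u)\ge h(\|\nabla u\|_2),\qquad h(t):=\frac12 t^2-\frac{\mu}{q_1}C_{N,q_1}^{q_1}a^{q_1(1-\gamma_{q_1})}t^{q_1\gamma_{q_1}}-\frac{1}{2^*S^{2^*/2}}t^{2^*}.
\]
The condition $\mu a^{q_1(1-\gamma_{q_1})}<C_1$ is exactly what makes $h$ have a strictly positive local maximum, with $h<0$ on some interval $(0,R_0)$ and $h>0$ on $(R_0,R_1)$. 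Since $q_1\gamma_{q_1}<2<2^*$, each $\Phi_{\mu,u}$ then has exactly two critical points, a local minimum $s_u$ and a global maximum $t_u$, with $s_u<t_u$. Consequently $\mathcal{P}^0_{a,\mu}=\emptyset$ and $\mathcal{P}_{a,\mu}$ is a $C^1$ natural constraint. We also get $(u)_{s_u}\in \mathcal{P}^+_{a,\mu}$ with $\Psi_\mu((u)_{s_u})<0$ and $\|\nabla (u)_{s_u}\|_2<R_0$. Hence
\[
m^+(a,\mu)=\inf_{A_{R_0}}\Psi_\mu<0,\qquad A_{R_0}:=\{u\in\mathcal{S}_a:\|\nabla u\|_2<R_0\},
\]
and this value equals $\inf_{\mathcal{P}_{a,\mu}}\Psi_\mu$, because $\Psi_\mu\ge0$ on $\mathcal{P}^-_{a,\mu}$.

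The second step is to take a minimizing sequence for $\inf_{A_{R_0}}\Psi_\mu$. Replacing each $u_n$ by $|u_n|^*$ (Schwarz symmetrization), which keeps it in $A_{R_0}$ and does not increase $\Psi_\mu$, we may assume it is radial and nonincreasing. Ekeland's principle on the sphere, applied together with the projection $u\mapsto (u)_{s_u}$, produces a Palais--Smale sequence $(u_n)$ with $P_\mu(u_n)\to0$, $u_n$ radial, $u_n^-\to0$, and $\|\nabla u_n\|_2\le R_0-\delta$. The sequence is bounded, so up to a subsequence $u_n\rightharpoonup u$ in $H^1_r$ and $u_n\to u$ in $L^{q_1}$ by Strauss compactness. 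The Lagrange multipliers $\lambda_n$ are bounded and converge to some $\lambda$. We claim $\lambda<0$: the Pohozaev identity gives $\lambda a^2=\mu(\gamma_{q_1}-1)\|u\|_{q_1}^{q_1}$ in the limit, and we must check $u\neq0$. If $u=0$, then $\|u_n\|_{q_1}\to0$, so $\|\nabla u_n\|_2^2-\|u_n\|_{2^*}^{2^*}\to0$. Then either $\|\nabla u_n\|_2\to0$, which gives $\Psi_\mu(u_n)\to0$ and contradicts $m^+<0$, or $\|\nabla u_n\|_2^2\ge S^{N/2}+o(1)$. The second alternative is impossible if $R_0^2<S^{N/2}$.

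The main obstacle is loss of compactness in the critical term. Set $v_n=u_n-u$. By the Brezis--Lieb lemma and the Pohozaev identities, $\|\nabla v_n\|_2^2-\|v_n\|_{2^*}^{2^*}\to0$, so either $v_n\to0$ in $D^{1,2}$ or $\|\nabla v_n\|_2^2\to\ell\ge S^{N/2}$. The second alternative is excluded by the gradient bound
\[
\|\nabla v_n\|_2^2\le\|\nabla u_n\|_2^2+o(1)\le R_0^2<S^{N/2}.
\]
The first step where I expect real work is proving $R_0^2<S^{N/2}$ from the condition $\mu a^{\cdots}<C_2$. The point of $C_2$ is to force the negative well of $h$ to lie inside the ball of radius $S^{N/4}$: setting $t^2=S^{N/2}$, the inequality $h(t)>0$, or the equivalent comparison $\tfrac1N S^{N/2}>$ the $q_1$-term, should reduce exactly to the $C_2$ threshold. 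If this direct route fails, the alternative is the energy comparison $m^+(a,\mu)<0<m^+(a,\mu)+\frac1N S^{N/2}$ used in the splitting argument. With $v_n\to0$ in $D^{1,2}$ and $\lambda<0$, testing the equations of $u_n$ and $u$ against $u_n-u$ gives $\|u_n-u\|_2\to0$. Thus $u\in\mathcal{S}_a$ attains $m^+(a,\mu)$, it is nonnegative because $u_n^-\to0$, and it is positive by the strong maximum principle. This $u$ is the ground state $\hat u_{a,\mu}$.
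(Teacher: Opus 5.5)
\paragraph{Verdict.} Your overall scheme is Soave's minimization on the gradient ball, and it is sound. These steps all go through: the fiber analysis, the identity $m^+(a,\mu)=\inf_{A_{R_0}}\Psi_\mu=\inf_{\mathcal{P}_{a,\mu}}\Psi_\mu<0$, free Schwarz symmetrization inside $A_{R_0}$, the Ekeland step, $\lambda a^2=\mu(\gamma_{q_1}-1)\|u\|_{q_1}^{q_1}$, the Brezis--Lieb dichotomy for $v_n=u_n-u$, and recovering the $L^2$-mass from $\lambda<0$. The gap is the step you flag, and the justification you propose for it fails.

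\paragraph{Why $h(S^{N/4})>0$ is the wrong target.} Since $\frac12-\frac1{2^*}=\frac1N$, we have $h(S^{N/4})=\frac1N S^{N/2}-\frac{\mu}{q_1}C_{N,q_1}^{q_1}a^{q_1(1-\gamma_{q_1})}S^{Nq_1\gamma_{q_1}/4}$. So $h(S^{N/4})>0$ is equivalent to
\[
\mu a^{q_1(1-\gamma_{q_1})}<\frac{q_1}{NC_{N,q_1}^{q_1}}S^{\frac{N(2-q_1\gamma_{q_1})}{4}} .
\]
This is not the $C_2$ threshold. $C_2$ equals this constant times $\frac{2\cdot 2^*}{(2^*-q_1\gamma_{q_1})(2-q_1\gamma_{q_1})}\bigl(\frac{2-q_1\gamma_{q_1}}{q_1\gamma_{q_1}}\bigr)^{q_1\gamma_{q_1}/2}>1$. $C_1$ also exceeds it: at $\mu a^{q_1(1-\gamma_{q_1})}=C_1$ the function $h$ has a double zero at some $t_*\neq S^{N/4}$ and is negative elsewhere. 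Hence $h(S^{N/4})\le 0$ on a nonempty part of the admissible range, and that route cannot be completed. It also asks for more than you need, since $h(S^{N/4})>0$ means $R_0<S^{N/4}<R_1$.

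\paragraph{The fix.} What you need, $R_0<S^{N/4}$, holds under $C_1$ alone. Write $h(t)=t^{q_1\gamma_{q_1}}g(t)$ with
\[
g(t)=\tfrac12 t^{2-q_1\gamma_{q_1}}-\tfrac{\mu}{q_1}C_{N,q_1}^{q_1}a^{q_1(1-\gamma_{q_1})}-\tfrac{1}{2^*S^{2^*/2}}t^{2^*-q_1\gamma_{q_1}} .
\]
Then $g$ increases strictly on $(0,t_*)$ and decreases strictly afterwards, where
\[
t_*^{2^*-2}=\frac{2^*(2-q_1\gamma_{q_1})}{2(2^*-q_1\gamma_{q_1})}S^{\frac{2^*}{2}}<S^{\frac{2^*}{2}}=\bigl(S^{\frac N4}\bigr)^{2^*-2},
\]
and the condition $\mu a^{q_1(1-\gamma_{q_1})}<C_1$ is exactly $g(t_*)>0$. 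Since $h$ and $g$ have the same sign, $R_0<t_*<S^{N/4}$. Your gradient bound then excludes vanishing and bubbling, and $C_2$ is never used.

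\paragraph{On your fallback.} It also works, but $C_2$ encodes more than $m^+(a,\mu)+\frac1N S^{N/2}>0$. For $w\in\mathcal{P}_{b,\mu}$ with $b\le a$ one has $\Psi_\mu(w)\ge\frac1N\|\nabla w\|_2^2-\frac{\mu}{q_1}\bigl(1-\frac{q_1\gamma_{q_1}}{2^*}\bigr)C_{N,q_1}^{q_1}a^{q_1(1-\gamma_{q_1})}\|\nabla w\|_2^{q_1\gamma_{q_1}}$. $C_2$ is exactly the condition that this bound, minimized over $\|\nabla w\|_2$, stays above $-\frac1N S^{N/2}$; this is the computation of Lemma~\ref{lemN5.4} with $p=2$. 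In the splitting you must apply it to the weak limit $u$, whose mass may be smaller than $a$.

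\paragraph{Comparison with the paper.} The paper does not prove this theorem; it quotes it from Soave. Its own ground-state result covering this case is Proposition~\ref{pro3.1}, which applies because $C_1$ forces $\mu<\mu_a^*$. The route there is different:
\begin{itemize}
\item it minimizes directly on $\mathcal{P}^+_{a,\mu}$ rather than on a gradient ball;
\item it handles symmetrization through the auxiliary parameter $\alpha$ in front of the gradient term (Lemma~\ref{lem3.3});
\item it runs Ekeland on a $\delta$-neighbourhood of $\mathcal{P}^+_{a,\mu}$;
\item it excludes bubbling and mass loss together, using the strict decrease of $m^+(\cdot,\mu)$ in the mass (Lemma~\ref{lem3.1}(b)): $m^+(a,\mu)=\Psi_\mu(u_0)+\frac1N\lim\|\nabla v_n\|_2^2\ge m^+(a,\mu)+\frac1N\lim\|\nabla v_n\|_2^2$, with strict inequality if $\|u_0\|_2<a$.
\end{itemize}
Your route gets symmetrization for free and kills bubbling with a one-line gradient bound, but it needs the positive bump of $h$. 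The paper's route needs neither a gradient gap nor a comparison with $S^{N/2}$. It works on all of $0<\mu<\mu_a^*$, where the local maximum of $\Phi_{\mu,u}$ need not be global, and it extends to $\mu=\mu_a^*$ by a limiting argument.
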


\vskip0.12in

Theorems~\ref{thmSoave20JDE} and \ref{thmSoave20JFA} have been generalized to many other models as well as improved by either developing the minimization method on the Pohozaev manifold or introducing other methods in many recent papers, see, for example, \cite{Jeanjean-Jendrej-Le-Visciglia, Jeanjean-Le2021,  Li2021, Liu-Wu25, Wei-Wu2022} and the references therein for the improvements, \cite{Borthwick-Chang-Jeanjean-Soave2023, Borthwick-Chang-Jeanjean-Soave2024, Chang-Jeanjean-Soave2024, Pierotti-Soave2022} and the references therein for the generalizations and improvements to problems on metric graphs, \cite{Alves-Ji-Miyagaki2021, Bieganowski-Mederski2021, Jeanjean-Lu2022-1, Jeanjean-Lu2022-2, Jeanjean-Zhang-Zhong2024, Mederski-Schino2021} and the references therein for the generalizations and improvements to problems with general nonlinearities, \cite{Bartsch-Molle-Rizzi-Verzini2021, Bartsch-Qi-Zou2024, Pierotti-Verzini-Yu2025} and the references therein for the generalizations and improvements to problems on bounded domains or $V(x)\not=0$, and so on.  In particular, the $p$-Laplacian equation~\eqref{e1.1} are also been considered in these generalizations.
To our best knowledge, \cite{Zhang-Zhang} is the first paper to study the $p$-Laplacian equation~\eqref{e1.1} with the $L^p$-constraint, where the authors proved the following theorem.
\begin{theorem}\label{thmZhang-Zhang}
(\cite[(b) of Theorem~1.1]{Zhang-Zhang})\quad Let $N\ge 2$, $1<p<N$, $p<q_1<p+\frac{p^2}{N}<q_2<p^*$ and $a,\mu>0$.  If the parameters $a$ and $\mu$ satisfy
$\max_{t>0}h_1(t)\geq 0$
and
\begin{equation*}
\mu a^{q_1(1-\gamma_{q_1})}<(t_{a,q_1,q_2})^{p-q_1\gamma_{q_1}}\frac{q_1(q_2\gamma_{q_2}-p)}{C_{N,p,q_1}^{q_1}p(q_2\gamma_{q_2}-q_1\gamma_{q_1})},
\end{equation*}
where
\begin{equation*}
h_1(t)=\frac{1}{p}t^p-\frac{\mu}{q_1}C_{N,p,q_1}^{q_1}a^{q_1(1-\gamma_{q_1})}t^{q_1\gamma_{q_1}}-\frac{1}{q_2}C_{N,p,q_2}^{q_2}a^{q_2(1-\gamma_{q_2})}t^{q_2\gamma_{q_2}}
\end{equation*}
and $t_{a,q_1,q_2}$ is the unique maximal point of $h_1(t)$,
then \eqref{e1.1} has a positive ground-state solution which minimizes $\Psi_{\mu}(u)$ on $\mathcal{P}_{a,\mu}$.
\end{theorem}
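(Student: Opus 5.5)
The plan is to minimize $\Psi_{\mu}$ over the set $\mathcal{P}_{a,\mu}^{+}$ restricted to a ball $\{\|\nabla u\|_p<t_{a,q_1,q_2}\}$. The second hypothesis should ensure this set equals $\mathcal{P}_{a,\mu}$ near zero, which is where the ground state lives.

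\emph{Step 1: fiber structure.} By the Gagliardo--Nirenberg inequality, for every $u\in\mathcal{S}_a$ we have $\Psi_\mu(u)\ge h_1(\|\nabla u\|_p)$. The function $h_1$ is negative near $0$, increases up to $t_{a,q_1,q_2}$, and has nonnegative maximum by assumption. I would use the bound on $\mu a^{q_1(1-\gamma_{q_1})}$ to show $\mathcal{P}^0_{a,\mu}=\emptyset$. Indeed, if $u\in\mathcal{P}^0_{a,\mu}$, combining the two identities eliminates $\|u\|_{q_2}^{q_2}$:
\[
(q_2\gamma_{q_2}-p)\|\nabla u\|_p^p=\mu\gamma_{q_1}(q_2\gamma_{q_2}-q_1\gamma_{q_1})\|u\|_{q_1}^{q_1}.
\]
Eliminating $\|u\|_{q_1}^{q_1}$ instead gives a lower bound on $\|\nabla u\|_p$. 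Together with Gagliardo--Nirenberg, these two relations contradict the stated inequality.

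Next, for each $u\in\mathcal{S}_a$ the fibering map $\Phi_{\mu,u}$ from \eqref{e1.9} has exactly two critical points $s_u<t_u$: a local minimum with $(u)_{s_u}\in\mathcal{P}^+_{a,\mu}$, and a global maximum with $(u)_{t_u}\in\mathcal{P}^-_{a,\mu}$. Moreover $\|\nabla (u)_{s_u}\|_p<t_{a,q_1,q_2}$ and $\Psi_\mu((u)_{s_u})<0\le\Psi_\mu((u)_{t_u})$. It follows that $m^+(a,\mu)<0$ and that it equals $\inf_{\mathcal{P}_{a,\mu}}\Psi_\mu$, and also that $\mathcal{P}_{a,\mu}$ is a natural constraint. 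The latter holds because $\mathcal{P}^0_{a,\mu}=\emptyset$ makes it a smooth codimension-one submanifold of $\mathcal{S}_a$.

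\emph{Step 2: minimizing sequence.} Since $m^+(a,\mu)=\inf\{\Psi_\mu(u):u\in\mathcal{S}_a,\ \|\nabla u\|_p<t_{a,q_1,q_2}\}$, I can take a minimizing sequence and replace each $u_n$ by its Schwarz symmetrization $|u_n|^*$. This does not increase $\|\nabla u\|_p$ and preserves the $L^p$ and $L^q$ norms. Then project back via $s_{u_n}$, so the sequence is radial, nonnegative, lies in $\mathcal{P}^+_{a,\mu}$, and is bounded in $W^{1,p}$. Using Ekeland's principle on the constrained set, as in \cite{Soave20JDE}, I obtain a Palais--Smale sequence for $\Psi_\mu|_{\mathcal{S}_a}$ that also almost satisfies the Pohozaev identity. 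This yields Lagrange multipliers $\lambda_n$, which are bounded.

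\emph{Step 3: compactness (main obstacle).} By Strauss compactness for radial functions, $u_n\to u$ in $L^{q_1}$ and $L^{q_2}$, since $p<q_i<p^*$. Then:
\begin{itemize}
\item The limit is nonzero, because $m^+(a,\mu)<0$ forces $\|u\|_{q_1}>0$.
\item Along a subsequence $\lambda_n\to\lambda<0$, by the Pohozaev identity, since $q_1\gamma_{q_1}<q_1$.
\end{itemize}
The delicate point for the $p$-Laplacian is passing to the limit in $-\Delta_p u_n$. I would prove $\nabla u_n\to\nabla u$ a.e. through the standard truncation argument of Boccardo--Murat type, so that $u$ weakly solves the equation. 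Then testing the equations for $u_n$ and $u$ with $u_n-u$ and using the $(S_+)$ property of $-\Delta_p-\lambda|\cdot|^{p-2}\cdot$, which holds since $\lambda<0$, gives strong convergence in $W^{1,p}$. Hence $\|u\|_p=a$ and $\Psi_\mu(u)=m^+(a,\mu)$.

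\emph{Step 4: positivity.} Finally, $u\ge0$ and $u\neq0$, so the strong maximum principle of Vázquez gives $u>0$. This $u$ is the positive ground state.
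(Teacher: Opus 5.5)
Your proof is correct for the stated subcritical range. One step in Step 1 is misattributed, and your overall route differs from the way the paper obtains this result.

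\emph{The misattribution.} The bound on $\mu a^{q_1(1-\gamma_{q_1})}$ is not what empties $\mathcal{P}^0_{a,\mu}$. Set $C_i:=C_{N,p,q_i}^{q_i}a^{q_i(1-\gamma_{q_i})}$. For $u\in\mathcal{P}^0_{a,\mu}$, your two eliminations combined with Gagliardo--Nirenberg give $\bar t\le\|\nabla u\|_p\le\hat t$, where
$\hat t^{\,p-q_1\gamma_{q_1}}=\frac{\mu\gamma_{q_1}(q_2\gamma_{q_2}-q_1\gamma_{q_1})C_1}{q_2\gamma_{q_2}-p}$ and $\bar t^{\,q_2\gamma_{q_2}-p}=\frac{p-q_1\gamma_{q_1}}{\gamma_{q_2}(q_2\gamma_{q_2}-q_1\gamma_{q_1})C_2}$.

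Write $h_1'(t)=t^{q_1\gamma_{q_1}-1}\bigl(k(t)-\mu\gamma_{q_1}C_1\bigr)$ with $k(t)=t^{p-q_1\gamma_{q_1}}-\gamma_{q_2}C_2t^{q_2\gamma_{q_2}-q_1\gamma_{q_1}}$. Then $k$ is maximized at $\bar t$, with $k(\bar t)=\frac{q_2\gamma_{q_2}-p}{q_2\gamma_{q_2}-q_1\gamma_{q_1}}\bar t^{\,p-q_1\gamma_{q_1}}$. Hence $\hat t<\bar t$ if and only if $k(\bar t)>\mu\gamma_{q_1}C_1$, i.e. if and only if $h_1$ increases somewhere. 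This is forced by the \emph{first} hypothesis: $h_1<0$ near $0$ and $h_1$ has an interior maximum point.

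In fact your argument never uses the second inequality. That inequality says exactly that the lower bound
$\Psi_\mu(u)\ge\frac{q_2\gamma_{q_2}-p}{pq_2\gamma_{q_2}}t^p-\frac{\mu(q_2\gamma_{q_2}-q_1\gamma_{q_1})C_1}{q_1q_2\gamma_{q_2}}t^{q_1\gamma_{q_1}}$, with $t=\|\nabla u\|_p$ and valid on $\mathcal{P}_{a,\mu}$,
is positive for $t\ge t_{a,q_1,q_2}$. Its role is to exclude low-energy points of $\mathcal{P}_{a,\mu}$ outside the ball, and your observation $\Psi_\mu\ge0$ on $\mathcal{P}^-_{a,\mu}$ already does this. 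The natural-constraint remark is also unnecessary, since Ekeland in the ball directly gives a Palais--Smale sequence for $\Psi_\mu|_{\mathcal{S}_a}$.

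\emph{Comparison with the paper.} The paper does not reprove this cited theorem. The same estimate $h_{\mu,u}(s)\ge\|\nabla u\|_p^{q_1\gamma_{q_1}}k(s\|\nabla u\|_p)$ gives $\mu(u)-\mu\ge\frac{k(\bar t)-\mu\gamma_{q_1}C_1}{\gamma_{q_1}C_1}>0$ uniformly on $\mathcal{S}_a$, so $\mu<\mu_a^*$. The statement is therefore contained in Proposition~\ref{pro3.1} together with $(a)$ of Lemma~\ref{lem3.1}, which hold for every $0<\mu<\mu_a^*$.

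In that larger range there is generally no ball in $\mathcal{S}_a$ carrying nonnegative energy on its boundary, so the paper minimizes directly on $\mathcal{P}^+_{a,\mu}$ and pays for it in three places:
\begin{enumerate}
\item it needs the auxiliary parameter $\alpha$ of Lemma~\ref{lem3.3} to show that re-projecting the Schwarz symmetrization onto $\mathcal{P}^+_{a,\mu}$ lowers the energy;
\item before applying Ekeland, it needs a separate argument that the infimum over a $\delta$-neighbourhood of $\mathcal{P}^+_{a,\mu}$ is still $m^+(a,\mu)$;
\item it obtains compactness via Brezis--Lieb splitting, the Pohozaev identity, and the strict decrease of $m^+(\cdot,\mu)$ in the mass, which is $(b)$ of Lemma~\ref{lem3.1}.
\end{enumerate}

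Your use of $\max h_1\ge0$ makes all of this immediate. The ball $\{\|\nabla u\|_p<t_{a,q_1,q_2}\}$ is invariant under symmetrization, the fibering projection lowers energy inside it, and its boundary is separated from the negative sublevels. Your $(S_+)$ argument with $\lambda<0$ then replaces mass monotonicity.

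The price is generality. You are confined to the Soave-type regime. Your $(S_+)$ step also fails for $q_2=p^*$, because the critical term does not pass to the limit, whereas the paper's splitting argument covers $q_2=p^*$ as well.
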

The Sobolev critical case $q_2=p^*$ of the $p$-Laplacian equation~\eqref{e1.1} was first considered in \cite{Lou-Zhang-Zhang24}, where the authors proved the following theorem.
\begin{theorem}\label{thmZhang-Zhang24}
(\cite[Theorem~1.1]{Lou-Zhang-Zhang24})\quad Let $N\ge 2$, $1<p<N$, $p<q_1<p+\frac{p^2}{N}$, $q_2=p^*$  and $a,\mu>0$.  If the parameters $a$ and $\mu$ satisfy
$\max_{t>0}h_2(t)\geq 0$
and
\begin{equation*}
\mu a^{q_1(1-\gamma_{q_1})}<\left(\frac{S^{\frac{p^*}{p}}p^*(p-q_1\gamma_{q_1})}{p(p^*-q_1\gamma_{q_1})}\right)^{\frac{p-q_1\gamma_{q_1}}{p^*-p}}\frac{q_1(p^*-p)}
{C_{N,p,q_1}^{q_1}p(p^*-q_1\gamma_{q_1})},
\end{equation*}
where
\begin{equation*}
h_2(t)=\frac{1}{p}t^p-\frac{\mu}{q_1}C_{N,p,q_1}^{q_1}a^{q_1(1-\gamma_{q_1})}t^{q_1\gamma_{q_1}}-\frac{1}{p^*}S^{-\frac{p^*}{p}}t^{p^*},
\end{equation*}
then \eqref{e1.1} has a positive ground-state solution which minimizes $\Psi_{\mu}(u)$ on $\mathcal{P}_{a,\mu}$.
\end{theorem}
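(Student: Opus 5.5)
The plan is the standard minimization on $\mathcal{P}_{a,\mu}^+$, adapted to the $p$-Laplacian with a Sobolev critical term, which is how Soave treats the case $p=2$.

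\textbf{Geometry of the fibering maps.} Fix $u\in\mathcal{S}_a$. By the Gagliardo--Nirenberg and Sobolev inequalities,
\[
\Psi_\mu(u)\geq h_2(\|\nabla u\|_p).
\]
Since $q_1\gamma_{q_1}<p<p^*$, the function $h_2$ is negative near $0$, then has a positive local maximum (this is where $\max h_2\geq0$ is used; in the borderline case $\max h_2=0$ one works with strict inequalities coming from non-optimality). It then decreases to $-\infty$. Call $\rho_0<\rho_1$ its two zeros.

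The smallness condition on $\mu a^{q_1(1-\gamma_{q_1})}$ is exactly what is needed to show $\mathcal{P}_{a,\mu}^0=\emptyset$. Suppose $u\in\mathcal{P}^0_{a,\mu}$. Combining the two identities gives
\[
(p-q_1\gamma_{q_1})\|\nabla u\|_p^p=(p^*-q_1\gamma_{q_1})\|u\|_{p^*}^{p^*},
\]
and also
\[
\mu\gamma_{q_1}(p^*-q_1\gamma_{q_1})\|u\|_{q_1}^{q_1}=(p^*-p)\|\nabla u\|_p^p.
\]
Inserting Sobolev into the first gives a lower bound on $\|\nabla u\|_p$. Inserting Gagliardo--Nirenberg into the second gives an upper bound. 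These two bounds are incompatible under the stated condition.

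Consequently, for each $u$, the map $\Phi_{\mu,u}$ has exactly two critical points $s_u^+<s_u^-$: a local minimum with $(u)_{s_u^+}\in\mathcal{P}^+_{a,\mu}$ and $\|\nabla (u)_{s_u^+}\|_p<\rho_0$, and a global maximum with $(u)_{s_u^-}\in\mathcal{P}^-$. It follows that $\mathcal{P}_{a,\mu}$ is a $C^1$ submanifold and a natural constraint. Moreover,
\[
m^+(a,\mu)=\inf_{\{\|\nabla u\|_p<\rho_0\}\cap\mathcal{S}_a}\Psi_\mu<0,
\]
while $\Psi_\mu\geq0$ on $\mathcal{P}^-$. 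Hence $\inf_{\mathcal{P}_{a,\mu}}\Psi_\mu=m^+(a,\mu)<0$.

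\textbf{Minimizing sequence.} Take a minimizing sequence for the local-minimum problem on $\{\|\nabla u\|_p<\rho_0\}$. Replace each term by the Schwarz symmetrization $|u_n|^*$; this preserves the $L^p$ norm and does not increase the energy. By Ekeland's principle applied to the constraint $\mathcal{P}^+$, we may assume the sequence is a Palais--Smale sequence for $\Psi_\mu|_{\mathcal{S}_a}$, is radial and nonnegative, satisfies $P(u_n)\to0$, and is bounded in $W^{1,p}$.

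Pass to the limit $u_n\rightharpoonup u$ in $W^{1,p}_{rad}$, so that $u_n\to u$ in $L^{q_1}$, and let $\lambda_n\to\lambda$. We have $u\neq0$, since otherwise the energy limit would be $\geq0$. Then $\lambda<0$ follows from the Pohozaev identity combined with the equation: $\lambda a^p=\mu(\gamma_{q_1}-1)\|u\|_{q_1}^{q_1}+\dots<0$ in the limit.

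\textbf{Main obstacle: compactness at the critical exponent.} To show $\nabla u_n\to\nabla u$ a.e., I would use the Boccardo--Murat argument for the $p$-Laplacian, so that the limit solves the equation. Set $v_n=u_n-u$. Brezis--Lieb gives
\[
\|\nabla v_n\|_p^p-\|v_n\|_{p^*}^{p^*}\to0.
\]
Either $v_n\to0$, or $\|\nabla v_n\|_p^p\to\ell\geq S^{N/p}$. In the second case the energy splits as
\[
m^+=\Psi_\mu(u)+\tfrac1N\ell+o(1).
\]
Since $u$ satisfies the Pohozaev identity and lies in the sublevel region, $\Psi_\mu(u)\geq m^+$, which forces $\ell=0$, a contradiction. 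In this step one must check that $u$ stays in $\{\|\nabla u\|_p<\rho_0\}$, which holds by weak lower semicontinuity.

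To upgrade weak convergence to strong convergence, the $L^p$ mass must also be recovered. Since $\lambda<0$, testing the equation for $u_n$ minus that for $u$ against $v_n$ gives
\[
\|\nabla v_n\|_p^p-\lambda\|v_n\|_p^p\to0.
\]
Thus $u\in\mathcal{S}_a$ attains $m^+$, and it is a ground state. Positivity follows from $u\geq0$ together with the strong maximum principle of V\'azquez for $-\Delta_p u\geq\lambda u^{p-1}$.

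The delicate point is doing the energy splitting cleanly for the $p$-Laplacian, where Brezis--Lieb for gradients requires the a.e. convergence of gradients; that is why the Boccardo--Murat step must come first.
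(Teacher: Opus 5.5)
Your plan follows Soave's and Jeanjean--Le's $h_2$-geometry rather than the paper's route, and it is essentially sound. However, the compactness step as written is circular.

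\textbf{The gap.} To exclude the bubble you assert $\Psi_\mu(u)\ge m^+(a,\mu)$. At that stage you only know $\|u\|_p=a_1\in(0,a]$, so $u\in\mathcal{P}_{a_1,\mu}$. Being in $\{\|\nabla u\|_p<\rho_0\}$ says nothing about how $\Psi_\mu(u)$ compares with the infimum over functions of mass $a$.

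Your later mass-recovery step cannot supply $a_1=a$ here. Testing the two equations against $v_n$ actually gives
\[
\|\nabla v_n\|_p^p-\lambda\|v_n\|_p^p-\|v_n\|_{p^*}^{p^*}\to0,
\]
and you drop the last term using the conclusion of the bubble step. The paper closes exactly this point with the mass monotonicity of Lemma~\ref{lem3.1}$(b)$, which you never prove.

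Within your framework either of two repairs works.
\begin{enumerate}
\item[(i)] Reverse the order. From $P(u_n)\to0$, $P(u)=0$ and Brezis--Lieb you have $\|\nabla v_n\|_p^p-\|v_n\|_{p^*}^{p^*}\to0$. Subtracting this from the identity above gives $\lambda\|v_n\|_p^p\to0$, hence $\|u\|_p=a$ since $\lambda<0$. Then $u\in\mathcal{P}_{a,\mu}$, so $\Psi_\mu(u)\ge\inf_{\mathcal{P}_{a,\mu}}\Psi_\mu=m^+(a,\mu)$, and $\ell=0$ follows.
\item[(ii)] Kill the bubble with the gradient bound. The point $\rho_0$ lies left of the maximum point of $t\mapsto t^{-q_1\gamma_{q_1}}h_2(t)$, so $\rho_0^p<\bigl(\frac{p^*(p-q_1\gamma_{q_1})}{p(p^*-q_1\gamma_{q_1})}\bigr)^{\frac{p}{p^*-p}}S^{\frac{N}{p}}<S^{\frac{N}{p}}$. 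Since $\ell\le\limsup_n\|\nabla u_n\|_p^p\le\rho_0^p$, the alternative $\ell\ge S^{N/p}$ is impossible; then recover the mass as you do.
\end{enumerate}

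Relatedly, apply Ekeland on a closed subset of $\mathcal{S}_a\cap W^{1,p}_{rad}$ rather than on $\mathcal{P}^+$ itself. The closed ball works, since on its boundary $\Psi_\mu\ge h_2(\rho_0)=0>m^+$. This way you genuinely obtain a Palais--Smale sequence for $\Psi_\mu|_{\mathcal{S}_a}$ with $P(u_n)\to0$.

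\textbf{Comparison with the paper.} The paper does not reprove this cited result; it recovers it from Proposition~\ref{pro3.1}, valid for all $0<\mu<\mu_a^*$, a range that contains your hypothesis. There the fibers need not have a global maximum and there is no invariant ball. So the paper needs the $\alpha$-device of Lemma~\ref{lem3.3} to justify symmetrization, the neighbourhood claim \eqref{e3.7} before Ekeland, and Lemma~\ref{lem3.1}$(b)$ to rule out mass loss and bubbling simultaneously.

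Your route is simpler: P\'olya--Szeg\H{o} keeps you in the ball, and $\Psi_\mu>0$ on $\mathcal{P}^-_{a,\mu}$. The price is that it only covers the $h_2$ regime. Repair (i) is a neat substitute for mass monotonicity.

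Two minor points. First, the displayed bound on $\mu a^{q_1(1-\gamma_{q_1})}$ is exactly equivalent to $\max_{t>0}h_2(t)>0$, so there is no borderline case to treat. Second, it is not ``exactly'' what $\mathcal{P}^0_{a,\mu}=\emptyset$ needs. Your two bounds only require a constant larger by the factor $\frac{p}{q_1\gamma_{q_1}}\bigl(\frac{p}{p^*}\bigr)^{\frac{p-q_1\gamma_{q_1}}{p^*-p}}>1$, so the hypothesis suffices with room to spare.
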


\vskip0.12in

Theorem~\ref{thmZhang-Zhang24} is improved in \cite{Deng-Wu,Feng-Li} in the sense that under the same conditions on $\mu$ and $a$,  Feng and Li in \cite{Feng-Li} and Deng and Wu in \cite{Deng-Wu} constructed a positive mountain-pass solution of \eqref{e1.1} by further assuming $3^{2/3}\leq N^{2/3}<p<3$ in \cite{Feng-Li} and $p\leq N^{1/2}$ or $N^{1/2}<p<3$ in \cite{Deng-Wu}, respectively.  We remark that the additional assumptions
on $N$ and $p$ in \cite{Deng-Wu,Feng-Li} look like technical conditions.  Thus, the first purpose of this paper is to remove these conditions in constructing a positive mountain-pass solution of \eqref{e1.1}.

\vskip0.12in

It is also worth pointing out that an interesting question in the existence theory of \eqref{e1.1} is the optimal value of the total mass $a$.  We remark that the same question is pointed out in \cite[Remark~1.3]{Jeanjean-Le2021} for the $2d$ and $3d$ Schrodinger-Poisson-Slater equation.  Going back to \eqref{e1.1}, starting from
\cite{Soave20JDE, Soave20JFA} (see also \cite{Deng-Wu,Feng-Li, Lou-Zhang-Zhang24, Zhang-Zhang} for the $p$-Laplacian case), the conditions imposed on the mass $a$ in the existence theory of \eqref{e1.1} always ensure that the degenerate submanifold $\mathcal{P}_{a,\mu}^0=\emptyset$ and the fibering maps $\Phi_{\mu,u}(s)$ have a global maximal point for every $u\in\mathcal{S}_a$.  These two properties play crucial roles in applying the minimization method on the Pohozaev manifold in these references.  In \cite{Cingolani-Jeanjean2019, Jeanjean-Le2021}, a first effect is made for the optimal value of the mass $a$ in the existence theory of solutions of the $2d$ and $3d$ Schr\"{o}dinger-Poisson-Slater equation (see \cite[Remark~1.3]{Jeanjean-Le2021} again), where
Cingolani and Jeanjean in \cite{Cingolani-Jeanjean2019} and Jeanjean and Le in \cite{Jeanjean-Le2021} do not impose the condition on the mass $a$ such that the fibering maps $\Phi_{\mu,u}(s)$ have a global maximal point for every $u\in\mathcal{S}_a$.  However, they still need the condition on the mass $a$ such that the degenerate submanifold $\mathcal{P}_{a,\mu}^0=\emptyset$.  In \cite{Liu-Wu25}, by introducing the extremal value of the Pohozaev manifold (we call it the first extremal value of the Pohozaev manifold below) which is inspired by \cite{Albuquerque-Silva2020}, the conditions imposed on the mass $a$ in the existence theory of solutions to \eqref{e1.1} in the semilinear case $p=2$ could allow that the degenerate submanifold $\mathcal{P}_{a,\mu}^0\not=\emptyset$ and the fibering maps $\Phi_{\mu,u}(s)$ interact the Pohozaev manifold $\mathcal{P}_{a,\mu}$ for every $u\in\mathcal{S}_a$ (not necessary to be local minimal points or local maximal points) at the extremal value.  In this paper, we shall further develop the idea and technique in \cite{Liu-Wu25} to make some effects on the optimal value of the mass $a$ in the existence theory of solutions to \eqref{e1.1} for the quasilinear case $1<p<N$.

\vskip0.12in

\subsection{Main results.}\quad
Let
\begin{equation}\label{e1.12}
\mu_{a}^*:=\frac{(q_2\gamma_{q_2}-p)(p-q_1\gamma_{q_1})^{\frac{p-q_1\gamma_{q_1}}{q_2\gamma_{q_2}-p}}}
{\gamma_{q_1}\gamma_{q_2}^{\frac{p-q_1\gamma_{q_1}}{q_2\gamma_{q_2}-p}}(q_2\gamma_{q_2}-q_1\gamma_{q_1})^{\frac{q_2\gamma_{q_2}-q_1\gamma_{q_1}}{q_2\gamma_{q_2}-p}}}
\inf_{u\in \mathcal{S}_a}\frac{(\|\nabla u\|_p^p)^{\frac{q_2\gamma_{q_2}-q_1\gamma_{q_1}}{q_2\gamma_{q_2}-p}}}{\|u\|_{q_1}^{q_1}(\|u\|_{q_2}^{q_2})^{\frac{p-q_1\gamma_{q_1}}{q_2\gamma_{q_2}-p}}}
\end{equation}
be the first extremal value of the $L^p$-Pohozaev manifold $\mathcal{P}_{a,\mu}$.  Then
\begin{eqnarray*}
\mu_{a}^*=\sup\left\{\mu>0\mid \mathcal{P}_{a,\tau}^0=\emptyset\quad\text{for all }0<\tau<\mu\right\}.
\end{eqnarray*}
For $\mu>\mu_a^*$, we know that $\mathcal{P}_{a,\mu}^0\not=\emptyset$.  Thus, we also define
\begin{eqnarray}\label{m0}
m_{rad}^{0}(b,\mu):=\left\{\aligned
&\inf_{u\in\mathcal{P}_{b,\mu,rad}^0}\Psi_\mu(u),\quad &\mathcal{P}_{b,\mu,rad}^0\not=\emptyset,\\
&+\infty, \quad &\mathcal{P}_{b,\mu,rad}^0=\emptyset
\endaligned\right.
\end{eqnarray}
for all $0<b\leq a$, where
\begin{equation*}
\mathcal{P}_{b,\mu,rad}^0:=\mathcal{P}_{b,\mu}^0\cap W^{1,p}_{rad}\left(\mathbb{R}^N\right)
\end{equation*}
and
\begin{equation*}
W^{1,p}_{rad}\left(\mathbb{R}^N\right)=\left\{u\in W^{1,p}\left(\mathbb{R}^N\right)\mid \text{$u$ is radially symmetric}\right\}.
\end{equation*}
Let
\begin{eqnarray*}
\mu_{a,+}^{**}:=\min\left\{\mu_a^{**}, \hat{\mu}_{a,+}^{**}, \tilde{\mu}_{a,+}^{**}\right\}
\end{eqnarray*}
be the second extremal value of $\mathcal{P}_{a,\mu}^{+}$ and
\begin{eqnarray*}
\mu_{a,-}^{**}:=\min\left\{\hat{\mu}_{a,-}^{**}, \tilde{\mu}_{a,-}^{**}, \overline{\mu}_{a,-}^{**}\right\}
\end{eqnarray*}
be the second extremal value of $\mathcal{P}_{a,\mu}^{-}$ for $q_2<p^*$,
where
\begin{eqnarray*}
\left\{\aligned
&\hat{\mu}_{a,\pm}^{**}:=\sup\left\{\mu\geq\mu_a^*\mid m_{rad}^{\pm}(b,\mu)<m_{rad}^{0}(b,\mu)\text{ for all }0<b\leq a\right\},\\
&\tilde{\mu}_{a,\pm}^{**}:=\sup\left\{\mu\geq\mu_a^*\mid m^{\pm}_{rad}(c,\mu)\geq m^{\pm}_{rad}(a,\mu)\text{ for all }0<c<a\right\},\\
&\mu_a^{**}:=\sup\left\{\mu\geq\mu_a^*\mid m^+_{rad}(a,\mu)<m^-_{rad}(a,\mu)\right\},\\
&\overline{\mu}_{a,-}^{**}:=\sup\left\{\mu\geq\mu_a^*\mid \mu\leq\mu(u) \  \text{for\ all}\  u\in\tilde{\Phi}(a,\mu)\right\}
\endaligned\right.
\end{eqnarray*}
with
\begin{eqnarray*}
\tilde{\Phi}(a,\mu):=\left\{u\in W^{1,p}_{rad}\left(\mathbb{R}^N\right)\backslash\{0\}\mid
\begin{array}{l}
\text{There\ exists\ a\ minimizing\ sequence}\ \{u_n\}\subset \mathcal{P}_{a,\mu,rad}^{-}\ \\
 \text{of}\  m_{rad}^-\left(a,\mu\right)\ \text{such\ that}\ u_n\rightharpoonup u\ \text{weakly\ in\ }  W^{1,p}_{rad}\left(\mathbb{R}^N\right)
\end{array}
\right\}
\end{eqnarray*}
and
\begin{equation}\label{radial}
m^{\pm}_{rad}(a,\mu):=\inf_{u\in \mathcal{P}_{a,\mu,rad}^{\pm}}\Psi_{\mu}(u)\quad\text{and}\quad \mathcal{P}_{a,\mu,rad}^\pm:=\mathcal{P}_{a,\mu}^\pm\cap W^{1,p}_{rad}\left(\mathbb{R}^N\right),
\end{equation}
then our main result for the Sobolev subcritical case $q_2<p^*$ can be stated as follows.
\begin{theorem}\label{thm1.1}
(The Sobolev subcritical case)\quad Let $1<p<N$ with $N\geq2$, $a>0$, $p<q_1<p+\frac{p^2}{N}$ and $p+\frac{p^2}{N}<q_2<p^*$.
\begin{enumerate}
\item[$(1)$]\quad If $0<\mu\leq\mu_{a}^{*}$, then the stationary version of MFG models~\eqref{sMFG} with the Hamiltonians~\eqref{Hamiltonian} has two radial solutions $u_{a,\mu,\pm}$ for suitable Lagrange multipliers $\lambda_{a,\mu,\pm}<0$.  Moreover,
$u_{a,\mu,+}$ is a ground-state solution minimizing $\Psi_{\mu}(u)$ on $\mathcal{P}_{a,\mu}^+$ and $u_{a,\mu,-}$ is a mountain-pass solution minimizing $\Psi_{\mu}(u)$ on $\mathcal{P}_{a,\mu}^-$.
\item[$(2)$]\quad If $\mu_{a}^{*}<\mu<\mu_{a,+}^{**}$, then the stationary version of MFG models~\eqref{sMFG} with the Hamiltonians~\eqref{Hamiltonian} has a radial solution $u_{a,\mu,+}$ for a suitable Lagrange multiplier $\lambda_{a,\mu,+}<0$.  Moreover,
$u_{a,\mu,+}$ is a ground-state solution in the radial class minimizing $\Psi_{\mu}(u)$ on $\mathcal{P}_{a,\mu,rad}^+$.
\item[$(3)$]\quad If $\mu_{a}^{*}<\mu<\mu_{a,-}^{**}$, then the stationary version of MFG models~\eqref{sMFG} with the Hamiltonians~\eqref{Hamiltonian} has a radial solution $u_{a,\mu,-}$ for a suitable Lagrange multiplier $\lambda_{a,\mu,-}<0$.  Moreover,
$u_{a,\mu,-}$ is a mountain-pass  solution in the radial class minimizing $\Psi_{\mu}(u)$ on $\mathcal{P}_{a,\mu,rad}^-$.
\end{enumerate}
\end{theorem}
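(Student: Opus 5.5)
The plan is to work throughout in $W^{1,p}_{rad}(\mathbb{R}^N)$ and minimize $\Psi_\mu$ on $\mathcal{P}^\pm_{a,\mu,rad}$. A minimizer found this way is a constrained critical point, and it gives a solution of \eqref{e1.1}, hence of \eqref{sMFG}, via $m=u^p$. Two standard tools support this. The compact embedding $W^{1,p}_{rad}\hookrightarrow L^q$ for $p<q<p^*$ handles the nonlinear terms. Replacing $u$ by $|u|$, or by its Schwarz symmetrization, keeps us on the manifold and preserves the energy, so we may take $u\ge0$. Positivity then follows from the strong maximum principle for the $p$-Laplacian (V\'azquez).

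\textbf{Fibering analysis.} Write $A=\|\nabla u\|_p^p$, $B=\|u\|_{q_1}^{q_1}$, $C=\|u\|_{q_2}^{q_2}$. One checks that
\[
s^{1-p}\Phi'_{\mu,u}(s)=A-\mu\gamma_{q_1}Bs^{q_1\gamma_{q_1}-p}-\gamma_{q_2}Cs^{q_2\gamma_{q_2}-p}
\]
has a single interior maximum. Therefore $\Phi_{\mu,u}$ has either two critical points $s^+<s^-$ (a local minimum and a local maximum), or one degenerate critical point, or none.

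Computing the maximum value of this expression gives the following equivalence. The fiber through $u$ meets $\mathcal{P}_{a,\mu}$ in two nondegenerate points if and only if $\mu$ is smaller than the quotient inside the infimum in \eqref{e1.12}, multiplied by the explicit constant there. Hence for $\mu<\mu_a^*$ every fiber meets $\mathcal{P}^\pm_{a,\mu}$ once each and $\mathcal{P}^0_{a,\mu}=\emptyset$. At $\mu=\mu_a^*$ the infimum in \eqref{e1.12} should not be attained: a minimizer $u$ would lie on $\mathcal{P}^0_{a,\mu_a^*}$ and satisfy an Euler--Lagrange system that forces $\lambda=0$-type contradictions, via a Pohozaev identity. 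So for $\mu=\mu_a^*$ we still have $\mathcal{P}^0=\emptyset$.

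With this in hand we get three facts:
\begin{itemize}
\item $m^+(a,\mu)<0$, by using the GN inequality to show $\Phi$ goes negative near $s^+$;
\item $m^-(a,\mu)>0$, or at least $m^->m^+$;
\item $\mathcal{P}^\pm$ are natural constraints, i.e.\ the Lagrange multiplier attached to the Pohozaev constraint vanishes because $\mathcal{P}^0=\emptyset$.
\end{itemize}

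\textbf{Case (1).} Take a minimizing sequence in $\mathcal{P}^+_{a,\mu,rad}$. It is bounded because $q_1\gamma_{q_1}<p$ gives coercivity on $\mathcal{P}$ combined with $\Psi$. Extract a weak limit $u$ with strong $L^{q_i}$ convergence, and note $u\neq0$ since $m^+<0$. By weak lower semicontinuity we get $\|u\|_p=b\le a$ and $u$ in the weak Pohozaev set.

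To rule out $b<a$, use monotonicity of $b\mapsto m^\pm(b,\mu)$, namely $m^\pm(c,\mu)\ge m^\pm(a,\mu)$ for $c<a$. This is shown by scaling $u\mapsto (a/c)u$ followed by projection onto the fiber, and it is strict with the Lagrange multiplier sign. The same argument gives $\lambda<0$: test the equation with $u$ and use the Pohozaev identity, obtaining $\lambda a^p=\mu(\gamma_{q_1}-1)B+(\gamma_{q_2}-1)C<0$. Mass loss plus $\lambda<0$ then yields strong $L^p$ convergence. The $\mathcal{P}^-$ minimizer is handled identically, using $m^->0$ to exclude a zero limit through the Pohozaev constraint and GN.

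\textbf{Cases (2) and (3).} Here the definitions of $\mu^{**}_{a,\pm}$ are designed to supply exactly the missing ingredients:
\begin{itemize}
\item $\hat\mu^{**}_{a,\pm}$ keeps the weak limit (of mass $b\le a$) away from $\mathcal{P}^0_{b}$, so it lands in $\mathcal{P}^\pm_{b}$, where the constraint is natural;
\item $\tilde\mu^{**}_{a,\pm}$ gives the monotonicity in mass needed to exclude $b<a$;
\item $\mu^{**}_a$ ensures that the limit of a $\mathcal{P}^+$ sequence does not jump to $\mathcal{P}^-$;
\item $\overline\mu^{**}_{a,-}$ (the condition $\mu\le\mu(u)$ for weak limits) guarantees that the fiber of the limit $u$ still has a $\mathcal{P}^-$ point, so we can project back.
\end{itemize}

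The main obstacle is the compactness step when $\mu>\mu_a^*$. Fibers may have no Pohozaev point, so one must rescale the weak limit, comparing energies via $\Phi(s)$ monotonicity and Brezis--Lieb splitting, while keeping track of which component ($\pm$, or $0$) the limit belongs to. I would first establish a splitting lemma: $\Psi(u_n)=\Psi(u)+\frac1p\|\nabla(u_n-u)\|_p^p+o(1)$ and $P(u_n-u)\to$ gradient part. From it derive $\Psi(u)\le m^\pm$, then combine with the comparisons encoded in $\mu^{**}$ to force $\nabla u_n\to\nabla u$ and $\|u\|_p=a$. A further technical point is that in the quasilinear case a.e.\ convergence of gradients (Boccardo--Murat) is needed before passing to the limit in $|\nabla u_n|^{p-2}\nabla u_n$.
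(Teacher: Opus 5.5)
For $0<\mu<\mu_a^*$ your outline broadly follows the paper: minimization on $\mathcal{P}^\pm$, radial compactness, strict monotonicity of $m^\pm$ in the mass, and $\lambda a^p=\mu(\gamma_{q_1}-1)\|u\|_{q_1}^{q_1}+(\gamma_{q_2}-1)\|u\|_{q_2}^{q_2}<0$. Your reading of the thresholds in parts (2)--(3) also broadly matches their use in Propositions~\ref{pro5.1} and~\ref{pro5.2}.

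The genuine gap is the endpoint $\mu=\mu_a^*$ of part (1). You claim the infimum in \eqref{e1.12} is not attained, so that $\mathcal{P}^0_{a,\mu_a^*}=\emptyset$; this is false for $q_2<p^*$. The functional $\mu(\cdot)$ is invariant along $s\mapsto(u)_s$ and does not increase under Schwarz symmetrization, so a minimizing sequence can be taken radial with $\|\nabla u_n\|_p=1$. Radial compactness then gives $u_n\to u_0$ in $L^{q_1}\cap L^{q_2}$ with $u_0\neq0$, since otherwise $\mu(u_n)\to+\infty$. The chain $\mu_a^*\ge\mu(u_0)\ge\mu^*_{\|u_0\|_p}\ge\mu_a^*$, together with the strict decrease of $b\mapsto\mu_b^*$, forces $\|u_0\|_p=a$. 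Hence $\mu_a^*$ is attained and $(u_0)_{s(u_0)}\in\mathcal{P}^0_{a,\mu_a^*}$ (Proposition~\ref{pro4.1}).

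The Euler--Lagrange equation \eqref{e4.1} of such a minimizer contains no contradiction either. Its Pohozaev identity is exactly the relation defining $\mathcal{P}^0$, and testing it with $u$ gives a negative multiplier. The consequences at $\mu=\mu_a^*$ are:
\begin{itemize}
\item the fibers through minimizers of $\mu(\cdot)$ meet $\mathcal{P}_{a,\mu_a^*}$ only at a degenerate point;
\item $\mathcal{P}^\pm_{a,\mu_a^*}$ have the nonempty boundary $\mathcal{P}^0_{a,\mu_a^*}$;
\item a minimizing sequence may converge onto that boundary, where the argument that the Pohozaev multiplier vanishes breaks down, since it uses $\Phi''_{\mu,u}(1)\neq0$;
\item your scaling-and-projection proof of mass monotonicity may land in $\mathcal{P}^0$ instead of $\mathcal{P}^\pm$.
\end{itemize}

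The missing idea is that of Propositions~\ref{pro4.4}, \ref{pro4.5} and~\ref{pro4.7}. Take $\mu_n\uparrow\mu_a^*$ and the minimizers $u_{a,\mu_n,\pm}$ already constructed. Show $m^\pm(a,\mu_n)\to\hat{\Psi}^\pm_{\mu_a^*}$, the infimum over $\mathcal{P}^\pm_{a,\mu_a^*}\cup\mathcal{P}^0_{a,\mu_a^*}$, and compare smaller masses with this enlarged infimum. The $u_{a,\mu_n,\pm}$ are genuine solutions whose multipliers stay away from $0$, so their strong limit solves \eqref{e1.1} with $\lambda_0<0$ without any natural-constraint step. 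Finally, the limit is not in $\mathcal{P}^0$: a positive $u$ solving both \eqref{e1.1} and \eqref{e4.1} would make $(q_2\gamma_{q_2}-p)u^{q_2-p}-\mu_a^*(p-q_1\gamma_{q_1})u^{q_1-p}$ constant, which is impossible for $0<u\in L^p(\mathbb{R}^N)$.

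Three further points need repair already for $\mu<\mu_a^*$.

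\textbf{Symmetrization.} Schwarz symmetrization does not keep you on $\mathcal{P}^\pm$, since it can strictly lower $\|\nabla u\|_p$. Working only in $W^{1,p}_{rad}$ gives minimizers on $\mathcal{P}^\pm_{a,\mu,rad}$, whereas part (1) asserts minimality on all of $\mathcal{P}^\pm_{a,\mu}$. You need that projecting $u^*$ back along its fiber does not raise the energy, which is not obvious for $\mathcal{P}^-$. The paper gets this in Lemma~\ref{lem3.3} by writing $\|\nabla u^*\|_p^p=\alpha_0\|\nabla u\|_p^p$. By the implicit function theorem, $\alpha\mapsto\tilde{\Psi}_{\mu,\alpha}\big((u)_{\tilde{t}^\pm_{\mu,\alpha}(u)}\big)$ then has derivative $\frac1p\big(\tilde{t}^\pm_{\mu,\alpha}(u)\big)^p\|\nabla u\|_p^p>0$ on $[\alpha_0,1]$. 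This is available because $\mu<\mu_a^*\le\mu(u^*)$.

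\textbf{Sign of $m^-$.} The claim $m^-(a,\mu)>0$ is false near $\mu_a^*$. On $\mathcal{P}^0$ one has $\Psi_\mu(u)=-\frac{(q_2\gamma_{q_2}-p)(p-q_1\gamma_{q_1})}{pq_1\gamma_{q_1}q_2\gamma_{q_2}}\|\nabla u\|_p^p<0$, and $m^-(a,\mu)\to\hat{\Psi}^-_{\mu_a^*}\le m^0(a,\mu_a^*)<0$ as $\mu\uparrow\mu_a^*$. Nonvanishing of a $\mathcal{P}^-$ minimizing sequence must come instead from $\|u_n\|_{q_2}^{q_2}\gtrsim1$ (Pohozaev constraint plus Gagliardo--Nirenberg) and the compact embedding $W^{1,p}_{rad}(\mathbb{R}^N)\hookrightarrow L^{q_2}(\mathbb{R}^N)$.

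\textbf{Palais--Smale sequence.} In the $\mathcal{P}^+$ problem, the weak limit of a bare minimizing sequence satisfies no equation. Before ``testing the equation with $u$'' you need a Palais--Smale sequence for $\Psi_\mu|_{\mathcal{S}_a}$. The paper obtains it from Ekeland's principle on a $W^{1,p}$-neighbourhood of $\mathcal{P}^+_{a,\mu}$ on which the infimum is unchanged.
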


\begin{remark}
Clearly, by Theorem~\ref{thm1.1}, the stationary version of MFG models~\eqref{sMFG} with the Hamiltonians~\eqref{Hamiltonian} has two radial solutions for $0<\mu<\min\{\mu_{a,+}^{**}, \mu_{a,-}^{**}\}$ and one radial solution for $\min\{\mu_{a,+}^{**}, \mu_{a,-}^{**}\}\leq\mu<\max\{\mu_{a,+}^{**}, \mu_{a,-}^{**}\}$ in the Sobolev subcritical case, provided $\min\{\mu_{a,+}^{**}, \mu_{a,-}^{**}\}<\max\{\mu_{a,+}^{**}, \mu_{a,-}^{**}\}$.
\end{remark}

To state our result for $q_2=p^*$, we re-denote the second extremal value of $\mathcal{P}_{a,\mu}^-$ for $q_2<p^*$ by $\mu_{a,-,q_2}^{**}$.  Then we define
\begin{eqnarray*}
\mu_{a,-}^{**}:=\min\left\{\mu_{a,-}^{***}, \mu_{a,-,0}^{**}\right\}
\end{eqnarray*}
be the second extremal value of $\mathcal{P}_{a,\mu}^-$ for $q_2=p^*$, where $\mu_{a,-,0}^{**}:=\limsup_{q_2\to p^*}\mu_{a,-,q_2}^{**}$ and
\begin{eqnarray*}
\mu_{a,-}^{***}:=\sup\left\{\mu\geq\mu_a^*\mid m_{rad}^{-}(a,\mu)<m_{rad}^{+}(a,\mu)+\frac{1}{N}S^{\frac{N}{p}}\right\}.
\end{eqnarray*}
\begin{theorem}\label{thm1.2}
(The Sobolev critical case)\quad Let $1<p<N$ with $N\geq2$, $a>0$, $p<q_1<p+\frac{p^2}{N}$ and $q_2=p^*$.
\begin{enumerate}
\item[$(1)$]\quad If $0<\mu\leq\mu_{a}^{*}$, then the stationary version of MFG models~\eqref{sMFG} with the Hamiltonians~\eqref{Hamiltonian} has two radial solutions $u_{a,\mu,\pm}$ for suitable Lagrange multipliers $\lambda_{a,\mu,\pm}<0$.  Moreover,
$u_{a,\mu,+}$ is a ground-state solution minimizing $\Psi_{\mu}(u)$ on $\mathcal{P}_{a,\mu}^+$ and $u_{a,\mu,-}$ is a mountain-pass solution minimizing $\Psi_{\mu}(u)$ on $\mathcal{P}_{a,\mu}^-$.
\item[$(2)$]\quad If $\mu_{a}^{*}<\mu<\mu_{a,+}^{**}$, then the stationary version of MFG models~\eqref{sMFG} with the Hamiltonians~\eqref{Hamiltonian} has a radial solution $u_{a,\mu,+}$ for a suitable Lagrange multiplier $\lambda_{a,\mu,+}<0$.  Moreover,
$u_{a,\mu,+}$ is a ground-state solution in the radial class minimizing $\Psi_{\mu}(u)$ on $\mathcal{P}_{a,\mu,rad}^+$.
\item[$(3)$]\quad There exists $q_{1}^*\in\left(p, p+\frac{p^2}{N}\right)$ independent of $a$ such that
if $\mu_{a}^{*}<\mu<\mu_{a,-}^{**}$  and $q_1\in\left(q_{1}^*, p+\frac{p^2}{N}\right)$, then the stationary version of MFG models~\eqref{sMFG} with the Hamiltonians~\eqref{Hamiltonian} has a radial solution $u_{a,\mu,-}$ for a suitable Lagrange multiplier $\lambda_{a,\mu,-}<0$.  Moreover,
$u_{a,\mu,-}$ is a mountain-pass  solution in the radial class minimizing $\Psi_{\mu}(u)$ on $\mathcal{P}_{a,\mu,rad}^-$.
\end{enumerate}
\end{theorem}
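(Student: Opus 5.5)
The plan is to work throughout in $W^{1,p}_{rad}(\mathbb{R}^N)$ and minimize $\Psi_\mu$ on $\mathcal{P}^\pm_{a,\mu,rad}$, obtaining nonradial information via Schwarz symmetrization in part (1). The starting point is the fibering analysis of \eqref{e1.9}. Since $q_1\gamma_{q_1}<p<p^*$, for each $u\in\mathcal{S}_a$ the map $\Phi_{\mu,u}$ has at most two critical points: a local minimum $s^+_u$ and a local maximum $s^-_u$. They exist and are nondegenerate exactly when $\mu$ lies below the threshold encoded in \eqref{e1.12}, and they coincide (degenerate) when $u$ attains the equality case. Hence for $\mu<\mu_a^*$ we have $\mathcal{P}^0_{a,\mu}=\emptyset$. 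For $\mu=\mu_a^*$ the infimum in \eqref{e1.12} is not attained for $q_2=p^*$ (Talenti bubbles escape), so $\mathcal{P}^0$ is still empty.

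From this I would derive three facts. First, $m^+(a,\mu)<0$. Second, $m^-(a,\mu)>0$. Third, and essentially, the bound $m^-(a,\mu)<m^+(a,\mu)+\frac1N S^{N/p}$. To prove the third, take a radial minimizer $u_+$ of $m^+$ (obtained first by the concentration-compactness / rearrangement argument below) and test with $u_++tU_{\epsilon,0}$, renormalized to $\mathcal{S}_a$ and projected onto $\mathcal{P}^-$. The standard Brezis--Nirenberg expansion gives an energy $\le m^++\frac1NS^{N/p}-C\epsilon^{\theta}+o(\epsilon^\theta)$. The negative term comes from the cross term $\int u_+^{p^*-1}U_\epsilon$ together with the $q_1$-interaction, and this expansion is valid for all $1<p<N$ (no restriction such as $p<3$ as in \cite{Deng-Wu,Feng-Li}). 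We use that $u_+>0$ is bounded below near the origin, by the strong maximum principle for the $p$-Laplacian.

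Next comes compactness of minimizing sequences. For $m^+$, a minimizing sequence can be taken radial and nonincreasing, and it is bounded because $\mathcal{P}^+$ lies in a bounded region $\|\nabla u\|_p\le\rho_0$. Since $\rho_0$ lies below the Sobolev level, no bubble can form. By the compact embedding $W^{1,p}_{rad}\hookrightarrow L^{q_1}$, the weak limit is nontrivial, since $m^+<0$ forces $\|u\|_{q_1}\not\to0$. For $m^-$, take a radial Palais--Smale sequence at level $m^-$ satisfying the Pohozaev constraint asymptotically, via Jeanjean's augmented functional $\Psi_\mu((u)_s)$ and Ghoussoub's minimax principle. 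Boundedness follows from the Pohozaev identity since $q_2\gamma_{q_2}=p^*>p$. Then:
\begin{itemize}
\item[(i)] We have $u_n\rightharpoonup u$ and $\nabla u_n\to\nabla u$ a.e. The a.e. gradient convergence is needed for the quasilinear term; I would use the Boccardo--Murat argument with truncations.
\item[(ii)] The Lagrange multipliers satisfy $\lambda_n\to\lambda<0$. This follows from the Pohozaev identity and $q_1\gamma_{q_1}<q_1$, provided $u\ne0$. Nontriviality of $u$ comes from the strict inequality $m^-<\frac1NS^{N/p}$, or from $u=0$ leading to a contradiction with $m^-<m^++\frac1N S^{N/p}$ via Brezis--Lieb splitting.
\item[(iii)] The Brezis--Lieb splitting $\Psi_\mu(u_n)=\Psi_\mu(u)+\Psi_\mu(u_n-u)+o(1)$ combined with $\|\nabla(u_n-u)\|_p^p-\|u_n-u\|_{p^*}^{p^*}\to0$ shows that either the convergence is strong, or $\|\nabla(u_n-u)\|_p^p\ge S^{N/p}$. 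In the latter case $m^-\ge\Psi_\mu(u)+\frac1NS^{N/p}$.
\end{itemize}

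The main obstacle is in the dichotomy of step (iii). Since $\|u\|_p=b\le a$ possibly with $b<a$, and $u$ satisfies the Pohozaev identity, we get $\Psi_\mu(u)\ge m^+(b,\mu)\ge m^+(a,\mu)$ by monotonicity of $b\mapsto m^+(b,\mu)$ (a scaling argument). This contradicts the strict bound, so $u_n\to u$ strongly in $D^{1,p}$. Then $\lambda<0$ gives strong $L^p$ convergence, hence $\|u\|_p=a$. Finally, replace $u$ by $|u|$. The strong maximum principle (V\'azquez) then gives $u>0$, and setting $m=u^p$ and recovering $v$ from the Fokker--Planck relation yields solutions of \eqref{sMFG}.
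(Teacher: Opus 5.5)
Your outline is a workable alternative for $0<\mu<\mu_a^*$, modulo the points at the end: you use a Soave-type Palais--Smale sequence at level $m^-$ where the paper uses the approximation $q_2\to p^*$. But it has a genuine gap exactly where the theorem is new. You claim that for $q_2=p^*$ the infimum $\mu_a^*$ is not attained, so $\mathcal{P}^0_{a,\mu_a^*}=\emptyset$. This is unproved, and for $q_1$ close to $p+\frac{p^2}{N}$ the paper proves the opposite (Proposition~\ref{pro4.1}).

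The key computation is that the dilation $u\mapsto(u)_{t_*}$, with $t_*=(p^*/p)^{\frac{1}{p^*-p}}$, maps $\mathcal{P}^0_{a,\mu}$ into $\mathcal{P}^-_{a,\overline{\mu}}$ with $\Psi_{\overline{\mu}}((u)_{t_*})=0$, where $\overline{\mu}=\frac{q_1\gamma_{q_1}}{p}t_*^{p-q_1\gamma_{q_1}}\mu<\mu$. Apply this to $u_\mu\in\mathcal{P}^0_{a,\mu,rad}$ with $\mu\downarrow\mu_a^*$, which is a minimizing sequence for $\mu_a^*$. A bubble splitting off would force $0\geq m^+_{rad}(a,\overline{\mu}_a^*)+\frac{1}{N}S^{\frac{N}{p}}$. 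That is impossible once $m^+\to0$ uniformly as $q_1\to p+\frac{p^2}{N}$ (Lemma~\ref{lemN5.4}); this is where $q_1^*$ comes from, and your argument never produces it.

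Combined with Lemma~\ref{lem5.4}, the same computation shows your second ``fact'' is false: $m^-(a,\nu)\leq 0$ for every $\nu\in(\overline{\mu}_a^*,\mu_a^*)$. So already at $\mu=\mu_a^*$ the sets $\mathcal{P}^\pm_{a,\mu_a^*}$ may have the nonempty boundary $\mathcal{P}^0_{a,\mu_a^*}$, where the Pohozaev manifold is not a natural constraint. You therefore cannot treat $\mu=\mu_a^*$ like $\mu<\mu_a^*$. The paper passes to the limit in the minimizers along $\mu_n\uparrow\mu_a^*$ and excludes a limit $u_0\in\mathcal{P}^0_{a,\mu_a^*}$. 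Otherwise $u_0>0$ would solve both \eqref{e1.1} and \eqref{e4.1}, making $(q_2\gamma_{q_2}-p)u_0^{q_2-p}-\mu_a^*(p-q_1\gamma_{q_1})u_0^{q_1-p}$ constant.

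For $\mu>\mu_a^*$ (parts (2) and (3)) the proposal contains no argument, and the steps you would reuse fail there:
\begin{itemize}
\item Mass monotonicity ``by scaling'' breaks down because $\mu(cu)=c^{q_1\gamma_{q_1}-q_1}\mu(u)<\mu(u)$ for $c>1$, so the fibre of $\frac{a}{b}u$ may have no critical point.
\item Minimizing sequences in $\mathcal{P}^\pm_{a,\mu,rad}$ may accumulate on $\mathcal{P}^0_{a,\mu,rad}$.
\item A weak limit may satisfy $\mu(u_0)<\mu$, so it cannot be projected onto $\mathcal{P}^-$.
\end{itemize}
The hypotheses $\mu<\mu^{**}_{a,\pm}$ are built to exclude exactly these. $\hat{\mu}^{**}_{a,\pm}$ gives $m^\pm_{rad}(b,\mu)<m^0_{rad}(b,\mu)$ for all $0<b\leq a$. $\tilde{\mu}^{**}_{a,\pm}$ gives monotonicity in the mass. $\mu^{**}_a$ gives $m^+_{rad}<m^-_{rad}$, hence $\mu(u_n)-\mu\gtrsim1$ along minimizing sequences. $\overline{\mu}^{**}_{a,-}$ gives projectability of weak limits. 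Finally $\mu^{***}_{a,-}$ and $\mu^{**}_{a,-,0}$ supply the critical energy threshold and the approximation $q_2\to p^*$. The proofs of Propositions~\ref{pro5.1} and \ref{pro5.3} consist of invoking these conditions at the points above.

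Your energy estimate also misidentifies the mechanism. The terms linear in $\tau$, including $\int_{\mathbb{R}^N}u_+^{p^*-1}u_\epsilon dx$, cancel. This follows from the Euler--Lagrange equation of $u_+$, the $L^p$-renormalization, and $\lambda a^p=\mu(\gamma_{q_1}-1)\|u_+\|_{q_1}^{q_1}$. The actual gain is $-\tau^{p^*-1}\int_{\mathbb{R}^N}u_+u_\epsilon^{p^*-1}dx\thickapprox-\epsilon^{\frac{N-p}{p}}$. For $p\neq2$ this must beat errors such as $\int_{\mathbb{R}^N}|\nabla u_+|^{p-\eta}|\nabla u_\epsilon|^{\eta}dx$ from the non-quadratic expansion of $\|\nabla(u_++\tau u_\epsilon)\|_p^p$. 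A fixed cut-off handles these only under extra restrictions on $(N,p)$. The paper's new ingredient is a cut-off at radius $\epsilon^\alpha$ with $\alpha$ tuned to $p$; note also that an uncut $U_{\epsilon,0}$ is not even in $L^p$ when $N\leq p^2$.

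Likewise, replacing a minimizing sequence on $\mathcal{P}^\pm$ by its Schwarz rearrangement is not automatic, since $u^*$ leaves $\mathcal{P}^\pm$. This is what Lemma~\ref{lem3.3} provides.
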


\begin{remark}
Again, by Theorem~\ref{thm1.2}, the stationary version of MFG models~\eqref{sMFG} with the Hamiltonians~\eqref{Hamiltonian} has two radial solutions for $0<\mu\leq\mu_{a}^{*}$ and one radial solution for $\mu_{a}^{*}<\mu<\mu_{a,+}^{**}$ in the Sobolev critical case for all $q_1\in\left(p, p+\frac{p^2}{N}\right)$.  Moreover, if $q_1\in\left(q_{1}^*, p+\frac{p^2}{N}\right)$ for a suitable $q_1^*>p$, then the stationary version of MFG models~\eqref{sMFG} with the Hamiltonians~\eqref{Hamiltonian} has two radial solutions for $0<\mu<\min\{\mu_{a,+}^{**}, \mu_{a,-}^{**}\}$ and one radial solution for $\min\{\mu_{a,+}^{**}, \mu_{a,-}^{**}\}\leq\mu<\max\{\mu_{a,+}^{**}, \mu_{a,-}^{**}\}$ in the Sobolev critical case, provided $\min\{\mu_{a,+}^{**}, \mu_{a,-}^{**}\}<\max\{\mu_{a,+}^{**}, \mu_{a,-}^{**}\}$.
\end{remark}

\vskip0.12in

As pointed out above, there are several methods to prove the existence theory of \eqref{e1.1} even with general nonlinearities.  However, the advantage of the minimization method on the Pohozaev manifold is that it is easy to compute the Morse index of the constructed solutions, which is useful in understanding the global dynamics of \eqref{MFG}.  To compute the Morse index of the constructed solutions in Theorems~\ref{thm1.1} and \ref{thm1.2} for general $1<p<N$, it is necessary that the functional $\Psi_\mu(u)$ and the norm $\|u\|_p^p$ are of class $C^2$, which requires $2\leq p<N$ as that in \cite{Aftalion-Pacella, Cingolan-Degiovanni-Sciunzi}.  For the sake of simplicity, we denote $u_{\pm}=u_{a,\mu,\pm}$ and $\lambda_{\pm}=\lambda_{a,\mu,\pm}$, where $u_{a,\mu,\pm}$ are the solutions of \eqref{e1.1} constructed in Theorems~\ref{thm1.1} and \ref{thm1.2} and $\lambda_{a,\mu,\pm}<0$ are their Lagrange multipliers.  Since $p\geq2$, $p<q_1<q_2$ and $u_{\pm}$ are radial, by \cite[Theorem~1.1]{Li-Zhao},
\begin{eqnarray}\label{decay}
\lim_{r\to+\infty}u_{\pm}r^{\frac{N-1}{p(p-1)}}e^{\left(\frac{\lambda_{\pm}}{p-1}\right)^{\frac{1}{p}}r}=c_{\pm}\quad\text{and}\quad\lim_{r\to+\infty}\frac{u'_{\pm}}{u_{\pm}}=-d_{\pm}
\end{eqnarray}
for positive constants $c_{\pm}$ and $d_{\pm}$.  Thus, we can follow the idea in \cite{Figalli-Neumayer} to compute the Morse index of $u_{\pm}$ for $2\leq p<N$ in weighted Sobolev spaces.  For every $v\in C_0^{\infty}\left(\mathbb{R}^N\right)$, we introduce the following norm and inner product
\begin{equation*}
\|v\|_{\mathbb{H}_{\pm}}:=\left(\int_{\mathbb{R}^N}\left|u'_{\pm}\right|^{p-2}|\nabla v|^2+(p-2)\left|u'_{\pm}\right|^{p-4}(\nabla u_{\pm}\nabla v)^2-\lambda_{\pm}(p-1)\left| u_{\pm}\right|^{p-2}| v|^2dx\right)^{\frac{1}{2}}
\end{equation*}
and
\begin{equation*}
\left\langle w, v\right\rangle_{\mathbb{H}_{\pm}}:=\int_{\mathbb{R}^N}\left|u'_{\pm}\right|^{p-2}\nabla w\nabla v+(p-2)\left|u'_{\pm}\right|^{p-4}(\nabla u_{\pm}\nabla v)(\nabla u_{\pm}\nabla w)-\lambda_{\pm}(p-1)\left| u_{\pm}\right|^{p-2}wvdx.
\end{equation*}
We also introduce the weighted Sobolev spaces
\begin{equation*}
\mathbb{H}_{\pm}:=\left\{v\in W^{1,p}\left(\mathbb{R}^N\right)\mid \|v\|_{\mathbb{H}_{\pm}}<+\infty\right\}\quad\text{and}\quad\mathbb{H}_{\pm,rad}:=\left\{v\in W_{rad}^{1,p}\left(\mathbb{R}^N\right)\mid \|v\|_{\mathbb{H}_{\pm}}<+\infty\right\}.
\end{equation*}
Clearly, $\mathbb{H}_{\pm}$ and $\mathbb{H}_{\pm, rad}$ are Hilbert spaces and since $p\geq2$ and $\lambda_{\pm}<0$, by the H\"older inequality, it is easy to see that $\mathbb{H}_{\pm}=W^{1,p}\left(\mathbb{R}^N\right)$ and $\mathbb{H}_{\pm, rad}=W_{rad}^{1,p}\left(\mathbb{R}^N\right)$ in the sense of sets.  Moreover, by \cite[Theorem~6.1]{Figalli-Neumayer} and \eqref{decay}, $\mathbb{H}_{\pm}$ compactly embeds into $L^2\left(\mathbb{R}^N; \pmb{w}_{\pm}\right)$, where $L^2\left(\mathbb{R}^N; \pmb{w}_{\pm}\right)$ is the weighted Lebesgue space with the weight $\pmb{w}_{\pm}=\mu(q_1-1)u_{\pm}^{q_1-2}+(q_2-1)u_{\pm}^{q_2-2}$ and the usual weighted norm
\begin{eqnarray*}
\|v\|_{L^2; \pmb{w}_{\pm}}:=\left(\int_{\mathbb{R}^N}\left(\mu(q_1-1)u_{\pm}^{q_1-2}+(q_2-1)u_{\pm}^{q_2-2}\right)v^2dx\right)^{\frac12}.
\end{eqnarray*}
It follows that the linear operator
\begin{eqnarray*}
\mathcal{L}_{\pm}(w)=-\text{div}\left(\left|u'_{\pm}\right|^{p-2}\nabla w+(p-2)|u'_{\pm}|^{p-4}(\nabla u_{\pm}\nabla w)\nabla u_{\pm}\right)-\lambda_{\pm}(p-1)\left| u_{\pm}\right|^{p-2}w
\end{eqnarray*}
has a discrete spectrum $\{\sigma_m\}$ in the weighted Lebesgue space $L^2\left(\mathbb{R}^N; \pmb{w}_{\pm}\right)$.  We denote $\{\sigma_{m,rad}\}$ the discrete spectrum of $\mathcal{L}_{\pm}(w)$ in the radial setting.
We now introduce the following definition of Morse index of $u_{\pm}$.
\begin{definition}
The Morse index of $u_{\pm}$ in $\mathbb{H}_{\pm}$, denoted by $\pmb{M}(u_{\pm})$ is defined by the largest number of $m$ such that $\sigma_m<1$, that is
\begin{eqnarray*}
\pmb{M}(u_{\pm}):=\max\{m\in\mathbb{N}\mid \sigma_m<1\}.
\end{eqnarray*}
Correspondingly, the Morse index of $u_{\pm}$ in $\mathbb{H}_{\pm,rad}$, denoted by $\pmb{M}_{rad}(u_{\pm})$ is defined by the largest number of $m$ such that $\sigma_{m,rad}<1$, that is
\begin{eqnarray*}
\pmb{M}_{rad}(u_{\pm}):=\max\{m\in\mathbb{N}\mid \sigma_{m,rad}<1\}.
\end{eqnarray*}
\end{definition}
For the sake of simplicity, we also denote $\mathcal{P}^{\pm}=\mathcal{P}_{a,\mu}^{\pm}$ and $\mathcal{P}_{rad}^{\pm}=\mathcal{P}_{a,\mu,rad}^{\pm}$.  Now,
our results in computing Morse index of $u_{\pm}$ can be stated as follows.
\begin{theorem}\label{thm1.3}
Let $N\geq 3$ and $2\leq p<N$. Suppose that $u_{\pm}$ are the solutions constructed in Theorems~\ref{thm1.1} and \ref{thm1.2}, where $u_{+}$ is a ground-state solution minimizing $\Psi_{\mu}(u)$ on $\mathcal{P}^{+}$ for $0<\mu\leq\mu_a^*$ and on $\mathcal{P}_{rad}^{+}$ for $\mu_a^*<\mu<\mu_{a,+}^{**}$ while, $u_{-}$ is a mountain-pass solution minimizing $\Psi_{\mu}(u)$ on $\mathcal{P}^{-}$ for $0<\mu\leq\mu_a^*$ and on $\mathcal{P}_{rad}^{-}$ for $\mu_a^*<\mu<\mu_{a,-}^{**}$.  Then
\begin{enumerate}
\item[$(1)$]\quad $\pmb{M}(u_{+})=1$ and $\pmb{M}(u_{-})=2$ for $0<\mu\leq\mu_a^*$.
\item[$(2)$]\quad $\pmb{M}_{rad}(u_{+})=1$ for $\mu_a^*<\mu<\mu_{a,+}^{**}$ and $\pmb{M}_{rad}(u_{-})=2$ for $\mu_a^*<\mu<\mu_{a,-}^{**}$.
\end{enumerate}
\end{theorem}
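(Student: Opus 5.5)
We compute the Morse index through the second variation of the Lagrangian $I_\pm(v):=\Psi_\mu(v)-\frac{\lambda_\pm}{p}\|v\|_p^p$ at $u_\pm$. For $p\ge2$ this functional is $C^2$, and its Hessian quadratic form at $u_\pm$ is
\begin{equation*}
Q_\pm(v)=\|v\|_{\mathbb{H}_\pm}^2-\|v\|_{L^2;\pmb{w}_\pm}^2 .
\end{equation*}
By the compact embedding $\mathbb{H}_\pm\hookrightarrow L^2(\mathbb{R}^N;\pmb{w}_\pm)$ and the min-max characterization of the eigenvalues $\sigma_m$, we get $\pmb{M}(u_\pm)=\max\{\dim W: Q_\pm<0\text{ on }W\setminus\{0\}\}$, and the same holds in the radial class. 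So it suffices to give a lower bound, by exhibiting a negative subspace, and an upper bound, by showing that $Q_\pm\ge0$ on a subspace of the right codimension.

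\emph{Lower bounds.} First take $v=u_\pm$. Testing the equation and using $p$-homogeneity gives
\begin{equation*}
Q_\pm(u_\pm)=(p-1)\|\nabla u_\pm\|_p^p-\lambda_\pm(p-1)\|u_\pm\|_p^p-\mu(q_1-1)\|u_\pm\|_{q_1}^{q_1}-(q_2-1)\|u_\pm\|_{q_2}^{q_2}.
\end{equation*}
The equation itself gives $\|\nabla u\|_p^p-\lambda\|u\|_p^p=\mu\|u\|_{q_1}^{q_1}+\|u\|_{q_2}^{q_2}$, hence
\begin{equation*}
Q_\pm(u_\pm)=-\mu(q_1-p)\|u_\pm\|_{q_1}^{q_1}-(q_2-p)\|u_\pm\|_{q_2}^{q_2}<0,
\end{equation*}
so $\pmb{M}\ge1$ in both cases. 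For $u_-$ we also use the generator of the dilation $w=\frac{d}{ds}(u_-)_s|_{s=1}=\frac Np u_-+x\cdot\nabla u_-$, which is radial. Since $s\mapsto(u)_s$ stays on $\mathcal{S}_a$, the term involving $\lambda$ cancels along the curve. Then $\Phi''_{\mu,u_-}(1)=Q_-(w)+\langle I'_-(u_-),\ddot{(u)}_s\rangle=Q_-(w)$. The left side is negative by the definition of $\mathcal{P}^-$, so $Q_-(w)<0$. This step needs $w\in\mathbb{H}_-$, which follows from the exponential decay \eqref{decay} and regularity of radial solutions. Next we check that $Q_-$ is negative definite on $\mathrm{span}\{u_-,w\}$. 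We compute the bilinear term $B(u_-,w)$ by differentiating $s\mapsto\langle I'((u)_s),u\rangle$; the resulting $2\times2$ matrix should have negative determinant expressible through the $q_i$, $\gamma_{q_i}$ and norms of $u_-$. If the determinant is not clearly of one sign, we will instead use the curve $t\mapsto t(u_-)_s$ with the scaling of mass removed, namely the $2$-parameter family $(t,s)\mapsto I_-(t(u)_s)$, and show its Hessian at $(1,1)$ is negative definite because $u_-$ is a strict local maximum in both directions: in $t$ by superlinearity, and in $s$ by $\mathcal{P}^-$. This gives $\pmb{M}(u_-)\ge2$ and $\pmb{M}_{rad}(u_-)\ge2$.

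\emph{Upper bounds.} We use the constrained-minimization structure. The tangent space $T=\{v:\int|u_\pm|^{p-2}u_\pm v=0\}$ has codimension one. For $u_+$, which minimizes $\Psi_\mu$ on $\mathcal{P}^+$ and hence, as in the theorems, is a local minimizer of $\Psi_\mu$ on $\mathcal{S}_a$ (in the radial class for $\mu>\mu_a^*$), the second-order constrained condition gives $Q_+\ge0$ on $T$ (resp.\ on $T\cap\mathbb{H}_{+,rad}$). Hence $\pmb{M}\le1$. For $u_-$, minimization on the codimension-one submanifold $\mathcal{P}^-\subset\mathcal{S}_a$ gives $Q_-\ge0$ on $T\cap T_{u_-}\mathcal{P}^-$, which has codimension two. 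This requires $\mathcal{P}^-$ to be a $C^2$ manifold near $u_-$, which holds since $u_-\notin\mathcal{P}^0$. It also requires the Lagrange multiplier of the Pohozaev constraint to vanish, which is the standard fact (used in proving Theorems~\ref{thm1.1} and \ref{thm1.2}) that critical points on $\mathcal{P}^-$ are free critical points on $\mathcal{S}_a$. Hence $\pmb{M}\le2$.

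\emph{Main obstacle.} The delicate points are functional-analytic. The quadratic form must be the genuine second derivative in $\mathbb{H}_\pm$, even though the weight $|u'_\pm|^{p-2}$ degenerates where $u'_\pm=0$. We must also justify that second-order optimality conditions on $\mathcal{S}_a$ and $\mathcal{P}^-$ pass from $W^{1,p}$ to the completion $\mathbb{H}_\pm$ via density of $C_0^\infty$. We will handle both through the framework of \cite{Figalli-Neumayer} together with \eqref{decay}, approximating $v\in\mathbb{H}_\pm$ by smooth compactly supported functions and using dominated convergence.
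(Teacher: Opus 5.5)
The gap is the lower bound $\pmb{M}(u_-)\ge 2$ (and $\pmb{M}_{rad}(u_-)\ge 2$). Neither of your two routes gives it.

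Since $I_-'(u_-)=0$, the Hessian at $(1,1)$ of $f(t,s):=I_-(t(u_-)_s)$ is exactly the Gram matrix of $Q_-$ on $\mathrm{span}\{u_-,w\}$. So your fallback is the same matrix, not an alternative. Put $A=\|\nabla u_-\|_p^p$, $B=\|u_-\|_{q_1}^{q_1}$, $C=\|u_-\|_{q_2}^{q_2}$, $X=\mu(q_1-p)B$ and $Y=(q_2-p)C$. Using the equation ($A-\lambda_-a^p=\mu B+C$) and the Pohozaev identity ($A=\mu\gamma_{q_1}B+\gamma_{q_2}C$), one gets
\begin{equation*}
f_{tt}=-(X+Y),\qquad f_{ts}=-(\gamma_{q_1}X+\gamma_{q_2}Y),\qquad f_{ss}=\Phi''_{\mu,u_-}(1)=c_1X+c_2Y,
\end{equation*}
with $c_i=\frac{N}{p}(1-\gamma_{q_i})-\gamma_{q_i}$. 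Therefore
\begin{equation*}
f_{tt}f_{ss}-f_{ts}^2=-N\Big((1-\gamma_{q_1})X+(1-\gamma_{q_2})Y\Big)\Big(\frac{X}{q_1}+\frac{Y}{q_2}\Big)<0 .
\end{equation*}
Your guess that the determinant is negative is right, but that is the wrong sign for your purpose. With both diagonal entries negative, a negative determinant means $Q_-$ is \emph{indefinite} on this plane; indeed $Q_-(u_--cw)>0$ for $c=f_{ts}/f_{ss}>0$. Being a maximum along each coordinate axis only controls the diagonal, and here the mixed term dominates. Only the equation and $u_-\in\mathcal{P}_{a,\mu}$ were used, so this happens for every admissible $\mu$.

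There is also no cheap replacement direction. We have $w\in T_{u_-}\mathcal{S}_a$ and $Q_-(w)<0$, and your (correct) upper-bound argument gives $Q_-\ge0$ on $T_{u_-}\mathcal{S}_a\cap T_{u_-}\mathcal{P}$, which has codimension one in $T_{u_-}\mathcal{S}_a$. Hence $Q_-$ restricted to $T_{u_-}\mathcal{S}_a$ has index exactly one, and $\pmb{M}(u_-)\in\{1,2\}$. When $I_-''(u_-)$ is nondegenerate, which value occurs is decided by a Vakhitov--Kolokolov type sign: that of $\langle I_-''(u_-)^{-1}u_-^{p-1},u_-^{p-1}\rangle=\frac1p\frac{d}{d\lambda}\|u_\lambda\|_p^p$ along the branch of solutions through $u_-$, with index $2$ iff this is negative. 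The fibering maps carry no information on this sign. In the model case $p=2$, $\mu\to0^+$, $u_-$ tends to the pure-power normalized ground state. There this sign is positive, and the linearization has exactly one negative eigenvalue with kernel spanned by translations. By continuity one gets $\pmb{M}(u_-)=1$ for small $\mu$, so the value $2$ should not be expected in general.

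The paper's proof does not close this either. It records $Q_-(u_-)<0$, $Q_-(\phi_-)<0$ and nonnegativity on the codimension-two space of Lemma~\ref{Morse1}; by the computation above these do not yield a two-dimensional negative subspace.

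The rest of your plan is sound, and for $u_+$ it is tighter than the paper's. Local minimality on $\mathcal{S}_a$ (on its radial part when $\mu>\mu_a^*$) gives $Q_+\ge0$ on the codimension-one space $T_{u_+}\mathcal{S}_a$, hence $\pmb{M}(u_+)=1$, respectively $\pmb{M}_{rad}(u_+)=1$. By contrast, nonnegativity on a codimension-two space together with $Q_+(\phi_+)>0$ does not by itself exclude index two.
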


\subsection{Further remarks.}\quad
Our assumptions on the parameters $a$ and $\mu$ in Theorems~\ref{thm1.1} and \ref{thm1.2} are more general than that used for Theorems~\ref{thmSoave20JDE}, \ref{thmSoave20JFA}, \ref{thmZhang-Zhang} and \ref{thmZhang-Zhang24} in \cite{Deng-Wu, Feng-Li, Jeanjean-Jendrej-Le-Visciglia, Jeanjean-Le2021,  Jeanjean-Le, Liu-Wu25, Lou-Zhang-Zhang24, Wei-Wu2022, Zhang-Zhang} and the references therein to study \eqref{e1.1}, even in the semilinear case $p=2$.  Thus,
to construct positive solutions of \eqref{e1.1} under the general conditions in Theorems~\ref{thm1.1} and \ref{thm1.2}, we need to deal with three additional difficulties in applying the minimization method on the Pohozaev manifold.

\vskip0.12in

The first difficulty lies in proving the compactness of minimizing sequences on $\mathcal{P}_{a,\mu}^{-}$ for $q_2=p^*$.  As observed in \cite{Jeanjean-Le, Liu-Wu25, Wei-Wu2022} for the semilinear case $p=2$, respectively, it is necessary to establish the following energy estimate
\begin{equation*}
m^{-}(a,\mu)<m^{+}(a,\mu)+\frac{1}{N}S^{\frac{N}{p}}
\end{equation*}
in overcoming this compactness issue.  The basic idea in the above estimate, which is originally introduced by Brezis and Nirenberg in \cite{Brezis-Nirenberg83} and Tarantello in \cite{T1992}, is to glue the ground-state solution $u_{a,\mu,+}$ and a modification of the Talanti bubble to form a path such that this path interacts $\mathcal{P}_{a,\mu}^{-}$, and the maximal  energy of this path could be well controlled.  Two different techniques are introduced in \cite{Jeanjean-Le, Liu-Wu25, Wei-Wu2022} for the semilinear case $p=2$ of \eqref{e1.1} to achieve this goal, where the nonradial construction in \cite{Jeanjean-Le} only works for $N\geq4$ while the radial-superposition construction in \cite{Liu-Wu25, Wei-Wu2022} works for all $N\geq3$.  In these two constructions, the good terms $\int_{\mathbb{R}^N}u_{a,\mu}^+u_{\epsilon}^{2^*-1}dx$ and $\|u_\epsilon\|_{q_1}^{q_1}$ can control the other infinitesimals for $N=3$ and $N\geq4$, respectively, where $u_\epsilon$ is a modification of the Talanti bubble by a cut-off function.  However, unlike the semilinear case $p=2$,
\begin{eqnarray*}
0\not=\|\nabla u_{a,\mu}^++\tau \nabla u_\epsilon\|_p^p-\|\nabla u_{a,\mu}^+\|_p^p-\|\tau \nabla u_\epsilon\|_p^p-p\int_{\mathbb{R}^N}|\nabla u_{a,\mu}^+|^{p-2}\nabla u_{a,\mu}^+\tau\nabla u_{\epsilon}dx
\end{eqnarray*}
in the quasilinear case for $1<p\not=2<N$, which requires some additional assumptions on $N$ and $p$ (cf. $3^{2/3}\leq N^{2/3}<p<3$ in \cite{Feng-Li} and $p\leq N^{1/2}$ or $N^{1/2}<p<3$ in \cite{Deng-Wu}) to control the energy of the path by using the standard cut-off function $\varphi=1$ in $B_1(0)$ and $\varphi=0$ in $\mathbb{R}^{N}\setminus B_2(0)$.  To remove these technical conditions, we introduce {\bf a new parameter} $\alpha$ in the cut-off function $\varphi$ such that $\varphi$ depends on $\epsilon$ with $\varphi_{\epsilon}=1$ in $B_{\epsilon^\alpha}(0)$ and $\varphi_{\epsilon}=0$ in $\mathbb{R}^N\setminus B_{2\epsilon^\alpha}(0)$.  We then well control the additional errors in $\|\nabla u_{a,\mu}^++\tau \nabla u_\epsilon\|_p^p$ for the quasilinear case for $1<p\not=2<N$ by a careful choice of $\alpha$, which depends on $p$, and kill them by the good terms $\int_{\mathbb{R}^N}u_{a,\mu}^+u_{\epsilon}^{p^*-1}dx$  in the radial construction of pathes in the superposition as that in \cite{ Wei-Wu2022} for all $0<\mu\leq\mu_a^*$ and $N^{1/2}<p<N$.

\vskip0.12in

The second difficulty lies in applying the sub-additive inequality of the energy in proving the compactness of a minimizing sequence $\{u_n\}\subset \mathcal{P}_{a,\mu}^{\pm}$ for $m^\pm(a,\mu)$ in the case of $q_2=p^*$.  Indeed, since the ground-state energy $m^{+}(a,\mu)<0$ by the $L^p$-subcritical perturbation for $p<q_1<p+\frac{p^2}{N}$, we need to remove it in the limit equation to ensure the least energy of nonzero solutions of the limit equation is positive, so that the sub-additive inequality works in killing the possibility of noncompactness of the minimizing sequence on $\mathcal{P}_{a,\mu}^{\pm}$ at the levels $m^\pm(a,\mu)$, which is usually achieved by adapting the symmetric rearrangement to the minimizing sequence on $\mathcal{P}_{a,\mu}^{\pm}$ and applying the compactness of the embedding from $W^{1,p}_{rad}\left(\mathbb{R}^N\right)$ into $L^q\left(\mathbb{R}^N\right)$ for $p<q<p^*$.
To achieve this goal, it is necessary to ensure that the symmetric rearrangement to the minimizing sequence on $\mathcal{P}_{a,\mu}^{\pm}$ is still a minimizing sequence, which is guaranteed by adding some conditions on the parameters $a$ and $\mu$ in the literatures such that either the fibering maps $\Phi_{\mu,u}(s)$ have a global maximal point for every $u\in\mathcal{S}_a$ or $\Psi_{\mu}(u)$ has a local minimizer in the interior of the submanifold $\mathcal{D}:=\{u\in \mathcal{S}_{a}\mid\|\nabla u\|_p\leq k\}$ for a suitable choice of $k$.  In \cite{Cingolani-Jeanjean2019, Jeanjean-Le2021}, by noting that the convolution term is of order $1$ in the fibering maps for the $2d$ and $3d$ Schr\"odinger-Poisson-Slater equation, which implies that the second derivative of the fibering maps changes sign at most once, Cingolani and Jeanjean in \cite{Cingolani-Jeanjean2019} and Jeanjean and Le in \cite{Jeanjean-Le2021} showed that the symmetric rearrangement to some special local minimizing sequences on the Pohozaev manifold of the $2d$ and $3d$ Schr\"odinger-Poisson-Slater equation are still local minimizing sequences by adding some conditions on the parameters $a$ and $\mu$ such that the Pohozaev manifold is nondegenerate.  However, the nonlinearity of the subcritical perturbation of \eqref{e1.1} is stronger than that of the $2d$ and $3d$ Schr\"odinger-Poisson-Slater equations and the fibering maps $\Phi_{\mu,u}(s)$ only have a local maximal point for every $u\in\mathcal{S}_a$ if $\mu<\mu_a^*$ and sufficiently close to $\mu_a^*$ (see below for details), and even has no local maximal and minimal points for some $u\in\mathcal{S}_a$
if $\mu\geq\mu_a^*$.  To overcome this difficulty, our idea is to introduce {\bf a new parameter} $\alpha\in(0, 1)$ in front of $\|\nabla u\|_p^p$ in $\Psi_{\mu}(u)$ to simulate the symmetric rearrangement of $u$.  By using the implicit function theorem as that in \cite[Lemma~3.2]{Wei-Wu2022}, we proved a uniform estimate for all $\alpha\in(0, 1]$, which ensures that the symmetric rearrangement to the minimizing sequence on $\mathcal{P}_{a,\mu}^{\pm}$ is still a minimizing sequence if $0<\mu<\mu_a^*$.  We believe that this idea will be potentially helpful in other studies.

\vskip0.12in

The third difficulty, which is the main difficulty in proving Theorems~\ref{thm1.1} and \ref{thm1.2}, lies in proving the existence theory of \eqref{e1.1} for $\mu\geq\mu_a^*$ and these results are even new for the semilinear case $p=2$.  In this region of $\mu$, $\mathcal{P}_{a,\mu}$ is degenerate since $\mathcal{P}_{a,\mu}^0\not=\emptyset$ in many cases.  What is worse is that we also loss the one-to-one correspondance between $\mathcal{S}_a$ and $\mathcal{P}_{a,\mu}$ for $\mu>\mu_a^*$.  Thus, $\mathcal{P}_{a,\mu}$ is even no longer homomorphous to $\mathcal{S}_a$ any more for $\mu>\mu_a^*$.  These facts mean that the nondegenerate submanifolds $\mathcal{P}_{a,\mu}^{\pm}$, which are used to construct solutions of \eqref{e1.1} by considering the minimizers on them, have nonempty boundaries for $\mu\geq\mu_a^*$.  Thus, we need to carefully control the minimizing sequences on $\mathcal{P}_{a,\mu}^{\pm}$ to make sure that they are away from their boundaries, which is crucial in applying the method of the Lagrange multiplier to construct solutions of \eqref{e1.1} from local minimizers of $\Psi_{\mu}(u)$ on $\mathcal{P}_{a,\mu}$.  To achieve this goal, we introduce the following variational problems
\begin{equation}\label{newvp}
\hat{\Psi}_{\mu,rad}^{\pm}:=\inf\left\{\Psi_{\mu}(u) \mid u\in \mathcal{P}_{a,\mu,rad}^{\pm}\cup\partial\mathcal{P}_{a,\mu,rad}^{\pm}\right\}
\end{equation}
for $\mu\geq\mu_a^*$, where $\partial\mathcal{P}_{a,\mu,rad}^{\pm}=\mathcal{P}_{a,\mu}^{0}$ is the boundary of $\mathcal{P}_{a,\mu,rad}^{\pm}$ in the $W_{rad}^{1,p}\left(\mathbb{R}^N\right)$ topology.  We remark that in some other weaker topology, it is not necessary that $\partial\mathcal{P}_{a,\mu,rad}^{\pm}=\mathcal{P}_{a,\mu}^{0}$.
We then introduce the values $\hat{\mu}_{a,\pm}^{**}$, $\mu_a^{**}$ and $\overline{\mu}_{a,-}^{**}$, which serve as the separation conditions of
$\mathcal{P}_{a,\mu}^{+}$ for $\mu_a^*<\mu<\min\left\{\hat{\mu}_{a,+}^{**}, \mu_a^{**}\right\}$ and of $\mathcal{P}_{a,\mu}^{-}$ for $\mu_a^*<\mu<\min\left\{\hat{\mu}_{a,-}^{**}, \overline{\mu}_{a,-}^{**}\right\}$, as the choice of $k$ to the submanifold $\mathcal{D}:=\{u\in \mathcal{S}_{a}\mid\|\nabla u\|_p\leq k\}$ in many papers (cf. \cite{Gou-Zhang21, Jeanjean-Jendrej-Le-Visciglia, Jeanjean-Le2021,  Jeanjean-Le, Lou-Zhang-Zhang24, Soave20JDE, Soave20JFA,  Zhang-Zhang}), such that
\begin{eqnarray}\label{eqn1010}
\inf_{\mathcal{P}_{a,\mu,rad}^{\pm}}\Psi_{\mu}(u)<\inf_{\partial\mathcal{P}_{a,\mu,rad}^{\pm}}\Psi_{\mu}(u).
\end{eqnarray}
It is worth pointing out that to our best knowledge, this idea first appears in \cite{Lin-Wei05} in the usage of the Nehari manifold to construct nontrivial solutions of the coupled elliptic system.  Similar ideas are also applies in \cite{Albuquerque-Silva2020, Bieganowski-Mederski2021, Cingolani-Jeanjean2019, Jeanjean-Jendrej-Le-Visciglia,  Jeanjean-Le2021, Jeanjean-Le,  Mederski-Schino2021, Soave20JDE, Soave20JFA, Yao-Chen-Sun2023} for other studies, in particular, this idea was also applied and developed for the existence theory of solutions of \eqref{pSchrodinger} in the semilinear case $p=2$.
By \eqref{eqn1010}, we can well control the minimizing sequences on $\mathcal{P}_{a,\mu}^{\pm}$
to make sure that they are away from their boundaries after solving their compactness issues, for which we introduce the values $\tilde{\mu}_{a,\pm}^{**}$ for the compactness in $L^p\left(\mathbb{R}^N\right)$ and the values $\mu_{a,-}^{***}$ and $\mu_{a,-,0}^{**}$ for the compactness in $L^{p^*}\left(\mathbb{R}^N\right)$ of the mountain-pass solution in the Sobolev critical case, and proving
\begin{eqnarray*}
\min\left\{\hat{\mu}_{a,\pm}^{**}, \tilde{\mu}_{a,\pm}^{**}, \overline{\mu}_{a,-}^{**}, \mu_a^{**}, \mu_{a,-}^{***}, \mu_{a,-,0}^{**}\right\}>\mu_a^*.
\end{eqnarray*}
These two goals are all achieved by developing a good perturbation theory around the degenerate submanifold $\mathcal{P}_{a,\mu_a^*}^0$ at the first extremal value $\mu_a^*$, which is based on the fact that $\mathcal{P}_{a,\mu_a^*}^0$ corresponds to minimizers of the variational problem~\eqref{e1.12}.  The cornerstone of our perturbation theory is the achievement of the minimum on $\mathcal{P}_{a,\mu_a^*}^0$.  In the Sobolev subcritical case $q_2<p^*$, it is achieved by using the compactness of the embedding from $W^{1,p}_{rad}\left(\mathbb{R}^N\right)$ into $L^q\left(\mathbb{R}^N\right)$ for $p<q<p^*$ while in the Sobolev critical case $q_2=p^*$, we need to further deal with a compactness issue according to the noncompactness of the embedding from $W^{1,p}_{rad}\left(\mathbb{R}^N\right)$ into $L^{p^*}\left(\mathbb{R}^N\right)$.  To overcome this difficulty, our key observation is that the degenerate submanifold $\mathcal{P}_{a,\mu,rad}^0$ can be embedded into the nondegenerate submanifold $\mathcal{P}_{a,\mu(t),rad}^-$ with an additional restriction for a precise value $\mu(t)$ in the following way:
\begin{eqnarray*}
\mathcal{Q}_{a,\mu(t),rad}^-:=\mathcal{T}\left(\mathcal{P}_{a,\mu,rad}^0\right)=\left\{w_t\in \mathcal{P}_{a,\mu(t),rad}^-\mid D_{\mu(t)}(w_t)=0\right\},
\end{eqnarray*}
where $\mathcal{T}: u\to w_t:=(u)_{t}=t^{\frac{N}{p}}u(tx)$ with $t>1$,
\begin{eqnarray*}
0<\mu(t)=\left(\frac{p^*-q_1\gamma_{q_1}}{p^*-p}t^{p-q_1\gamma_{q_1}}-\frac{p-q_1\gamma_{q_1}}{p^*-p}t^{p^*-q_1\gamma_{q_1}}   \right)\mu<\mu
\end{eqnarray*}
and $D_{\mu(t)}(w_t)=\|w_t\|_{p^*}^{p^*}-\frac{\mu_a^*\gamma_{q_1}(p-q_1\gamma_{q_1})}{p^*-p}t^{p^*-q_1\gamma_{q_1}}\|w_t\|_{q_1}^{q_1}$.  In particular, $\Psi_{\overline{\mu}_a^*}(w)=0$ for all $w\in\mathcal{Q}_{a,\overline{\mu}_a^*,rad}^-$, where
\begin{eqnarray*}
\overline{\mu}_a^*=\mu_a^*\left(\left(\frac{p^*}{p}\right)^{\frac{1}{p^*-p}}\right)=\frac{q_1\gamma_{q_1}}{p}\left(\frac{p^*}{p}\right)^{\frac{p-q_1\gamma_{q_1}}{p^*-p}}\mu_a^*.
\end{eqnarray*}
This, together with the Euler-Lagrange equation satisfied by $u\in\mathcal{P}_{a,\mu_a^*,rad}^0$ and the spliting arguments, implies the following energy estimate
\begin{eqnarray*}
0\geq m^+_{rad}\left(a,\overline{\mu}_a^*\right)+\frac{1}{N}S^{\frac{N}{p}},
\end{eqnarray*}
which serves as {\bf the threshold of the compactness} of minimizing sequences on the degenerate submanifold $\mathcal{P}_{a,\mu_a^*,rad}^0$ in $L^{p^*}\left(\mathbb{R}^N\right)$.  By proving $m^+_{rad}\left(a,\overline{\mu}_a^*\right)\to0$ as $q_1\to p+\frac{p^2}{N}$, we recover the compactness of minimizing sequences on the degenerate submanifold $\mathcal{P}_{a,\mu_a^*,rad}^0$ in $L^{p^*}\left(\mathbb{R}^N\right)$ for $q_1$ sufficiently close to $p+\frac{p^2}{N}$.  To our best knowledge, this is {\bf a completely new finding}, which we believe will be very useful in many other studies.  Moreover, by $\Psi_{\overline{\mu}_a^*}(w)=0$ for all $w\in\mathcal{Q}_{a,\overline{\mu}_a^*,rad}^-\subset \mathcal{P}_{a,\overline{\mu}_a^*,rad}^-$, we also know that $m^-_{rad}\left(a,\overline{\mu}_a^*\right)\leq0$, which together with the monotonicity of $m^-_{rad}\left(a,\mu\right)$ as a function of $\mu>0$, implies that $m^-_{rad}\left(a,\mu\right)<0$ for all $\mu>\overline{\mu}_a^*$.

\subsection{Another background of (\ref{e1.1}) in the semilinear case $p=2$}
Since some of our results are even new for the semilinear case $p=2$, we close this section by introducing another physical background of \eqref{e1.1} in this case.  In the semilinear case $p=2$, the motivation of the studies on (\ref{e1.1}) also comes from finding solitary waves of the nonlinear Schr\"{o}dinger equation
\begin{equation}\label{e8.1}
i\partial_t\varphi+\Delta \varphi+\mu |\varphi|^{q_1-2}\varphi+|\varphi|^{q_2-2}\varphi=0,\ \text{in}\  (0,+\infty)\times\mathbb{R}^N,
\end{equation}
where $i$ is the imaginary unit and $\varphi: (0,+\infty)\times\mathbb{R}^N\to \mathbb{C}$ is the wave function.  It is known in \cite{Tao-Visan-Zhang07} that any solution $\varphi$ of
equation (\ref{e8.1}) with the Cauchy initial function $\varphi(0, x)$ preserves the $L^2$-mass and the Hamiltonian, that is,
\begin{equation*}
\int_{\mathbb{R}^N}\left|\varphi(t,x)\right|^2dx=\int_{\mathbb{R}^N}\left|\varphi(0,x)\right|^2dx\text{ and }\Psi_{\mu}\left(\varphi(t, x)\right)=\Psi_{\mu}\left(\varphi(0, x)\right),\ \forall t\in (0,+\infty).
\end{equation*}
where $\Psi_{\mu}(u)$ is the energy of the Schr\"{o}dinger flow~\eqref{e8.1} defined in (\ref{e1.2}) for $p=2$.  Recall that a solitary wave of (\ref{e8.1}) is a solution of the form $\varphi(x,t)=e^{-i\lambda t}u(x)$, where $\lambda\in \mathbb{R}$ and $u: \mathbb{R}^N\to \mathbb{C}$. Thus,  $\varphi(x,t)=e^{-i\lambda t}u(x)$ solves \eqref{e8.1} is equivalent to $(u,\lambda)$ solves (\ref{e1.1}) with $a=\left(\int_{\mathbb{R}^N}|\varphi(0,x)|^2dx\right)^{\frac12}$.  We point out that \eqref{e8.1} appears in the study of Bose-Einstein condensates (cf. \cite{Fibich15}), where the wave function $\varphi$ describes
the state of the condensate and the $L^2$-mass is the total number of atoms.  The well understood of solitary waves is helpful in
understanding the dynamics of Schr\"{o}dinger flow~\eqref{e8.1}, see, for example,
\cite{Akahori-Ibrahim-Kikuchi-Nawa2013, Akahori-Ibrahim-Kikuchi-Nawa2021, Akahori-Ibrahim-Kikuchi-Nawa-Wei2025, Bellazzini-Forcella-Georgiev2023, Luo2022} and the references therein.  In particular, in the very recent paper \cite{Akahori-Ibrahim-Kikuchi-Nawa-Wei2025}, the previous studies on \eqref{e1.1} for the Laplacian case $p=2$ in \cite{Wei-Wu2022, Wei-Wu2023} are also used to classify the positive solutions with finite energy.  Thus, we hope that our ideas and techniques introduced in this paper will also be useful in understanding the Schr\"{o}dinger flow~\eqref{e8.1}.

\subsection{Organization of the paper and Notations}\quad
In Section 2, we give some preliminary results used in this paper. Sections 3, 4 and 5 are devoted to the proof of Theorems~\ref{thm1.1} and \ref{thm1.2} for the cases $\mu<\mu_{a}^*$, $\mu=\mu_{a}^*$ and $\mu>\mu_a^*$, respectively.  The proof of Theorem~\ref{thm1.3} is contained in Section 6 while, in Sections 8 and 9, we list some useful results and provide the energy estimate, respectively.

\medskip

Throughout this paper,  $a\lesssim b$
means that $a\leq Cb$,  $a\gtrsim b$
means that $b\lesssim a$ and $a\thickapprox b$ means that $a\lesssim b$ and $a\gtrsim b$, where $C$ is a positive constant. $B_s(x)$ denotes the ball in $\mathbb{R}^N$ of center at $x$ and radius $s$. $\|\cdot\|_p$ denotes the usual norm in the Lebesgue space $L^p\left(\mathbb{R}^N\right)$. $W^{1,p}\left(\mathbb{R}^N\right):=\{u\in L^p\left(\mathbb{R}^N\right)\mid |\nabla u|\in L^p\left(\mathbb{R}^N\right)\}$ with the norm $\|u\|_{W^{1,p}}:=(\|u\|_p^p+\|\nabla u\|_p^p)^{1/p}$. $W_{rad}^{1,p}\left(\mathbb{R}^N\right)$ denotes the subspace of functions in $W^{1,p}\left(\mathbb{R}^N\right)$ which are radially symmetric with respect to the original point.

\section{Preliminaries}
\setcounter{section}{2} \setcounter{equation}{0}
In this section, we shall introduce some necessary preliminaries.  We begin with the result that any solution of (\ref{e1.1}), say $(u,\lambda)$,
belongs to $\mathcal{P}_{a,\mu}$, where $\mathcal{P}_{a,\mu}$ is the $L^p$-Pohozaev manifold given by \eqref{e1.3}.
\begin{lemma}\label{lem1.5}
Let $N\geq 2$, $1<p<N$, $p\leq q_1\leq q_2\leq p^*$, $\mu\in \mathbb{R}$ and $a>0$.  If $u\in \mathcal{S}_a$ is a solution of (\ref{e1.1}) then $u\in \mathcal{P}_{a,\mu}$.
\end{lemma}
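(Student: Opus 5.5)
The plan is to combine two identities satisfied by any solution $(u,\lambda)$ of \eqref{e1.1}: the Nehari-type identity from testing the equation with $u$, and the Pohozaev identity from testing with $x\cdot\nabla u$. Eliminating $\lambda$ and $\|u\|_p^p$ between them gives exactly the constraint defining $\mathcal{P}_{a,\mu}$.

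First, testing \eqref{e1.1} with $u\in W^{1,p}(\mathbb{R}^N)$, which is admissible since $q_1,q_2\le p^*$, gives
\begin{equation*}
\|\nabla u\|_p^p=\lambda\|u\|_p^p+\mu\|u\|_{q_1}^{q_1}+\|u\|_{q_2}^{q_2}.
\end{equation*}

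Second, we need the Pohozaev identity for the $p$-Laplacian. With $G(u)=\frac{\lambda}{p}|u|^p+\frac{\mu}{q_1}|u|^{q_1}+\frac{1}{q_2}|u|^{q_2}$, it reads
\begin{equation*}
\frac{N-p}{p}\|\nabla u\|_p^p=N\int_{\mathbb{R}^N}G(u)\,dx.
\end{equation*}
To justify it, we would use the $C^{1,\alpha}_{loc}$ regularity of weak solutions of quasilinear equations (DiBenedetto--Tolksdorf). We then either invoke the general Pohozaev identity for $p$-Laplacian equations in $\mathbb{R}^N$ (Guedda--V\'eron, or Degiovanni--Musesti--Squassina for $C^1$ solutions), or test with $\varphi_R(x)\,x\cdot\nabla u$ for a cutoff $\varphi_R$ and let $R\to\infty$, using $|\nabla u|^p,|u|^{q_i}\in L^1$ to kill the boundary terms. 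This justification is the only real obstacle: $u$ need not be $C^2$, so the multiplier computation must be done through a regularized argument. The cleanest route is to cite the known identity.

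An alternative that avoids regularity is a scaling argument on $\Psi_\mu$. One would use $\frac{d}{ds}\Psi_\mu((u)_s)|_{s=1}=0$, since $u$ is a constrained critical point and $s\mapsto(u)_s$ is a curve in $\mathcal{S}_a$ through $u$. However, its tangent vector $\frac{N}{p}u+x\cdot\nabla u$ may not lie in $W^{1,p}$, so this route also needs approximation. I would therefore stick with the cited Pohozaev identity.

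Finally, we eliminate $\lambda$. Multiplying the Nehari identity by $N/p$ and subtracting the Pohozaev identity gives
\begin{equation*}
\|\nabla u\|_p^p=\mu N\Big(\frac1p-\frac1{q_1}\Big)\|u\|_{q_1}^{q_1}+N\Big(\frac1p-\frac1{q_2}\Big)\|u\|_{q_2}^{q_2}.
\end{equation*}
Since $N\big(\frac1p-\frac1q\big)=\gamma_q$, this is precisely $\|\nabla u\|_p^p=\mu\gamma_{q_1}\|u\|_{q_1}^{q_1}+\gamma_{q_2}\|u\|_{q_2}^{q_2}$. Together with $u\in\mathcal{S}_a$, this yields $u\in\mathcal{P}_{a,\mu}$.
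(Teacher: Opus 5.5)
Your argument is the paper's: the Nehari identity from testing with $u$, the Pohozaev identity, and elimination of $\lambda$. Your algebra, via $N(\frac1p-\frac1q)=\gamma_q$, is correct. What fails as you state it is the regularity input. The $C^{1,\alpha}_{loc}$ theorems of DiBenedetto and Tolksdorf are proved for \emph{locally bounded} weak solutions, and local boundedness is not automatic in the Sobolev critical case $q_2=p^*$. Writing $|u|^{p^*-2}u=V(x)|u|^{p-2}u$ with $V=|u|^{p^*-p}$, you only know $V\in L^{N/p}$. That is the borderline space, where Serrin's local boundedness theorem does not apply. For $q_2<p^*$ the missing step is routine: $V=|u|^{q_2-p}\in L^{p^*/(q_2-p)}$ with $p^*/(q_2-p)>N/p$, so Serrin's theorem gives $u\in L^\infty_{loc}$ directly.

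The paper supplies exactly this step. A Brezis--Kato type iteration (Lemma~A.1 of Garc\'{\i}a Azorero--Peral) gives $u\in L^{\beta p^*}_{loc}$ for $\beta\in(1,p^*/p)$, and Serrin's theorem then yields $u\in L^\infty_{loc}$. With that bound, Lemma~\ref{lem1.4} (the Jeanjean--Squassina identity) applies directly, since it only requires $u\in L^\infty_{loc}$, $|\nabla u|\in L^p$ and $F(u)\in L^1$. The $C^1$ regularity and the Pucci--Serrin justification you identified as the main obstacle are already built into that lemma. The real analytic issue is therefore the $L^\infty_{loc}$ bound at the critical exponent, not the multiplier computation.
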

\begin{proof}
Since $u\in \mathcal{S}_a$ is a solution of (\ref{e1.1}), similarly to \cite[Lemma~A.1]{Garcia-Peral94}, we can prove that $u\in L_{loc}^{\beta p^*}\left(\mathbb{R}^N\right)$ for any $\beta\in\left(1, \frac{p^*}{p}\right)$.  Thus, by \cite[Theorem~1]{Serrin 64}, we conclude that $u\in L_{loc}^{\infty}\left(\mathbb{R}^N\right)$.  On the other hand, it is also clear that $|\nabla u|\in L^p\left(\mathbb{R}^N\right)$ and
\begin{equation*}
F(u)=\frac{\lambda}{p}|u|^p+\frac{\mu}{q_1}|u|^{q_1}+\frac{1}{q_2}|u|^{q_2}\in L^1\left(\mathbb{R}^N\right).
\end{equation*}
Thus, by Lemma \ref{lem1.4}, we have
\begin{equation}\label{e7.1}
(N-p)\|\nabla u\|_p^p=\lambda Na^p+\frac{\mu N p}{q_1}\|u\|_{q_1}^{q_1}+\frac{N p}{q_2}\|u\|_{q_2}^{q_2}.
\end{equation}
By multiplying (\ref{e1.1}) with $u$ on both sides and integrating by parts, we also obtain that
\begin{equation*}
\|\nabla u\|_p^p=\lambda a^p+\mu\|u\|_{q_1}^{q_1}+\|u\|_{q_2}^{q_2},
\end{equation*}
which, together with (\ref{e7.1}), implies that $u\in \mathcal{P}_{a,\mu}$.
\end{proof}

We next introduce the fundamental property of the fibering maps $\Phi_{\mu,u}(s)$ given by \eqref{e1.9}. Let
\begin{equation}\label{muu}
\mu(u):=\frac{(q_2\gamma_{q_2}-p)(p-q_1\gamma_{q_1})^{\frac{p-q_1\gamma_{q_1}}{q_2\gamma_{q_2}-p}}}
{\gamma_{q_1}\gamma_{q_2}^{\frac{p-q_1\gamma_{q_1}}{q_2\gamma_{q_2}-p}}(q_2\gamma_{q_2}-q_1\gamma_{q_1})^{\frac{q_2\gamma_{q_2}-q_1\gamma_{q_1}}{q_2\gamma_{q_2}-p}}}
\frac{(\|\nabla u\|_p^p)^{\frac{q_2\gamma_{q_2}-q_1\gamma_{q_1}}{q_2\gamma_{q_2}-p}}}{\|u\|_{q_1}^{q_1}(\|u\|_{q_2}^{q_2})^{\frac{p-q_1\gamma_{q_1}}{q_2\gamma_{q_2}-p}}}.
\end{equation}

\begin{proposition}\label{pro2.1}
Let $N\geq 2$, $1<p<N$, $p<q_1<p+\frac{p^2}{N}<q_2\leq p^*$, $\mu>0$ and $a>0$.
Then for any $u\in \mathcal{S}_a$, $\Phi_{\mu,u}(s)$ is of class $C^{\infty}$ in $(0, +\infty)$. Moreover, the set of fibering maps $\{\Phi_{\mu,u}(s)\mid u\in \mathcal{S}_a\}$ can be classified as follows:
\begin{enumerate}
\item[$(a)$]\quad If $\mu\in \left(0, \mu(u)\right)$, then $\Phi_{\mu,u}(s)$ only has two critical points $t^\pm_{\mu}(u)$ satisfying
\begin{eqnarray*}
0<t^+_{\mu}(u)<s(u)<t^-_{\mu}(u)<+\infty,
\end{eqnarray*}
where $t^+_{\mu}(u)$ is the local minimal point of $\Phi_{\mu,u}(s)$ in $\left(0,t^-_{\mu}(u)\right)$, $t^-_{\mu}(u)$ is the local maximal point of
$\Phi_{\mu,u}(s)$ in $\left(t^+_{\mu}(u),+\infty\right)$ and
\begin{equation*}
s(u):=\left(\frac{(p-q_1\gamma_{q_1})\|\nabla u\|_p^p}{\gamma_{q_2}(q_2\gamma_{q_2}-q_1\gamma_{q_1})\|u\|_{q_2}^{q_2}}\right)^{\frac{1}{q_2\gamma_{q_2}-p}}.
\end{equation*}
Moreover, $(u)_{t^{\pm}_{\mu}(u)}\in \mathcal{P}_{a,\mu}^{\pm}$ and $\Phi_{\mu,u}(t^+_{\mu}(u))<\min\left\{0, \Phi_{\mu,u}(t^-_{\mu}(u))\right\}$.
\item[$(b)$]\quad If $\mu=\mu(u)$, then $\Phi_{\mu,u}(s)$ only has a degenerate critical point $t^0_{\mu}(u)=s(u)$.  Moreover, $(u)_{t^{0}_{\mu}(u)}\in \mathcal{P}_{a,\mu}^{0}$.
\item[$(c)$]\quad If $\mu>\mu(u)$, then $\Phi_{\mu,u}(s)$ has no critical points.
\end{enumerate}
\end{proposition}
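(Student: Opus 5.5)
The plan is to reduce everything to the elementary analysis of a one-variable function. Fix $u\in\mathcal{S}_a$ and write $A=\|\nabla u\|_p^p$, $B=\|u\|_{q_1}^{q_1}$, $C=\|u\|_{q_2}^{q_2}$, all positive. Also set $\alpha=q_1\gamma_{q_1}$ and $\beta=q_2\gamma_{q_2}$, so that $0<\alpha<p<\beta$ by \eqref{e1.8}. Smoothness of $\Phi_{\mu,u}$ on $(0,+\infty)$ is immediate from \eqref{e1.9}, since it is a finite sum of power functions. Differentiating gives
\[
\Phi_{\mu,u}'(s)=s^{\alpha-1}\Big(As^{p-\alpha}-\mu\gamma_{q_1}B-\gamma_{q_2}Cs^{\beta-\alpha}\Big)=:s^{\alpha-1}g(s).
\]
Hence the critical points of $\Phi_{\mu,u}$ are exactly the zeros of $g$. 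I will also record the scaling identities $\|\nabla (u)_s\|_p^p=s^pA$, $\|(u)_s\|_{q}^{q}=s^{q\gamma_q}\|u\|_q^q$ and $(u)_s\in\mathcal{S}_a$. These show that $g(s)=0$ is equivalent to $(u)_s\in\mathcal{P}_{a,\mu}$. They also show that the sign of $s\,g'(s)$ at a zero of $g$ decides which of $\mathcal{P}^\pm_{a,\mu}$, $\mathcal{P}^0_{a,\mu}$ contains $(u)_s$. Indeed, at a zero, $p\|\nabla (u)_s\|_p^p-\mu q_1\gamma_{q_1}^2\|(u)_s\|_{q_1}^{q_1}-q_2\gamma_{q_2}^2\|(u)_s\|_{q_2}^{q_2}$ equals $s^{\alpha}\big(sg'(s)+\alpha g(s)\big)=s^{\alpha+1}g'(s)$. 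So $g'>0$ gives $\mathcal{P}^+$, $g'<0$ gives $\mathcal{P}^-$, and $g'=0$ gives $\mathcal{P}^0$.

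Next I study $h(s):=As^{p-\alpha}-\gamma_{q_2}Cs^{\beta-\alpha}$, so that $g=h-\mu\gamma_{q_1}B$. We have $h(0^+)=0$ and $h(s)\to-\infty$ as $s\to\infty$. Moreover $h'(s)=s^{p-\alpha-1}\big((p-\alpha)A-\gamma_{q_2}(\beta-\alpha)Cs^{\beta-p}\big)$ vanishes only at $s=s(u)$, the value in the statement. Thus $h$ increases strictly on $(0,s(u))$ and decreases strictly afterwards. Substituting $s(u)$ gives
\[
h(s(u))=A\,s(u)^{p-\alpha}\frac{\beta-p}{\beta-\alpha}.
\]
The routine computation I will verify is that $h(s(u))=\mu(u)\gamma_{q_1}B$, with $\mu(u)$ as in \eqref{muu}. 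The trichotomy then follows. If $\mu<\mu(u)$, the horizontal level $\mu\gamma_{q_1}B$ meets the graph of $h$ exactly twice, at points $t^+<s(u)<t^-$. There $g'=h'>0$ and $g'=h'<0$ respectively, so $\Phi'$ changes sign $-\to+$ at $t^+$ (a local minimum) and $+\to-$ at $t^-$ (a local maximum). If $\mu=\mu(u)$, $g$ has the single double zero $s(u)$ with $g'(s(u))=0$, which gives $\mathcal{P}^0$. If $\mu>\mu(u)$, then $g<0$ everywhere, and there are no critical points.

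For the energy comparison in (a), note first that $\Phi'<0$ on $(0,t^+)$. Since $\Phi_{\mu,u}(0^+)=0$, this gives $\Phi(t^+)<0$. Next, $\Phi'>0$ on $(t^+,t^-)$, which gives $\Phi(t^+)<\Phi(t^-)$. The local minimality claim on $(0,t^-)$ and local maximality on $(t^+,\infty)$ follow from the same sign pattern of $\Phi'$. The only genuinely computational step, and the one I expect to need care, is checking that the maximum value of $h$ reproduces exactly the constant and exponents in \eqref{muu}. To do this I write $s(u)^{\beta-p}=\frac{(p-\alpha)A}{\gamma_{q_2}(\beta-\alpha)C}$ and raise it to the power $\frac{p-\alpha}{\beta-p}$. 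I then collect the powers of $A$, which give $1+\frac{p-\alpha}{\beta-p}=\frac{\beta-\alpha}{\beta-p}$, and divide by $\gamma_{q_1}B$. This yields precisely the prefactor $\frac{(\beta-p)(p-\alpha)^{\frac{p-\alpha}{\beta-p}}}{\gamma_{q_1}\gamma_{q_2}^{\frac{p-\alpha}{\beta-p}}(\beta-\alpha)^{\frac{\beta-\alpha}{\beta-p}}}$.
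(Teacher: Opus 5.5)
Your proposal is correct and follows the paper's proof: the paper likewise factors $\Phi_{\mu,u}'(s)=s^{q_1\gamma_{q_1}-1}\bigl(h_{\mu,u}(s)-\mu\gamma_{q_1}\|u\|_{q_1}^{q_1}\bigr)$, maximizes $h_{\mu,u}$ at $s(u)$, and reads off the trichotomy by comparing $\max h_{\mu,u}$ with $\mu\gamma_{q_1}\|u\|_{q_1}^{q_1}$, that is, $\mu$ with $\mu(u)$. Two points the paper leaves implicit are supplied by your argument: the identity tying the sign of $g'$ at a zero to membership in $\mathcal{P}^{\pm}_{a,\mu}$ or $\mathcal{P}^{0}_{a,\mu}$, and the sign-pattern argument for $\Phi_{\mu,u}(t^+_{\mu}(u))<\min\{0,\Phi_{\mu,u}(t^-_{\mu}(u))\}$.
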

\begin{proof}
Direct calculations give that
\begin{equation*}
\begin{split}
\Phi_{\mu,u}'(s)&=s^{p-1}\|\nabla u\|_p^p-\mu\gamma_{q_1}s^{q_1\gamma_{q_1}-1}\|u\|_{q_1}^{q_1}-\gamma_{q_2}s^{q_2\gamma_{q_2}-1}\|u\|_{q_2}^{q_2}\\
&=s^{q_1\gamma_{q_1}-1}\left(h_{\mu,u}(s)-\mu\gamma_{q_1}\|u\|_{q_1}^{q_1}\right),
\end{split}
\end{equation*}
where $h_{\mu,u}(s):=s^{p-q_1\gamma_{q_1}}\|\nabla u\|_p^p-\gamma_{q_2}s^{q_2\gamma_{q_2}-q_1\gamma_{q_1}}\|u\|_{q_2}^{q_2}$.  Since
\begin{equation*}
\begin{split}
\max_{s>0}h_{\mu,u}(s)&=h_{\mu,u}(s(u))\\
&=\left(\frac{(p-q_1\gamma_{q_1})\|\nabla u\|_p^p}{\gamma_{q_2}(q_2\gamma_{q_2}-q_1\gamma_{q_1})\|u\|_{q_2}^{q_2}}\right)^{\frac{p-q_1\gamma_{q_1}}{q_2\gamma_{q_2}-p}}\frac{q_2\gamma_{q_2}-p}{q_2\gamma_{q_2}-q_1\gamma_{q_1}}\|\nabla u\|_p^p,
\end{split}
\end{equation*}
the equation $\Phi_{\mu,u}'(s)=0$
has solutions if and only if $\max_{s>0}h_{\mu,u}(s)\geq \mu\gamma_{q_1}\|u\|_{q_1}^{q_1}$, that is,
 $\mu\leq \mu(u)$, where $\mu(u)$ is given by \eqref{muu}.  Moreover, it has two solutions $t^\pm_{\mu}(u)$ for $\mu<\mu(u)$
satisfying  $0<t^+_{\mu}(u)<s(u)<t^-_{\mu}(u)<+\infty$ and has only one solution $t^0_{\mu}(u)=s(u)$ for
$\mu=\mu(u)$.
\end{proof}
We close this section by introducing the fundamental property of the first extremal value $\mu_{a}^*$ given by \eqref{e1.12}.  We recall that by \eqref{e1.12}, $\mu_{a}^*=\inf_{u\in \mathcal{S}_a}\mu(u)$, where  $\mu(u)$ is given by \eqref{muu}.

\begin{proposition}\label{pro2.2}
Let $N\geq 2$, $1<p<N$, $p<q_1<p+\frac{p^2}{N}<q_2\leq p^*$ and $a>0$. Then
\begin{enumerate}
\item[$(a)$]\quad $\mu_a^*\in (0,+\infty)$ and the functional $\mu(u)$ is 0-homogeneous for
$(u)_s$, that is, $\mu((u)_s) = \mu(u)$ for all $s > 0$, where $(u)_s=s^{\frac{N}{p}}u(sx)$ is the trajectory on $\mathcal{S}_a$ generated by $u$.
\item[$(b)$]\quad $\mu_a^*=\mu_1^*a^{-\frac{N(q_2-q_1)(q_2-p)}{q_2\gamma_{q_2}(q_2\gamma_{q_2}-p)}}$
is decreasing  in terms of $a > 0$.
\end{enumerate}
\end{proposition}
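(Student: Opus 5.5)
For part (a), the 0-homogeneity is a direct scaling computation. For $(u)_s=s^{N/p}u(sx)$ we have
\[
\|\nabla (u)_s\|_p^p=s^p\|\nabla u\|_p^p,\qquad \|(u)_s\|_{q}^{q}=s^{q\gamma_q}\|u\|_q^q .
\]
Substituting into \eqref{muu}, the power of $s$ in the numerator is $p\frac{q_2\gamma_{q_2}-q_1\gamma_{q_1}}{q_2\gamma_{q_2}-p}$. The power in the denominator is $q_1\gamma_{q_1}+q_2\gamma_{q_2}\frac{p-q_1\gamma_{q_1}}{q_2\gamma_{q_2}-p}$. Putting both over the common denominator $q_2\gamma_{q_2}-p$, each exponent equals $\frac{p q_2\gamma_{q_2}-pq_1\gamma_{q_1}}{q_2\gamma_{q_2}-p}$. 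Hence $\mu((u)_s)=\mu(u)$.

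Finiteness of $\mu_a^*$ is immediate, since $\mathcal{S}_a\neq\emptyset$ and $\mu(u)<\infty$ for any fixed $u$. For positivity I bound the denominator with the Gagliardo--Nirenberg inequality:
\[
\|u\|_{q_i}^{q_i}\le C_{N,p,q_i}^{q_i}\|\nabla u\|_p^{q_i\gamma_{q_i}}a^{q_i(1-\gamma_{q_i})}.
\]
The denominator is then at most a constant times $\|\nabla u\|_p^{E}$, where $E=q_1\gamma_{q_1}+q_2\gamma_{q_2}\frac{p-q_1\gamma_{q_1}}{q_2\gamma_{q_2}-p}$. This is the same exponent as in the numerator $(\|\nabla u\|_p^p)^{\frac{q_2\gamma_{q_2}-q_1\gamma_{q_1}}{q_2\gamma_{q_2}-p}}$, by the computation above. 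So $\mu(u)\ge c(N,p,q_1,q_2,a)>0$ uniformly on $\mathcal{S}_a$, which gives $\mu_a^*>0$. In the critical case $q_2=p^*$ I use the Sobolev inequality in place of Gagliardo--Nirenberg; then $\gamma_{p^*}=1$ and the factor $a^{q_2(1-\gamma_{q_2})}$ simply equals $1$.

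For part (b) I use the mass scaling $v=\frac{a}{b}u$, which is a bijection $\mathcal{S}_b\to\mathcal{S}_a$. Under it the numerator picks up the factor $(a/b)^{p\frac{q_2\gamma_{q_2}-q_1\gamma_{q_1}}{q_2\gamma_{q_2}-p}}$. The denominator picks up $(a/b)^{q_1+q_2\frac{p-q_1\gamma_{q_1}}{q_2\gamma_{q_2}-p}}$. Taking $b=1$ gives $\mu_a^*=\mu_1^*a^{\kappa}$, where
\[
\kappa=\frac{p(q_2\gamma_{q_2}-q_1\gamma_{q_1})-q_1(q_2\gamma_{q_2}-p)-q_2(p-q_1\gamma_{q_1})}{q_2\gamma_{q_2}-p}.
\]
The main obstacle, a purely algebraic one, is to simplify $\kappa$ using $\gamma_q=\frac{N(q-p)}{pq}$. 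Substituting $q_i\gamma_{q_i}=\frac{N(q_i-p)}{p}$, the numerator expands to $(q_2-q_1)\bigl(N-q_2\frac{N-p}{p}\bigr)$ up to sign. Rewriting this in the form stated, $-\frac{N(q_2-q_1)(q_2-p)}{q_2\gamma_{q_2}}$ divided by $q_2\gamma_{q_2}-p$, is a matter of regrouping. I would do this carefully, checking it against the case $p=2$.

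Monotonicity then follows from the sign of $\kappa$. We have $q_2>q_1$, $q_2>p$ and $q_2\gamma_{q_2}>p$ (by \eqref{e1.8}, since $q_2>p+\frac{p^2}{N}$), so $\kappa<0$ and $\mu_a^*$ is decreasing in $a$.
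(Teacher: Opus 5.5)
Your argument for (a) is fine, and it is exactly the direct computation plus Gagliardo--Nirenberg/Sobolev argument the paper intends. The idea in (b) is also right. Your amplitude scaling $v=au$ from $\mathcal{S}_1$ to $\mathcal{S}_a$ is the paper's map in disguise. The paper's $u_1=s^{N/q_2}u_a(sx)$ with $s=a^{1/\gamma_{q_2}}$ equals $a^{-1}(u_a)_s$, and $\mu$ is invariant along $(\cdot)_s$ by (a). Your expression for $\kappa$ in terms of the $q_i\gamma_{q_i}$ is also correct.

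The problem is the step you yourself flagged as the main obstacle: the simplification you announce is false. Your claimed numerator satisfies $(q_2-q_1)\bigl(N-q_2\frac{N-p}{p}\bigr)=(q_2-q_1)q_2(1-\gamma_{q_2})$. This vanishes at $q_2=p^*$, which would make $\mu_a^*$ independent of $a$ in the critical case and contradict the strict monotonicity you want. It also equals $p(q_2-q_1)$ only when $q_2=p$, so no regrouping can turn it into the stated exponent.

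The correct computation uses $q_i\gamma_{q_i}=\frac{N(q_i-p)}{p}$:
\begin{align*}
p(q_2\gamma_{q_2}-q_1\gamma_{q_1})&=N(q_2-q_1),\\
-q_1(q_2\gamma_{q_2}-p)-q_2(p-q_1\gamma_{q_1})&=\tfrac{N}{p}\bigl(q_2(q_1-p)-q_1(q_2-p)\bigr)-p(q_2-q_1)=-(N+p)(q_2-q_1).
\end{align*}
So the numerator of $\kappa$ is $-p(q_2-q_1)$, and $\kappa=-\frac{p(q_2-q_1)}{q_2\gamma_{q_2}-p}$. Since $pq_2\gamma_{q_2}=N(q_2-p)$, i.e. $\frac{N(q_2-p)}{q_2\gamma_{q_2}}=p$, this is exactly $-\frac{N(q_2-q_1)(q_2-p)}{q_2\gamma_{q_2}(q_2\gamma_{q_2}-p)}$. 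The paper's less transparent form of the exponent is just an artifact of its scaling, which fixes $\|u\|_{q_2}$.

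With this correction, your sign argument for monotonicity goes through unchanged. It relies on $\mu_1^*\in(0,+\infty)$, which you have from (a).
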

\begin{proof}
(a) It follows immediately from direct calculations, the Sobolev and the Gagliardo
-Nirenberg inequalities.

\medskip

(b) We first notice that by the conclusion $(a)$,
\begin{equation*}
\mu_a^*=\inf_{u\in \mathcal{S}_a,\|u\|_{q_2}^{q_2}=1}\mu(u)
\end{equation*}
under a suitable choice of $s>0$ in the trajectory on $\mathcal{S}_a$ generated by $u$.
Moreover, for any $u_a\in \mathcal{S}_a$, we define
\begin{equation*}
u_1:=s^{\frac{N}{q_2}}u_a(sx)\quad\text{with}\quad s^{\frac{N}{q_2}p-N}=a^{-p}.
\end{equation*}
Then $u_1\in \mathcal{S}_1$, $\|u_1\|_{q_2}=\|u_a\|_{q_2}$ and
\begin{equation*}
\mu(u_1)=a^{\frac{N(q_2-q_1)(q_2-p)}{q_2\gamma_{q_2}(q_2\gamma_{q_2}-p)}}\mu(u_a)
\end{equation*}
It follows that
\begin{equation*}
\mu_1^*=a^{\frac{N(q_2-q_1)(q_2-p)}{q_2\gamma_{q_2}(q_2\gamma_{q_2}-p)}}\mu_a^*,
\end{equation*}
which completes the proof.
\end{proof}

\section{The existence and multiplicity theory of (\ref{e1.1}) for $\mu<\mu_{a}^*$}
\setcounter{section}{3} \setcounter{equation}{0}
In constructing solutions of \eqref{e1.1} via the minimizaztion method on the Pohozaev manifold, a crucial point is to prove the compactness of the minimizing sequences, which usually requires the energy values $m^{\pm}(a,\mu)$ to be nonincreasing as a function of the mass $a$.
\begin{lemma}\label{lem3.1}
Let $N\geq 2$, $1<p<N$, $p<q_1<p+\frac{p^2}{N}<q_2\leq p^*$, $a>0$ and $\mu\in (0,\mu_a^*)$, where $\mu_a^*$ is the first extremal value of the Pohozaev manifold given by \eqref{e1.12}.  Then
\begin{enumerate}
\item[$(a)$]\quad $m^{-}(a,\mu)\geq m^{+}(a,\mu)$ and $m^{+}(a,\mu)<0$.
\item[$(b)$]\quad For all $u_a\in \mathcal{P}_{a,\mu}^{\pm}$ and $b>a$ such that $\mu_b^*>\mu$, we have
\begin{equation*}
\Psi_{\mu}(u_a)>\Psi_{\mu}\left(\left(\frac{b}{a}u_a\right)_{t_{\mu}^{\pm}\left(\frac{b}{a}u_a\right)}\right).
\end{equation*}
In particular, $m^{\pm}(a,\mu)\geq m^{\pm}(b,\mu)$ and if $m^{\pm}(a,\mu)$ is attained, then we also have that $m^{\pm}(a,\mu)> m^{\pm}(b,\mu)$.
\end{enumerate}
\end{lemma}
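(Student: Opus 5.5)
The plan is to use the fibering maps of Proposition~\ref{pro2.1} and the scaling properties of $\mu(u)$ from Proposition~\ref{pro2.2}. Since $\mu<\mu_a^*=\inf_{\mathcal{S}_a}\mu(u)$, every $u\in\mathcal{S}_a$ falls under case $(a)$ of Proposition~\ref{pro2.1}. Hence $\mathcal{P}_{a,\mu}^0=\emptyset$, and each trajectory $(u)_s$ meets $\mathcal{P}_{a,\mu}^\pm$ exactly once, at $t^\pm_\mu(u)$.

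For $(a)$, I take any $u\in\mathcal{P}^-_{a,\mu}$, so $t^-_\mu(u)=1$. Along its fiber, Proposition~\ref{pro2.1}(a) gives
\begin{equation*}
\Psi_\mu\bigl((u)_{t^+_\mu(u)}\bigr)=\Phi_{\mu,u}(t^+_\mu(u))<\min\{0,\Phi_{\mu,u}(1)\}\le\Psi_\mu(u).
\end{equation*}
Since $(u)_{t^+_\mu(u)}\in\mathcal{P}^+_{a,\mu}$, this yields $m^+(a,\mu)\le\Psi_\mu(u)$, and taking the infimum gives $m^-(a,\mu)\ge m^+(a,\mu)$. The same chain, applied to any fixed $u\in\mathcal{S}_a$, gives $m^+(a,\mu)\le\Phi_{\mu,u}(t^+_\mu(u))<0$. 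This strict inequality for one element already shows $m^+<0$, because $m^+$ is an infimum bounded above by a strictly negative value.

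For $(b)$, fix $u_a\in\mathcal{P}^\pm_{a,\mu}$ and set $\theta=b/a>1$ and $v=\theta u_a\in\mathcal{S}_b$. The hypothesis $\mu_b^*>\mu$ ensures $\mu(v)>\mu$, so $t^\pm_\mu(v)$ exist. Since $\mu(\theta u)=\theta^{-\kappa}\mu(u)$ for some $\kappa>0$ (the homogeneity count behind Proposition~\ref{pro2.2}(b)), the fiber of $v$ is still nondegenerate. The key computation is the monotonicity in $\theta$ of
\begin{equation*}
\Phi_{\mu,\theta u_a}(s)=\frac{\theta^p}{p}s^p\|\nabla u_a\|_p^p-\frac{\mu\theta^{q_1}}{q_1}s^{q_1\gamma_{q_1}}\|u_a\|_{q_1}^{q_1}-\frac{\theta^{q_2}}{q_2}s^{q_2\gamma_{q_2}}\|u_a\|_{q_2}^{q_2}.
\end{equation*}
I would consider the curve $\theta\mapsto g(\theta):=\Phi_{\mu,\theta u_a}(t^\pm_\mu(\theta u_a))$ on $[1,b/a]$. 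The function $t^\pm_\mu(\theta u_a)$ is $C^1$ in $\theta$ by the implicit function theorem, since the critical point is nondegenerate along the whole path (on it, $\mu(\theta u_a)\ge\mu_b^*\cdot(\text{scaling})>\mu$; more precisely, $\theta u_a\in\mathcal{S}_{\theta a}$ and $\mu_{\theta a}^*\ge\mu_b^*>\mu$ by monotonicity of $\mu_a^*$). By the envelope theorem,
\begin{equation*}
g'(\theta)=\partial_\theta\Phi_{\mu,\theta u_a}(s)\big|_{s=t^\pm}=\frac1\theta\Bigl(s^p\theta^p\|\nabla u_a\|_p^p-\mu\theta^{q_1}s^{q_1\gamma_{q_1}}\|u_a\|_{q_1}^{q_1}-\theta^{q_2}s^{q_2\gamma_{q_2}}\|u_a\|_{q_2}^{q_2}\Bigr).
\end{equation*}
Writing $w=(\theta u_a)_{t^\pm}\in\mathcal{P}_{\theta a,\mu}$, this equals $\theta^{-1}\bigl(\|\nabla w\|_p^p-\mu\|w\|_{q_1}^{q_1}-\|w\|_{q_2}^{q_2}\bigr)$. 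Using the Pohozaev identity to replace $\|\nabla w\|_p^p$, it becomes
\begin{equation*}
\theta^{-1}\bigl(\mu(\gamma_{q_1}-1)\|w\|_{q_1}^{q_1}+(\gamma_{q_2}-1)\|w\|_{q_2}^{q_2}\bigr)<0,
\end{equation*}
since $\gamma_{q_1},\gamma_{q_2}\le1$ with $\gamma_{q_1}<1$. Therefore $g$ is strictly decreasing, and $g(1)=\Psi_\mu(u_a)$ because $t^\pm_\mu(u_a)=1$. This gives the strict inequality.

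Taking the infimum over $u_a$ gives $m^\pm(a,\mu)\ge m^\pm(b,\mu)$, using that $(\theta u_a)_{t^\pm}\in\mathcal{P}^\pm_{b,\mu}$. If $m^\pm(a,\mu)$ is attained by some $u_a$, the strict inequality at that $u_a$ gives $m^\pm(a,\mu)>m^\pm(b,\mu)$. The main obstacle is justifying that the path $\theta\mapsto\theta u_a$ stays in the nondegenerate regime, so that $t^\pm$ remains smooth and of the same type $\pm$. This requires $\mu<\mu_{\theta a}^*$ for all $\theta\in[1,b/a]$, which I expect to follow from Proposition~\ref{pro2.2}(b) ($\mu^*$ decreasing) together with $\mu_b^*>\mu$.
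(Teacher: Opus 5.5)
Your proof is correct and follows essentially the same route as the paper. Part $(a)$ is read off fiberwise from Proposition~\ref{pro2.1}(a). Part $(b)$ uses the implicit function theorem along the mass path $c\mapsto\frac{c}{a}u_a$ (your $\theta=c/a$), where the derivative of the energy reduces, via the Pohozaev identity, to $\frac{1}{c}\bigl(\mu(\gamma_{q_1}-1)\|w\|_{q_1}^{q_1}+(\gamma_{q_2}-1)\|w\|_{q_2}^{q_2}\bigr)<0$, and nondegeneracy along the path holds exactly as you say because $\mu(\theta u_a)\geq\mu^*_{\theta a}\geq\mu_b^*>\mu$ for $1\leq\theta\leq b/a$.
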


\begin{proof}
$(a)$\quad Since $\mu\in (0,\mu_a^*)$, the conclusion follows immediately from the definition of $\mu_a^*$ and $(a)$ of Proposition \ref{pro2.1}.

\medskip

$(b)$\quad Since $\mu\in (0,\mu_b^*)$, by $(a)$ of Proposition \ref{pro2.1} and $(b)$ of Proposition \ref{pro2.2}, there exist $\epsilon>0$ small enough and
$t_{\mu}^{\pm}\left(\frac{c}{a}u_a\right)>0$ such that $\left(\frac{c}{a}u_a\right)_{t_{\mu}^{\pm}\left(\frac{c}{a}u_a\right)}\in \mathcal{P}_{c,\mu}^{\pm}$ for any $c\in (a/2,b+\epsilon)$.  For the sake of simplicity, we denote $v_c:=\frac{c}{a}u_a$ and $t_{\mu}^{\pm}(c):=t_{\mu}^{\pm}(v_c)$.  Then by $\left(v_c\right)_{t_{\mu}^{\pm}(c)}\in \mathcal{P}_{c,\mu}^{\pm}$, we have
\begin{eqnarray*}
H(c,t_{\mu}^{\pm}(c))\equiv 0\quad\text{and}\quad \frac{\partial H(c,\tau)}{\partial\tau}|_{\tau=t_{\mu}^{\pm}(c)}\not=0
\end{eqnarray*}
for all $c\in (a/2,b+\epsilon)$, where
\begin{equation*}
H(c,\tau):=\left(\frac{c}{a}\tau\right)^p\|\nabla u_a\|_p^p-\left(\frac{c}{a}\right)^{q_1}\tau^{q_1\gamma_{q_1}}\mu\gamma_{q_1}\|u_a\|_{q_1}^{q_1}-
\left(\frac{c}{a}\right)^{q_2}\tau^{q_2\gamma_{q_2}}\gamma_{q_2}\|u_a\|_{q_2}^{q_2}.
\end{equation*}
It follows from the implicit function theorem that $\frac{dt_{\mu}^{\pm}(c)}{dc}$ exists for all $c\in (a/2,b+\epsilon)$ and
\begin{equation*}
\begin{split}
\frac{d t_{\mu}^{\pm}(c)}{dc}&=-\frac{\tau\left(
p\left(t_{\mu}^{\pm}(c)\right)^p\|\nabla v_c\|_p^p-\mu q_1\gamma_{q_1}(t_{\mu}^{\pm}(c))^{q_1\gamma_{q_1}}\|v_c\|_{q_1}^{q_1}-q_2\gamma_{q_2}
(t_{\mu}^{\pm}(c))^{q_2\gamma_{q_2}}\|v_c\|_{q_2}^{q_2}\right)}
{c\left(
p\left(t_{\mu}^{\pm}(c)\right)^p\|\nabla v_c\|_p^p-\mu q_1\gamma_{q_1}^2(t_{\mu}^{\pm}(c))^{q_1\gamma_{q_1}}\|v_c\|_{q_1}^{q_1}-q_2\gamma_{q_2}^2
(t_{\mu}^{\pm}(c))^{q_2\gamma_{q_2}}\|v_c\|_{q_2}^{q_2}\right)}\\
& = \frac{\tau\left(
\mu (q_1-p)\gamma_{q_1}(t_{\mu}^{\pm}(c))^{q_1\gamma_{q_1}}\|v_c\|_{q_1}^{q_1}+(q_2-p)\gamma_{q_2}
(t_{\mu}^{\pm}(c))^{q_2\gamma_{q_2}}\|v_c\|_{q_2}^{q_2}\right)}
{c\left(
p\left(t_{\mu}^{\pm}(c)\right)^p\|\nabla v_c\|_p^p-\mu q_1\gamma_{q_1}^2(t_{\mu}^{\pm}(c))^{q_1\gamma_{q_1}}\|v_c\|_{q_1}^{q_1}-q_2\gamma_{q_2}^2
(t_{\mu}^{\pm}(c))^{q_2\gamma_{q_2}}\|v_c\|_{q_2}^{q_2}\right)},
\end{split}
\end{equation*}
which, together with $\left(v_c\right)_{t_{\mu}^{\pm}(c)}\in \mathcal{P}_{c,\mu}^{\pm}$ and $p<q_1<q_2$, implies that $t_{\mu}^{+}(c)$ is increasing and $t_{\mu}^{-}(c)$ is decreasing for all $c\in (a/2,b+\epsilon)$.  Thus, $\Psi_{\mu}\left((v_c)_{t_{\mu}^{\pm}(c)}\right)$ is $C^1$ as a function of $c$ in $(a/2,b+\epsilon)$.  Moreover, by $\left(v_c\right)_{t_{\mu}^{\pm}(c)}\in \mathcal{P}_{c,\mu}^{\pm}$ again and \eqref{e1.8}, we have
\begin{eqnarray*}
\frac{d \Psi_{\mu}\left((v_c)_{t_{\mu}^{\pm}(c)}\right)}{dc}&=&\frac{\partial \Psi_{\mu}\left((v_c)_{t_{\mu}^{\pm}(c)}\right)}{\partial v_c}\frac{d v_c}{dc}+\frac{\partial \Psi_{\mu}\left((v_c)_{t_{\mu}^{\pm}(c)}\right)}{\partial t_{\mu}^{\pm}(c)}\frac{d t_{\mu}^{\pm}(c)}{dc}\\
&=&\frac{1}{c}\left(
\left(t_{\mu}^{\pm}(c)\right)^p\|\nabla v_c\|_p^p-\mu(t_{\mu}^{\pm}(c))^{q_1\gamma_{q_1}}\|v_c\|_{q_1}^{q_1}-
(t_{\mu}^{\pm}(c))^{q_2\gamma_{q_2}}\|v_c\|_{q_2}^{q_2}\right)\\
&=&\frac{1}{c}\left(\mu(\gamma_{q_1}-1)(t_{\mu}^{\pm}(c))^{q_1\gamma_{q_1}}\|v_c\|_{q_1}^{q_1}+(\gamma_{q_2}-1)
(t_{\mu}^{\pm}(c))^{q_2\gamma_{q_2}}\|v_c\|_{q_2}^{q_2}\right)\\
&<&0.
\end{eqnarray*}
It follows that $\Psi_{\mu}\left((v_c)_{t_{\mu}^{\pm}(c)}\right)$ is decreasing in terms of $c\in(a/2,b+\epsilon)$. In particular,
\begin{equation*}
\Psi_{\mu}(u_a)>\Psi_{\mu}\left((v_b)_{t_{\mu}^{\pm}(v_b)}\right)\ \text{for}\ b>a,
\end{equation*}
which implies that
\begin{equation}\label{eWu01}
\Psi_{\mu}(u_a)>\Psi_{\mu}\left((v_b)_{t_{\mu}^{\pm}(v_b)}\right)\geq m^{\pm}(b,\mu).
\end{equation}
By the arbitrariness of $u_a\in \mathcal{P}_{a,\mu}^{\pm}$, we obtain that  $m^{\pm}(a,\mu)\geq m^{\pm}(b,\mu)$.  If $m^{\pm}(a,\mu)$ is attained, then we can choose $u_a^{\pm}\in \mathcal{P}_{a,\mu}^{\pm}$ such that $\Psi_{\mu}\left(u_a^{\pm}\right)=m^{\pm}(a,\mu)$.  It follows from \eqref{eWu01} that $m^{\pm}(a,\mu)> m^{\pm}(b,\mu)$ for $b>a$.
\end{proof}

As stated in the introduction, in dealing with the compactness issue of the minimizing sequence of $\Psi_{\mu}(u)$ on $\mathcal{P}_{a,\mu}^{\pm}$, we shall introduce an additional parameter $\alpha\in(0, 1)$ in front of $\|\nabla u\|_p^p$ in $\Psi_{\mu}(u)$ to simulate the symmetric rearrangement of $u$ by defining
\begin{equation*}
\tilde{\Psi}_{\mu,\alpha}(u):=\frac{1}{p}\alpha\|\nabla u\|_p^p-\frac{\mu}{q_1}\|u\|_{q_1}^{q_1}-\frac{1}{q_2}\|u\|_{q_2}^{q_2}.
\end{equation*}
We define its fibering maps
\begin{equation*}
\tilde{\Phi}_{\mu,\alpha,u}(s):=\tilde{\Psi}_{\mu,\alpha}((u)_s)=\frac{1}{p}\alpha s^p\|\nabla u\|_p^p-\frac{\mu}{q_1}s^{q_1\gamma_{q_1}}\|u\|_{q_1}^{q_1}-\frac{1}{q_2}s^{q_2\gamma_{q_2}}\|u\|_{q_2}^{q_2}
\end{equation*}
as well as its $L^p$-Pohozaev manifold
\begin{equation*}
\tilde{\mathcal{P}}_{a,\mu,\alpha}:=\{u\in \mathcal{S}_a\mid \alpha\|\nabla u\|_p^p=\mu\gamma_{q_1}\|u\|_{q_1}^{q_1}+\gamma_{q_2}\|u\|_{q_2}^{q_2}\}
\end{equation*}
with
\begin{equation*}
\tilde{\mathcal{P}}_{a,\mu,\alpha}^+:=\{u\in \tilde{\mathcal{P}}_{a,\mu,\alpha}\mid \alpha p\|\nabla u\|_p^p>\mu q_1\gamma_{q_1}^2\|u\|_{q_1}^{q_1}+q_2\gamma_{q_2}^2\|u\|_{q_2}^{q_2}\},
\end{equation*}
\begin{equation*}
\tilde{\mathcal{P}}_{a,\mu,\alpha}^0:=\{u\in \tilde{\mathcal{P}}_{a,\mu,\alpha}\mid\alpha  p\|\nabla u\|_p^p=\mu q_1\gamma_{q_1}^2\|u\|_{q_1}^{q_1}+q_2\gamma_{q_2}^2\|u\|_{q_2}^{q_2}\},
\end{equation*}
\begin{equation*}
\tilde{\mathcal{P}}_{a,\mu,\alpha}^-:=\{u\in \tilde{\mathcal{P}}_{a,\mu,\alpha}\mid \alpha p\|\nabla u\|_p^p<\mu q_1\gamma_{q_1}^2\|u\|_{q_1}^{q_1}+q_2\gamma_{q_2}^2\|u\|_{q_2}^{q_2}\}.
\end{equation*}
\begin{lemma}\label{lem3.2}
Let $N\geq 2$, $1<p<N$, $p<q_1<p+\frac{p^2}{N}<q_2\leq p^*$ and $a,\mu,\alpha>0$. Then for any $u\in \mathcal{S}_a$, $\tilde{\Phi}_{\mu,\alpha,u}(s)$ is $C^{\infty}$ in $(0, +\infty)$.  Moreover, if $\mu\in \left(0, \tilde{\mu}_{\alpha}(u)\right)$, where
\begin{equation}\label{mu+alpha}
\tilde{\mu}_{\alpha}(u):=\frac{(q_2\gamma_{q_2}-p)(p-q_1\gamma_{q_1})^{\frac{p-q_1\gamma_{q_1}}{q_2\gamma_{q_2}-p}}}
{\gamma_{q_1}\gamma_{q_2}^{\frac{p-q_1\gamma_{q_1}}{q_2\gamma_{q_2}-p}}(q_2\gamma_{q_2}-q_1\gamma_{q_1})^{\frac{q_2\gamma_{q_2}-q_1\gamma_{q_1}}{q_2\gamma_{q_2}-p}}}
\frac{(\alpha\|\nabla u\|_p^p)^{\frac{q_2\gamma_{q_2}-q_1\gamma_{q_1}}{q_2\gamma_{q_2}-p}}}{\|u\|_{q_1}^{q_1}(\|u\|_{q_2}^{q_2})^{\frac{p-q_1\gamma_{q_1}}{q_2\gamma_{q_2}-p}}},
\end{equation}
then $\tilde{\Phi}_{\mu,\alpha,u}(s)$ only has two critical points $\tilde{t}^\pm_{\mu,\alpha}(u)$ satisfying
\begin{eqnarray*}
0<\tilde{t}^+_{\mu,\alpha}(u)<\tilde{s}_{\alpha}(u)<\tilde{t}^-_{\mu,\alpha}(u)<+\infty
\end{eqnarray*}
such that $(u)_{\tilde{t}^{\pm}_{\mu,\alpha}(u)}\in \tilde{\mathcal{P}}_{a,\mu,\alpha}^{\pm}$,  where $\tilde{t}^+_{\mu,\alpha}(u)$ is the local minimal point of $\tilde{\Phi}_{\mu,\alpha,u}(s)$ in $\left(0,\tilde{t}^-_{\mu,\alpha}(u)\right)$, $\tilde{t}^-_{\mu,\alpha}(u)$ is the local maximal point of
$\tilde{\Phi}_{\mu,\alpha,u}(s)$ in $\left(\tilde{t}^+_{\mu,\alpha}(u),+\infty\right)$ and
\begin{equation*}
\tilde{s}_{\alpha}(u):=\left(\frac{(p-q_1\gamma_{q_1})\alpha \|\nabla u\|_p^p}{\gamma_{q_2}(q_2\gamma_{q_2}-q_1\gamma_{q_1})\|u\|_{q_2}^{q_2}}\right)^{\frac{1}{q_2\gamma_{q_2}-p}}.
\end{equation*}
\end{lemma}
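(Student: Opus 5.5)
The plan is to repeat the proof of Proposition~\ref{pro2.1} with $\|\nabla u\|_p^p$ replaced by $\alpha\|\nabla u\|_p^p$ throughout; alternatively, one can reduce to that proposition directly. Smoothness is immediate: $\tilde{\Phi}_{\mu,\alpha,u}(s)$ is a finite sum of power functions $s^{p}$, $s^{q_1\gamma_{q_1}}$, $s^{q_2\gamma_{q_2}}$ with nonnegative coefficients, hence $C^\infty$ on $(0,+\infty)$.

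First I will compute the derivative and factor it as
\begin{equation*}
\tilde{\Phi}_{\mu,\alpha,u}'(s)=s^{q_1\gamma_{q_1}-1}\left(\tilde{h}_{\alpha,u}(s)-\mu\gamma_{q_1}\|u\|_{q_1}^{q_1}\right),
\end{equation*}
where $\tilde{h}_{\alpha,u}(s):=\alpha s^{p-q_1\gamma_{q_1}}\|\nabla u\|_p^p-\gamma_{q_2}s^{q_2\gamma_{q_2}-q_1\gamma_{q_1}}\|u\|_{q_2}^{q_2}$.

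By \eqref{e1.8} we have $0<p-q_1\gamma_{q_1}<q_2\gamma_{q_2}-q_1\gamma_{q_1}$. Hence $\tilde{h}_{\alpha,u}(0^+)=0$, $\tilde{h}_{\alpha,u}$ is strictly increasing on $(0,\tilde{s}_\alpha(u))$, strictly decreasing on $(\tilde{s}_\alpha(u),+\infty)$, and tends to $-\infty$ as $s\to+\infty$. The unique critical point $\tilde{s}_\alpha(u)$ is found by solving $\tilde{h}_{\alpha,u}'(s)=0$, which gives exactly the stated formula. Substituting it back yields
\begin{equation*}
\max_{s>0}\tilde{h}_{\alpha,u}=\left(\tilde{s}_\alpha(u)\right)^{p-q_1\gamma_{q_1}}\frac{q_2\gamma_{q_2}-p}{q_2\gamma_{q_2}-q_1\gamma_{q_1}}\alpha\|\nabla u\|_p^p.
\end{equation*}
The inequality $\max_{s>0}\tilde{h}_{\alpha,u}>\mu\gamma_{q_1}\|u\|_{q_1}^{q_1}$ then rearranges to $\mu<\tilde{\mu}_\alpha(u)$ with $\tilde{\mu}_\alpha(u)$ as in \eqref{mu+alpha}. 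The exponent bookkeeping is identical to that in Proposition~\ref{pro2.1}. A cleaner way to see this is to note that $\tilde{\mu}_\alpha(u)$ is $\mu(u)$ from \eqref{muu} with $\|\nabla u\|_p^p$ replaced by $\alpha\|\nabla u\|_p^p$.

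By the monotonicity structure of $\tilde{h}_{\alpha,u}$, the equation $\tilde{h}_{\alpha,u}(s)=\mu\gamma_{q_1}\|u\|_{q_1}^{q_1}>0$ has exactly two roots $\tilde{t}^+_{\mu,\alpha}(u)<\tilde{s}_\alpha(u)<\tilde{t}^-_{\mu,\alpha}(u)$. The sign of $\tilde{\Phi}'$ is negative near $0$, positive between the roots, and negative after the larger root. So $\tilde{t}^+$ is a local minimum on $(0,\tilde{t}^-)$ and $\tilde{t}^-$ is a local maximum on $(\tilde{t}^+,\infty)$.

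Membership in $\tilde{\mathcal{P}}^\pm_{a,\mu,\alpha}$ follows in three steps.
\begin{enumerate}
\item[(i)] Since $(u)_s\in\mathcal{S}_a$ and $\|\nabla (u)_s\|_p^p=s^p\|\nabla u\|_p^p$, $\|(u)_s\|_q^q=s^{q\gamma_q}\|u\|_q^q$, the identity $s\tilde{\Phi}'(s)=0$ is exactly the constraint defining $\tilde{\mathcal{P}}_{a,\mu,\alpha}$ for $(u)_s$.
\item[(ii)] Using the constraint, one computes $s^{2}\tilde{\Phi}''(s)$ at a critical point to be a positive multiple (namely $s^{q_1\gamma_{q_1}}$) of $\tilde{h}_{\alpha,u}'(s)$.
\item[(iii)] Translated back, this sign condition is the strict inequality defining $\tilde{\mathcal{P}}^\pm_{a,\mu,\alpha}$. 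Since $\tilde{h}_{\alpha,u}'>0$ at $\tilde{t}^+$ and $\tilde{h}_{\alpha,u}'<0$ at $\tilde{t}^-$, the plus and minus signs come out as claimed.
\end{enumerate}

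The only point needing care is step (ii), the identification of the sign of $\tilde{\Phi}''$ with the defining inequalities; everything else is routine. For it I will use the relation $\tilde{\Phi}'(s)=s^{q_1\gamma_{q_1}-1}(\tilde{h}-\mu\gamma_{q_1}\|u\|_{q_1}^{q_1})$: differentiating, the term involving $(\tilde{h}-\mu\gamma_{q_1}\|u\|_{q_1}^{q_1})$ vanishes at a critical point, which gives step (ii) immediately.
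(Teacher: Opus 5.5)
Your proposal is correct and takes the paper's route. The paper's proof just says to repeat Proposition~\ref{pro2.1}(a) with $\|\nabla u\|_p^p$ replaced by $\alpha\|\nabla u\|_p^p$, which is exactly your factorization $\tilde{\Phi}_{\mu,\alpha,u}'(s)=s^{q_1\gamma_{q_1}-1}(\tilde{h}_{\alpha,u}(s)-\mu\gamma_{q_1}\|u\|_{q_1}^{q_1})$ together with maximizing $\tilde{h}_{\alpha,u}$ at $\tilde{s}_{\alpha}(u)$; your second-derivative step also spells out the $\tilde{\mathcal{P}}^{\pm}_{a,\mu,\alpha}$ membership that the paper leaves implicit.

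There are two harmless slips. At a critical point $\tilde{\Phi}_{\mu,\alpha,u}''(s)=s^{q_1\gamma_{q_1}-1}\tilde{h}_{\alpha,u}'(s)$, so the positive factor in $s^2\tilde{\Phi}_{\mu,\alpha,u}''(s)$ is $s^{q_1\gamma_{q_1}+1}$, not $s^{q_1\gamma_{q_1}}$. Also, the coefficients of $\tilde{\Phi}_{\mu,\alpha,u}$ are not all nonnegative, though smoothness on $(0,+\infty)$ holds regardless.
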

\begin{proof}
The proof is the same as the proof of the conclusion $(a)$ of  Proposition \ref{pro2.1} with the trivial modifications by replacing $\|\nabla u\|_p^p$ by $\alpha\|\nabla u\|_p^p$.  Thus, we omit it here.
\end{proof}

With Lemma~\ref{lem3.2} in hands, we have the following uniform estimates.
\begin{lemma}\label{lem3.3}
Let $N\geq 2$, $1<p<N$, $p<q_1<p+\frac{p^2}{N}<q_2\leq p^*$, $a>0$ and $u\in \mathcal{P}_{a,\mu}^{\pm}$. If $\alpha_0\in (0,1)$ and $\mu>0$ satisfy
$\mu\in \left(0, \tilde{\mu}_{\alpha_0}(u)\right)$, where $\tilde{\mu}_{\alpha_0}(u)$ is given by \eqref{mu+alpha}, then there exists  $\epsilon>0$ small enough such that
\begin{enumerate}
\item[$(a)$]\quad $\tilde{t}^{\pm}_{\mu,\alpha}(u)$ are $C^1$ in terms of $\alpha\in (\alpha_0-\epsilon,1+\epsilon)$ with $\tilde{t}^+_{\mu,\alpha}(u)$ decreasing and $\tilde{t}^-_{\mu,\alpha}(u)$ increasing.
\item[$(b)$]\quad $\tilde{\Psi}_{\mu,\alpha}\left((u)_{\tilde{t}^{\pm}_{\mu,\alpha}(u)}\right)$ is $C^1$ and increasing in terms of $\alpha\in (\alpha_0-\epsilon,1+\epsilon)$. In particular,
\begin{equation*}
\tilde{\Psi}_{\mu,\alpha_0}\left((u)_{\tilde{t}^{\pm}_{\mu,\alpha_0}(u)}\right)<\tilde{\Psi}_{\mu,1}\left((u)_{\tilde{t}^{\pm}_{\mu,1}(u)}\right)=\Psi_{\mu}(u).
\end{equation*}
\end{enumerate}
\end{lemma}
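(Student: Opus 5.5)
The plan is to apply the implicit function theorem to the Pohozaev constraint in the variables $(\alpha,\tau)$, exactly as in the proof of Lemma~\ref{lem3.1}~$(b)$, and then to differentiate the energy along the resulting curve. Fix $u\in\mathcal{P}_{a,\mu}^{\pm}$.

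\textbf{Step 1: the curves $\tilde t^{\pm}_{\mu,\alpha}(u)$ are defined on $(\alpha_0-\epsilon,1+\epsilon)$.} Since $\tilde{\mu}_{\alpha}(u)=\alpha^{\frac{q_2\gamma_{q_2}-q_1\gamma_{q_1}}{q_2\gamma_{q_2}-p}}\mu(u)$ is strictly increasing in $\alpha$, the hypothesis $\mu<\tilde\mu_{\alpha_0}(u)$ propagates to all $\alpha\ge\alpha_0$. By continuity it also holds on $(\alpha_0-\epsilon,1+\epsilon)$ for small $\epsilon>0$. Lemma~\ref{lem3.2} then provides the two nondegenerate critical points $\tilde t^{\pm}_{\mu,\alpha}(u)$ for every such $\alpha$. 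At $\alpha=1$ we have $\tilde t^{\pm}_{\mu,1}(u)=1$, because $u\in\mathcal{P}^{\pm}_{a,\mu}$ and the critical points are unique in their respective ranges.

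\textbf{Step 2: implicit function theorem and monotonicity of $\tilde t^{\pm}$.} Set
\begin{equation*}
G(\alpha,\tau):=\alpha\tau^{p}\|\nabla u\|_p^p-\mu\gamma_{q_1}\tau^{q_1\gamma_{q_1}}\|u\|_{q_1}^{q_1}-\gamma_{q_2}\tau^{q_2\gamma_{q_2}}\|u\|_{q_2}^{q_2},
\end{equation*}
which equals $\tau\tilde\Phi'_{\mu,\alpha,u}(\tau)$. Then $G(\alpha,\tilde t^{\pm}_{\mu,\alpha}(u))=0$. Moreover, $\tau\partial_\tau G=\alpha p\tau^p\|\nabla u\|_p^p-\mu q_1\gamma_{q_1}^2\tau^{q_1\gamma_{q_1}}\|u\|_{q_1}^{q_1}-q_2\gamma_{q_2}^2\tau^{q_2\gamma_{q_2}}\|u\|_{q_2}^{q_2}$, which is $>0$ on the $+$ branch and $<0$ on the $-$ branch, by the definition of $\tilde{\mathcal{P}}^{\pm}_{a,\mu,\alpha}$. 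The implicit function theorem therefore gives $C^1$ dependence on $\alpha$, with
\begin{equation*}
\frac{d\tilde t^{\pm}}{d\alpha}=-\frac{\partial_\alpha G}{\partial_\tau G}=-\frac{\tau^{p}\|\nabla u\|_p^p}{\partial_\tau G}\Big|_{\tau=\tilde t^{\pm}}.
\end{equation*}
The numerator is positive, so $\tilde t^+$ is decreasing and $\tilde t^-$ is increasing. This proves $(a)$.

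\textbf{Step 3: monotonicity of the energy.} The function $f(\alpha):=\tilde\Phi_{\mu,\alpha,u}(\tilde t^{\pm}_{\mu,\alpha}(u))$ is $C^1$ by the chain rule. Since $\partial_s\tilde\Phi_{\mu,\alpha,u}=0$ at $s=\tilde t^{\pm}$, the envelope argument gives
\begin{equation*}
f'(\alpha)=\partial_\alpha\tilde\Phi_{\mu,\alpha,u}(s)\big|_{s=\tilde t^{\pm}}=\frac1p\big(\tilde t^{\pm}_{\mu,\alpha}(u)\big)^p\|\nabla u\|_p^p>0.
\end{equation*}
Hence $f$ is increasing. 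Combined with $\tilde t^{\pm}_{\mu,1}(u)=1$ and $\tilde\Psi_{\mu,1}=\Psi_\mu$, this yields the strict inequality in $(b)$.

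The only delicate point is Step 1. We must make sure the critical points remain on the correct branch, with nonvanishing $\partial_\tau G$, throughout the whole interval and not merely near $\alpha=1$. This follows from the monotonicity of $\tilde\mu_\alpha(u)$ in $\alpha$ together with Lemma~\ref{lem3.2}: for each such $\alpha$ there are exactly two critical points, both nondegenerate, and continuity keeps each implicit branch identified with $\tilde t^{+}$ or $\tilde t^{-}$ respectively.
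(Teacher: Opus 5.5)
Your proposal is correct and is essentially the paper's proof. The paper also uses the monotonicity of $\tilde{\mu}_{\alpha}(u)$ in $\alpha$ to keep $\mu<\tilde{\mu}_{\alpha}(u)$ on $(\alpha_0-\epsilon,1+\epsilon)$, applies the implicit function theorem to the same function $H(\alpha,t)$ (your $G$), and obtains $\frac{d}{d\alpha}\tilde{\Psi}_{\mu,\alpha}\bigl((u)_{\tilde{t}^{\pm}_{\mu,\alpha}(u)}\bigr)=\frac{1}{p}\bigl(\tilde{t}^{\pm}_{\mu,\alpha}(u)\bigr)^p\|\nabla u\|_p^p>0$ from the vanishing of $\partial_s\tilde{\Phi}_{\mu,\alpha,u}$ at the critical point; your remarks that $\tilde{\mu}_{\alpha}(u)=\alpha^{\frac{q_2\gamma_{q_2}-q_1\gamma_{q_1}}{q_2\gamma_{q_2}-p}}\mu(u)$ and that $\tilde{t}^{\pm}_{\mu,1}(u)=1$ only make explicit what the paper leaves implicit.
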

\begin{proof}
$(a)$\quad Since $\tilde{\mu}_{\alpha}(u)$ is increasing as a function of $\alpha$, there exists $\epsilon>0$ small enough such that $\mu\in \left(0, \tilde{\mu}_{\alpha}(u)\right)$ for any $\alpha\in (\alpha_0-\epsilon,1+\epsilon)$.  By Lemma \ref{lem3.2} there exists $\tilde{t}^{\pm}_{\mu,\alpha}(u)$ such that $(u)_{\tilde{t}^{\pm}_{\mu,\alpha}(u)}\in \tilde{\mathcal{P}}_{a,\mu,\alpha}^{\pm}$.  The remaining of the proof is similar to that of Lemma~\ref{lem3.1} by applying the implicit function theorem to the function
\begin{equation*}
H(\alpha,t):=\alpha t^{p}\|\nabla u\|_p^p-\mu\gamma_{q_1}t^{q_1\gamma_{q_1}}\|u\|_{q_1}^{q_1}-\gamma_{q_2}t^{q_2\gamma_{q_2}}\|u\|_{q_2}^{q_2}.
\end{equation*}
Thus, we omit it here.

\medskip

$(b)$\quad By the conclusion of $(a)$, $\tilde{\Psi}_{\mu,\alpha}\left((u)_{\tilde{t}^{\pm}_{\mu,\alpha}(u)}\right)$  is $C^1$  in terms of $\alpha\in (\alpha_0-\epsilon,1+\epsilon)$. Moreover, by $(u)_{\tilde{t}^{\pm}_{\mu,\alpha}(u)}\in \tilde{\mathcal{P}}_{a,\mu,\alpha}^{\pm}$, we also have
\begin{eqnarray*}
\frac{d \tilde{\Psi}_{\mu,\alpha}\left((u)_{\tilde{t}^{\pm}_{\mu,\alpha}(u)}\right)}{d \alpha}
&=&\frac{\partial \tilde{\Psi}_{\mu,\alpha}\left((u)_{\tilde{t}^{\pm}_{\mu,\alpha}(u)}\right)}{\partial \alpha}+\frac{\partial \tilde{\Psi}_{\mu,\alpha}\left((u)_{\tilde{t}^{\pm}_{\mu,\alpha}(u)}\right)}{\partial \tilde{t}^{\pm}_{\mu,\alpha}(u)}\frac{d\left(\tilde{t}^{\pm}_{\mu,\alpha}(u)\right)}{d\alpha}\\
&=&\frac{1}{p}\left(\tilde{t}^{\pm}_{\mu,\alpha}(u)\right)^p\|\nabla u\|_p^p\\
&>&0.
\end{eqnarray*}
Thus, $\tilde{\Psi}_{\mu,\alpha}\left((u)_{\tilde{t}^{\pm}_{\mu,\alpha}(u)}\right)$ is increasing in terms of $\alpha\in (\alpha_0-\epsilon,1+\epsilon)$.
\end{proof}

\subsection{The existence of ground-state solutions} In this subsection, we shall construct a solution of \eqref{e1.1} by studying
the variational problem
\begin{equation}\label{e3.1}
m^{+}(a,\mu):=\inf_{u\in \mathcal{P}_{a,\mu}^{+}}\Psi_{\mu}(u).
\end{equation}
We begin with the following simple property of $m^{+}(a,\mu)$.
\begin{lemma}\label{lem2.1}
Let $N\geq 2$, $1<p<N$ and $p<q_1<p+\frac{p^2}{N}<q_2\leq p^*$. Then $m^{+}(a,\mu)>-\infty$ for all $a,\mu>0$.
\end{lemma}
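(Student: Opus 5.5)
The plan is to show that on $\mathcal{P}_{a,\mu}^{+}$ the gradient norm is uniformly bounded, and then to bound $\Psi_\mu$ from below by the Gagliardo--Nirenberg inequality. If $\mathcal{P}_{a,\mu}^{+}=\emptyset$, then $m^+(a,\mu)=+\infty$ by convention and there is nothing to prove, so we assume it is nonempty.

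Take $u\in\mathcal{P}_{a,\mu}^{+}$. Subtract $q_2\gamma_{q_2}$ times the Pohozaev identity $\|\nabla u\|_p^p=\mu\gamma_{q_1}\|u\|_{q_1}^{q_1}+\gamma_{q_2}\|u\|_{q_2}^{q_2}$ from the defining inequality of $\mathcal{P}_{a,\mu}^+$. This eliminates the $q_2$-term and gives
\begin{equation*}
(p-q_2\gamma_{q_2})\|\nabla u\|_p^p>\mu\gamma_{q_1}(q_1\gamma_{q_1}-q_2\gamma_{q_2})\|u\|_{q_1}^{q_1}.
\end{equation*}
By \eqref{e1.8} we have $q_2\gamma_{q_2}>p>q_1\gamma_{q_1}$, so both coefficients are negative. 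Hence
\begin{equation*}
(q_2\gamma_{q_2}-p)\|\nabla u\|_p^p<\mu\gamma_{q_1}(q_2\gamma_{q_2}-q_1\gamma_{q_1})\|u\|_{q_1}^{q_1}.
\end{equation*}
Next apply the Gagliardo--Nirenberg inequality with $\|u\|_p=a$:
\begin{equation*}
\|u\|_{q_1}^{q_1}\le C_{N,p,q_1}^{q_1}a^{q_1(1-\gamma_{q_1})}\|\nabla u\|_p^{q_1\gamma_{q_1}}.
\end{equation*}
Since $q_1\gamma_{q_1}<p$, this yields $\|\nabla u\|_p^{p-q_1\gamma_{q_1}}\le C(N,p,q_1,q_2,a,\mu)$. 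So $\|\nabla u\|_p\le K$ uniformly on $\mathcal{P}_{a,\mu}^+$.

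For the energy, use the Pohozaev identity to eliminate the $q_2$-term from $\Psi_\mu$:
\begin{equation*}
\Psi_\mu(u)=\Big(\frac1p-\frac1{q_2\gamma_{q_2}}\Big)\|\nabla u\|_p^p-\frac{\mu}{q_1}\Big(1-\frac{q_1\gamma_{q_1}}{q_2\gamma_{q_2}}\Big)\|u\|_{q_1}^{q_1}.
\end{equation*}
The first term is nonnegative. The second term is bounded in absolute value by $C\|\nabla u\|_p^{q_1\gamma_{q_1}}\le CK^{q_1\gamma_{q_1}}$, using Gagliardo--Nirenberg again. Therefore $\Psi_\mu(u)\ge -CK^{q_1\gamma_{q_1}}>-\infty$ uniformly in $u\in\mathcal{P}_{a,\mu}^+$, which gives $m^+(a,\mu)>-\infty$.

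This argument works for every $\mu>0$, since it never needs $\mu<\mu_a^*$. It also covers $q_2=p^*$ (where $\gamma_{q_2}=1$), because the $q_2$-term is eliminated and never estimated. The only delicate point is getting the signs right when combining the Pohozaev identity with the $\mathcal{P}^+$ inequality; once the gradient bound is in hand, the rest is routine.
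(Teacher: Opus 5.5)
Your proposal is correct and takes essentially the same approach as the paper. You combine the Pohozaev identity with the $\mathcal{P}_{a,\mu}^+$ inequality to get $(q_2\gamma_{q_2}-p)\|\nabla u\|_p^p<\mu\gamma_{q_1}(q_2\gamma_{q_2}-q_1\gamma_{q_1})\|u\|_{q_1}^{q_1}$, use Gagliardo--Nirenberg with $q_1\gamma_{q_1}<p$ to bound $\|\nabla u\|_p$, and then bound the Pohozaev-reduced form of $\Psi_\mu$ from below — which is exactly the paper's argument (your sign bookkeeping checks out, and the argument indeed needs neither $\mu<\mu_a^*$ nor $q_2<p^*$).
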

\begin{proof}
For any $u\in  \mathcal{P}_{a,\mu}^+$, we have
\begin{equation*}
(q_2\gamma_{q_2}-p)\|\nabla u\|_p^p<\mu\gamma_{q_1}(q_2\gamma_{q_2}-q_1\gamma_{q_1})\|u\|_{q_1}^{q_1}.
\end{equation*}
It follows from the Gagliardo-Nirenberg  inequality that
\begin{equation*}
\|\nabla u\|_p^p\lesssim \mu\|u\|_{q_1}^{q_1}\lesssim \mu a^{q_1(1-\gamma_{q_1})}\|\nabla u\|_p^{q_1\gamma_{q_1}},
\end{equation*}
which implies $\|\nabla u\|_p^p\lesssim1$.  Then by $u\in  \mathcal{P}_{a,\mu}^+$ and the Gagliardo-Nirenberg inequality once more, we obtain that
\begin{equation*}
\Psi_{\mu}(u)=\left(\frac{1}{p}-\frac{1}{q_2\gamma_{q_2}}\right)\|\nabla u\|_p^p+\mu\gamma_{q_1}\left(\frac{1}{q_2\gamma_{q_2}}-\frac{1}{q_1\gamma_{q_1}}\right)\|u\|_{q_1}^{q_1}\gtrsim-1,
\end{equation*}
which completes the proof.
\end{proof}

With Lemma~\ref{lem2.1} in hands, we can prove the following result.
\begin{proposition}\label{pro3.1}
Let $N\geq 2$, $1<p<N$, $p<q_1<p+\frac{p^2}{N}<q_2\leq p^*$, $a>0$ and $0<\mu<\mu_{a}^*$. Then the variational problem (\ref{e3.1}) is achieved by some
$u_{a,\mu,+}$, which is real valued, positive, radially symmetric and radially decreasing. Moreover, $u_{a,\mu,+}$ also satisfies the equation (\ref{e1.1}) in the weak sense
for a suitable Lagrange multiplier $\lambda_{a,\mu,+}<0$.
\end{proposition}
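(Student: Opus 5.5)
We follow the standard minimization scheme on $\mathcal{P}_{a,\mu}^+$. The new ingredient is Lemma~\ref{lem3.3}, which ensures that symmetrization preserves minimizing sequences.

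\emph{Step 1: reduction to radial, nonincreasing minimizing sequences.} Let $\{u_n\}\subset\mathcal{P}_{a,\mu}^+$ be a minimizing sequence for $m^+(a,\mu)$. By Lemma~\ref{lem2.1}, $m^+(a,\mu)$ is finite, and the proof of that lemma gives $\|\nabla u_n\|_p\lesssim1$. Replacing $u_n$ by $|u_n|$, we may assume $u_n\geq0$. Let $u_n^*$ be the Schwarz rearrangement. Then $\|u_n^*\|_q=\|u_n\|_q$ for all $q$, and by P\'olya--Szeg\H{o} $\|\nabla u_n^*\|_p^p=\alpha_n\|\nabla u_n\|_p^p$ for some $\alpha_n\in(0,1]$. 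Hence $\tilde\Psi_{\mu,\alpha_n}((u_n)_s)=\Psi_\mu((u_n^*)_s)$ for all $s>0$, and $\tilde{\mathcal{P}}^+_{a,\mu,\alpha_n}$ corresponds exactly to $\mathcal{P}^+_{a,\mu}$ for $u_n^*$.

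Since $\mu<\mu_a^*\leq\mu(u_n^*)=\tilde\mu_{\alpha_n}(u_n)$, Lemma~\ref{lem3.2} applies, and Lemma~\ref{lem3.3} with $\alpha_0=\alpha_n$ gives
\[\Psi_\mu\bigl((u_n^*)_{t^+_\mu(u_n^*)}\bigr)=\tilde\Psi_{\mu,\alpha_n}\bigl((u_n)_{\tilde t^+_{\mu,\alpha_n}(u_n)}\bigr)\leq\Psi_\mu(u_n).\]
So $v_n:=(u_n^*)_{t^+_\mu(u_n^*)}\in\mathcal{P}^+_{a,\mu}$ is a radial, nonnegative, nonincreasing minimizing sequence. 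It is again bounded in $W^{1,p}$ by the same argument.

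\emph{Step 2: compactness.} Up to a subsequence, $v_n\rightharpoonup v$ in $W^{1,p}_{rad}$ and $v_n\to v$ in $L^{q_1}$. If $q_2<p^*$, then also $v_n\to v$ in $L^{q_2}$.

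First, $v\neq0$. Indeed $m^+(a,\mu)<0$ by Lemma~\ref{lem3.1}(a). Writing $\Psi_\mu$ on $\mathcal{P}$ as in Lemma~\ref{lem2.1}, a negative limit forces $\liminf\|v_n\|_{q_1}>0$.

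The critical case $q_2=p^*$ is the main obstacle, because the $L^{p^*}$ part may lose mass. We use the Brezis--Lieb splitting $w_n=v_n-v$. The terms $\|\nabla w_n\|_p^p$ and $\|w_n\|_{p^*}^{p^*}$ decouple, with Brezis--Lieb for the gradient justified by a.e. convergence of gradients. On $\mathcal{P}^+$ the quantity $\|\nabla v_n\|_p$ is small: the constraint forces the gradient to lie below the local maximum of the fibering map. We then argue as follows. If $\|\nabla w_n\|_p\not\to0$, the Sobolev inequality shows $w_n$ would need $\|\nabla w_n\|_p^p\gtrsim S^{N/p}$, which is incompatible with the $\mathcal{P}^+$ bound. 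We compare via Proposition~\ref{pro2.1}, noting that the minimum point $t^+$ of the fibering map lies in the region where the gradient term dominates the $p^*$ term. Otherwise $\|w_n\|_{p^*}\to0$, giving strong convergence in $L^{p^*}$.

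Next set $b=\|v\|_p\in(0,a]$. By weak lower semicontinuity, $v$ satisfies $\|\nabla v\|_p^p\leq\mu\gamma_{q_1}\|v\|_{q_1}^{q_1}+\gamma_{q_2}\|v\|_{q_2}^{q_2}$. Hence the fibering map of $v$ has $t^+_\mu(v)\geq1$, and
\[\Psi_\mu\bigl((v)_{t^+_\mu(v)}\bigr)\leq\Psi_\mu(v)\leq\liminf\Psi_\mu(v_n)=m^+(a,\mu).\]
Here we use that $\Phi_{\mu,v}$ decreases on $(0,t^+)$ and that $\mu<\mu_b^*$ since $\mu_b^*\geq\mu_a^*$. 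Combining with Lemma~\ref{lem3.1}(b): if $b<a$, then $m^+(b,\mu)\leq m^+(a,\mu)$ with strict inequality whenever attained. This yields a contradiction, so $b=a$. Then $v_n\to v$ strongly in $W^{1,p}$ and $v\in\mathcal{P}^+_{a,\mu}$ attains $m^+(a,\mu)$.

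\emph{Step 3: Euler--Lagrange equation.} Since $\mathcal{P}^0_{a,\mu}=\emptyset$, $\mathcal{P}^+_{a,\mu}$ is a natural constraint. Its defining function $G(u)=\Phi'_{\mu,u}(1)$ has a derivative independent of $\|u\|_p$ on $\mathcal{S}_a$, since the second derivative of the fibering map is nonzero. A minimizer on $\mathcal{P}^+$ is therefore a constrained critical point of $\Psi_\mu$ on $\mathcal{S}_a$: the Lagrange multiplier attached to $G$ vanishes by testing along the fiber $(u)_s$. This gives $\lambda$ with $-\Delta_p u=\lambda u^{p-1}+\mu u^{q_1-1}+u^{q_2-1}$.

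Testing the equation with $u$ and using $u\in\mathcal{P}$ gives
\[\lambda a^p=\mu(\gamma_{q_1}-1)\|u\|_{q_1}^{q_1}+(\gamma_{q_2}-1)\|u\|_{q_2}^{q_2}<0.\]
Finally, positivity follows from the strong maximum principle of V\'azquez, since $u\geq0$ and $u\not\equiv0$. Radial symmetry and radial monotonicity hold by construction.
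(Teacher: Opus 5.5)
\textbf{Gap: the critical case $q_2=p^*$ in Step 2.} You work with a bare minimizing sequence and never produce a Palais--Smale sequence. Three things break as a result.

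First, Lemma~\ref{lem1.3} (a.e.\ convergence of gradients) applies only to bounded (PS) sequences of $\Psi_\mu|_{\mathcal{S}_a}$. For a minimizing sequence and $p\neq2$ you therefore have no Brezis--Lieb splitting of $\|\nabla v_n\|_p^p$.

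Second, even granting the splitting, your dichotomy ``$w_n\to0$ or $\|\nabla w_n\|_p^p\gtrsim S^{N/p}$'' needs the identity $\|\nabla w_n\|_p^p=\|w_n\|_{p^*}^{p^*}+o_n(1)$. In the paper this comes from subtracting the Pohozaev identity of the weak limit, which solves the equation with $\lambda_0<0$ and hence lies in $\mathcal{P}_{a_1,\mu}$ by Lemma~\ref{lem1.5}. Your $v$ is not known to solve anything, so this remainder relation is unavailable. For the same reason, $\|\nabla v\|_p^p\le\mu\gamma_{q_1}\|v\|_{q_1}^{q_1}+\|v\|_{p^*}^{p^*}$ does not follow from lower semicontinuity, because the $L^{p^*}$ term goes the wrong way.

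Third, the ``$\mathcal{P}^+$ smallness'' you invoke is not available. On $\mathcal{P}^+_{a,\mu}$ one only has $\|\nabla u\|_p^{p-q_1\gamma_{q_1}}\lesssim\mu a^{q_1(1-\gamma_{q_1})}$ (cf.\ \eqref{eq1001}), and nothing in $\mu<\mu_a^*$ puts this below the Sobolev threshold. The fact that ``the gradient term dominates the $p^*$ term'' at $t^+$ is the ratio bound $\|u\|_{p^*}^{p^*}<\frac{p-q_1\gamma_{q_1}}{p^*-q_1\gamma_{q_1}}\|\nabla u\|_p^p$. That is not an upper bound on $\|\nabla u\|_p$.

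\textbf{Missing idea: the paper's Ekeland step.} The paper first shows that $m^+(a,\mu)$ is still the infimum over the neighbourhood $\mathcal{P}^{\delta,+}_{a,\mu}\cap W^{1,p}_{rad}$; see \eqref{e3.7}--\eqref{e3.9}, using $t^+_\mu(\varphi_n)\to1$ while $t^-_\mu(\varphi_n)$ stays away from $1$. Ekeland's principle then gives a radial (PS)$_{m^+(a,\mu)}$ sequence $w_n$ with $w_n-u_n\to0$, to which Lemma~\ref{lem1.3} applies. The weak limit $u_0\neq0$ is a solution with $\lambda_0<0$, so $u_0\in\mathcal{P}_{a_1,\mu}$. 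The remainder satisfies $\|\nabla(w_n-u_0)\|_p^p=\|w_n-u_0\|_{p^*}^{p^*}+o_n(1)$, so its energy is $\frac1N\lim\|\nabla(w_n-u_0)\|_p^p\geq0$. Since $\Psi_\mu(u_0)\ge m^+(a,\mu)$, strictly if $a_1<a$ by Lemma~\ref{lem3.1}(b), this forces $a_1=a$ and strong convergence. No smallness is needed.

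\textbf{Subcritical case.} Your argument there (lower semicontinuity, fibering, then natural constraint) is a legitimate alternative to the Ekeland route, but fix two slips.
\begin{enumerate}
\item[(i)] Lemma~\ref{lem3.1}(b) gives $m^+(b,\mu)\ge m^+(a,\mu)$ for $b<a$, not $\le$. The contradiction comes from the pointwise strict inequality applied to $u_b=(v)_{t^+_\mu(v)}$, not from attainment of $m^+(b,\mu)$.
\item[(ii)] $P(v)\le0$ alone only gives $1\notin(t^+_\mu(v),t^-_\mu(v))$. To get $t^+_\mu(v)\ge1$, pass the $\mathcal{P}^+$ inequality of $v_n$ to the limiting fiber $g$ whose gradient coefficient is $\lim\|\nabla v_n\|_p^p$. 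Then $g'(1)=0\le g''(1)$, so $g'<0$ on $(0,1)$, and hence $\Phi'_{\mu,v}\le g'<0$ there.
\end{enumerate}
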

\begin{proof}
Since $m^{+}(a,\mu)>-\infty$ by Lemma~\ref{lem2.1}, we can choose $\{u_n\}\subset \mathcal{P}_{a,\mu}^+$ be a minimizing sequence of $m^+(a,\mu)$.   We recall that the Schwarz symmetric rearrangement of $\{u_n\}$, say $\{u_n^*\}$, is real valued, nonnegative, radially symmetric, radially decreasing and satisfies
\begin{equation*}
\|u_n^*\|_q=\|u_n\|_q\ \text{for}\ q\in [p,p^*],\ \text{and}\  \|\nabla u_n^*\|_p\leq \|\nabla u_n\|_p.
\end{equation*}
Denote $\|\nabla u_n^*\|_p^p=\alpha_n \|\nabla u_n\|_p^p$.  If $\alpha_n=1$, then  $\{u_n^*\}\subset \mathcal{P}_{a,\mu}^+$.  If $\alpha_n<1$, then by $\mu<\mu(u_n^*)=\tilde{\mu}_{\alpha_n}(u_n)$ and Lemma \ref{lem3.3}, we obtain that
\begin{equation*}
\Psi_{\mu}\left((u_n^*)_{t^+_\mu(u_n^*)}\right)=\tilde{\Psi}_{\mu,\alpha_n}\left((u_n)_{\tilde{t}^{+}_{\mu,\alpha_n}(u_n)}\right)<\tilde{\Psi}_{\mu,1}\left((u_n)_{\tilde{t}^{+}_{\mu,1}(u_n)}\right)=\Psi_{\mu}(u_n).
\end{equation*}
Hence,  $\left\{(u_n^*)_{t^+_\mu(u_n^*)}\right\}\subset \mathcal{P}_{a,\mu}^+$ is also a minimizing sequence of $m^+(a,\mu)$ and without loss of generality, we may assume that $\{u_n\}$ is real valued, nonnegative, radially symmetric and radially decreasing.  Moreover, we also have
\begin{equation}\label{e3.6}
m^{+}(a,\mu)=m^{+}_{rad}(a,\mu),
\end{equation}
where $m^{+}_{rad}(a,\mu)$ is given by \eqref{radial}.  Since $\{u_n\}\subset \mathcal{P}_{a,\mu}^+$, that is,
\begin{equation*}
\begin{cases}
&\|\nabla u_n\|_p^p=\mu\gamma_{q_1}\|u_n\|_{q_1}^{q_1}+\gamma_{q_2}\|u_n\|_{q_2}^{q_2},\\
&p\|\nabla u_n\|_p^p>\mu q_1\gamma_{q_1}^2\|u_n\|_{q_1}^{q_1}+q_2\gamma_{q_2}^2\|u_n\|_{q_2}^{q_2},
\end{cases}
\end{equation*}
we obtain that
\begin{equation}\label{e3.5}
\left\{\aligned
&\gamma_{q_2}(q_2\gamma_{q_2}-q_1\gamma_{q_1})\|u_n\|_{q_2}^{q_2}<(p-q_1\gamma_{q_1})\|\nabla u_n\|_p^p,\\
&(q_2\gamma_{q_2}-p)\|\nabla u_n\|_p^p<\mu\gamma_{q_1}(q_2\gamma_{q_2}-q_1\gamma_{q_1})\|u_n\|_{q_1}^{q_1},\\
&\gamma_{q_2}(q_2\gamma_{q_2}-p)\|u_n\|_{q_2}^{q_2}<\mu\gamma_{q_1}(p-q_1\gamma_{q_1})\|u_n\|_{q_1}^{q_1}.
\endaligned\right.
\end{equation}
Since $q_1\gamma_{q_1}<p$, by the Gagliardo-Nirenberg  inequality,
\begin{equation}\label{e3.56}
\|\nabla u_n\|_p^p\lesssim \mu\|u_n\|_{q_1}^{q_1}\lesssim \mu a^{q_1(1-\gamma_{q_1})}\|\nabla u_n\|_p^{q_1\gamma_{q_1}},
\end{equation}
which implies $\|\nabla u_n\|_p^p\lesssim 1$.  Hence, $\{u_n\}$ is bounded in $W^{1,p}\left(\mathbb{R}^N\right)$. Since $m^+(a,\mu)<0$, we obtain that $\|\nabla u_n\|_p^p\gtrsim 1$, which, together with (\ref{e3.5}), also implies that $\|u_n\|_{q_1}^{q_1}\gtrsim 1$.  Moreover, if $\|u_n\|_{q_2}^{q_2}\to0$ as $n\to+\infty$, then by the H\"older inequality, we also have $\|u_n\|_{q_1}^{q_1}\to0$ as $n\to+\infty$, which is impossible.  Thus, we also have $\|u_n\|_{q_2}^{q_2}\gtrsim 1$.  We claim that there exists $\delta>0$ sufficiently small such that
\begin{equation}\label{e3.7}
\inf_{\mathcal{P}_{a,\mu}^{\delta,+}}\Psi_{\mu}(u)=\inf_{\mathcal{P}_{a,\mu}^{+}}\Psi_{\mu}(u)=m^{+}(a,\mu),
\end{equation}
where $\mathcal{P}_{a,\mu}^{\delta,+}:=\left\{u\in \mathcal{S}_a\mid \text{dist}_{W^{1,p}}(u,\mathcal{P}_{a,\mu}^{+})\leq \delta\right\}$ with
\begin{equation*}
\text{dist}_{W^{1,p}}\left(u,\mathcal{P}_{a,\mu}^{+}\right)=\inf_{v\in \mathcal{P}_{a,\mu}^{+}}\|u-v\|_{W^{1,p}}
\end{equation*}
and $\|\cdot\|_{W^{1,p}}=\left(\|\nabla\cdot\|_{p}^p+\|\cdot\|_{p}^p\right)^{\frac{1}{p}}$ the usual norm in the Sobolev space $W^{1,p}\left(\mathbb{R}^N\right)$.
It is obvious that
\begin{equation*}
\inf_{\mathcal{P}_{a,\mu}^{\delta,+}}\Psi_{\mu}(u)\leq \inf_{\mathcal{P}_{a,\mu}^{+}}\Psi_{\mu}(u)
\end{equation*}
for any $\delta>0$.  Suppose the contrary.  Then by Lemma~\ref{lem2.1}, there exists $\delta_n\to0$ as $n\to+\infty$,  $\varphi_n\in \mathcal{S}_a$ and $\phi_n\in \mathcal{P}_{a,\mu}^+$ such that $\varphi_n-\phi_n\to0$ strongly in $W^{1,p}\left(\mathbb{R}^N\right)$ as $n\to+\infty$ and
\begin{eqnarray*}
\Psi_\mu(\varphi_n)<m^+(a,\mu)
\end{eqnarray*}
for all $n$.  Since $0<\mu<\mu^{*}_{a}$, by the conclusion $(a)$ of Proposition~\ref{pro2.1}, there exists $t_{\mu}^+(\varphi_n)>0$ such that $\left(\varphi_n\right)_{t^+_{\mu}(\varphi_n)}\in\mathcal{P}_{a,\mu}^+$.  It follows from $\phi_n\in \mathcal{P}_{a,\mu}^+$ and $\varphi_n-\phi_n\to0$ strongly in $W^{1,p}\left(\mathbb{R}^N\right)$ as $n\to+\infty$ that
\begin{equation}\label{026}
\left\{\aligned
&\left(t_{\mu}^+\left(\varphi_n\right)\right)^{p-q_1\gamma_{q_1}}\|\nabla \varphi_n\|_p^p=\mu\gamma_{q_1}\|\varphi_n\|_{q_1}^{q_1}+\left(t_{\mu}^+\left(\varphi_n\right)\right)^{q_2\gamma_{q_2}-q_1\gamma_{q_1}}\gamma_{q_2}\|\varphi_n\|_{q_2}^{q_2},
\\
&\|\nabla \varphi_n\|_p^p=\mu\gamma_{q_1}\|\varphi_n\|_{q_1}^{q_1}+\gamma_{q_2}\|\varphi_n\|_{q_2}^{q_2}+o(1).
\endarray\right.
\end{equation}
Since $\{\phi_n\}$ is a minimizing sequence of $m^+(a,\mu)$ and $\varphi_n-\phi_n\to0$ strongly in $W^{1,p}\left(\mathbb{R}^N\right)$ as $n\to+\infty$,
as that for $\{u_n\}$, we know that $\|\varphi_n\|_{q_1}\thickapprox\|\varphi_n\|_{q_2}\thickapprox\|\nabla \varphi_n\|_p\thickapprox1$, which, together with \eqref{026}, implies that $t_{\mu}^+\left(\varphi_n\right)\thickapprox1$.  Thus, up to a subsequence, we may assume that $t_{\mu}^+\left(\varphi_n\right)\to t_{\mu}$ as $n\to+\infty$.  We also denote $A=\lim_{n\to+\infty}\|\nabla \phi_n\|_p^p$, $B=\|\phi_n\|_{q_1}^{q_1}$ and $C=\|\phi_n\|_{q_2}^{q_2}$.  Then by \eqref{026}, $\|\varphi_n\|_{q_1}\thickapprox\|\varphi_n\|_{q_2}\thickapprox\|\nabla \varphi_n\|_p\thickapprox1$ and $\varphi_n-\phi_n\to0$ strongly in $W^{1,p}\left(\mathbb{R}^N\right)$ as $n\to+\infty$, we have
\begin{eqnarray*}
\left(t_{\mu}^+\right)^{p}A=\left(t_{\mu}^+\right)^{q_1\gamma_{q_1}}\mu\gamma_{q_1}B+\left(t_{\mu}^+\right)^{q_2\gamma_{q_2}}\gamma_{q_2}C.
\end{eqnarray*}
Since $\phi_n\in \mathcal{P}_{a,\mu}^+$, we also have
\begin{eqnarray*}
\left\{\aligned
&A=\mu\gamma_{q_1}B+\gamma_{q_2}C,\\
&pA\geq\mu q_1\gamma_{q_1}^2B+q_2\gamma_{q_2}^2C.
\endaligned\right.
\end{eqnarray*}
By similar computations in the proof of $(a)$ of Proposition~\ref{pro2.1}, we know that $t_{\mu}^+=1$.  Moreover, since $0<\mu<\mu^{*}_{a}\leq \mu(\phi_n)$, by similar computations in the proof of $(a)$ of Proposition~\ref{pro2.1}, we must have $t_{\mu}^-\left(\varphi_n\right)-1\gtrsim1$, where $t_{\mu}^-\left(\varphi_n\right)$ is given in Proposition~\ref{pro2.1}.  Thus, by Proposition~\ref{pro2.1} again,
\begin{equation*}
m^+(a,\mu)\le\Psi_\mu\left((\varphi_n)_{t^+_{\mu}(\varphi_n)}\right)\le\Psi_\mu(\varphi_n)<m^+(a,\mu)
\end{equation*}
for $n$ sufficiently large, which is impossible.  Thus, \eqref{e3.7} holds true for $\delta>0$ sufficiently small.  Similarly, we also have
\begin{equation*}\label{e3.8}
\inf_{\mathcal{P}_{a,\mu}^{\delta,+}\cap W^{1,p}_{rad}\left(\mathbb{R}^N\right)}\Psi_{\mu}(u)=\inf_{\mathcal{P}_{a,\mu}^{+}\cap W^{1,p}_{rad}\left(\mathbb{R}^N\right)}\Psi_{\mu}(u)=m^{+}_{rad}(a,\mu),
\end{equation*}
which, together with (\ref{e3.6}), implies that
\begin{equation}\label{e3.9}
\inf_{\mathcal{P}_{a,\mu}^{\delta,+}\cap W^{1,p}_{rad}\left(\mathbb{R}^N\right)}\Psi_{\mu}(u)=m^{+}(a,\mu)
\end{equation}
for $\delta>0$ sufficiently small.  Since $\mathcal{P}_{a,\mu}^{\delta,+}\cap W^{1,p}_{rad}\left(\mathbb{R}^N\right)$ is closed in the $W^{1,p}\left(\mathbb{R}^N\right)$ topology, $\Psi_{\mu}(u)$ is bounded below by (\ref{e3.9}) and Lemma~\ref{lem2.1}, and $\{u_n\}$ is a minimizing sequence, by Ekeland's variational principle, there exists $\{w_n\}\subset \mathcal{P}_{a,\mu}^{\delta,+}\cap W^{1,p}_{rad}\left(\mathbb{R}^N\right)$ such that $\{w_n\}$ is a $(PS)_{m^{+}(a,\mu)}$ sequence of $\Psi_{\mu}(u)|_{\mathcal{S}_a}$ and $w_n-u_n\to 0$ strongly in $W^{1,p}\left(\mathbb{R}^N\right)$ as $n\to +\infty$.  By Lemma \ref{lem1.3}, there exists $u_0\in W^{1,p}_{rad}\left(\mathbb{R}^N\right)$ such that
\begin{eqnarray}\label{e3.10}
\left\{\aligned
&w_n\rightharpoonup u_0\quad \text{weakly in } W^{1,p}\left(\mathbb{R}^N\right),\\
&w_n\to u_0\quad \text{a.e. in } \mathbb{R}^N,\\
&\nabla w_n\to \nabla u_0\quad \text{a.e. in}\ \mathbb{R}^N,\\
&w_n\to u_0\quad \text{strongly in } L^t\left(\mathbb{R}^N\right)\text{ for all } t\in \left(p,p^*\right)
\endaligned\right.
\end{eqnarray}
as $n\to+\infty$.
By $\|u_n\|_{q_1}^{q_1}\gtrsim 1$, $w_n-u_n\to 0$ strongly in $W^{1,p}\left(\mathbb{R}^N\right)$ and (\ref{e3.10}), we obtain that $\|u_0\|_{q_1}^{q_1}\gtrsim 1$ and hence $u_0\neq 0$. It follows from the method of the Lagrange multiplier that there exists $\lambda_n\in \mathbb{R}$ such that
\begin{eqnarray*}
\int_{\mathbb{R}^N}|\nabla w_n|^{p-2}\nabla w_n\cdot\nabla\varphi dx&=&\lambda_n\int_{\mathbb{R}^N}|w_n|^{p-2}w_n\varphi dx+\mu\int_{\mathbb{R}^N}|w_n|^{q_1-2}w_n\varphi dx\\
&&+\int_{\mathbb{R}^N}|w_n|^{q_2-2}w_n\varphi dx+o_n(1)\|\varphi\|_{W^{1,p}}
\end{eqnarray*}
for any $\varphi\in W^{1,p}\left(\mathbb{R}^N\right)$, where $\|\cdot\|_{W^{1,p}}=\left(\|\nabla\cdot\|_{p}^p+\|\cdot\|_{p}^p\right)^{\frac{1}{p}}$ is the usual norm in the Sobolev space $W^{1,p}\left(\mathbb{R}^N\right)$.  Thus, by $\{w_n\}\subset \mathcal{S}_a$, $\{u_n\}\subset \mathcal{P}_{a,\mu}^+$ and $w_n-u_n\to 0$ strongly in $W^{1,p}\left(\mathbb{R}^N\right)$ as $n\to +\infty$, we obtain that
\begin{equation*}
\begin{split}
\lambda_n a^p&=\|\nabla w_n\|_p^p-\mu \|w_n\|_{q_1}^{q_1}-\|w_n\|_{q_2}^{q_2}+o_n(1)\\
&=\|\nabla u_n\|_p^p-\mu \|u_n\|_{q_1}^{q_1}-\|u_n\|_{q_2}^{q_2}+o_n(1)\\
&=\mu(\gamma_{q_1}-1) \|u_n\|_{q_1}^{q_1}+(\gamma_{q_2}-1)\|u_n\|_{q_2}^{q_2}+o_n(1)\\
&\leq  \mu(\gamma_{q_1}-1) \|u_0\|_{q_1}^{q_1}+o_n(1),
\end{split}
\end{equation*}
which, together with $u_0\neq 0$ and $\gamma_{q_1}<1$ for $p<q_1<p+\frac{p^2}{N}$, implies that $\lambda_n\to \lambda_0<0$ as $n\to +\infty$ up to a subsequence. It follows that $u_0$ is a weak solution of the following equation
\begin{equation*}
-\Delta_pu=\lambda_0 |u|^{p-2}u+\mu |u|^{q_1-2}u+|u|^{q_2-2}u,\ \text{in}\  \mathbb{R}^N.
\end{equation*}
By Lemma \ref{lem1.5} and the Fatou lemma, we have $u_0\in \mathcal{P}_{a_1,\mu}$, where $a_:=\|u_0\|_p\in(0, a]$.
Let $v_n:=w_n-u_0$. Recall that $w_n-u_n\to 0$ strongly in $W^{1,p}\left(\mathbb{R}^N\right)$ as $n\to +\infty$ and $\{u_n\}\subset \mathcal{P}_{a,\mu}^+$.  Thus, by (\ref{e3.10}) and Lemma \ref{lem1.1}, we obtain that
\begin{equation*}
\|\nabla v_n\|_{p}^{p}=\gamma_{q_2}\|v_n\|_{q_2}^{q_2}+o_n(1).
\end{equation*}
If $q_2<p^*$, then we have $\|\nabla v_n\|_{p}^{p}=o_n(1)$.  If $q_2=p^*$, then by the Sobolev inequality, we have
\begin{equation*}
\|\nabla v_n\|_{p}^{p}=\|v_n\|_{p^*}^{p^*}+o_n(1)\leq S^{-p^*/p}\|\nabla v_n\|_{p}^{p^*}+o_n(1).
\end{equation*}
Thus, either $\|\nabla v_n\|_{p}^{p}=\|v_n\|_{p^*}^{p^*}=o_n(1)$ or $\|\nabla v_n\|_{p}^{p}=\|v_n\|_{p^*}^{p^*}\geq S^{N/p}+o_n(1)$.
It follows Lemma \ref{lem1.1} and $(b)$ of Lemma \ref{lem3.1} that
\begin{equation*}
\begin{split}
m^+(a,\mu)&=\lim_{n\to +\infty}\Psi_{\mu}(w_n)\\
&=\Psi_{\mu}(u_0)+\lim_{n\to +\infty}\left(\frac{1}{p}\|\nabla v_n\|_{p}^{p}-\frac{1}{q_2}\|v_n\|_{q_2}^{q_2}+o_n(1)\right)\\
&\left\{
\begin{array}{ll}
\geq m^+(a,\mu),&\ \text{if}\ a_1=a,\\
>\Psi_{\mu}\left(\left(\frac{a}{a_1}u_0\right)_{t^+_{\mu}\left(\frac{a}{a_1}u_0\right)}\right)\geq m^+(a,\mu),          &\ \text{if}\ a_1<a
\end{array}
\right.
\end{split}
\end{equation*}
which implies that $a_1=a$, $\Psi_{\mu}(u_0)=m^+(a,\mu)$ and $\|\nabla v_n\|_{p}^{p}=o_n(1)$. Thus, $w_n\to u_0$ strongly in $W^{1,p}\left(\mathbb{R}^N\right)$ as $n\to +\infty$, which, together with  $w_n-u_n\to 0$ strongly in $W^{1,p}\left(\mathbb{R}^N\right)$ as $n\to +\infty$, implies that $u_n\to u_0$ strongly in $W^{1,p}\left(\mathbb{R}^N\right)$. By  $\{u_n\}\subset \mathcal{P}_{a,\mu}^+$ and $0<\mu<\mu_a^*$, we obtain that $u_0\in \mathcal{P}^+_{a,\mu}$. Moreover, $u_0$ is real valued, nonnegative, radially symmetric and radially decreasing, which, together with the strong comparison principle (\cite[Theorem~1.4]{D98}), implies that $u_0$ is also positive in $\mathbb{R}^N$. Thus, $u_{a,\mu,+}:=u_0$ is the required function and $\lambda_{a,\mu,+}:=\lambda_0<0$ is its Lagrange multiplier.
\end{proof}

\subsection{The existence of mountain-pass solutions} In this section, we shall construct another solution of \eqref{e1.1} by studying
the variational problem
\begin{equation}\label{e3.11}
m^{-}(a,\mu):=\inf_{u\in \mathcal{P}_{a,\mu}^{-}}\Psi_{\mu}(u).
\end{equation}
We first consider \eqref{e3.11} in the Sobolev subcritical case $q_2<p^*$.
\begin{proposition}\label{pro3.21}
Let $N\geq 2$, $1<p<N$, $p<q_1<p+\frac{p^2}{N}<q_2<p^*$, $a>0$ and $0<\mu<\mu_{a}^*$. Then the variational problem (\ref{e3.11}) is achieved by some
$u_{a,\mu,-}$, which is real valued, positive, radially symmetric and radially decreasing. Moreover, $u_{a,\mu,-}$ also satisfies the equation (\ref{e1.1}) in the weak sense
for a suitable Lagrange multiplier $\lambda_{a,\mu,-}<0$.
\end{proposition}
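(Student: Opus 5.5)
We follow the scheme of the proof of Proposition~\ref{pro3.1}, with $\mathcal{P}_{a,\mu}^+$ replaced by $\mathcal{P}_{a,\mu}^-$. The argument has three parts: a lower bound for $m^-(a,\mu)$ and a bounded, symmetrized minimizing sequence; a Palais--Smale sequence with negative Lagrange multipliers; and a splitting argument that forces strong convergence.

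\textbf{Lower bound and symmetrization.} For $u\in\mathcal{P}_{a,\mu}^-$ we have
\[
\Psi_\mu(u)=\Big(\frac1p-\frac{1}{q_2\gamma_{q_2}}\Big)\|\nabla u\|_p^p+\mu\gamma_{q_1}\Big(\frac{1}{q_2\gamma_{q_2}}-\frac{1}{q_1\gamma_{q_1}}\Big)\|u\|_{q_1}^{q_1},
\]
so the Gagliardo--Nirenberg inequality and $q_1\gamma_{q_1}<p$ give coercivity in $\|\nabla u\|_p$. Moreover, the defining inequality of $\mathcal{P}_{a,\mu}^-$ together with $\mu<\mu_a^*$ gives $\|\nabla u\|_p\gtrsim1$ uniformly on $\mathcal{P}_{a,\mu}^-$. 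Indeed, $t^-_\mu(u)=1>s(u)$, and $h_{\mu,u}(1)=\mu\gamma_{q_1}\|u\|_{q_1}^{q_1}$ lies strictly below the maximum of $h_{\mu,u}$. Hence $m^-(a,\mu)>-\infty$ and every minimizing sequence is bounded in $W^{1,p}$.

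Next take a minimizing sequence $\{u_n\}\subset\mathcal{P}_{a,\mu}^-$ and replace it by the Schwarz rearrangements $u_n^*$. Write $\|\nabla u_n^*\|_p^p=\alpha_n\|\nabla u_n\|_p^p$. Then $\mu<\mu_a^*\le\mu(u_n^*)=\tilde\mu_{\alpha_n}(u_n)$, so Lemma~\ref{lem3.3} applies to the minus branch and gives $\Psi_\mu((u_n^*)_{t^-_\mu(u_n^*)})\le\Psi_\mu(u_n)$. Thus we may assume $u_n$ is radial, nonnegative and nonincreasing, and $m^-(a,\mu)=m^-_{rad}(a,\mu)$.

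\textbf{Palais--Smale sequence and multipliers.} Repeat the $\delta$-neighbourhood argument (\ref{e3.7}) for $\mathcal{P}_{a,\mu}^{\delta,-}$. The analogue of the key step is that limits of $t^-_\mu(\varphi_n)$ equal $1$, while $t^+_\mu(\varphi_n)$ stays uniformly below $1$ because $\mu<\mu(\phi_n)$. Since $t^-_\mu$ is the global maximum of the fibering map on $(t^+_\mu,\infty)$, we get $\Psi_\mu((\varphi_n)_{t^-_\mu(\varphi_n)})\ge\Psi_\mu(\varphi_n)$ for $\varphi_n$ near $\mathcal{P}^-$, which yields the contradiction. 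Ekeland's principle then produces a radial $(PS)_{m^-(a,\mu)}$ sequence $\{w_n\}$ with $w_n-u_n\to0$.

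By compactness of the radial embedding (Lemma~\ref{lem1.3}), $w_n\to u_0$ in $L^{q_1}$ and $L^{q_2}$. We have $\|u_n\|_{q_2}\gtrsim1$, because the $\mathcal{P}^-$ inequality gives $\|\nabla u_n\|_p^p\lesssim\|u_n\|_{q_2}^{q_2}$ and $\|\nabla u_n\|_p\gtrsim1$. Hence $u_0\ne0$. The computation of $\lambda_n a^p$ as in Proposition~\ref{pro3.1} gives $\lambda_n\to\lambda_0<0$, now using $\gamma_{q_2}<1$ when $q_2<p^*$. Therefore $u_0$ solves the limit equation, and Lemma~\ref{lem1.5} gives $u_0\in\mathcal{P}_{a_1,\mu}$ with $a_1=\|u_0\|_p\le a$.

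\textbf{Strong convergence and positivity.} This is the main obstacle. Because $q_2<p^*$, the remainder $v_n=w_n-u_0$ satisfies $\|\nabla v_n\|_p^p=\gamma_{q_2}\|v_n\|_{q_2}^{q_2}+o(1)=o(1)$. So $\nabla w_n\to\nabla u_0$ in $L^p$, and the only possible loss of compactness is mass in $L^p$, i.e.\ $a_1<a$. To exclude it, we must place $u_0$ in $\mathcal{P}^-_{a_1,\mu}$ rather than $\mathcal{P}^+_{a_1,\mu}$ or $\mathcal{P}^0_{a_1,\mu}$. Strong convergence of the gradients and of the $L^{q_i}$ norms passes the non-strict $\mathcal{P}^-$ inequality to $u_0$. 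Moreover $\mu<\mu_a^*\le\mu_{a_1}^*$ (Proposition~\ref{pro2.2}(b)), so $\mathcal{P}^0_{a_1,\mu}=\emptyset$, which gives $u_0\in\mathcal{P}^-_{a_1,\mu}$.

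With this in hand, if $a_1<a$ then Lemma~\ref{lem3.1}(b) gives
\[
m^-(a,\mu)=\Psi_\mu(u_0)>\Psi_\mu\Big(\big(\tfrac{a}{a_1}u_0\big)_{t^-_\mu}\Big)\ge m^-(a,\mu),
\]
a contradiction. Hence $a_1=a$, $w_n\to u_0$ in $W^{1,p}$, and $u_0$ attains $m^-(a,\mu)$. Finally, $u_0$ is radial, nonnegative and nonincreasing, so the strong maximum principle \cite[Theorem~1.4]{D98} gives positivity.
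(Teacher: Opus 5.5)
\textbf{Gap: the Palais--Smale step.} In Proposition~\ref{pro3.1}, the neighbourhood identity \eqref{e3.7} works because $t^+_\mu$ is a local \emph{minimum} of the fibering map. Projecting a nearby $\varphi_n$ onto $\mathcal{P}^+_{a,\mu}$ therefore lowers the energy, giving $m^+(a,\mu)\le\Psi_\mu((\varphi_n)_{t^+_\mu(\varphi_n)})\le\Psi_\mu(\varphi_n)<m^+(a,\mu)$. On the minus branch, the inequality you obtain, $\Psi_\mu((\varphi_n)_{t^-_\mu(\varphi_n)})\ge\Psi_\mu(\varphi_n)$, points the other way. Together with $\Psi_\mu(\varphi_n)<m^-(a,\mu)\le\Psi_\mu((\varphi_n)_{t^-_\mu(\varphi_n)})$ it is perfectly consistent, so there is no contradiction.

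In fact the minus analogue of \eqref{e3.7} is false whenever the proposition is true. Suppose $u$ attains $m^-(a,\mu)$. For $s\neq 1$ close to $1$, the function $(u)_s$ lies in $\mathcal{S}_a$ within any prescribed $W^{1,p}$-distance $\delta$ of $\mathcal{P}^-_{a,\mu}$. At the same time $\Psi_\mu((u)_s)=\Phi_{\mu,u}(s)<\Phi_{\mu,u}(1)=m^-(a,\mu)$, because $\Phi''_{\mu,u}(1)<0$. Hence $\inf_{\mathcal{P}^{\delta,-}_{a,\mu}}\Psi_\mu<m^-(a,\mu)$ for every $\delta>0$. Ekeland's principle on $\mathcal{P}^{\delta,-}_{a,\mu}$ then produces almost-minimizers at a level strictly below $m^-(a,\mu)$, and these need not be almost-critical for $\Psi_\mu|_{\mathcal{S}_a}$, since they may sit on the boundary of the neighbourhood. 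Everything downstream therefore has no sequence to act on: the multipliers $\lambda_n$, the limit equation and Lemma~\ref{lem1.5} for $u_0$, and the splitting $\|\nabla v_n\|_p^p=\gamma_{q_2}\|v_n\|_{q_2}^{q_2}+o(1)$. The saddle structure of $\mathcal{P}^-_{a,\mu}$ in the fibre direction is exactly what makes this trick specific to $\mathcal{P}^+_{a,\mu}$.

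\textbf{How the paper avoids this.} The paper never builds a Palais--Smale sequence for this proposition. Your lower bound, the bound $\|u_n\|_{q_2}\gtrsim 1$, and the symmetrization via Lemma~\ref{lem3.3} are fine and match its first steps. From there the paper keeps the symmetrized minimizing sequence $\{u_n\}\subset\mathcal{P}^-_{a,\mu}$ and argues directly.
\begin{enumerate}
\item It takes the weak limit $u_0\neq 0$ and uses $\mu<\mu_a^*\le\mu_{a_1}^*\le\mu(u_0)$ to project $u_0$ onto $\mathcal{P}^-_{a_1,\mu}$.
\item Weak lower semicontinuity of $\|\nabla\cdot\|_p$ and strong $L^{q_i}$ convergence give $0=t\Phi'_{\mu,u_0}(t)\le t\Phi'_{\mu,u_n}(t)+o_n(1)$ at $t=t^-_\mu(u_0)$. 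Hence $t^+_\mu(u_n)+o_n(1)\le t^-_\mu(u_0)\le 1+o_n(1)$.
\item Since $s=1$ maximizes $\Phi_{\mu,u_n}$ on $[t^+_\mu(u_n),\infty)$, the local-maximum property now helps rather than hurts: $m^-(a,\mu)=\lim\Psi_\mu(u_n)\ge\liminf\Psi_\mu((u_n)_{t^-_\mu(u_0)})\ge\Psi_\mu((u_0)_{t^-_\mu(u_0)})$.
\item Lemma~\ref{lem3.1}(b) then rules out $a_1<a$ and forces equality, which yields strong convergence in $W^{1,p}$.
\item The equation with $\lambda_{a,\mu,-}<0$ follows because $\mathcal{P}^-_{a,\mu}$ is a natural constraint, not from a PS sequence.
\end{enumerate}
If you want to keep a PS-based route, you would need to run Ekeland on the manifold $\mathcal{P}^-_{a,\mu}$ itself, or use a minimax characterization of $m^-(a,\mu)$. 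You would also have to prove that the multiplier of the Pohozaev constraint vanishes in the limit, which is an additional argument your proposal does not contain.
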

\begin{proof}
Since $m^{-}(a,\mu)>-\infty$ by Lemmas~\ref{lem3.1} and \ref{lem2.1}, we can choose $\{u_n\}\subset \mathcal{P}_{a,\mu}^-$ such that it is a minimizing sequence of $m^-(a,\mu)$.   Similarly to that in the proof of Proposition~\ref{pro3.1}, we may assume that $\{u_n\}$ is real valued, nonnegative, radially symmetric and radially decreasing.  Moreover, we also have
\begin{equation*}\label{e3.66}
m^{-}(a,\mu)=m^{-}_{rad}(a,\mu),
\end{equation*}
where $m^{-}_{rad}(a,\mu)$ is given by \eqref{radial}.  Since $\{u_n\}\subset \mathcal{P}_{a,\mu}^-$, that is,
\begin{eqnarray*}
\left\{\aligned
&\|\nabla u_n\|_p^p=\mu\gamma_{q_1}\|u_n\|_{q_1}^{q_1}+\gamma_{q_2}\|u_n\|_{q_2}^{q_2},\\
&p\|\nabla u_n\|_p^p<\mu q_1\gamma_{q_1}^2\|u_n\|_{q_1}^{q_1}+q_2\gamma_{q_2}^2\|u_n\|_{q_2}^{q_2},
\endaligned\right.
\end{eqnarray*}
we obtain that
\begin{eqnarray}\label{e3.55}
\left\{\aligned
&\gamma_{q_2}(q_2\gamma_{q_2}-q_1\gamma_{q_1})\|u_n\|_{q_2}^{q_2}>(p-q_1\gamma_{q_1})\|\nabla u_n\|_p^p,\\
&(q_2\gamma_{q_2}-p)\|\nabla u_n\|_p^p>\mu\gamma_{q_1}(q_2\gamma_{q_2}-q_1\gamma_{q_1})\|u_n\|_{q_1}^{q_1},\\
&\gamma_{q_2}(q_2\gamma_{q_2}-p)\|u_n\|_{q_2}^{q_2}>\mu\gamma_{q_1}(p-q_1\gamma_{q_1})\|u_n\|_{q_1}^{q_1}.
\endaligned\right.
\end{eqnarray}
By the Gagliardo-Nirenberg  inequality,
\begin{equation}\label{e3.12}
\|\nabla u_n\|_p^p\lesssim \|u_n\|_{q_2}^{q_2}\lesssim a^{q_2(1-\gamma_{q_2})}\|\nabla u_n\|_p^{q_2\gamma_{q_2}},
\end{equation}
which implies $\|\nabla u_n\|_p^p\gtrsim 1$ and $\|u_n\|_{q_2}^{q_2}\gtrsim 1$. By $\{u_n\}\subset \mathcal{P}_{a,\mu}^-$ and $\Psi_{\mu}(u_n)=m^-(a,\mu)+o_n(1)$, we obtain that
\begin{equation*}
m^-(a,\mu)+o_n(1)=\left(\frac{1}{p}-\frac{1}{q_2\gamma_{q_2}}\right)\|\nabla u_n\|_p^p+\mu\gamma_{q_1}\left(\frac{1}{q_2\gamma_{q_2}}-\frac{1}{q_1\gamma_{q_1}}\right)\|u_n\|_{q_1}^{q_1}.
\end{equation*}
Thus, by the Gagliardo-Nirenberg  inequality,
\begin{eqnarray}\label{e3.13}
\left(\frac{1}{p}-\frac{1}{q_2\gamma_{q_2}}\right)\|\nabla u_n\|_p^p&=&\Psi_{\mu}(u_n)+\mu\gamma_{q_1}\left(\frac{1}{q_1\gamma_{q_1}}-\frac{1}{q_2\gamma_{q_2}}\right)\|u_n\|_{q_1}^{q_1}\notag\\
&\lesssim&1+ \mu a^{q_1(1-\gamma_{q_1})}\|\nabla u_n\|_p^{q_1\gamma_{q_1}},
\end{eqnarray}
which, together with $q_1\gamma_{q_1}<p<q_2\gamma_{q_2}$ and $u_n\in \mathcal{S}_a$, implies that  $\{u_n\}$ is bounded in $W^{1,p}\left(\mathbb{R}^N\right)$.  Moreover, if $\|u_n\|_{q_1}^{q_1}\to0$ as $n\to+\infty$ then by the H\"older inequality, $\|u_n\|_{q_2}^{q_2}\to0$ as $n\to+\infty$, which is impossible.  Thus, we also have $\|u_n\|_{q_1}^{q_1}\gtrsim 1$.  Clearly, there exists $u_0\in W^{1,p}_{rad}\left(\mathbb{R}^N\right)$ such that
\begin{eqnarray}\label{e3.100}
\left\{\aligned
&u_n\rightharpoonup u_0\quad \text{weakly in } W^{1,p}\left(\mathbb{R}^N\right),\\  &u_n\to u_0\quad \text{a.e. in } \mathbb{R}^N,\\
&u_n\to u_0\quad \text{strongly in } L^t\left(\mathbb{R}^N\right) \text{ for all } t\in (p,p^*)
\endaligned\right.
\end{eqnarray}
as $n\to+\infty$.  Since $q_2<p^*$, by $\|u_n\|_{q_2}^{q_2}\gtrsim 1$, we obtain that $\|u_0\|_{q_2}^{q_2}\gtrsim 1$, which implies that $u_0\neq 0$.  By the Fatou lemma, we have $\mu<\mu_a^*\leq \mu_{a_1}^*\leq \mu(u_0)$, which, together with Proposition \ref{pro2.1}, implies that there exists $t^-_\mu\left(u_0\right)>0$ such that $\left(u_0\right)_{t^-_\mu\left(u_0\right)}\in\mathcal{P}_{a_1,\mu}^-$.  Thus, by \eqref{e3.100},
\begin{eqnarray*}
0&=&\left(t^-_\mu\left(u_0\right)\right)^{p}\|\nabla u_0\|_p^p-\left(t^-_\mu\left(u_0\right)\right)^{q_1\gamma_{q_1}}\mu\gamma_{q_1}\|u_0\|_{q_1}^{q_1}-\left(t^-_\mu\left(u_0\right)\right)^{q_2\gamma_{q_2}}\gamma_{q_2}\|u_0\|_{q_2}^{q_2}\\
&\leq&\left(t^-_\mu\left(u_0\right)\right)^{p}\|\nabla u_n\|_p^p-\left(t^-_\mu\left(u_0\right)\right)^{q_1\gamma_{q_1}}\mu\gamma_{q_1}\|u_n\|_{q_1}^{q_1}-\left(t^-_\mu\left(u_0\right)\right)^{q_2\gamma_{q_2}}\gamma_{q_2}\|u_n\|_{q_2}^{q_2}+o_n(1),
\end{eqnarray*}
which, together with $\{u_n\}\subset \mathcal{P}_{a,\mu}^-$, $\|\nabla u_n\|_p^p\thickapprox\|u_n\|_{q_2}^{q_2}\thickapprox\|u_n\|_{q_1}^{q_1}\thickapprox 1$ and Proposition~\ref{pro2.1}, implies that
\begin{eqnarray*}
1\lesssim t^+_{\mu}(u_n)+o_n(1)\leq t_{\mu}^-(u_0)\leq1+o_n(1).
\end{eqnarray*}
It follows from Proposition~\ref{pro2.1} and $(b)$ of Lemma \ref{lem3.1} that
\begin{equation*}
\begin{split}
m^-(a,\mu)&=\lim_{n\to +\infty}\Psi_{\mu}(u_n)\\
&\geq\lim_{n\to +\infty}\Psi_{\mu}\left(\left(u_n\right)_{t^-_\mu\left(u_0\right)}\right)\\
&\geq\Psi_{\mu}\left(\left(u_0\right)_{t^-_\mu\left(u_0\right)}\right)\\
&\left\{
\begin{array}{ll}
\geq m^-(a,\mu),&\ \text{if}\ a_1=a,\\
>\Psi_{\mu}\left(\left(\frac{a}{a_1}u_0\right)_{t^-_{\mu}\left(\frac{a}{a_1}u_0\right)}\right)\geq m^-(a,\mu),          &\ \text{if}\ a_1<a,
\end{array}
\right.
\end{split}
\end{equation*}
where $a_1=\|u_0\|_p$.  As that in the proof of Proposition~\ref{pro3.1}, we have $a_1=a$ and $u_n\to u_0$ strongly in $W^{1,p}\left(\mathbb{R}^N\right)$ as $n\to+\infty$.  By  $\{u_n\}\subset \mathcal{P}_{a,\mu}^-$ and $0<\mu<\mu_a^*$, we obtain that $u_0\in \mathcal{P}^-_{a,\mu}$ and $\mathcal{P}_{a,\mu}^{-}$ is achieved by $u_0$.  Since $\mathcal{P}^-_{a,\mu}$ is a natural constraint in $\mathcal{S}_a$, $u_0$ is a solution of \eqref{e1.1} for a   suitable Lagrange multiplier $\lambda_{0}\in\mathbb{R}$, which, together with \eqref{e1.8},  implies that
\begin{eqnarray*}
\lambda_0 a^p=\mu(\gamma_{q_1}-1) \|u_0\|_{q_1}^{q_1}+(\gamma_{q_2}-1)\|u_0\|_{q_2}^{q_2}<0.
\end{eqnarray*}
Moreover, $u_0$ is real valued, nonnegative, radially symmetric and radially decreasing, which, together with the strong comparison principle (\cite[Theorem~1.4]{D98}), implies that $u_0$ is also positive in $\mathbb{R}^N$. Thus, $u_{a,\mu,-}:=u_0$ is the required function and $\lambda_{a,\mu,-}:=\lambda_0<0$ is its Lagrange multiplier.
\end{proof}

We next consider \eqref{e3.11} for the Sobolev critical case $q_2=p^*$ by using the subcritical perturbation argument.  For this purpose, we re-denote $\mu_{a}^*$, $m^-(a,\mu)$, $\Psi_{\mu}(u)$, $\mathcal{P}_{a,\mu}^-$, $t^-_\mu\left(u\right)$, $\lambda_{a, \mu,-}$ and $u_{a,\mu,-}$ in Proposition~\ref{pro3.21} for $q_2<p^*$ by $\mu_{a,q_2}^*$, $m^-_{q_2}(a,\mu)$, $\Psi_{\mu,q_2}(u)$, $\mathcal{P}_{a,\mu,q_2}^-$, $t^-_{\mu,q_2}\left(u\right)$,  $\lambda_{a, \mu,q_2,-}$ and $u_{a,\mu,q_2,-}$, respectively. For $q_2=p^*$, we still use the original notations.
\begin{lemma}\label{lemN3.9}
Let $N\geq 2$, $1<p<N$, $p<q_1<p+\frac{p^2}{N}<q_2<p^*$ and $a>0$.  Then
\begin{enumerate}
\item[$(1)$]\quad $\lim_{q_2\to p^*}\mu_{a,q_2}^*=\mu_{a}^*$.
\item[$(2)$]\quad $\lim_{q_2\to p^*}m^-_{q_2}(a,\mu)\leq m^-(a,\mu)$.
\end{enumerate}
\end{lemma}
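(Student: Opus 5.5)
Both parts rest on the same mechanism: as $q_2\to p^*$, every quantity built from $\|u\|_{q_2}^{q_2}$ and $\gamma_{q_2}$ converges to its $p^*$ analogue, \emph{for each fixed $u$}. Part (1) will use this together with compactness of the infimum, and part (2) only needs it pointwise.

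\textbf{Part (1).} Write $\mu(u)=\mu_{q_2}(u)$ for the functional in \eqref{muu}. I split the claim into an upper and a lower bound.

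For $\limsup_{q_2\to p^*}\mu_{a,q_2}^*\leq\mu_a^*$, fix $\varepsilon>0$ and take $u\in\mathcal{S}_a$ with $\mu_{p^*}(u)<\mu_a^*+\varepsilon$. By density we may assume $u\in C_0^\infty(\mathbb{R}^N)$, since $\mu_{p^*}$ is continuous in $W^{1,p}$. Then $\|u\|_{q_2}^{q_2}\to\|u\|_{p^*}^{p^*}$ by dominated convergence. The constants in \eqref{muu} depend continuously on $q_2$ through $\gamma_{q_2}\to1$, and the exponents $\frac{q_2\gamma_{q_2}-q_1\gamma_{q_1}}{q_2\gamma_{q_2}-p}$ and $\frac{p-q_1\gamma_{q_1}}{q_2\gamma_{q_2}-p}$ also converge. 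Hence $\mu_{q_2}(u)\to\mu_{p^*}(u)$, which gives the upper bound.

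For $\liminf_{q_2\to p^*}\mu_{a,q_2}^*\geq\mu_a^*$, I would avoid compactness and interpolate instead. By H\"older, $\|u\|_{q_2}\leq\|u\|_p^{1-\theta}\|u\|_{p^*}^{\theta}$, so $\|u\|_{q_2}^{q_2}\leq a^{q_2(1-\theta)}\|u\|_{p^*}^{q_2\theta}$ with $\theta=\theta(q_2)\to1$. Using the $0$-homogeneity of $\mu_{q_2}(\cdot)$ from Proposition~\ref{pro2.2}, we may normalize $\|\nabla u\|_p=1$. Then $\|u\|_{p^*}\leq S^{-1/p}$ and, by Gagliardo--Nirenberg, $\|u\|_{q_1}$ is bounded. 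From these bounds I get
\[
\mu_{q_2}(u)\geq (1+o(1))\,\mu_{p^*}(u)\,\bigl(\|u\|_{p^*}^{p^*}\bigr)^{o(1)}
\]
uniformly on $\{\|\nabla u\|_p=1\}\cap\mathcal{S}_a$. It then remains to handle $\|u\|_{p^*}^{o(1)}$ when $\|u\|_{p^*}$ is small.

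This step is where I expect the main obstacle. My plan is to show that near-minimizers of $\mu_{q_2}$ cannot have $\|u\|_{p^*}\to0$: in that case $\|u\|_{q_2}$ would be small, which forces $\mu_{q_2}(u)$ large via Gagliardo--Nirenberg for $q_1$. Once that is excluded, the factor $(\|u\|_{p^*}^{p^*})^{o(1)}$ tends to $1$ and the lower bound follows. Alternatively, if Proposition~\ref{pro2.2}(b) supplies explicit expressions, I would track continuity of $\mu_1^*$ in $q_2$ directly.

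\textbf{Part (2).} I will show $\limsup_{q_2\to p^*} m^-_{q_2}(a,\mu)\leq m^-(a,\mu)$, which is what ``$\lim\leq$'' means if the limit does not exist.

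Take $u\in\mathcal{P}^-_{a,\mu}$ with $\Psi_\mu(u)<m^-(a,\mu)+\varepsilon$; this requires $\mu<\mu(u)$. By density and continuity of $t^-_\mu$ (implicit function theorem, as in Lemma~\ref{lem3.1}), we may take $u\in C_0^\infty$ with $\mu<\mu_{p^*}(u)$ strictly. For $q_2$ close to $p^*$, part (1)'s pointwise convergence gives $\mu<\mu_{q_2}(u)$. Proposition~\ref{pro2.1} then yields $t^-_{\mu,q_2}(u)$, characterized as the larger zero of $\Phi'_{\mu,u}$. Nondegeneracy of this zero gives $t^-_{\mu,q_2}(u)\to t^-_\mu(u)=1$, again by the implicit function theorem applied to the $q_2$-dependent fibering equation.

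Therefore
\[
m^-_{q_2}(a,\mu)\leq\Psi_{\mu,q_2}\bigl((u)_{t^-_{\mu,q_2}(u)}\bigr)\to\Psi_\mu(u)<m^-(a,\mu)+\varepsilon,
\]
and letting $\varepsilon\to0$ concludes. If $\mathcal{P}^-_{a,\mu}=\emptyset$, then $m^-(a,\mu)=+\infty$ and there is nothing to prove.
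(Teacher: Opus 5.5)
Your proposal is correct and follows essentially the paper's argument. Part (1)'s upper bound uses the pointwise convergence $\mu_{q_2}(u)\to\mu_{p^*}(u)$. Part (1)'s lower bound uses H\"older interpolation $\|u\|_{q_2}\leq\|u\|_p^{1-\gamma_{q_2}}\|u\|_{p^*}^{\gamma_{q_2}}$ after a scaling normalization; the paper normalizes a minimizer by $\|v_{q_2}\|_{p^*}=1$ rather than $\|\nabla u\|_p=1$. Part (2) tests $m^-_{q_2}(a,\mu)$ with $(u)_{t^-_{\mu,q_2}(u)}$, using $t^-_{\mu,q_2}(u)\to1$.

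The obstacle you anticipate does not actually arise. The $o(1)$ exponent on $\|u\|_{p^*}$ is $(p-q_1\gamma_{q_1})\bigl(\frac{p^*}{p^*-p}-\frac{q_2\gamma_{q_2}}{q_2\gamma_{q_2}-p}\bigr)<0$, so the Sobolev bound $\|u\|_{p^*}\leq S^{-1/p}$ alone makes that factor at least $1+o(1)$, uniformly in $u$.
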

\begin{proof}
$(1)$\quad Recall that $\mu_{a,q_2}^*=\inf_{u\in \mathcal{S}_a}\mu_{q_2}(u)$, where
\begin{equation*}
\mu_{q_2}(u)=\frac{(q_2\gamma_{q_2}-p)(p-q_1\gamma_{q_1})^{\frac{p-q_1\gamma_{q_1}}{q_2\gamma_{q_2}-p}}}
{\gamma_{q_1}\gamma_{q_2}^{\frac{p-q_1\gamma_{q_1}}{q_2\gamma_{q_2}-p}}(q_2\gamma_{q_2}-q_1\gamma_{q_1})^{\frac{q_2\gamma_{q_2}-q_1\gamma_{q_1}}{q_2\gamma_{q_2}-p}}}
\frac{(\|\nabla u\|_p^p)^{\frac{q_2\gamma_{q_2}-q_1\gamma_{q_1}}{q_2\gamma_{q_2}-p}}}{\|u\|_{q_1}^{q_1}(\|u\|_{q_2}^{q_2})^{\frac{p-q_1\gamma_{q_1}}{q_2\gamma_{q_2}-p}}}.
\end{equation*}
Then by noticing that $\lim_{q_2\to p^*}\|u\|_{q_2}^{q_2}=\|u\|_{p^*}^{p^*}$ for any fixed $u\in \mathcal{S}_a$, we have $\lim_{q_2\to p^*}\mu_{a,q_2}^*\leq\mu_{a}^*$.  On the other hand, it is easy to prove that $\mu_{a,q_2}^*$ is attained by some $v_{q_2}\in \mathcal{S}_a$.  Thus, by re-normalized if necessary, we may assume that $\|v_{q_2}\|_{p^*}=1$.  It follows from the H\"older inequality that $\lim_{q_2\to p^*}\mu_{a,q_2}^*\geq\mu_{a}^*$.  Thus, we must have $\lim_{q_2\to p^*}\mu_{a,q_2}^*=\mu_{a}^*$.

\medskip

$(2)$\quad Since $\lim_{q_2\to p^*}\|u\|_{q_2}^{q_2}=\|u\|_{p^*}^{p^*}$ for any fixed $u\in \mathcal{S}_a$, by Proposition~\ref{pro2.1}, we have
\begin{eqnarray*}
\lim_{q_2\to p^*}t^-_{\mu,q_2}\left(u\right)=1
\end{eqnarray*}
for any $u\in\mathcal{P}_{a,\mu}^-$.  The conclusion then follows from testing $m^-_{q_2}(a,\mu)$ by $u\in\mathcal{P}_{a,\mu}^-$ and letting $q_2\to p^*$.
\end{proof}

As stated in the introduction, a crucial point in solving \eqref{e3.11} for the Sobolev critical case $q_2=p^*$ is the following energy estimate.
\begin{lemma}\label{lem3.6}
Let $N\geq 2$, $1<p<N$ and $p<q_1<p+\frac{p^2}{N}<q_2=p^*$. Then
\begin{equation*}
m^{-}(a,\mu)<m^{+}(a,\mu)+\frac{1}{N}S^{\frac{N}{p}}
\end{equation*}
for any $a>0$ and  $0<\mu<\mu_{a}^*$.
\end{lemma}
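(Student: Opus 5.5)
We follow the Brezis--Nirenberg/Tarantello gluing strategy in the radial superposition form of \cite{Wei-Wu2022}, with the modified cut-off announced in the introduction. Let $u_+:=u_{a,\mu,+}$ be the positive, radial, radially decreasing minimizer of $m^+(a,\mu)$ given by Proposition~\ref{pro3.1}. It solves \eqref{e1.1} with $\lambda_{a,\mu,+}<0$ and, by local regularity, is bounded with $u_+(0)>0$. For $\epsilon>0$ small and a parameter $\alpha>0$ to be fixed depending on $p$, let $\varphi_\epsilon$ be radial with $\varphi_\epsilon=1$ on $B_{\epsilon^\alpha}(0)$, $\varphi_\epsilon=0$ outside $B_{2\epsilon^\alpha}(0)$ and $|\nabla\varphi_\epsilon|\lesssim\epsilon^{-\alpha}$. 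Set $u_\epsilon:=\varphi_\epsilon U_{\epsilon,0}$.

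For $\tau\ge0$ consider $w_{\epsilon,\tau}:=u_++\tau u_\epsilon$ and renormalize the mass by $\overline{w}_{\epsilon,\tau}:=s_{\epsilon,\tau}^{\frac{N-p}{p}}w_{\epsilon,\tau}(s_{\epsilon,\tau}x)$, with $s_{\epsilon,\tau}=\|w_{\epsilon,\tau}\|_p/a\ge1$. This rescaling keeps $\|\nabla\cdot\|_p$ and $\|\cdot\|_{p^*}$ fixed and lowers the $q_1$-term only by a factor tending to $1$ as $\epsilon\to0$.

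\textbf{Topological step.} At $\tau=0$ the function $u_+$ lies in $\mathcal{P}^+_{a,\mu}$, so $t^-_\mu(u_+)>1$. For $\tau$ large, $\|\nabla \overline{w}_{\epsilon,\tau}\|_p\to\infty$, and the third inequality in \eqref{e3.55}, together with the fact that $\mu<\mu_a^*$ makes every fiber have exactly two critical points (Proposition~\ref{pro2.1}), shows that $t^-_\mu(\overline{w}_{\epsilon,\tau})<1$. The map $\tau\mapsto t^-_\mu(\overline{w}_{\epsilon,\tau})$ is continuous by the implicit function theorem, as in Lemma~\ref{lem3.1}. Hence some $\tau_\epsilon$ satisfies $\overline{w}_{\epsilon,\tau_\epsilon}\in\mathcal{P}^-_{a,\mu}$, and therefore
\[
m^-(a,\mu)\le\sup_{\tau\ge0}\Psi_\mu(\overline{w}_{\epsilon,\tau}).
\]
The maximum is attained at $\tau\approx1$, since for $\tau$ near $0$ or $\infty$ the energy lies below the target.

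\textbf{Energy expansion.} The two standard Talenti facts are
\[
\|\nabla u_\epsilon\|_p^p=S^{N/p}+O\bigl(\epsilon^{\frac{(N-p)(1-\alpha)}{p-1}}\bigr),\qquad \|u_\epsilon\|_{p^*}^{p^*}=S^{N/p}+O\bigl(\epsilon^{\frac{N(1-\alpha)}{p-1}}\bigr).
\]
For the critical term we use
\[
\|u_++\tau u_\epsilon\|_{p^*}^{p^*}\ge\|u_+\|_{p^*}^{p^*}+\tau^{p^*}\|u_\epsilon\|_{p^*}^{p^*}+p^*\tau\int u_+^{p^*-1}u_\epsilon+p^*\tau^{p^*-1}\int u_+u_\epsilon^{p^*-1}.
\]
The good term satisfies $\int u_+u_\epsilon^{p^*-1}\gtrsim u_+(0)\,\epsilon^{\frac{N-p}{p}}$. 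For the gradient term we write $|\nabla u_++\tau\nabla u_\epsilon|^p$ and expand it pointwise: on $B_{2\epsilon^\alpha}$ the bubble dominates $|\nabla u_+|$, which is bounded there. The cross terms are then bounded by $\int|\nabla u_+|\,|\tau\nabla u_\epsilon|^{p-1}$ plus a mixed term $|\nabla u_+|^{\min(p,2)}|\nabla u_\epsilon|^{p-\min(p,2)}$, both integrated over $B_{2\epsilon^\alpha}$. The linear term $p\int|\nabla u_+|^{p-2}\nabla u_+\nabla u_\epsilon$ is absorbed through the equation of $u_+$ tested with $u_\epsilon$, and it cancels against the $\lambda$, $q_1$ and $u_+^{p^*-1}$ cross terms. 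The leftovers $\lambda\int u_+^{p-1}u_\epsilon$ and $\mu\int u_+^{q_1-1}u_\epsilon$ have a favorable sign or are of order $\epsilon^{\frac{N-p}{p(p-1)}}\epsilon^{\alpha(N-\frac{N-p}{p-1})}$.

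Collecting everything yields
\[
\Psi_\mu(\overline{w}_{\epsilon,\tau})\le m^+(a,\mu)+\frac1N S^{N/p}-C\epsilon^{\frac{N-p}{p}}+\text{(errors)},
\]
where the maximum in $\tau$ of $\frac{\tau^p}{p}S^{N/p}-\frac{\tau^{p^*}}{p^*}S^{N/p}$ gives the $\frac1NS^{N/p}$, and the mass-renormalization loss is of order $\|u_\epsilon\|_p^p$, which is small in all dimensions after the cut-off.

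\textbf{Main obstacle.} The hardest step is choosing $\alpha$ so that every error is $o(\epsilon^{\frac{N-p}{p}})$. Shrinking the support to $B_{2\epsilon^\alpha}$ makes the gradient cross terms small, of order $\epsilon^{\frac{N-p}{p}}$ times positive powers of $\epsilon^{\alpha}$ (for example $\int_{B_{2\epsilon^\alpha}}|\nabla u_\epsilon|^{p-1}\lesssim\epsilon^{\frac{N-p}{p}}\epsilon^{\alpha}$). The price is that the cut-off errors $\epsilon^{\frac{(N-p)(1-\alpha)}{p-1}}$ must also stay below $\epsilon^{\frac{N-p}{p}}$, which requires $\frac{1-\alpha}{p-1}>\frac1p$, i.e. $\alpha<\frac1p$. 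I would first try $\alpha$ slightly below $\frac1p$ and check every exponent case by case ($p<2$ versus $p\ge2$, and $p^2$ compared with $N$).

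If some regime fails, I would fall back on the subcritical perturbation of Lemma~\ref{lemN3.9}, or bring in $\|u_\epsilon\|_{q_1}^{q_1}$ as an additional good term when $N$ is large relative to $p$.
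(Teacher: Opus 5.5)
For $p^2>N$ your plan is the paper's own argument, and it works with $\alpha$ slightly below $\frac1p$. The ingredients are the same: the radial superposition $u_++\tau u_\epsilon$, the dilation fixing the mass while preserving $\|\nabla\cdot\|_p$ and $\|\cdot\|_{p^*}$, continuity of $\tau\mapsto t^-_\mu$, the shrinking cut-off with $\alpha<\frac1p$, the good term $\int u_+u_\epsilon^{p^*-1}\approx\epsilon^{\frac{N-p}{p}}$, and an elementary case-by-case expansion of $|\nabla u_++\tau\nabla u_\epsilon|^p$.

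The gap is your claim that this works in all dimensions. The paper runs this argument only for $N<p^2$ and quotes \cite[Lemma~4.5]{Deng-Wu} for $p^2\le N$. Two of your steps fail in that range.

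First, the four-term lower bound you use for $\|u_++\tau u_\epsilon\|_{p^*}^{p^*}$ requires $r=p^*\ge3$, i.e.\ $p\ge\frac{3N}{N+3}$. This is automatic for $p>\sqrt N$, but false for instance when $N=9$, $p=2$ (so $p^*=\frac{18}{7}$). Indeed, with $a=1$, $b=t\to0$ one has $(1+t)^r-1-rt=O(t^2)$, whereas $rt^{r-1}\gg t^2$ when $r<3$. This point is repairable by keeping only $c\min(a^{r-2}b^2,ab^{r-1})$.

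Second, and more seriously: when $N>p^2$ one has $p>\frac{N(p-1)}{N-p}$, so Lemma~\ref{lemN3.7} gives $\|u_\epsilon\|_p^p\approx\epsilon^{p}$ for every $\alpha$. The $L^p$-mass of the bubble sits at scale $\epsilon$, and the cut-off radius does not touch it. Beyond the first-order piece, one has $\|u_++\tau u_\epsilon\|_p^p-a^p-p\tau\int u_+^{p-1}u_\epsilon\gtrsim\tau^p\|u_\epsilon\|_p^p$. Hence the renormalization factor in front of $\|\cdot\|_{q_1}^{q_1}$ genuinely raises the energy by an amount of order $\epsilon^{p}$, which is not $o(\epsilon^{\frac{N-p}{p}})$ once $N\ge p^2+p$. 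So your statement that the renormalization loss is ``small in all dimensions after the cut-off'' is false, and in that range the good term $\int u_+u_\epsilon^{p^*-1}$ cannot close the estimate.

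In that range you need the other good term already present in your expansion, namely $-\frac{\mu}{q_1}\tau^{q_1}\|u_\epsilon\|_{q_1}^{q_1}$ with $\|u_\epsilon\|_{q_1}^{q_1}\approx\epsilon^{N-\frac{(N-p)q_1}{p}}\gg\epsilon^{p}$, since $q_1>p$. This is the $p$-Laplacian analogue of the $N\ge4$ argument for $p=2$. You only name it as a fallback. Closing the proof requires redoing the error analysis against this weaker good term: you must show that the non-cancelling $p$-Laplacian cross terms and the cut-off errors are $o\bigl(\epsilon^{N-\frac{(N-p)q_1}{p}}\bigr)$. That is precisely the case $p^2\le N$, which the paper takes from \cite{Deng-Wu} rather than proving.

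A smaller inaccuracy: the renormalization loss is not $O(\|u_\epsilon\|_p^p)$. It contains the first-order piece
$\frac{\mu(1-\gamma_{q_1})}{a^p}\|u_+\|_{q_1}^{q_1}\tau\int u_+^{p-1}u_\epsilon=-\lambda\tau\int u_+^{p-1}u_\epsilon$, using $\lambda a^p=\mu(\gamma_{q_1}-1)\|u_+\|_{q_1}^{q_1}$. This exactly cancels the ``favorable'' $\lambda$-leftover you keep, so in the paper all linear terms cancel exactly. For $p^2>N$ this is harmless, since with $\alpha$ close to $\frac1p$ these linear terms are $o(\epsilon^{\frac{N-p}{p}})$ anyway.
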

The proof of Lemma~\ref{lem3.6} is very involved and technical.  Thus, we shall delay it to the Appendix~B.  With Lemma~\ref{lem3.6} in hands, we can prove the following proposition.
\begin{proposition}\label{pro3.2}
Let $N\geq 2$, $1<p<N$, $p<q_1<p+\frac{p^2}{N}<q_2=p^*$, $a>0$ and $0<\mu<\mu_{a}^*$. Then the variational problem (\ref{e3.11}) is achieved by some
$u_{a,\mu,-}$, which is real valued, positive, radially symmetric and radially decreasing. Moreover, $u_{a,\mu,-}$ also satisfies the equation (\ref{e1.1}) in the weak sense
for a suitable Lagrange multiplier $\lambda_{a,\mu,-}<0$.
\end{proposition}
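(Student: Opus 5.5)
I will follow the scheme of Proposition~\ref{pro3.21}, with the Brezis--Lieb splitting and the energy estimate of Lemma~\ref{lem3.6} replacing the compactness of the $L^{q_2}$ embedding.

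\emph{Step 1: a bounded, radial, nonvanishing minimizing sequence.} Lemmas~\ref{lem3.1} and \ref{lem2.1} give $m^-(a,\mu)>-\infty$. I take a minimizing sequence $\{u_n\}\subset\mathcal{P}_{a,\mu}^-$ and replace it by $\bigl(u_n^*\bigr)_{t^-_\mu(u_n^*)}$. As in the proof of Proposition~\ref{pro3.1}, Lemma~\ref{lem3.3} for the $-$ branch shows this is still minimizing. Hence I may assume each $u_n$ is nonnegative, radial and radially decreasing, and $m^-(a,\mu)=m^-_{rad}(a,\mu)$. Estimates \eqref{e3.55}, \eqref{e3.12} and \eqref{e3.13} hold verbatim with $q_2=p^*$, so $\|\nabla u_n\|_p^p\thickapprox\|u_n\|_{p^*}^{p^*}\thickapprox 1$ and $\{u_n\}$ is bounded. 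By Ekeland's principle on a $\delta$-neighbourhood of $\mathcal{P}_{a,\mu,rad}^-$, I get a radial $(PS)_{m^-(a,\mu)}$ sequence $\{w_n\}$ of $\Psi_\mu|_{\mathcal{S}_a}$ with $w_n-u_n\to0$. This needs the analogue of \eqref{e3.7}, which uses $\mu<\mu_a^*$ to keep $t^\pm_\mu$ uniformly separated. I then extract $w_n\rightharpoonup u_0$ weakly and strongly in $L^{q_1}$, with Lagrange multipliers $\lambda_n\to\lambda_0$.

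\emph{Step 2: exclude vanishing.} The main risk is $u_0=0$, that is, $\|u_n\|_{q_1}\to0$. Then $\|\nabla u_n\|_p^p=\|u_n\|_{p^*}^{p^*}+o_n(1)$, and the Sobolev inequality gives $\|\nabla u_n\|_p^p\ge S^{N/p}+o_n(1)$. Consequently
\begin{equation*}
m^-(a,\mu)\ge \tfrac1N S^{N/p},
\end{equation*}
which contradicts Lemma~\ref{lem3.6} because $m^+(a,\mu)<0$ by Lemma~\ref{lem3.1}. So $u_0\ne0$. The identity $\lambda_na^p=\mu(\gamma_{q_1}-1)\|w_n\|_{q_1}^{q_1}+o_n(1)$ then forces $\lambda_0<0$. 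Thus $u_0$ solves the free equation, and Lemma~\ref{lem1.5} gives $u_0\in\mathcal{P}_{a_1,\mu}$ with $a_1=\|u_0\|_p\le a$.

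\emph{Step 3: exclude bubbling and mass loss.} Set $v_n=w_n-u_0$. Brezis--Lieb (Lemma~\ref{lem1.1}) yields $\|\nabla v_n\|_p^p=\|v_n\|_{p^*}^{p^*}+o_n(1)$, so either $v_n\to0$ in $D^{1,p}$ or $\|\nabla v_n\|_p^p\ge S^{N/p}+o_n(1)$. In the bubbling case,
\begin{equation*}
m^-(a,\mu)\ge \Psi_\mu(u_0)+\tfrac1N S^{N/p}.
\end{equation*}
Since $u_0\in\mathcal{P}_{a_1,\mu}$ and $\mu<\mu^*_{a_1}$, $u_0$ lies in $\mathcal{P}^+_{a_1,\mu}\cup\mathcal{P}^-_{a_1,\mu}$. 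Hence $\Psi_\mu(u_0)\ge m^+(a_1,\mu)\ge m^+(a,\mu)$, using Lemma~\ref{lem3.1}(a) and (b). This contradicts Lemma~\ref{lem3.6}, so $\nabla v_n\to0$ in $L^p$. Then $m^-(a,\mu)=\Psi_\mu(u_0)$. If $a_1<a$, I rescale $\frac a{a_1}u_0$ and use Lemma~\ref{lem3.1}(b), as in Proposition~\ref{pro3.21}, to get a strict inequality, a contradiction. Here I must check that $u_0\in\mathcal{P}^-_{a_1,\mu}$, or compare with $t^-_\mu(u_0)$ via the fibering comparison argument used there. Hence $a_1=a$ and $w_n\to u_0$ strongly in $W^{1,p}$.

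\emph{Step 4: conclusion.} Because $\mu<\mu_a^*$, the limit satisfies $u_0\in\mathcal{P}^-_{a,\mu}$ and not $\mathcal{P}^0_{a,\mu}$, so $u_0$ attains \eqref{e3.11}. The multiplier satisfies $\lambda_0a^p=\mu(\gamma_{q_1}-1)\|u_0\|_{q_1}^{q_1}<0$. Positivity of $u_0$ follows from the strong comparison principle \cite[Theorem~1.4]{D98}.

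The main obstacle is Step 3: ruling out a bubble whose weak limit lies in $\mathcal{P}^+$ with negative energy. This is exactly where the threshold $m^+(a,\mu)+\frac1N S^{N/p}$, and hence Lemma~\ref{lem3.6}, is needed.
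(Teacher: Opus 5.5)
Your Step~1 fails, and Steps~2--3 depend on it. The analogue of \eqref{e3.7} is false on the minus branch, so Ekeland's principle on a $\delta$-neighbourhood of $\mathcal{P}^-_{a,\mu,rad}$ does not give a Palais--Smale sequence at level $m^-(a,\mu)$. Separation of $t^\pm_\mu$ is not the issue; the direction of the fibering inequality is.

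In the proof of \eqref{e3.7} one has $1<t^-_\mu(\varphi_n)$, and $t^+_\mu(\varphi_n)$ minimizes $\Phi_{\mu,\varphi_n}$ on $(0,t^-_\mu(\varphi_n))$, which gives $\Psi_\mu(\varphi_n)\ge m^+(a,\mu)$. For $\mathcal{P}^-$ you get $1>t^+_\mu(\varphi_n)$. But $t^-_\mu(\varphi_n)$ \emph{maximizes} the fibering map on $(t^+_\mu(\varphi_n),+\infty)$, so only $\Psi_\mu(\varphi_n)\le\Psi_\mu\bigl((\varphi_n)_{t^-_\mu(\varphi_n)}\bigr)$ follows, which is the wrong direction.

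In fact the claim itself is false. Suppose $m^-(a,\mu)$ is attained at some $u$, as the proposition asserts. Then the dilates $(u)_s$ with $s\neq1$ close to $1$ lie in every such neighbourhood and satisfy $\Psi_\mu((u)_s)<m^-(a,\mu)$. So your minimizing sequence is not almost-minimizing for $\Psi_\mu$ on that set, and Ekeland gives nothing at this level.

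Without a PS sequence of $\Psi_\mu|_{\mathcal{S}_a}$ you lose the following:
\begin{itemize}
\item Lemma~\ref{lem1.3};
\item the limit equation with $\lambda_0<0$;
\item Lemma~\ref{lem1.5}, hence $u_0\in\mathcal{P}_{a_1,\mu}$;
\item the splitting $\|\nabla v_n\|_p^p=\|v_n\|_{p^*}^{p^*}+o_n(1)$. For $p\neq2$ the gradient term does not split under weak convergence alone.
\end{itemize}
Only your vanishing argument in Step~2 survives, since it uses nothing but $u_n\in\mathcal{P}^-_{a,\mu}$.

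To repair a direct approach you need a genuine source of Euler--Lagrange information. One option is Ekeland for the reduced functional $J(u):=\Psi_\mu\bigl((u)_{t^-_\mu(u)}\bigr)$ on $\mathcal{S}_a\cap W^{1,p}_{rad}\left(\mathbb{R}^N\right)$. This is $C^1$ because $\mu<\mu_a^*$, it satisfies $\inf J=m^-_{rad}(a,\mu)$, and you then transport the resulting sequence along the fibres. Another option is a Soave-type minimax characterization of $m^-(a,\mu)$, which yields a PS sequence with $P(u_n)\to0$.

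The paper avoids this altogether. It lets $q_2\uparrow p^*$ and works with the exact minimizers $u_{a,\mu,q_2,-}$ of Proposition~\ref{pro3.21}, which already solve \eqref{e1.1} with $\lambda_{a,\mu,q_2,-}<0$. It then runs your vanishing/bubbling dichotomy on these genuine solutions. Lemma~\ref{lemN3.9}(2), $\lim_{q_2\to p^*}m^-_{q_2}(a,\mu)\le m^-(a,\mu)$, carries the threshold of Lemma~\ref{lem3.6} to the limit.
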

\begin{proof}
For any $\mu\in\left(0, \mu_a^*\right)$, by $(1)$ of Lemma~\ref{lemN3.9}, $\mu<\mu_{a,q_2}^*$ for $q_2$ sufficiently close to $p^*$.  Thus, by Proposition~\ref{pro3.21}, $(2)$ of Lemma~\ref{lemN3.9} and similar arguments used in  the proof of Proposition~\ref{pro3.21}, we can show that $\{u_{a,\mu,q_2,-}\}$ is uniformly bounded in $W^{1,p}\left(\mathbb{R}^N\right)$ as $q_2\to p^*$, where $u_{a,\mu,q_2,-}$ is a mountain-pass solution constructed in Proposition~\ref{pro3.21}.  It follows that there exists $u_{a,\mu,-}\in W^{1,p}\left(\mathbb{R}^N\right)$, which is real valued, nonnegative, radially symmetric and radially decreasing, such that
\begin{eqnarray}\label{e3.101}
\left\{\aligned
&u_{a,\mu,q_2,-}\rightharpoonup u_{a,\mu,-}\quad \text{weakly in } W^{1,p}\left(\mathbb{R}^N\right),\\
&u_{a,\mu,q_2,-}\to u_{a,\mu,-}\quad \text{a.e. in } \mathbb{R}^N,\\
&u_{a,\mu,q_2,-}\to u_{a,\mu,-}\quad \text{strongly in } L^t\left(\mathbb{R}^N\right) \text{ for all } t\in (p,p^*)
\endaligned\right.
\end{eqnarray}
as $q_2\to p^*$ up to a subsequence.  Since $u_{a,\mu,q_2,-}\in \mathcal{P}_{a,\mu,q_2}^-$, is a mountain-pass solution constructed in Proposition~\ref{pro3.21} with a suitable Lagrange multiplier $\lambda_{a, \mu,q_2,-}<0$, we have
\begin{eqnarray}\label{e3.102}
 \left\{\aligned
 &\|\nabla u_{a,\mu,q_2,-}\|_p^p=\mu\gamma_{q_1}\|u_{a,\mu,q_2,-}\|_{q_1}^{q_1}+\gamma_{q_2}\|u_{a,\mu,q_2,-}\|_{q_2}^{q_2},\\
&\lambda_{a, \mu,q_2,-}a^p=\mu(\gamma_{q_1}-1) \|u_{a,\mu,q_2,-}\|_{q_1}^{q_1}+(\gamma_{q_2}-1)\|u_{a,\mu,q_2,-}\|_{q_2}^{q_2},\\
&p\|\nabla u_{a,\mu,q_2,-}\|_p^p<\mu q_1\gamma_{q_1}^2\|u_{a,\mu,q_2,-}\|_{q_1}^{q_1}+q_2\gamma_{q_2}^2\|u_{a,\mu,q_2,-}\|_{q_2}^{q_2}.
 \endaligned\right.
\end{eqnarray}
It follows from the boundedness of $\{u_{a,\mu,q_2,-}\}$ in $W^{1,p}\left(\mathbb{R}^N\right)$ as $q_2\to p^*$ that $\{\lambda_{a, \mu,q_2,-}\}$ is also bounded as $q_2\to p^*$.  Thus, we may assume that $\lambda_{a, \mu,q_2,-}\to \lambda_{a, \mu,-}\leq0$ as $q_2\to p^*$ up to a subsequence.  Since $u_{a,\mu,q_2,-}$, is a mountain-pass solution constructed in Proposition~\ref{pro3.21} with the Lagrange multiplier $\lambda_{a, \mu,q_2,-}<0$, it is standard to use the dominated convergence theorem and \eqref{e3.101} to show that
\begin{eqnarray}\label{eqnN01}
-\Delta_pu_{a,\mu,-}=\lambda_{a, \mu,-} u_{a,\mu,-}^{p-1}+\mu u_{a,\mu,-}^{q_1-1}+u_{a,\mu,-}^{p^*-1},\ \text{in}\  \mathbb{R}^N
\end{eqnarray}
in the weak sense.  Moreover, by $\gamma_{q_2}\to1$ as $q_2\to p^*$, we also have
\begin{eqnarray*}
\lambda_{a, \mu,-}a^p=\mu(\gamma_{q_1}-1) \|u_{a,\mu,-}\|_{q_1}^{q_1},
\end{eqnarray*}
which implies that $\lambda_{a, \mu,-}=0$ if and only if $u_{a,\mu,-}=0$.  Since $u_{a,\mu,-}\in W^{1,p}\left(\mathbb{R}^N\right)$ satisfies \eqref{eqnN01}, by Lemma~\ref{lem1.5}, $u_{a,\mu,-}$ satisfies the Pohozaev identity, that is,
\begin{eqnarray}\label{e3.103}
\|\nabla u_{a,\mu,-}\|_p^p=\mu\gamma_{q_1}\|u_{a,\mu,-}\|_{q_1}^{q_1}+\|u_{a,\mu,-}\|_{p^*}^{p^*}.
\end{eqnarray}
Since $\{u_{a,\mu,q_2,-}\}$ is uniformly bounded in $W^{1,p}\left(\mathbb{R}^N\right)$ as $q_2\to p^*$, $\lambda_{a, \mu,q_2,-}\to \lambda_{a, \mu,-}$ as $q_2\to p^*$ up to a subsequence and $u_{a,\mu,-}$ satisfies \eqref{eqnN01}, we can go through the proof of Lemma~\ref{lem1.3} to show that $\nabla u_{a,\mu,q_2,-}\to \nabla u_{a,\mu,-}$ a.e. in $\mathbb{R}^N$ as $q_2\to p^*$ up to a subsequence, since the involved cut-off function $\{\tau_k(u_{a,\mu,q_2,-}-u_{a,\mu,-})\}$ is uniformly bounded for $k\leq10$. Let $v_{a,\mu,q_2,-}=u_{a,\mu,q_2,-}-u_{a,\mu,-}$.  Then by \eqref{e3.102}, \eqref{e3.103} and Lemma \ref{lem1.1},
\begin{eqnarray*}
\|\nabla v_{a,\mu,q_2,-}\|_p^p=\|v_{a,\mu,q_2,-}\|_{p^*}^{p^*}+o(1)
\end{eqnarray*}
as $q_2\to p^*$ up to a subsequence, which implies that either $v_{a,\mu,q_2,-}\to0$ strongly in $D^{1,p}\left(\mathbb{R}^N\right)$ or $\|\nabla v_{a,\mu,q_2,-}\|_p^p=\|v_{a,\mu,q_2,-}\|_{p^*}^{p^*}+o(1)\geq S^{\frac{N}{p}}+o(1)$ as $q_2\to p^*$ up to a subsequence, where $D^{1,p}\left(\mathbb{R}^N\right)=\dot{W}^{1,p}\left(\mathbb{R}^N\right)$ is the usual homogeneous Sobolev space.  If $u_{a,\mu,-}=0$, then we must have that either $u_{a,\mu,q_2,-}\to0$ strongly in $D^{1,p}\left(\mathbb{R}^N\right)$ or $\|\nabla u_{a,\mu,q_2,-}\|_p^p=\|u_{a,\mu,q_2,-}\|_{p^*}^{p^*}+o(1)\geq S^{\frac{N}{p}}+o(1)$ as $q_2\to p^*$ up to a subsequence.  However, since by \eqref{e3.102} and similar arguments used for \eqref{e3.12}, we can show that $\|\nabla u_{a,\mu,q_2,-}\|_p^p\gtrsim1$ as $q_2\to p^*$ up to a subsequence.  Thus, we must have $\|\nabla u_{a,\mu,q_2,-}\|_p^p=\|u_{a,\mu,q_2,-}\|_{p^*}^{p^*}+o(1)\geq S^{\frac{N}{p}}+o(1)$ as $q_2\to p^*$ up to a subsequence, provided $u_{a,\mu,-}=0$.  It follows from $u_{a,\mu,q_2,-}\in \mathcal{P}_{a,\mu,q_2}^-$ that
\begin{eqnarray*}
m^-_{q_2}(a,\mu)&=&\left(\frac{1}{p}-\frac{1}{q_2\gamma_{q_2}}\right)\|\nabla u_{a,\mu,q_2,-}\|_p^p+\mu\gamma_{q_1}\left(\frac{1}{q_2\gamma_{q_2}}-\frac{1}{q_1\gamma_{q_1}}\right)\|u_{a,\mu,q_2,-}\|_{q_1}^{q_1}\\
&\geq&\frac{1}{N}S^{\frac{N}{p}}+o(1)
\end{eqnarray*}
as $q_2\to p^*$ up to a subsequence, which contradicts Lemmas~\ref{lem3.1} and \ref{lem3.6}.  Thus, we must have $u_{a,\mu,-}\not=0$ and $\lambda_{a, \mu,-}<0$, which implies that $u_{a,\mu,-}\in\mathcal{P}_{a_1,\mu}$ where $a_1=\|u_{a,\mu,-}\|_p\in(0, a]$.
Recall that either $v_{a,\mu,q_2,-}\to0$ strongly in $D^{1,p}\left(\mathbb{R}^N\right)$ or $\|\nabla v_{a,\mu,q_2,-}\|_p^p=\|v_{a,\mu,q_2,-}\|_{p^*}^{p^*}+o(1)\geq S^{\frac{N}{p}}+o(1)$ as $q_2\to p^*$ up to a subsequence.  Thus, by Lemmas~\ref{lem3.1} and \ref{lem1.1},
\begin{eqnarray*}
m^-_{q_2}(a,\mu)&=&\left(\frac{1}{p}-\frac{1}{q_2\gamma_{q_2}}\right)\|\nabla u_{a,\mu,q_2,-}\|_p^p+\mu\gamma_{q_1}\left(\frac{1}{q_2\gamma_{q_2}}-\frac{1}{q_1\gamma_{q_1}}\right)\|u_{a,\mu,q_2,-}\|_{q_1}^{q_1}\\
&\geq&m^+(a_1,\mu)+\frac{1}{N}S^{\frac{N}{p}}+o(1)\\
&\geq&m^+(a,\mu)+\frac{1}{N}S^{\frac{N}{p}}+o(1),
\end{eqnarray*}
which contradicts Lemmas~\ref{lemN3.9} and \ref{lem3.6}.  Thus, we must have $v_{a,\mu,q_2,-}\to0$ strongly in $D^{1,p}\left(\mathbb{R}^N\right)$ as $q_2\to p^*$ up to a subsequence. By  $u_{a,\mu,q_2,-}\subset \mathcal{P}_{a,\mu,q_2}^-$ and $0<\mu<\mu_a^*\leq \mu_{a_1}^*$, we obtain that $u_{a,\mu,-}\in \mathcal{P}^-_{a_1,\mu}$. Moreover, as that in the proof of Proposition~\ref{pro3.1}, we also have $a_1=a$.  Thus, $v_{a,\mu,q_2,-}\to0$ strongly in $W^{1,p}\left(\mathbb{R}^N\right)$ as $q_2\to p^*$ up to a subsequence and $m^-(a,\mu)$ is achieved by $u_{a,\mu,-}$.    Moreover, since $u_{a,\mu,-}$ is real valued, nonnegative, radially symmetric and radially decreasing, by the strong comparison principle (\cite[Theorem~1.4]{D98}), we have that $u_{a,\mu,-}$ is also positive in $\mathbb{R}^N$. Thus, $u_{a,\mu,-}$ is the required function and $\lambda_{a,\mu,-}<0$ is its Lagrange multiplier.
\end{proof}

\section{The existence and multiplicity theory  of (\ref{e1.1}) for $\mu=\mu_{a}^*$}
\setcounter{section}{4} \setcounter{equation}{0}

In this section, we shall construct solutions of \eqref{e1.1} for $\mu=\mu_{a}^*$.  We remark that in this case, the degenerate submanifold $\mathcal{P}^0_{a,\mu_a^*}$ may not be an emptyset by Proposition \ref{pro2.1}.  Thus, to establish the existence of solutions to (\ref{e1.1}) for $\mu=\mu_{a}^*$, we need to get a deeper well understood of the $L^p$-Pohozaev manifold $\mathcal{P}_{a,\mu_a^*}$.  Recall that $\mu_{a}^*=\inf_{u\in \mathcal{S}_a}\mu(u)$ by \eqref{e1.12}, where the functional $\mu(u)$ is given by \eqref{muu}.
\begin{lemma}\label{lem5.4}
Let $N\geq 2$, $1<p<N$, $p<q_1<p+\frac{p^2}{N}<q_2\leq p^*$ and  $a>0$.  Then
\begin{eqnarray}\label{eN5.44}
\sup_{u\in \mathcal{S}_{a}}\mu(u)=+\infty.
\end{eqnarray}
In particular, $\mathcal{P}_{a,\mu}^{0}\not=\emptyset$ for all $\mu>\mu_a^*$.
\end{lemma}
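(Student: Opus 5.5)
To prove \eqref{eN5.44}, I will exhibit an explicit family $\{u_k\}\subset\mathcal{S}_a$ with $\mu(u_k)\to+\infty$. By $(a)$ of Proposition~\ref{pro2.2}, $\mu(u)$ is $0$-homogeneous along the trajectory $(u)_s$. So dilations $(u)_s$ are useless, and I need a genuinely different mechanism that separates $\|\nabla u\|_p^p$ from the $L^{q_1}$ and $L^{q_2}$ norms. The scaling structure of $\mu(u)$ is
\begin{equation*}
\mu(u)\approx\frac{\|\nabla u\|_p^{p\theta_1}}{\|u\|_{q_1}^{q_1}\|u\|_{q_2}^{q_2\theta_2}},\qquad \theta_1=\frac{q_2\gamma_{q_2}-q_1\gamma_{q_1}}{q_2\gamma_{q_2}-p},\quad \theta_2=\frac{p-q_1\gamma_{q_1}}{q_2\gamma_{q_2}-p}.
\end{equation*}
One checks that $\theta_1=1+\theta_2$.

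The natural choice is to make $\|\nabla u\|_p$ large while keeping $\|u\|_p=a$ and keeping the $L^q$ norms controlled. My first plan is an oscillating function: fix a radial $\varphi\in C_0^\infty$ and set $u_k(x)=c_k\varphi(x)\big(1+\tfrac12\sin(k x_1)\big)$, with $c_k$ chosen so that $\|u_k\|_p=a$. Then $c_k\approx1$ and $\|u_k\|_{q_i}^{q_i}\approx1$ uniformly in $k$, while $\|\nabla u_k\|_p^p\approx k^p\to\infty$. Since the exponent $\theta_1$ of the gradient term in the numerator is positive, this gives $\mu(u_k)\to+\infty$ immediately. The only checks are the two-sided bounds on $c_k$ and on the $L^{q_i}$ norms. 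These are elementary: $1/2\le 1+\tfrac12\sin\le 3/2$, so all $L^q$ norms are comparable to those of $\varphi$. For the gradient, $\nabla u_k$ contains the term $\tfrac{k}{2}c_k\varphi\cos(kx_1)e_1$, whose $L^p$ norm is $\approx k$ by the Riemann--Lebesgue-type averaging $\int\varphi^p|\cos(kx_1)|^p\to c_p\int\varphi^p$. The remaining term is $O(1)$. If radial functions are preferred, the same idea works with $\sin(k|x|)$.

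For the second claim, fix $\mu>\mu_a^*$. By the definition of infimum, there is $u_0\in\mathcal{S}_a$ with $\mu(u_0)<\mu$, and by the first part there is $u_1\in\mathcal{S}_a$ with $\mu(u_1)>\mu$. I then connect $u_0$ and $u_1$ by a continuous path in $\mathcal{S}_a$, namely
\begin{equation*}
\sigma\mapsto a\,\frac{(1-\sigma)u_0+\sigma u_1}{\|(1-\sigma)u_0+\sigma u_1\|_p}.
\end{equation*}
To keep the convex combination away from $0$, I take $u_0,u_1\ge0$; replacing $u$ by $|u|$ does not increase $\mu$ for $u_0$, and the $u_k$ above are already positive. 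Along this path $\mu(\cdot)$ is continuous, because the norms depend continuously on $\sigma$ in $W^{1,p}$ and the denominators do not vanish. The intermediate value theorem then gives $u_\sigma$ with $\mu(u_\sigma)=\mu$. By $(b)$ of Proposition~\ref{pro2.1}, $(u_\sigma)_{s(u_\sigma)}\in\mathcal{P}^0_{a,\mu}$, so $\mathcal{P}^0_{a,\mu}\neq\emptyset$.

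The main point needing care is the continuity and non-vanishing along the path, which is handled by the positivity choice. The blow-up construction itself is routine.
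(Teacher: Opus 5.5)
Your proof is correct, and it reaches \eqref{eN5.44} by a different and simpler mechanism than the paper.

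The paper takes the radial family $u_k=A_k\varphi_k(|x|)(1+|x|^2)^{-b}$ with $b=\frac{N-p}{4p}$ and $\varphi_k$ a linear cut-off from $1$ to $0$ on $[k,k+1]$. It normalizes $A_k\thickapprox k^{-(N-2pb)/p}$ and estimates $\|u_k\|_q^q\thickapprox A_k^qk^{N-2qb}$ and $\|\nabla u_k\|_p^p\gtrsim A_k^pk^{N-1-2pb}$, where the cut-off annulus dominates. Balancing powers of $k$ then gives $\mu(u_k)\gtrsim k^{(p-1)\theta_1}$. After the $\mu$-invariant dilation $(u_k)_k$, that family is essentially $|y|^{-2b}$ on the unit ball with a transition layer of width $1/k$. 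So it is the same decoupling you use (bounded $L^{q_i}$ norms, $\|\nabla\cdot\|_p\to\infty$), produced by a steep cut-off instead of oscillation. Your version avoids all the exponent bookkeeping and makes clear that only $\theta_1>0$ is needed.

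For the second claim, the paper just cites continuity of $\mu$ on $\mathcal{S}_a$ and leaves connectedness implicit. Your normalized convex path through nonnegative functions supplies that step.

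What the paper's family gives for free is radial symmetry, and this is how the lemma is used later. Proposition~\ref{pro4.1} picks $u_\mu\in\mathcal{P}^0_{a,\mu,rad}$, and Lemma~\ref{lem5.5} needs radial $u$ with $\mu(u)$ large. Your $\sin(k|x|)$ variant handles the supremum over radial functions. If you also replace $u_0$ by its Schwarz rearrangement, which keeps $\|u_0\|_q$ and does not increase $\|\nabla u_0\|_p$ (hence does not increase $\mu$), the whole path is radial. You then get $\mathcal{P}^0_{a,\mu,rad}\neq\emptyset$ as well.
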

\begin{proof}
We borrow the idea in the proof of \cite[Lemma~7.5]{Deng-Wu} to obtain the conclusion.  Let
\begin{eqnarray*}
u_k(x):=\frac{A_k\varphi_k(|x|)}{(1+|x|^2)^{b}}\in \mathcal{S}_a,
\end{eqnarray*}
where $b=\frac{N-p}{4p}$, $A_k>0$ is a constant dependent on $k$,  and
\begin{equation*}
\varphi_k(s):=\left\{ \begin{array}{ll}
1,& s\in [0,k),\\
-s+k+1,& s\in [k,k+1],\\
0,&s\in (k+1,+\infty).
\end{array}
\right.
\end{equation*}
Thus, $u_k(x)\in W_{rad}^{1,p}\left(\mathbb{R}^N\right)$ for any $k\geq 1$.  By direct calculations, for any $q\in [p,p^*]$, we obtain that
\begin{equation*}
\|u_k\|_q^q\thickapprox A_k^q\int_0^k\frac{r^{N-1}}{(1+r^2)^{qb}}dr\thickapprox A_k^q \frac{k^{N-2qb}}{N-2qb}
\end{equation*}
and
\begin{eqnarray*}
\|\nabla u_k\|_p^p&=&A_k^p\int_{\mathbb{R}^N}\frac{1}{(1+|x|^2)^{pb}}\left|\varphi_k'(|x|)\frac{x}{|x|}-2b\varphi_k(|x|)\frac{x}{1+|x|^2}\right|^pdx\\
&=&A_k^p\int_{B_k(0)}\frac{1}{(1+|x|^2)^{pb}}\left|2b\varphi_k(|x|)\frac{x}{1+|x|^2}\right|^pdx\\
&&+A_k^p\int_{B_{k+1}(0)\backslash B_k(0)}\frac{1}{(1+|x|^2)^{pb}}\left|\varphi_k'(|x|)\frac{x}{|x|}-2b\varphi_k(|x|)\frac{x}{1+|x|^2}\right|^pdx\\
&\gtrsim& A_k^p\int_{B_k(0)}\frac{|x|^p}{(1+|x|^2)^{p(b+1)}}dx+A_k^p\int_{B_{k+1}(0)\backslash B_k(0)}\frac{1}{(1+|x|^2)^{pb}}dx\\
&\thickapprox& A_k^p \frac{ k^{N-p-2pb}}{N-p-2pb}+A_k^p k^{N-1-2pb}\\
&\thickapprox& A_k^p k^{N-1-2pb}
\end{eqnarray*}
as $k\to +\infty$.
We choose $A_k>0$ such that $\|u_k\|_p=a$, which implies that $A_k\thickapprox k^{-\frac{N-2pb}{p}}$ as $k\to+\infty$.  Hence,
\begin{eqnarray*}
\mu(u_k)&\gtrsim& \frac{(A_k^p k^{N-1-2pb} )^{\frac{q_2\gamma_{q_2}-q_1\gamma_{q_1}}{q_2\gamma_{q_2}-p}}}
{A_k^{q_1}k^{N-2q_1b}(A_k^{q_2}k^{N-2q_2b})^{\frac{p-q_1\gamma_{q_1}}{q_2\gamma_{q_2}-p}}}\\
&=&A_k^{\frac{p(q_1-q_2)}{q_2\gamma_{q_2}-p}}k^{\frac{(N-2b p)(q_1-q_2)}{q_2\gamma_{q_2}-p}}k^{(p-1)\frac{q_2\gamma_{q_2}-q_1\gamma_{q_1}}{q_2\gamma_{q_2}-p}}\\
&=&k^{(p-1)\frac{q_2\gamma_{q_2}-q_1\gamma_{q_1}}{q_2\gamma_{q_2}-p}}\\
&\to&+\infty
\end{eqnarray*}
as $k\to +\infty$.  Thus, \eqref{eN5.44} holds true.  Since the functional $\mu(u)$, given by \eqref{muu}, is continuous on $\mathcal{S}_a$, we must have $\mathcal{P}_{a,\mu}^{0}\not=\emptyset$ for all $\mu>\mu_a^*$ by \eqref{eN5.44}, which completes the proof.
\end{proof}

We also need the following estimate in understanding the degenerate submanifold $\mathcal{P}^0_{a,\mu_a^*}$ in the Sobolev critical case $q_2=p^*$.
\begin{lemma}\label{lemN5.4}
Let $N\geq 2$, $1<p<N$ and $p<q_1<p+\frac{p^2}{N}<q_2=p^*$.  Then
\begin{eqnarray*}
\lim_{q_1\to p+\frac{p^2}{N}}m^+(a,\mu)=0
\end{eqnarray*}
uniformly for all $a>0$ and $0<\mu<\mu_a^*$, where $m^+(a,\mu)$ is the ground-state energy of $\Psi_\mu(u)$ given by \eqref{e1.11}.
\end{lemma}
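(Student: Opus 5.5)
The plan is to bound $m^+(a,\mu)$ from below by an explicit negative quantity that carries the factor $p-q_1\gamma_{q_1}$, and to check that everything else stays bounded as $q_1\to p+\frac{p^2}{N}$. Since $m^+(a,\mu)<0$ by $(a)$ of Lemma~\ref{lem3.1}, this gives the limit.

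\textbf{Step 1: an algebraic lower bound.} Write $A=\|\nabla u\|_p^p$, $B=\|u\|_{q_1}^{q_1}$, $C=\|u\|_{p^*}^{p^*}$, and note $q_2\gamma_{q_2}=p^*$. For $u\in\mathcal{P}_{a,\mu}^+$, eliminating $\mu\gamma_{q_1}B$ by the Pohozaev identity gives
\begin{equation*}
\Psi_\mu(u)=\Big(\frac1p-\frac1{q_1\gamma_{q_1}}\Big)A+\Big(\frac1{q_1\gamma_{q_1}}-\frac1{p^*}\Big)C\ \geq\ -\frac{p-q_1\gamma_{q_1}}{p\,q_1\gamma_{q_1}}\,A .
\end{equation*}
Since $q_1\gamma_{q_1}\to p$ as $q_1\to p+\frac{p^2}{N}$, it suffices to bound $\sup_{u\in\mathcal{P}^+_{a,\mu}}A$ by a quantity $\Lambda(q_1)$ with $(p-q_1\gamma_{q_1})\Lambda(q_1)\to0$, uniformly in $a>0$ and $0<\mu<\mu_a^*$.

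\textbf{Step 2: bounding $A$ on $\mathcal{P}^+_{a,\mu}$.} From the second inequality in \eqref{e3.5} with $q_2=p^*$, together with the Gagliardo-Nirenberg inequality,
\begin{equation*}
A^{\frac{p-q_1\gamma_{q_1}}{p}}<\frac{\gamma_{q_1}(p^*-q_1\gamma_{q_1})}{p^*-p}\,C_{N,p,q_1}^{q_1}\,\mu a^{q_1(1-\gamma_{q_1})}.
\end{equation*}
The exponent $\frac{p}{p-q_1\gamma_{q_1}}$ blows up, so I cannot use this crudely. I will use $\mu<\mu_a^*$ together with the scaling law in $(b)$ of Proposition~\ref{pro2.2}. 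A direct computation shows that $\mu_a^*a^{q_1(1-\gamma_{q_1})}$ is a constant $\mu_1^*(q_1)$ independent of $a$; this is exactly what makes the bound uniform in $a$. Equivalently, I will use $\mu<\mu(u)$ with the explicit formula \eqref{muu}, combined with the Sobolev bound $C\le S^{-p^*/p}A^{p^*/p}$ and the first inequality in \eqref{e3.5}. Either way, the goal is an inequality of the form
\begin{equation*}
\frac{\gamma_{q_1}(p^*-q_1\gamma_{q_1})}{p^*-p}C_{N,p,q_1}^{q_1}\,\mu_1^*(q_1)\leq K(q_1)\quad\text{with}\quad \limsup_{q_1\to p+\frac{p^2}{N}}K(q_1)^{\frac{p}{p-q_1\gamma_{q_1}}}<+\infty .
\end{equation*}
Then $A\le K(q_1)^{p/(p-q_1\gamma_{q_1})}$ stays bounded, and Step 1 gives $m^+(a,\mu)\gtrsim-(p-q_1\gamma_{q_1})\to0$.

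\textbf{Step 3: the main obstacle.} The real work is the estimate on $K(q_1)$, which needs $K(q_1)\le 1+O(p-q_1\gamma_{q_1})$. I would obtain it by testing $\mu_1^*$ with a fixed, $q_1$-independent function $w\in\mathcal{S}_1$, for instance a Gagliardo-Nirenberg extremal at the $L^p$-critical exponent. I would then track the prefactor in \eqref{muu}: it contains $(p-q_1\gamma_{q_1})^{\frac{p-q_1\gamma_{q_1}}{p^*-p}}\to1$, while $\frac{p^*-q_1\gamma_{q_1}}{p^*-p}\to1$. Finally I would use continuity of $C_{N,p,q_1}$ and of the $L^{q_1}$ norms at $q_1=p+\frac{p^2}{N}$, where the $L^p$-critical Gagliardo-Nirenberg identity $\|w\|_{\bar q}^{\bar q}\le C_{N,p,\bar q}^{\bar q}\|\nabla w\|_p^p\,a^{\bar q-p}$ makes the leading factor exactly $1$.

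The delicate point is showing that the deviation from $1$ is of first order in $p-q_1\gamma_{q_1}$, through differentiability of $q\mapsto\log\|w\|_q$ and of $q\mapsto\log C_{N,p,q}$. If this fails, my fallback is to bound $A$ directly by a two-sided argument on the fibering map $\Phi_{\mu,u}$: use $t^+_\mu(u)<s(u)$ from Proposition~\ref{pro2.1}, where $s(u)$ is controlled through the Sobolev inequality with constants independent of $q_1$.
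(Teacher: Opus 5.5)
Your Steps 1 and 2 are correct and match the paper's reduction. The paper minimizes $f(t)=\frac1N t^p-c\,t^{q_1\gamma_{q_1}}$ instead of using your algebraic bound. Both routes reduce the lemma to bounding $t_0^p$, where $t_0^{p-q_1\gamma_{q_1}}=\frac{\gamma_{q_1}(p^*-q_1\gamma_{q_1})}{p^*-p}C_{N,p,q_1}^{q_1}\mu_1^*(q_1)$.

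The genuine gap is Step 3. It is the whole content of the lemma, and you leave it open. With a fixed $w\in\mathcal{S}_1$ your bound has the form $K(q_1)\le(\cdots)\,\rho(q_1)$, where $\rho(q_1)=C_{N,p,q_1}^{q_1}\|\nabla w\|_p^{q_1\gamma_{q_1}}/\|w\|_{q_1}^{q_1}\ge1$. This is then raised to the power $\frac{p}{p-q_1\gamma_{q_1}}\to+\infty$. So you need $\log\rho(q_1)=O(\bar q-q_1)$ with $\bar q:=p+\frac{p^2}{N}$; continuity of $C_{N,p,q}$ is useless here.

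You propose to get this from differentiability of $q\mapsto\log C_{N,p,q}$. You do not justify this, and it is not readily available: $C_{N,p,q}$ is a supremum, and differentiability in $q$ would need uniqueness and nondegeneracy of extremals, which is delicate for the $p$-Laplacian.

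Your fallback also fails as stated. For $u\in\mathcal{P}^+_{a,\mu}$, the relation $t^+_\mu(u)=1<s(u)$ is exactly the first inequality in \eqref{e3.5}, namely $(p^*-q_1\gamma_{q_1})\|u\|_{p^*}^{p^*}<(p-q_1\gamma_{q_1})\|\nabla u\|_p^p$. The Sobolev inequality bounds $\|u\|_{p^*}^{p^*}$ from above by a power of $\|\nabla u\|_p$. That gives a lower bound on $s(u)$, not an upper bound on $\|\nabla u\|_p^p$.

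The paper avoids the first-order problem by testing $\mu_1^*$ with the $q_1$-dependent extremal $w_{q_1,p}$, normalized by $\|w_{q_1,p}\|_p=\|\nabla w_{q_1,p}\|_p=1$. Then $\|w_{q_1,p}\|_{q_1}^{q_1}=C_{N,p,q_1}^{q_1}$, so the Gagliardo--Nirenberg factor is exactly $1$. The exponent $\frac{p-q_1\gamma_{q_1}}{p^*-p}$ in the prefactor of $\mu(u)$ cancels the power $\frac{p}{p-q_1\gamma_{q_1}}$, giving $t_0^p\le\big(\frac{p-q_1\gamma_{q_1}}{p^*-q_1\gamma_{q_1}}\big)^{\frac{p}{p^*-p}}\|w_{q_1,p}\|_{p^*}^{-\frac{pp^*}{p^*-p}}$. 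Only a lower bound on $\|w_{q_1,p}\|_{p^*}$ is then needed.

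The factor $(p-q_1\gamma_{q_1})^{\frac{p-q_1\gamma_{q_1}}{p^*-p}}$ that you list as ``$\to1$'' is not $1+O(p-q_1\gamma_{q_1})$. It is, however, $<1$, and it is exactly what produces the decay $(p-q_1\gamma_{q_1})^{\frac{p}{p^*-p}}$.

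Your route can be rescued in either of two ways.
\begin{itemize}
\item[(i)] $q\mapsto\log C_{N,p,q}^q$ is convex on $(p,p^*)$. It is a supremum over $u$ of $\log\int|u|^q$ minus functions affine in $q$, using log-convexity of $q\mapsto\int|u|^q$ and the fact that $q\gamma_q$ is affine. Hence it is locally Lipschitz, as is $q\mapsto\log\|w\|_q^q$, and that is all you need.
\item[(ii)] More simply, in your fallback replace Sobolev by the H\"older interpolation $\|u\|_{q_1}^{q_1}\le a^{q_1(1-\gamma_{q_1})}\|u\|_{p^*}^{q_1\gamma_{q_1}}$. Insert it in the second inequality of \eqref{e3.5} and combine with the first. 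Using $\mu a^{q_1(1-\gamma_{q_1})}<\mu_1^*(q_1)$, this gives $\|\nabla u\|_p^{p(1-q_1\gamma_{q_1}/p^*)}\lesssim\mu_1^*(q_1)(p-q_1\gamma_{q_1})^{q_1\gamma_{q_1}/p^*}$ on $\mathcal{P}^+_{a,\mu}$. Testing with one fixed function shows $\limsup\mu_1^*(q_1)<\infty$. So $\sup_{\mathcal{P}^+_{a,\mu}}\|\nabla u\|_p^p\to0$ uniformly in $a$ and $\mu$, and your Step 1 concludes without any information on $C_{N,p,q_1}$.
\end{itemize}
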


\begin{proof}
By Proposition \ref{pro3.1}, there exists $u_{a,\mu,+}\in \mathcal{P}_{a,\mu}^+$ such that $\Psi_{\mu}(u_{a,\mu,+})=m^+(a,\mu)$. Since $u_{a, \mu,+}\in\mathcal{P}^+_{a,\mu}$, by the Gagliardo-Nirenberg  inequality, we have
\begin{eqnarray}\label{eq1001}
\|\nabla u_{a,\mu,+}\|_p^{p-q_1\gamma_{q_1}}<C_{N,p,q_1}^{q_1}\gamma_{q_1}\mu a^{q_1-q_1\gamma_{q_1}}\frac{p^*-q_1\gamma_{q_1}}{p^*-p}
\end{eqnarray}
and
\begin{eqnarray}\label{eq1002}
m^+(a,\mu)&=&\Psi_\mu(u_{a,\mu,+})\notag\\
&\geq&\frac{1}{N}\|\nabla u_{a,\mu,+}\|_p^{p}-\frac{\mu a^{q_1-q_1\gamma_{q_1}}C_{N,p,q_1}^{q_1}}{q_1}\left(1-\frac{q_1\gamma_{q_1}}{p^*}\right)\|\nabla u_{a,\mu,+}\|_p^{q_1\gamma_{q_1}}.
\end{eqnarray}
Let us consider the function
\begin{eqnarray*}
f(t)=\frac{1}{N}t^{p}-\frac{\mu a^{q_1-q_1\gamma_{q_1}}C_{N,p,q_1}^{q_1}}{q_1}\left(1-\frac{q_1\gamma_{q_1}}{p^*}\right)t^{q_1\gamma_{q_1}}.
\end{eqnarray*}
As in \cite{Wei-Wu2022}, a direct calculation shows that $f(t)$ is decreasing in $(0, t_0)$, where
\begin{eqnarray*}
t_0=\left(C_{N,p,q_1}^{q_1}\gamma_{q_1}\mu a^{q_1-q_1\gamma_{q_1}}\frac{p^*-q_1\gamma_{q_1}}{p^*-p}\right)^{\frac{1}{p-q_1\gamma_{q_1}}}.
\end{eqnarray*}
Thus, by \eqref{eq1001} and \eqref{eq1002},
\begin{eqnarray}\label{eq1004}
m^+(a,\mu)&\geq&-\left(C_{N,p,q_1}^{q_1}\gamma_{q_1}\mu a^{q_1-q_1\gamma_{q_1}}\frac{p^*-q_1\gamma_{q_1}}{p^*-p}\right)^{\frac{p}{p-q_1\gamma_{q_1}}}\frac{(p^*-p)(p-q_1\gamma_{q_1})}{p^*pq_1\gamma_{q_1}}\notag\\
&\geq&-\left(C_{N,p,q_1}^{q_1}\gamma_{q_1}\mu_a^* a^{q_1-q_1\gamma_{q_1}}\frac{p^*-q_1\gamma_{q_1}}{p^*-p}\right)^{\frac{p}{p-q_1\gamma_{q_1}}}\frac{(p^*-p)(p-q_1\gamma_{q_1})}{p^*pq_1\gamma_{q_1}}
\end{eqnarray}
for all $0<\mu<\mu_a^*$.  Since it is well known that extremal functions $w_{q_1,p}$ of the Gagliardo-Nirenberg inequality $\|u\|_{q_1}\leq C_{N,p,q_1}\|\nabla u\|_{p}^{\gamma_{q_1}}\|u\|_{p}^{1-\gamma_{q_1}}$, satisfying $\|\nabla w_{q_1,p}\|_p^p=1$ and $\|w_{q_1,p}\|_p^p=1$, are also the solutions of the following equation
\begin{eqnarray*}
-q_1\gamma_{q_1}\Delta_p u+(q_1-q_1\gamma_{q_1})|u|^{p-2}u=q_1C_{N,p,q_1}^{-q_1}|u|^{q_1-2}u,\quad\text{in }\mathbb{R}^N.
\end{eqnarray*}
By the standard scaling, we know that
\begin{eqnarray*}
w_{q_1,p}=\left(\frac{q_1-q_1\gamma_{q_1}}{q_1C_{N,p,q_1}^{-q_1}}\right)^{\frac{1}{q_1-p}}W_{1,q_1}\left(\left(\frac{q_1-q_1\gamma_{q_1}}{q_1\gamma_{q_1}}\right)^{\frac1p}x\right),
\end{eqnarray*}
where $W_{1,q_1}$ is the unique positive radial solution of the following equation
\begin{eqnarray*}
-\Delta_p u+|u|^{p-2}u=|u|^{q_1-2}u,\quad\text{in }\mathbb{R}^N.
\end{eqnarray*}
Thus, we also have
\begin{eqnarray*}
C_{N,p,q_1}^{q_1}&=&\|w_{q_1,p}\|_{q_1}^{q_1}\\
&=&\left(\frac{q_1-q_1\gamma_{q_1}}{q_1C_{N,p,q_1}^{-q_1}}\right)^{\frac{q_1}{q_1-p}}\left\|  W_{1,q_1}\left(\left(\frac{q_1-q_1\gamma_{q_1}}{q_1\gamma_{q_1}}\right)^{\frac1p}x\right)    \right\|_{q_1}^{q_1}\\
&=&\left(\frac{q_1-q_1\gamma_{q_1}}{q_1C_{N,p,q_1}^{-q_1}}\right)^{\frac{q_1}{q_1-p}}
\left(\frac{q_1-q_1\gamma_{q_1}}{q_1\gamma_{q_1}}\right)^{-\frac{N}p}\|W_{1,q_1}\|_{q_1}^{q_1},
\end{eqnarray*}
which implies that
\begin{eqnarray}\label{eq1003}
\left(C_{N,p,q_1}^{q_1}\right)^{\frac{p}{q_1-p}}=\gamma_{q_1}^{-\frac{N}{p}}
\left(\frac{q_1}{q_1-q_1\gamma_{q_1}}\right)^{\frac{q_1}{q_1-p}-\frac{N}{p}}\left(\int_{\mathbb{R}^N}W_{1,q_1}^{q_1}(x)dx\right)^{-1}.
\end{eqnarray}
Since $W_{1,q_1}$ is the unique positive radial solution and $W^{1,p}_{rad}\left(\mathbb{R}^N\right)$ is compact embedded in $L^q\left(\mathbb{R}^N\right)$ for all $q\in(p, p^*)$, it is standard to show that $W_{1,q_1}$ is continuous in $W^{1,p}_{rad}\left(\mathbb{R}^N\right)$ as a function of $q_1\in(p, p^*)$.  It follows from \eqref{eq1003} that $\left(C_{N,p,q_1}^{q_1}\right)^p$ is also continuous as a function of $q_1\in(p, p^*)$.  On the other hand, by Proposition~\ref{pro2.2}, we have $\mu_a^* a^{q_1-q_1\gamma_{q_1}}=\mu_1^*$, where
\begin{equation*}
\mu_{1}^*=\frac{(p^*-p)(p-q_1\gamma_{q_1})^{\frac{p-q_1\gamma_{q_1}}{p^*-p}}}
{\gamma_{q_1}(p^*-q_1\gamma_{q_1})^{\frac{p^*-q_1\gamma_{q_1}}{p^*-p}}}
\inf_{u\in \mathcal{S}_1}\frac{(\|\nabla u\|_p^p)^{\frac{p^*-q_1\gamma_{q_1}}{p^*-p}}}{\|u\|_{q_1}^{q_1}(\|u\|_{p^*}^{p^*})^{\frac{p-q_1\gamma_{q_1}}{p^*-p}}}.
\end{equation*}
Since $w_{q_1,p}$ are the extremal functions of the Gagliardo-Nirenberg inequality such that $\|w_{q_1,p}\|_p^p=1$ and $\|\nabla w_{q_1,p}\|_p^p=1$,  we can test $\mu_{1}^*$ by $w_{q_1,p}$, which implies that
\begin{eqnarray*}
&&C_{N,p,q_1}^{q_1}\gamma_{q_1}\mu_a^* a^{q_1-q_1\gamma_{q_1}}\frac{p^*-q_1\gamma_{q_1}}{p^*-p}\\
&=&C_{N,p,q_1}^{q_1}\left(\frac{p-q_1\gamma_{q_1}}{p^*-q_1\gamma_{q_1}}\right)^{\frac{p-q_1\gamma_{q_1}}{p^*-p}}\inf_{u\in \mathcal{S}_1}\frac{(\|\nabla u\|_p^p)^{\frac{p^*-q_1\gamma_{q_1}}{p^*-p}}}{\|u\|_{q_1}^{q_1}(\|u\|_{p^*}^{p^*})^{\frac{p-q_1\gamma_{q_1}}{p^*-p}}}\\
&\leq& C_{N,p,q_1}^{q_1}\left(\frac{p-q_1\gamma_{q_1}}{p^*-q_1\gamma_{q_1}}\right)^{\frac{p-q_1\gamma_{q_1}}{p^*-p}}\frac{(\|\nabla w_{q_1,p}\|_p^p)^{\frac{p^*-q_1\gamma_{q_1}}{p^*-p}}}{\|w_{q_1,p}\|_{q_1}^{q_1}(\|w_{q_1,p}\|_{p^*}^{p^*})^{\frac{p-q_1\gamma_{q_1}}{p^*-p}}}\\
&=&C_{N,p,q_1}^{q_1}\frac{\|\nabla w_{q_1,p}\|_p^p}{\|w_{q_1,p}\|_{q_1}^{q_1}} \left(\frac{p-q_1\gamma_{q_1}}{p^*-q_1\gamma_{q_1}}\right)^{\frac{p-q_1\gamma_{q_1}}{p^*-p}}\left(\frac{\|\nabla w_{q_1,p}\|_p^p}{\|w_{q_1,p}\|_{p^*}^{p^*}}\right)^{\frac{p-q_1\gamma_{q_1}}{p^*-p}}\\
&=&\left(\frac{p-q_1\gamma_{q_1}}{p^*-q_1\gamma_{q_1}}\right)^{\frac{p-q_1\gamma_{q_1}}{p^*-p}}\left(\frac{1}{\|w_{q_1,p}\|_{p^*}^{p^*}}\right)^{\frac{p-q_1\gamma_{q_1}}{p^*-p}}
\end{eqnarray*}
for all $q_1\in (p,p+\frac{p^2}{N})$. It follows that
\begin{equation*}
\begin{split}
m^+(a,\mu)\geq-   \left(\frac{p-q_1\gamma_{q_1}}{p^*-q_1\gamma_{q_1}}\right)^{\frac{p}{p^*-p}}\left(\frac{1}{\|w_{q_1,p}\|_{p^*}^{p^*}}\right)^{\frac{p}{p^*-p}}       \frac{(p^*-p)(p-q_1\gamma_{q_1})}{\gamma_{q_1}p^*pq_1},
\end{split}
\end{equation*}
which implies that $\lim_{q_1\to p+\frac{p^2}{N}}m^+(a,\mu)=0$ uniformly for all $a>0$ and $0<\mu<\mu_a^*$.
\end{proof}

\begin{remark}\label{rmkq}
It follows from the proof of Lemma \ref{lemN5.4} that there exists $q_1^*\in \left(p, p+\frac{p^2}{N}\right)$, independent of $a$ and $\mu$, such that $m^+(a,\mu)+\frac{1}{N}S^{\frac{N}{p}}>0$ for all  $q_1\in\left(q_1^*, p+\frac{p^2}{N}\right)$ uniformly for all $a>0$ and  $0<\mu<\mu_a^*$.
\end{remark}

\subsection{The structure of $\mathcal{P}_{a,\mu_{a}^*}$}
We define
\begin{eqnarray*}
m^{0}(b,\mu):=\left\{\aligned
&\inf_{u\in\mathcal{P}_{b,\mu}^0}\Psi_\mu(u),\quad &\mathcal{P}_{b,\mu}^0\not=\emptyset,\\
&+\infty, \quad &\mathcal{P}_{b,\mu}^0=\emptyset.
\endaligned\right.
\end{eqnarray*}
\begin{proposition}\label{pro4.1}
Let $N\geq 2$, $1<p<N$, $p<q_1<p+\frac{p^2}{N}<q_2\leq p^*$, $a>0$ and $\mu=\mu_{a}^*$.  Then
\begin{eqnarray}\label{e4.2}
\mathcal{P}_{a,\mu_{a}^*}^0&=&\{u\in \mathcal{P}_{a,\mu_{a}^*} \mid \mu(u)=\mu_{a}^*\}\notag\\
&=&\mathcal{P}_{a,\mu_{a}^*}^0\cap W^{1,p}_{rad}\left(\mathbb{R^N}\right)\notag\\
&=:&\mathcal{P}_{a,\mu_{a}^*,rad}^0
\end{eqnarray}
and if $u\in \mathcal{P}_{a,\mu_{a}^*}^0$, then $u$ satisfies the following equation
\begin{equation}\label{e4.1}
-p\Delta_pu=\lambda \|u\|_{q_1}^{q_1} p|u|^{p-2}u+\mu_{a}^*q_1\gamma_{q_1} |u|^{q_1-2}u+q_2\gamma_{q_2}|u|^{q_2-2}u,\ \text{in}\  \mathbb{R}^N
\end{equation}
in the weak sense,
where $\lambda\in \mathbb{R}$ is the Lagrange multiplier.  Moreover,  $\mathcal{P}_{a,\mu_a^*}^{0}\not=\emptyset$, and $\mu_a^*$ and $m^0(a, \mu_a^*)=m^0_{rad}(a, \mu_a^*)$ are attained, provided
\begin{enumerate}
\item[$(a)$]\quad $q_2<p^*$;
\item[$(b)$]\quad $q_2=p^*$ and $q_1\in\left(q_1^*, p+\frac{p^2}{N}\right)$ with $q_1^*>p$ independent of $a$ defined in Remark \ref{rmkq},
\end{enumerate}
where $m_{rad}^{0}(a,\mu_a^*)$ is given by \eqref{m0}.
\end{proposition}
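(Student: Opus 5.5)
The first step is the identification \eqref{e4.2}. For $u\in\mathcal{P}_{a,\mu}$, Proposition~\ref{pro2.1} says $s=1$ is a critical point of $\Phi_{\mu,u}$. Hence $\mu\le\mu(u)$, and $s=1$ is a degenerate critical point exactly when $\mu=\mu(u)$, in which case $1=s(u)$. With $\mu=\mu_a^*=\inf_{\mathcal{S}_a}\mu(\cdot)$ this gives $\mathcal{P}^0_{a,\mu_a^*}=\{u\in\mathcal{P}_{a,\mu_a^*}\mid \mu(u)=\mu_a^*\}$. Thus every $u\in\mathcal{P}^0_{a,\mu_a^*}$ is a global minimizer of $\mu(\cdot)$ on $\mathcal{S}_a$.

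Second, radial symmetry. For such $u$, the Schwarz rearrangement $u^*$ satisfies $\mu(u^*)\le\mu(u)$, because $\mu$ is increasing in $\|\nabla u\|_p^p$ and the $L^{q}$ norms are preserved. Minimality forces $\|\nabla u^*\|_p=\|\nabla u\|_p$. I would then invoke the equality case of the P\'olya--Szeg\H{o} inequality (Brothers--Ziemer), which needs some care with the set of critical points. An alternative is to use the Euler--Lagrange equation: $|u|$ is also a minimizer, so it solves \eqref{e4.1}, is positive by the strong comparison principle \cite{D98}, and then Brothers--Ziemer (or a moving-plane type argument for $p$-Laplacians) shows $u$ is a translate of a radial function. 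The statement treats this as radial up to translation. To get the Euler--Lagrange equation \eqref{e4.1}, differentiate $\log\mu(u)$ on $\mathcal{S}_a$ with a Lagrange multiplier for the constraint $\|u\|_p^p=a^p$. This yields a combination of $-\Delta_p u$, $|u|^{q_1-2}u$ and $|u|^{q_2-2}u$ with coefficients $\frac{q_2\gamma_{q_2}-q_1\gamma_{q_1}}{q_2\gamma_{q_2}-p}\frac{p}{\|\nabla u\|_p^p}$, $\frac{q_1}{\|u\|_{q_1}^{q_1}}$ and $\frac{p-q_1\gamma_{q_1}}{q_2\gamma_{q_2}-p}\frac{q_2}{\|u\|_{q_2}^{q_2}}$. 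Using the two identities defining $\mathcal{P}^0_{a,\mu_a^*}$ to express $\|\nabla u\|_p^p$ and $\|u\|_{q_2}^{q_2}$ in terms of $\mu_a^*\|u\|_{q_1}^{q_1}$, one normalizes to \eqref{e4.1}.

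Third, attainment. Once $\mu_a^*$ is attained by some $v\in\mathcal{S}_a$, the dilation $(v)_{s(v)}$ lies in $\mathcal{P}^0_{a,\mu_a^*}$ by Proposition~\ref{pro2.1}(b) and the $0$-homogeneity in Proposition~\ref{pro2.2}(a). Hence $\mathcal{P}^0_{a,\mu_a^*}\neq\emptyset$. To minimize $\mu(\cdot)$, take a minimizing sequence, rearrange it, and normalize by dilation so that $\|\nabla u_n\|_p=1$. Then $\|u_n\|_{q_1}$ and $\|u_n\|_{q_2}$ are bounded above by Gagliardo--Nirenberg, and bounded below because $\mu_a^*<\infty$. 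In case (a), the compact embedding of $W^{1,p}_{rad}$ into $L^{q}$ for $p<q<p^*$ passes $\|u_n\|_{q_i}$ to the limit $u_0$. Weak lower semicontinuity of $\|\nabla\cdot\|_p$, together with a rescaling of the mass $b=\|u_0\|_p\le a$ and the monotonicity $\mu_b^*\ge\mu_a^*$ from Proposition~\ref{pro2.2}(b), forces $\|u_0\|_p=a$ and shows $u_0$ is a minimizer. The infimum $m^0(a,\mu_a^*)$ is then taken over the set of minimizers. That set is compact along radial minimizing sequences by the same argument, since $\Psi_{\mu}$ restricted to $\mathcal{P}^0$ is a combination of $\|u\|_{q_1}^{q_1}$ and $\|\nabla u\|_p^p$. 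So it is attained, and $m^0=m^0_{rad}$ follows from the radial symmetry established above.

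The main obstacle is case (b), $q_2=p^*$, where $\|u_n\|_{p^*}$ may lose mass to a bubble. I would write $u_n=u_0+v_n$ and use the Brezis--Lieb splitting of Lemma~\ref{lem1.1}, together with the Euler--Lagrange equation \eqref{e4.1} satisfied by the elements of the sequence. If $v_n\not\to0$, Sobolev gives $\|\nabla v_n\|_p^p\gtrsim S^{N/p}$. Following the strategy announced in the introduction, I would map $u_n$ by $\mathcal{T}$ into $\mathcal{P}^-_{a,\mu(t),rad}$ with $\mu(t)=\overline{\mu}_a^*$, where $\Psi$ vanishes. The splitting then bounds $0$ below by $m^+_{rad}(a,\overline{\mu}_a^*)+\frac1N S^{N/p}+o(1)$, with the weak limit contributing at least $m^+$ by Lemma~\ref{lem3.1}(b) and the bubble contributing $\frac1N S^{N/p}$. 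Remark~\ref{rmkq} makes this lower bound positive for $q_1>q_1^*$, which gives a contradiction. Hence $u_n\to u_0$ strongly, and the argument finishes as in case (a). Making this bookkeeping rigorous, in particular checking that the limit stays in the range where Lemma~\ref{lemN5.4} applies, is the delicate part.
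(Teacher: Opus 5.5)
Your proposal follows essentially the same route as the paper. The common steps are: identifying $\mathcal{P}^0_{a,\mu_a^*}$ through Proposition~\ref{pro2.1}; deriving \eqref{e4.1} by differentiating $\mu(\cdot)$ on $\mathcal{S}_a$; in case $(a)$, Schwarz rearrangement with the compact radial embedding and the strict monotonicity of $b\mapsto\mu_b^*$; and in case $(b)$, the dilation $\mathcal{T}$ with $t_*=(p^*/p)^{\frac{1}{p^*-p}}$ onto the zero-energy part of $\mathcal{P}^-_{a,\overline{\mu}_a^*,rad}$, where a bubble would force $0\geq m^+_{rad}(a,\overline{\mu}_a^*)+\frac{1}{N}S^{\frac{N}{p}}$, contradicting Remark~\ref{rmkq}. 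Your Brothers--Ziemer symmetry step, which gives radial symmetry only up to translation, is in fact more accurate than the paper's bare appeal to $\mu(u)\geq\mu(u^*)$, since $\mathcal{P}^0_{a,\mu_a^*}$ is translation invariant.

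One point needs repair in case $(b)$: the elements of a minimizing sequence of $\mu_a^*$ do not satisfy \eqref{e4.1}. You must first choose, e.g.\ by Ekeland's principle on $\mathcal{S}_a\cap W^{1,p}_{rad}(\mathbb{R}^N)$, a minimizing sequence on which \eqref{e4.1} holds up to an $o(1)$ error in the dual with bounded multipliers, as in \eqref{RepWuNew1002}. That approximate equation is exactly what gives a.e. convergence of the gradients for the Brezis--Lieb splitting of $\|\nabla\cdot\|_p^p$, and what places the weak limit on $\mathcal{P}_{a_0,\overline{\mu}_a^*}$ via Lemma~\ref{lem1.5}.
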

\begin{proof}
It follows from the definition of the extremal value $\mu_{a}^*$ given by (\ref{e1.12}) that $(u)_s\in \mathcal{P}_{a,\mu_{a}^*}^0$ for some $s>0$ if and only if $u$ is a minimizer of the variational problem (\ref{e1.12}).  Thus,
\begin{eqnarray}\label{eqn1020}
\mathcal{P}_{a,\mu_{a}^*}^0=\{u\in \mathcal{P}_{a,\mu_{a}^*} \mid \mu(u)=\mu_{a}^*\}.
\end{eqnarray}
Moreover, by computing the derivative of the functional
$\mu(u)$, given by \eqref{muu}, at the minimizer $u$ and notice that $s(u)=1$ for $u\in \mathcal{P}_{a,\mu_{a}^*}^0$ where $s(u)$ is given in $(a)$ of Proposition~\ref{pro2.1}, we derive the equation of $u$ which is given by (\ref{e4.1}).

To prove $\mu_a^*$ is attained and $\mathcal{P}_{a,\mu_a^*}^{0}\not=\emptyset$, we choose $\{u_n\}\subset \mathcal{S}_a$ such that it is a minimizing sequence of $\mu_a^*$.  By applying the Schwarz symmetric rearrangement if necessary, we may assume that $u_n$ are real valued, nonnegative, radially symmetric and radially decreasing.  Since the functional $\mu(u)$, given by \eqref{muu}, is 0-homogeneous for the trajectory $(u)_s\in \mathcal{S}_a$ by $(a)$ of Proposition \ref{pro2.2}, we may also assume that $\left\|\nabla u_n\right\|_{p}^{p}=1$.  Thus, $\{u_n\}$ is bounded in $W_{rad}^{1,p}\left(\mathbb{R}^N\right)$.  It follows that there exists $u_0\in W_{rad}^{1,p}\left(\mathbb{R}^N\right)$ such that  $u_n\rightharpoonup u_0$ weakly in $W_{rad}^{1,p}\left(\mathbb{R}^N\right)$ and strongly in $L^q\left(\mathbb{R}^N\right)$ for all $q\in (p,p^*)$ as $n\to+\infty$ up to a subsequence.  If $u_0=0$, then
\begin{eqnarray*}
\mu_a^*+o_n(1)&=&\mu(u_n)\\
&=&\frac{(q_2\gamma_{q_2}-p)(p-q_1\gamma_{q_1})^{\frac{p-q_1\gamma_{q_1}}{q_2\gamma_{q_2}-p}}}
{\gamma_{q_1}\gamma_{q_2}^{\frac{p-q_1\gamma_{q_1}}{q_2\gamma_{q_2}-p}}(q_2\gamma_{q_2}-q_1\gamma_{q_1})^{\frac{q_2\gamma_{q_2}-q_1\gamma_{q_1}}{q_2\gamma_{q_2}-p}}}
\frac{1}{\|u_n\|_{q_1}^{q_1}(\|u_n\|_{q_2}^{q_2})^{\frac{p-q_1\gamma_{q_1}}{q_2\gamma_{q_2}-p}}}\\
&\to&+\infty
\end{eqnarray*}
as $n\to+\infty$, which is impossible.  Thus, we must have $u_0\not=0$.  Moreover, by the Fatou lemma, we also have $a_1:=\|u_0\|_p\in (0,a]$.  In the Sobolev subcritical case $q_2<p^*$, by $(b)$ of Proposition \ref{pro2.2}, we have
\begin{equation*}\label{e5.1}
\mu_a^*=\lim_{n\to+\infty}\mu(u_n)\geq \mu(u_0)\geq \mu_{a_1}^*\geq \mu_a^*,
\end{equation*}
 which implies that $a_1=a$, $\lim_{n\to +\infty}\|\nabla u_n\|_p=\|\nabla u_0\|_p$ and $\mu_a^*$ is achieved by $u_0\in \mathcal{S}_a$. By Proposition \ref{pro2.1}, $(u_0)_{t^0_{\mu_a^*}(u_0)}\in \mathcal{P}_{a,\mu_a^*}^{0}$, which implies that $\mathcal{P}_{a,\mu_a^*}^{0}\not=\emptyset$.  In the Sobolev critical case $q_2=p^*$, by Lemma~\ref{lem5.4}, we can choose $u_{\mu}\in\mathcal{P}^0_{a,\mu,rad}$ with $\mu\downarrow\mu_a^*$.  Since $u_{\mu}\in\mathcal{P}^0_{a,\mu,rad}$, by similar arguments used in the proof of Propositions~\ref{pro3.1} and \ref{pro3.21}, we know that $\{u_{\mu}\}$ is bounded in $W^{1,p}_{rad}\left(\mathbb{R}^N\right)$ and $\|\nabla u_{\mu}\|_{p}^{p}\thickapprox\|u_{\mu}\|_{q_1}^{q_1}\thickapprox\|u_{\mu}\|_{p^*}^{p^*}\thickapprox1$ as $\mu\downarrow\mu_a^*$.  Without loss of generality, we may assume that  $u_\mu\to u_0\not=0$ weakly in $W^{1,p}_{rad}\left(\mathbb{R}^N\right)$ and strongly in $L^{q_1}\left(\mathbb{R}^N\right)$ as $\mu\downarrow\mu_a^*$ up to a subsequence.  Since $\mu\downarrow\mu_a^*$, by $u_{\mu}\in\mathcal{P}_{a,\mu,rad}^0$, we can see that $\{u_\mu\}$
is a minimizing sequence of $\mu_a^*$.  It follows from the method of the Lagrange multiplier and the fact that $s(u_\mu)=1$ for $u_\mu\in \mathcal{P}_{a,\mu,rad}^0$, where $s(u)$ is given in $(a)$ of Proposition~\ref{pro2.1}, that there exists $\lambda_{\mu}\in\mathbb{R}$ such that
\begin{equation}\label{RepWuNew1002}
-p\Delta_pu_\mu=\lambda_{\mu} \|u_{\mu}\|_{q_1}^{q_1} p|u_{\mu}|^{p-2}u_{\mu}+\mu_{a}^*q_1\gamma_{q_1} |u_{\mu}|^{q_1-2}u_{\mu}+p^*|u_{\mu}|^{p^*-2}u_{\mu}+o_\mu(1)
\end{equation}
in the dual space of $W^{1,p}_{rad}\left(\mathbb{R}^N\right)$ as $\mu\downarrow\mu_a^*$.
Let us consider
the map $\mathcal{T}: u\to w=(u)_{t_*}=(t_*)^{\frac{N}{p}}u(t_*x)$ with $t_*=\left(\frac{p^*}{p}\right)^{\frac{1}{p^*-p}}$. It follows from (\ref{RepWuNew1002}) that $w_{\mu}:=\mathcal{T}u_{\mu}$ satisfies
\begin{eqnarray}\label{e7.2}
-\Delta_pw_{\mu}&=&\lambda_{\mu} \|w_{\mu}\|_{q_1}^{q_1}(t_*)^{p-q_1\gamma_{q_1}} |w_{\mu}|^{p-2}w_{\mu}+\mu_{a}^*\frac{q_1\gamma_{q_1}}{p} (t_*)^{p-q_1\gamma_{q_1}}|w_{\mu}|^{q_1-2}w_{\mu}\notag\\
&&+\frac{p^*}{p}(t_*)^{p-p^*}|w_{\mu}|^{p^*-2}w_{\mu}+o_{\mu}(1).
\end{eqnarray}
Note that $\{w_{\mu}\}$ is bounded in $W^{1,p}_{rad}\left(\mathbb{R}^N\right)$ and $\|w_{\mu}\|_{q_1}^{q_1}\thickapprox 1$.
 So there exists $w_0\in W_{rad}^{1,p}(\mathbb{R^N})\backslash\{0\}$ and $\lambda_0\in \mathbb{R}$ such that
$w_{\mu}\rightharpoonup w_0$ weakly in $W^{1,p}_{rad}\left(\mathbb{R}^N\right)$,  $w_{\mu}\to w_0$ strongly in $L^{q}\left(\mathbb{R}^N\right)$ with $q\in (p,p^*)$, and $\lambda_{\mu}\to \lambda_0$  as $\mu\downarrow\mu_a^*$ up to a subsequence.  Letting $\mu\downarrow\mu_a^*$ in \eqref{e7.2}, we obtain that
\begin{equation}\label{RepWuNew1004}
-\Delta_pw_0=\lambda_{0}' \|w_{0}\|_{q_1}^{q_1} p|w_{0}|^{p-2}w_{0}+\overline{\mu}_{a}^*\gamma_{q_1} |w_{0}|^{q_1-2}w_{0}+|w_{0}|^{p^*-2}w_{0},
\end{equation}
where
\begin{eqnarray}\label{eq1005}
\overline{\mu}_a^*=\frac{q_1\gamma_{q_1}}{p}\left(\frac{p^*}{p}\right)^{\frac{p-q_1\gamma_{q_1}}{p^*-p}}\mu_a^*<\mu_a^*
\end{eqnarray}
since $q_1\gamma_{q_1}\in(0, p)$ for $p<q_1<p+\frac{p^2}{N}$.
Moreover, since $u_\mu\in \mathcal{P}_{a,\mu,rad}^0$, we also have $w_\mu\in\mathcal{Q}_{a,\overline{\mu},rad}^-$, where
\begin{eqnarray*}
\mathcal{Q}_{a,\overline{\mu},rad}^-:=\mathcal{T}\left(\mathcal{P}_{a,\mu,rad}^0\right)=\left\{w\in \mathcal{P}_{a,\overline{\mu},rad}^-\mid \frac{p-q_1\gamma_{q_1}}{q_1\gamma_{q_1}}\overline{\mu}\gamma_{q_1}\|w\|_{q_1}^{q_1}=\frac{p^*-p}{p^*}\|w\|_{p^*}^{p^*}\right\}
\end{eqnarray*}
with $\Psi_{\overline{\mu}}(w_\mu)=0$ and
\begin{eqnarray*}
\overline{\mu}=\frac{q_1\gamma_{q_1}}{p}\left(\frac{p^*}{p}\right)^{\frac{p-q_1\gamma_{q_1}}{p^*-p}}\mu<\mu.
\end{eqnarray*}
It follows from Lemma~\ref{lem1.5}, \eqref{RepWuNew1004} and $w_\mu\in\mathcal{Q}_{a,\overline{\mu}_a^*,rad}^-$ that
\begin{eqnarray*}
\left\{\aligned
&\|\nabla w_0\|_p^p=\overline{\mu}_a^*\gamma_{q_1} \|w_0\|_{q_1}^{q_1}+\|w_0\|_{p^*}^{p^*},\\
&\|\nabla v_\mu\|_p^p=\|v_\mu\|_{p^*}^{p^*}+o_\mu(1),
\endaligned\right.
\end{eqnarray*}
where $v_\mu=w_\mu-w_0$.   Then either $\|\nabla v_{\mu}\|_p^p=\|v_{\mu}\|_{p^*}^{p^*}=o_{\mu}(1)$ or $\|\nabla v_{\mu}\|_p^p=\|v_{\mu}\|_{p^*}^{p^*}+o_{\mu}(1)\geq S^{N/p}+o_{\mu}(1)$.
Set $a_0:=\|w_0\|_p\in (0,a]$. If $\|\nabla v_{\mu}\|_p^p=\|v_{\mu}\|_{p^*}^{p^*}\geq S^{N/p}+o_{\mu}(1)$, then by Lemma \ref{lem3.1}
\begin{eqnarray*}
0&=&\Psi_{\bar{\mu}_a^*}(w_{\mu})\\
&=&\Psi_{\bar{\mu}_a^*}(w_0)+\Psi_{\bar{\mu}_a^*}(v_n)+o_n(1)\\
&\geq& m_{rad}^+(a_0,\bar{\mu}_a^*)+\frac{1}{N}S^{N/p}+o_n(1)\\
&\geq& m_{rad}^+(a,\bar{\mu}_a^*)+\frac{1}{N}S^{N/p}+o_n(1),
\end{eqnarray*}
which contradicts Remark \ref{rmkq}.
So we must have $\|\nabla v_{\mu}\|_p^p=\|v_{\mu}\|_{p^*}^{p^*}=o_{\mu}(1)$. Then  $w_{\mu}\to w_0$  strongly in $D^{1,p}(\mathbb{R}^N)$ and $L^{p^*}(\mathbb{R}^N)$ as $\mu\downarrow\mu_a^*$, and
\begin{equation*}
\mu_a^*=\lim_{\mu\downarrow\mu_a^*}\mu(u_{\mu})=\lim_{\mu\downarrow\mu_a^*}\mu(w_{\mu})\geq \mu(w_0)\geq \mu_{a_0}^*\geq \mu_a^*,
\end{equation*}
which implies $a_0=a$, $\mu(w_0)=\mu_a^*$ and $w_{\mu}\to w_0$ strongly in $W^{1,p}(\mathbb{R}^N)$ as $\mu\downarrow\mu_a^*$. It follows that $\mathcal{P}_{a,\mu_a^*}^{0}\neq \emptyset$ for $q_1\in\left(q_1^*, p+\frac{p^2}{N}\right)$.

It remains to prove that $m^{0}(a,\mu_a^*)$ is attained.  Since
\begin{equation*}
\mu_a^*=\mu(u)\geq \mu(u^*)\geq \mu_a^*,
\end{equation*}
for all minimizers of $\mu_a^*$, where $u^*$ is the Schwarz symmetric rearrangement of $u$, all minimizers of $\mu_a^*$ must be real valued, nonnegative, radially symmetric and radially decreasing, which, together with \eqref{e4.1} and the strong comparison principle (\cite[Theorem~1.4]{D98}), implies that all minimizers of $\mu_a^*$ must be real valued, positive, radially symmetric and radially decreasing in $\mathbb{R}^N$.  It follows that $\mathcal{P}_{a,\mu_a^*}^{0}=\mathcal{P}_{a,\mu_a^*,rad}^{0}$ and $m^0(a, \mu_a^*)=m^0_{rad}(a, \mu_a^*)$, which, together with \eqref{eqn1020}, implies that \eqref{e4.2} holds true.  Let $\{u_n\}\subset \mathcal{P}_{a,\mu_a^*,rad}^{0}$ be a minimizing sequence of $m^{0}(a,\mu_a^*)$, provided $\mathcal{P}_{a,\mu_a^*}^{0}\not=\emptyset$.  Then $\{u_n\}$ is a minimizing sequence of $\mu_a^*$.  Thus, by applying the same arguments used to prove that $\mu_a^*$ is attained, we can also show that $m^0_{rad}(a, \mu_a^*)$ is attained, provided $\mathcal{P}_{a,\mu_a^*}^{0}\not=\emptyset$.
\end{proof}

As in \cite{Albuquerque-Silva2020}, we introduce the set $\hat{\mathcal{P}}_{a,\mu}:=\{u\in \mathcal{S}_a\mid\mu<\mu(u)\}$. Then by Proposition \ref{pro2.1} and $(a)$ of Proposition \ref{pro2.2}, it is easy to see that
\begin{equation*}
\hat{\mathcal{P}}_{a,\mu}=\left\{(u)_s \mid s>0, u\in \mathcal{P}_{a,\mu}^+\cup \mathcal{P}_{a,\mu}^-\right\}.
\end{equation*}
Let $\overline{\hat{\mathcal{P}}}_{a,\mu_a^*}$ be the closure of $\hat{\mathcal{P}}_{a,\mu_a^*}$ in the $W^{1,p}\left(\mathbb{R}^N\right)$ topology.
\begin{lemma}\label{lem4.2}
Let $N\geq 2$, $1<p<N$, $p<q_1<p+\frac{p^2}{N}<q_2\leq p^*$ and $a>0$.  Then
\begin{eqnarray*}
\overline{\hat{\mathcal{P}}}_{a,\mu_a^*}&=&\hat{\mathcal{P}}_{a,\mu_a^*}\cup\left\{(u)_s \mid s>0, u\in \mathcal{P}_{a,\mu_a^*}^0\right\}\\
&=&\{u\in \mathcal{S}_a\mid \mu_a^*\leq\mu(u)\}.
\end{eqnarray*}
\end{lemma}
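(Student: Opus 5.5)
We prove the two set identities by showing the chain of inclusions
\begin{eqnarray*}
\hat{\mathcal{P}}_{a,\mu_a^*}\cup\left\{(u)_s \mid s>0, u\in \mathcal{P}_{a,\mu_a^*}^0\right\}\subset\{u\in \mathcal{S}_a\mid \mu_a^*\leq\mu(u)\}\subset\overline{\hat{\mathcal{P}}}_{a,\mu_a^*}\subset\hat{\mathcal{P}}_{a,\mu_a^*}\cup\left\{(u)_s \mid s>0, u\in \mathcal{P}_{a,\mu_a^*}^0\right\}.
\end{eqnarray*}
Throughout we use that $\mu_a^*=\inf_{\mathcal{S}_a}\mu(u)$ by \eqref{e1.12}, so $\mu(u)\geq\mu_a^*$ for every $u\in\mathcal{S}_a$; hence $\{u\in\mathcal{S}_a\mid \mu_a^*\leq\mu(u)\}=\mathcal{S}_a$. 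We also use that $\mu(u)$ is continuous on $\mathcal{S}_a$ in the $W^{1,p}$ topology, since $\|\nabla u\|_p$, $\|u\|_{q_1}$ and $\|u\|_{q_2}$ are continuous (Sobolev embedding) and the denominators do not vanish for $u\neq0$.

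For the first inclusion, $\hat{\mathcal{P}}_{a,\mu_a^*}=\{\mu_a^*<\mu(u)\}$ lies in $\{\mu_a^*\le\mu(u)\}$ by definition. If $u\in\mathcal{P}_{a,\mu_a^*}^0$, then \eqref{e4.2} of Proposition~\ref{pro4.1} gives $\mu(u)=\mu_a^*$. By the $0$-homogeneity of $\mu$ along trajectories, $(a)$ of Proposition~\ref{pro2.2}, we get $\mu((u)_s)=\mu_a^*$ for all $s>0$, which proves the first inclusion.

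For the last inclusion, take $v\in\overline{\hat{\mathcal{P}}}_{a,\mu_a^*}$. Since $\mathcal{S}_a$ is closed, $v\in\mathcal{S}_a$. If $\mu(v)>\mu_a^*$, then $v\in\hat{\mathcal{P}}_{a,\mu_a^*}$. Otherwise $\mu(v)=\mu_a^*$, and $(b)$ of Proposition~\ref{pro2.1} shows that $\Phi_{\mu_a^*,v}$ has the degenerate critical point $t^0=s(v)$ with $(v)_{s(v)}\in\mathcal{P}^0_{a,\mu_a^*}$. Writing $v=(w)_{1/s(v)}$ with $w=(v)_{s(v)}$, using $((v)_t)_s=(v)_{ts}$, places $v$ in $\left\{(u)_s \mid s>0, u\in \mathcal{P}_{a,\mu_a^*}^0\right\}$.

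The middle inclusion $\mathcal{S}_a\subset\overline{\hat{\mathcal{P}}}_{a,\mu_a^*}$ is the main step. The points with $\mu(v)>\mu_a^*$ are trivially in the closure, so we must approximate each $v$ with $\mu(v)=\mu_a^*$ by points where $\mu>\mu_a^*$. The plan is to perturb $v$ to $v_\epsilon=a\frac{v+\epsilon\varphi}{\|v+\epsilon\varphi\|_p}\in\mathcal{S}_a$, which tends to $v$ in $W^{1,p}$ as $\epsilon\to0$. By continuity of $\mu$, it then suffices to show that $\mu(v_\epsilon)\neq\mu_a^*$ for some sequence $\epsilon\to0$; since $\mu\geq\mu_a^*$ everywhere, this forces $\mu(v_\epsilon)>\mu_a^*$. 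We argue by contradiction. Suppose $\mu\equiv\mu_a^*$ on a whole $W^{1,p}$-neighbourhood of $v$ in $\mathcal{S}_a$. Then every point of that neighbourhood is a minimizer of \eqref{e1.12}, so after the scaling $(\cdot)_{s(\cdot)}$ each satisfies an Euler–Lagrange equation of the form \eqref{e4.1}. By Proposition~\ref{pro4.1}, each is also positive, radial and radially decreasing. A small nonradial perturbation $\varphi$ (e.g.\ $\varphi\in C_0^\infty$ supported off the origin and not radial) produces nonradial functions in every neighbourhood of $v$, contradicting this radiality. Hence $\mu(v_\epsilon)>\mu_a^*$ for suitable small $\epsilon$, giving $v\in\overline{\hat{\mathcal{P}}}_{a,\mu_a^*}$.

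If in the critical case existence of minimizers is unknown, the set $\{\mu(v)=\mu_a^*\}$ is empty and the middle inclusion is trivial; so this argument is needed only where minimizers exist, which is exactly when Proposition~\ref{pro4.1} applies.
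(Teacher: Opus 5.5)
Your first and last inclusions are exactly the paper's argument. The paper's entire proof is the citation of Proposition~\ref{pro2.1} and $(a)$ of Proposition~\ref{pro2.2}, i.e.\ the degenerate case $(b)$ of the fibering classification together with $\mu((u)_s)=\mu(u)$. That is what identifies $\{u\in\mathcal{S}_a\mid \mu_a^*\le\mu(u)\}$ with $\hat{\mathcal{P}}_{a,\mu_a^*}\cup\{(u)_s\mid s>0,\ u\in\mathcal{P}^0_{a,\mu_a^*}\}$.

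Where you go beyond the paper is the middle inclusion. Those two propositions say nothing about closures. Since $\{\mu\ge\mu_a^*\}=\mathcal{S}_a$, the closure of $\hat{\mathcal{P}}_{a,\mu_a^*}$ equals $\mathcal{S}_a$ if and only if the set of minimizers $\{\mu=\mu_a^*\}$ has empty interior in $\mathcal{S}_a$. The paper takes this for granted (and later uses $\overline{\hat{\mathcal{P}}}_{a,\mu_a^*}=\mathcal{S}_a$ repeatedly). You isolate this density step and prove it from the rigidity of minimizers in Proposition~\ref{pro4.1}, and you correctly treat the case where $\mu_a^*$ is not attained separately. So your proof is more complete than the paper's. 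The price is that the closure identity now depends on the symmetry claim of Proposition~\ref{pro4.1}, which itself rests on the equality case of the P\'olya--Szeg\H{o} inequality.

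One point needs repair. The functional $\mu$ is invariant under $u\mapsto -u$ and under translations $u\mapsto u(\cdot-y)$. Hence minimizers of \eqref{e1.12} can only be positive and radially decreasing up to sign and translation, and \eqref{e4.2} cannot hold literally with symmetry about the origin. Producing functions near $v$ that are nonradial about the origin is therefore not by itself a contradiction. You must show that $v_\epsilon$ is not of the form $\pm w(\cdot-y)$ with $w$ radially nonincreasing.

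Your bump supported in a small ball away from the center of $v$ does achieve this, but it has to be argued:
\begin{itemize}
\item If $y$ differed from the center of $v$, then on spheres about that center which miss the bump, $w$ would be constant on overlapping intervals of radii. This forces $w\equiv 0$ near infinity, contradicting $v>0$.
\item If $y$ equals the center, the bump itself would have to be radial, which is impossible.
\end{itemize}

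A cleaner route avoids symmetry altogether. Every minimizer is, up to the scaling $(\cdot)_s$, a weak solution of \eqref{e4.1}, hence lies in $L^\infty_{loc}$ as in the proof of Lemma~\ref{lem1.5}. Take $\psi=\eta\,|x-x_0|^{-\beta}$ with a cut-off $\eta$ and $0<\beta<\frac{N-p}{p}$, so that $\psi\in W^{1,p}$. Then $v_\epsilon=a\frac{v+\epsilon\psi}{\|v+\epsilon\psi\|_p}\in\mathcal{S}_a$ converges to $v$ and is unbounded near $x_0$. So $v_\epsilon$ is not a minimizer, i.e.\ $\mu(v_\epsilon)>\mu_a^*$, which gives the density directly.
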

\begin{proof}
The conclusion follows immediately from Proposition \ref{pro2.1} and $(a)$ of Proposition \ref{pro2.2}.
\end{proof}

By the definitions of $\hat{\mathcal{P}}_{a,\mu}$ and $\overline{\hat{\mathcal{P}}}_{a,\mu_a^*}$, it follows from Proposition \ref{pro2.1} that $\hat{\mathcal{P}}_{a,\mu}=\mathcal{S}_a$ for $\mu\in (0, \mu_a^*)$ and $\overline{\hat{\mathcal{P}}}_{a,\mu_a^*}=\mathcal{S}_a$. Moreover, by Lemma \ref{lem4.2} and Proposition \ref{pro2.1}, we can define two functionals $\tau_{\mu_a^*}^{\pm}: \overline{\hat{\mathcal{P}}}_{a,\mu_a^*}\to \mathbb{R}$ by
\begin{equation}\label{e4.3}
\tau_{\mu_a^*}^{\pm}(u):=\left\{\begin{array}{ll}
t_{\mu_a^*}^{\pm}(u), & u\in \hat{\mathcal{P}}_{a,\mu_a^*},\\
t_{\mu_a^*}^{0}(u), & u\in \overline{\hat{\mathcal{P}}}_{a,\mu_a^*}\backslash\hat{\mathcal{P}}_{a,\mu_a^*},
\end{array}
\right.
\end{equation}
where $t_{\mu_a^*}^{\pm}(u)$ and $t_{\mu_a^*}^{0}(u)$ are given in Proposition \ref{pro2.1}.
\begin{lemma}\label{lem4.3}
Let $N\geq 2$, $1<p<N$, $p<q_1<p+\frac{p^2}{N}<q_2\leq p^*$ and $a>0$. Then for any $v\in \mathcal{S}_a$,
\begin{enumerate}
\item[$(a)$]\quad $t_{\mu}^{\pm}(v)$ are $C^1$ in terms of $\mu\in (0,\mu_a^*)$ with $t_{\mu}^{+}(v)$ increasing and $t_{\mu}^{-}(v)$ decreasing.
\item[$(b)$]\quad $\Psi_{\mu}\left((v)_{t_{\mu}^{\pm}(v)}\right)$ is $C^1$ and decreasing in terms of $\mu\in (0,\mu_a^*)$.
\item[$(c)$]\quad We have
\begin{equation*}
\lim_{\mu\uparrow\mu_a^*}t_{\mu}^{\pm}(v)=\tau_{\mu_a^*}^{\pm}(v)\quad \text{and}\quad
\lim_{\mu\uparrow\mu_a^*}\Psi_{\mu}\left((v)_{t_{\mu}^{\pm}(v)}\right)=\Psi_{\mu_a^*}\left((v)_{\tau_{\mu_a^*}^{\pm}(v)}\right),
\end{equation*}
where $\tau_{\mu_a^*}^{\pm}(v)$ are given by (\ref{e4.3}).
\end{enumerate}
\end{lemma}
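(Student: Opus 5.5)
We fix $v\in\mathcal{S}_a$ and argue fibrewise, exactly as in Lemma~\ref{lem3.1} and Lemma~\ref{lem3.3}, using the implicit function theorem in the variable $\mu$. Since $\mu_a^*\leq\mu(v)$, every $\mu\in(0,\mu_a^*)$ satisfies $\mu<\mu(v)$. So by $(a)$ of Proposition~\ref{pro2.1} the two critical points $t_\mu^\pm(v)$ exist, with $0<t_\mu^+(v)<s(v)<t_\mu^-(v)$. They are nondegenerate because $(v)_{t_\mu^\pm(v)}\in\mathcal{P}_{a,\mu}^\pm$.

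For $(a)$ we set
\begin{equation*}
H(\mu,t):=t^{p}\|\nabla v\|_p^p-\mu\gamma_{q_1}t^{q_1\gamma_{q_1}}\|v\|_{q_1}^{q_1}-\gamma_{q_2}t^{q_2\gamma_{q_2}}\|v\|_{q_2}^{q_2},
\end{equation*}
so that $H(\mu,t_\mu^\pm(v))=0$. Moreover $t\,\partial_tH=p t^p\|\nabla v\|_p^p-\mu q_1\gamma_{q_1}^2t^{q_1\gamma_{q_1}}\|v\|_{q_1}^{q_1}-q_2\gamma_{q_2}^2t^{q_2\gamma_{q_2}}\|v\|_{q_2}^{q_2}$ at $t=t_\mu^\pm(v)$, which is $>0$ on $\mathcal{P}^+$ and $<0$ on $\mathcal{P}^-$. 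The implicit function theorem then gives that $t_\mu^\pm(v)$ is $C^1$, with
\begin{equation*}
\frac{dt_\mu^\pm(v)}{d\mu}=\frac{\gamma_{q_1}t^{q_1\gamma_{q_1}}\|v\|_{q_1}^{q_1}}{\partial_tH(\mu,t)}\Big|_{t=t_\mu^\pm(v)}.
\end{equation*}
This is positive for $+$ and negative for $-$, which proves the claimed monotonicity.

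For $(b)$ we write $g^\pm(\mu):=\Phi_{\mu,v}(t_\mu^\pm(v))$. By $(a)$ this is $C^1$. Since $\Phi'_{\mu,v}(t_\mu^\pm(v))=0$, the chain rule leaves only the explicit $\mu$-derivative:
\begin{equation*}
\frac{dg^\pm}{d\mu}=-\frac{1}{q_1}\big(t_\mu^\pm(v)\big)^{q_1\gamma_{q_1}}\|v\|_{q_1}^{q_1}<0 .
\end{equation*}

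For $(c)$, the main point is to identify the limits of $t_\mu^\pm(v)$. By $(a)$, $t_\mu^+(v)$ is increasing and bounded above by $s(v)$, and $t_\mu^-(v)$ is decreasing and bounded below by $s(v)$. Hence the limits $t^\pm_*$ as $\mu\uparrow\mu_a^*$ exist in $(0,+\infty)$, and $t^+_*\leq s(v)\leq t^-_*$. Passing to the limit in $H(\mu,t_\mu^\pm(v))=0$ shows $H(\mu_a^*,t_*^\pm)=0$, so $t_*^\pm$ are critical points of $\Phi_{\mu_a^*,v}$. We now split into two cases.

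\emph{Case $v\in\hat{\mathcal{P}}_{a,\mu_a^*}$, i.e.\ $\mu_a^*<\mu(v)$.} By Proposition~\ref{pro2.1}$(a)$, $\Phi_{\mu_a^*,v}$ has exactly the two critical points $t_{\mu_a^*}^\pm(v)$, one on each side of $s(v)$. Then $t_*^\pm=t_{\mu_a^*}^\pm(v)$. Here the implicit function theorem in fact extends across $\mu_a^*$, since $\partial_tH\neq0$ there.

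\emph{Case $\mu(v)=\mu_a^*$.} By Proposition~\ref{pro2.1}$(b)$ the only critical point is $t^0_{\mu_a^*}(v)=s(v)$. Hence $t_*^\pm=s(v)=\tau^\pm_{\mu_a^*}(v)$, in agreement with \eqref{e4.3}.

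The energy convergence then follows from the joint continuity of $(\mu,t)\mapsto\Phi_{\mu,v}(t)$ on $(0,+\infty)^2$. The only delicate step is this degenerate case: there the implicit function theorem fails at $\mu_a^*$, and we replace it by the monotone-bounded argument combined with the uniqueness of the critical point.
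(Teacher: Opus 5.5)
Your proposal is correct and follows essentially the same route as the paper. You apply the implicit function theorem to the same $H(\mu,t)$ for $(a)$, drop the $t$-derivative term at a critical point for $(b)$, and for $(c)$ pass to the monotone limit and identify it by splitting into $\mu(v)>\mu_a^*$ versus $\mu(v)=\mu_a^*$ via Proposition~\ref{pro2.1}. Your use of the $\mu$-independent bound $s(v)$ usefully makes explicit two points the paper leaves implicit: why the limits exist, and which critical point each limit must be.
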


\begin{proof}
$(a)$\quad Since $\mathcal{P}_{a,\mu}^{0}=\emptyset$ for all $0<\mu<\mu_a^*$, the proof is similar to that for Lemma~\ref{lem3.1}, so we only sketch it here.  Let
\begin{equation*}
H(\mu,t):=t^p\|\nabla v\|_{p}^{p}-\mu \gamma_{q_1}t^{q_1\gamma_{q_1}}\|v\|_{q_1}^{q_1}-\gamma_{q_2}t^{q_2\gamma_{q_2}}\|v\|_{q_2}^{q_2}.
\end{equation*}
By $(v)_{t_{\mu}^{\pm}(v)}\in \mathcal{P}_{a,\mu}^{\pm}$, it is easy to see that
\begin{eqnarray*}
H\left(\mu,t_{\mu}^{\pm}(v)\right)=0\quad\text{and}\quad\frac{\partial H(\mu,t)}{\partial t}|_{t=t_{\mu}^{\pm}(v)}\neq0.
\end{eqnarray*}
Thus, by the implicit function theorem, $t_{\mu}^{\pm}(v)$ are $C^1$ in terms of $\mu\in (0,\mu_a^*)$ with
\begin{equation*}
\frac{d\left(t_{\mu}^{\pm}(v)\right)}{d\mu}=\frac{\gamma_{q_1}\left(t_{\mu}^{\pm}(v)\right)^{q_1\gamma_{q_1}+1}\|v\|_{q_1}^{q_1}}
{p\left(t_{\mu}^{\pm}(v)\right)^p\|\nabla v\|_{p}^{p}-\mu q_1 \gamma_{q_1}^2   \left(t_{\mu}^{\pm}(v)\right))^{q_1\gamma_{q_1}}\|v\|_{q_1}^{q_1}-q_2\gamma_{q_2}^2\left(t_{\mu}^{\pm}(v)\right)^{q_2\gamma_{q_2}}\|v\|_{q_2}^{q_2}}.
\end{equation*}
In particular, $\frac{d\left(t_{\mu}^{+}(v)\right)}{d\mu}>0$ and $\frac{d\left(t_{\mu}^{-}(v)\right)}{d\mu}<0$ for all $\mu\in (0,\mu_a^*)$.

\medskip

$(b)$\quad By the conclusion $(a)$, $\Psi_{\mu}\left((v)_{t_{\mu}^{\pm}(v)}\right)$ is $C^1$ in terms of $\mu\in (0,\mu_a^*)$. Moreover, by $(v)_{t_{\mu}^{\pm}(v)}\in \mathcal{P}_{a,\mu}^{\pm}$, we also have
\begin{eqnarray*}
\frac{d \Psi_{\mu}\left((v)_{t_{\mu}^{\pm}(v)}\right)}{d\mu}&=&\frac{\partial\Psi_{\mu}\left((v)_{t_{\mu}^{\pm}(v)}\right)}{\partial\mu}+
\frac{\partial\Psi_{\mu}\left((v)_{t_{\mu}^{\pm}(v)}\right)}{\partial t_{\mu}^{\pm}(v)} \frac{d\left(t_{\mu}^{\pm}(v)\right)}{d\mu}\\
&=&\frac{\partial\Psi_{\mu}\left((v)_{t_{\mu}^{\pm}(v)}\right)}{\partial\mu}\\
&=&-\frac{\left(t_{\mu}^{\pm}(v)\right)^{q_1\gamma_{q_1}}}{q_1}\|v\|_{q_1}^{q_1}\\
&<&0.
\end{eqnarray*}
Thus, $\Psi_{\mu}\left((v)_{t_{\mu}^{\pm}(v)}\right)$ is decreasing in terms of $\mu\in (0,\mu_a^*)$.

\medskip

$(c)$\quad Since $\hat{\mathcal{P}}_{a,\mu}=\mathcal{S}_a$ for $\mu\in (0,\mu_a^*)$, for any $v\in \mathcal{S}_a$, by Proposition \ref{pro2.1}, there exist $t_{\mu}^{\pm}(v)$ such that $t_{\mu}^{+}(v)<s(v)<t_{\mu}^{+}(v)$ with $s(u)$ given in $(a)$ of Proposition~\ref{pro2.1},
\begin{eqnarray*}
\left\{\aligned
&(t_{\mu}^{+}(v))^p\|\nabla v\|_{p}^{p}-\mu \gamma_{q_1}  (t_{\mu}^{+}(v))^{q_1\gamma_{q_1}}\|v\|_{q_1}^{q_1}-\gamma_{q_2}(t_{\mu}^{+}(v))^{q_2\gamma_{q_2}}\|v\|_{q_2}^{q_2}=0,\\
&p(t_{\mu}^{+}(v))^p\|\nabla v\|_{p}^{p}-\mu q_1 \gamma_{q_1}^2   (t_{\mu}^{+}(v))^{q_1\gamma_{q_1}}\|v\|_{q_1}^{q_1}-q_2\gamma_{q_2}^2(t_{\mu}^{+}(v))^{q_2\gamma_{q_2}}\|v\|_{q_2}^{q_2}>0,
\endaligned\right.
\end{eqnarray*}
and
\begin{eqnarray*}
\left\{\aligned
&(t_{\mu}^{-}(v))^p\|\nabla v\|_{p}^{p}-\mu \gamma_{q_1}  (t_{\mu}^{-}(v))^{q_1\gamma_{q_1}}\|v\|_{q_1}^{q_1}-\gamma_{q_2}(t_{\mu}^{-}(v))^{q_2\gamma_{q_2}}\|v\|_{q_2}^{q_2}=0,\\
&p(t_{\mu}^{-}(v))^p\|\nabla v\|_{p}^{p}-\mu q_1 \gamma_{q_1}^2   (t_{\mu}^{-}(v))^{q_1\gamma_{q_1}}\|v\|_{q_1}^{q_1}-q_2\gamma_{q_2}^2(t_{\mu}^{-}(v))^{q_2\gamma_{q_2}}\|v\|_{q_2}^{q_2}<0.
\endaligned\right.
\end{eqnarray*}
By the conclusion $(a)$, we may assume that $t_{\mu}^{\pm}(v)\to \bar{t}^{\pm}(v)$ as $\mu\uparrow \mu_a^*$. Note that we also have $\overline{\hat{\mathcal{P}}}_{a,\mu_a^*}=\mathcal{S}_a$. Thus, by Lemma \ref{lem4.2}, either $v\in \hat{\mathcal{P}}_{a,\mu_a^*}$ or $v\in \left\{(u)_s \mid s>0, u\in \mathcal{P}_{a,\mu_a^*}^0\right\}$. If $v\in \hat{\mathcal{P}}_{a,\mu_a^*}$, then $\mu_a^*<\mu(v)$. It follows from Proposition \ref{pro2.1} that $\bar{t}^{\pm}(v)=t_{\mu_a^*}^{\pm}(v)$. If $v\in \left\{(u)_s \mid s>0, u\in \mathcal{P}_{a,\mu_a^*}^0\right\}$, then $\mu_a^*=\mu(v)$,
\begin{eqnarray*}
\left\{\aligned
&(\bar{t}^{+}(v))^p\|\nabla v\|_{p}^{p}-\mu_a^* \gamma_{q_1}  (\bar{t}^{+}(v))^{q_1\gamma_{q_1}}\|v\|_{q_1}^{q_1}-\gamma_{q_2}(\bar{t}^{+}(v))^{q_2\gamma_{q_2}}\|v\|_{q_2}^{q_2}=0,\\
&p(\bar{t}^{+}(v))^p\|\nabla v\|_{p}^{p}-\mu_a^* q_1 \gamma_{q_1}^2   (\bar{t}^{+}(v))^{q_1\gamma_{q_1}}\|v\|_{q_1}^{q_1}-q_2\gamma_{q_2}^2(\bar{t}^{+}(v))^{q_2\gamma_{q_2}}\|v\|_{q_2}^{q_2}\geq0,
\endaligned\right.
\end{eqnarray*}
and
\begin{eqnarray*}
\left\{\aligned
&(\bar{t}^{-}(v))^p\|\nabla v\|_{p}^{p}-\mu_a^* \gamma_{q_1}  (\bar{t}^{-}(v))^{q_1\gamma_{q_1}}\|v\|_{q_1}^{q_1}-\gamma_{q_2}(\bar{t}^{-}(v))^{q_2\gamma_{q_2}}\|v\|_{q_2}^{q_2}=0,\\
&p(\bar{t}^{-}(v))^p\|\nabla v\|_{p}^{p}-\mu_a^* q_1 \gamma_{q_1}^2   (\bar{t}^{-}(v))^{q_1\gamma_{q_1}}\|v\|_{q_1}^{q_1}-q_2\gamma_{q_2}^2(\bar{t}^{-}(v))^{q_2\gamma_{q_2}}\|v\|_{q_2}^{q_2}\leq 0.
\endaligned\right.
\end{eqnarray*}
It follows from $v=(u)_s$ for some $s>0$ and $u\in \mathcal{P}_{a,\mu_a^*}^{0}$ that $\bar{t}^{+}(v)=\bar{t}^{-}(v)=t_{\mu_a^*}^{0}(v)$.
The conclusions of $t_{\mu}^{\pm}(v)$ then follows immediately from the definitions of $\tau_{\mu_a^*}^{\pm}(v)$ given by (\ref{e4.3}).  The conclusions of $\Psi_{\mu}\left((v)_{t_{\mu}^{\pm}(v)}\right)$ follows from the continuity of $\Psi_{\mu}\left((v)_{t_{\mu}^{\pm}(v)}\right)$ in terms of $\mu$.
\end{proof}

By Lemma~\ref{lem4.2}, the variational problems~\eqref{newvp} for $\mu=\mu_a^*$ are reduced to
\begin{equation*}
\hat{\Psi}_{\mu_a^*}^{\pm}:=\inf\left\{\Psi_{\mu_a^*}(u)\mid u\in \mathcal{P}_{a,\mu_a^*}^{\pm}\cup \partial\mathcal{P}_{a,\mu_a^*}^{\pm}\right\}.
\end{equation*}
\begin{proposition}\label{pro4.4}
Let $N\geq 2$, $1<p<N$, $p<q_1<p+\frac{p^2}{N}<q_2\leq p^*$ and $a>0$. Then
\begin{enumerate}
\item[$(a)$]\quad $m^{\pm}(a,\mu)$ are decreasing in terms of $\mu\in (0,\mu_a^*)$ with
\begin{equation}\label{e4.8}
\lim_{\mu\uparrow \mu_a^*}m^{\pm}(a,\mu)=\hat{\Psi}_{\mu_a^*}^{\pm}.
\end{equation}
In particular, $\hat{\Psi}_{\mu_a^*}^{-}\geq \hat{\Psi}_{\mu_a^*}^{+}$ and $\hat{\Psi}_{\mu_a^*}^{+}<0$.
\item[$(b)$]\quad For all $u_b\in \mathcal{P}_{b,\mu_a^*}^{\pm}$ with $b<a$, we have
\begin{equation}\label{e4.9}
\Psi_{\mu_a^*}(u_b)>\Psi_{\mu_a^*}\left(\left(\frac{a}{b}u_b\right)_{\tau_{\mu_a^*}^{\pm}\left(\frac{a}{b}u_b\right)}\right).
\end{equation}
In particular, $m^{\pm}(b,\mu_a^*)> \hat{\Psi}_{\mu_a^*}^{\pm}$.
\end{enumerate}
\end{proposition}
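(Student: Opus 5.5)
For $(a)$, the monotonicity comes from Lemma~\ref{lem4.3}. Fix $0<\mu_1<\mu_2<\mu_a^*$ and any $u\in\mathcal{P}_{a,\mu_1}^{\pm}$. By $(b)$ of Lemma~\ref{lem4.3},
\[
m^{\pm}(a,\mu_2)\le \Psi_{\mu_2}\bigl((u)_{t^{\pm}_{\mu_2}(u)}\bigr)<\Psi_{\mu_1}\bigl((u)_{t^{\pm}_{\mu_1}(u)}\bigr)=\Psi_{\mu_1}(u),
\]
so $m^{\pm}(a,\mu)$ is nonincreasing. Strict decrease follows once $m^{\pm}(a,\mu_1)$ is attained, which holds by Propositions~\ref{pro3.1}, \ref{pro3.21} and \ref{pro3.2}; in the critical case for $m^-$ this uses Lemma~\ref{lem3.6}. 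Hence $L^{\pm}:=\lim_{\mu\uparrow\mu_a^*}m^{\pm}(a,\mu)$ exists in $[-\infty,+\infty)$.

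I prove \eqref{e4.8} by two inequalities.

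\emph{Upper bound.} Take any $w\in\mathcal{P}_{a,\mu_a^*}^{\pm}\cup\mathcal{P}^0_{a,\mu_a^*}$. Then $\tau^{\pm}_{\mu_a^*}(w)=1$, and $(c)$ of Lemma~\ref{lem4.3} gives
\[
L^{\pm}\le\lim_{\mu\uparrow\mu_a^*}\Psi_\mu\bigl((w)_{t^\pm_\mu(w)}\bigr)=\Psi_{\mu_a^*}(w).
\]
Hence $L^{\pm}\le\hat{\Psi}^{\pm}_{\mu_a^*}$.

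\emph{Lower bound.} For $\mu<\mu_a^*$ take the minimizer $u_\mu\in\mathcal{P}^{\pm}_{a,\mu}$. Exactly as in \eqref{e3.56} and \eqref{e3.12}--\eqref{e3.13}, the Gagliardo--Nirenberg estimates hold uniformly in $\mu\le\mu_a^*$, so $\|\nabla u_\mu\|_p\thickapprox1$. Since $\mu(u_\mu)\ge\mu_a^*$, Proposition~\ref{pro2.1} and Lemma~\ref{lem4.2} give a scaling $s_\mu:=\tau^{\pm}_{\mu_a^*}(u_\mu)$ with $(u_\mu)_{s_\mu}\in\mathcal{P}^{\pm}_{a,\mu_a^*}\cup\mathcal{P}^0_{a,\mu_a^*}$. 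Compare the two Pohozaev-type equations $H(\mu,1)=0$ and $H(\mu_a^*,s_\mu)=0$, together with the sign conditions defining $\pm$. This shows $s_\mu$ stays bounded above and away from $0$, and that $\Psi_{\mu_a^*}\bigl((u_\mu)_{s_\mu}\bigr)-\Psi_\mu(u_\mu)\to0$. The key observation is that $t\mapsto\Phi_{\mu,u_\mu}(t)$ differs from $\Phi_{\mu_a^*,u_\mu}(t)$ by $O(\mu_a^*-\mu)$ uniformly on compact $t$-sets.

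For the ``$-$'' branch, the fibering map increases from $t^+$ to $t^-$. So $\Psi_{\mu_a^*}\bigl((u_\mu)_{s_\mu}\bigr)\le\Phi_{\mu_a^*,u_\mu}(1)+o(1)=\Psi_\mu(u_\mu)+o(1)$, using that $s_\mu$ is the local max of $\Phi_{\mu_a^*,u_\mu}$ on a neighbourhood containing $1$. For the ``$+$'' branch I argue symmetrically with the local minimum. This yields $\hat{\Psi}^{\pm}_{\mu_a^*}\le L^{\pm}$.

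The inequalities $\hat{\Psi}^-_{\mu_a^*}\ge\hat{\Psi}^+_{\mu_a^*}$ and $\hat{\Psi}^+_{\mu_a^*}<0$ then follow by passing to the limit in $(a)$ of Lemma~\ref{lem3.1}. Strict negativity comes from a fixed test function $v$ with $\mu(v)>\mu_a^*$, which exists by Lemma~\ref{lem5.4}. For such $v$, Proposition~\ref{pro2.1}$(a)$ gives $\hat{\Psi}^+_{\mu_a^*}\le\Phi_{\mu_a^*,v}(t^+_{\mu_a^*}(v))<0$.

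For $(b)$, repeat the implicit-function computation of Lemma~\ref{lem3.1}$(b)$ with $v_c=\frac{c}{b}u_b$ for $c\in[b,a]$ and $\mu=\mu_a^*$. Here $\mu_c^*\ge\mu_a^*$ by Proposition~\ref{pro2.2}$(b)$, so $\mu_a^*\le\mu(v_c)$. At $c=b$ the derivative of the Pohozaev constraint in $\tau$ is nonzero because $u_b\in\mathcal{P}^{\pm}_{b,\mu_a^*}$. The derivative computed in Lemma~\ref{lem3.1}$(b)$ shows $c\mapsto\Psi_{\mu_a^*}\bigl((v_c)_{t^\pm(c)}\bigr)$ is strictly decreasing as long as the branch stays nondegenerate.

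\textbf{Main obstacle.} The branch may hit $\mathcal{P}^0$ before $c=a$; since $\mu_c^*>\mu_a^*$ for $c<a$, this can only happen at $c=a$. I handle it by continuity: $t^{\pm}(c)\to\tau^{\pm}_{\mu_a^*}(v_a)$ as $c\uparrow a$, as in Lemma~\ref{lem4.3}$(c)$. Strict monotonicity on $[b,a)$ plus continuity at $a$ gives \eqref{e4.9}. Then $m^\pm(b,\mu_a^*)>\hat{\Psi}^\pm_{\mu_a^*}$ follows by taking the infimum, provided the decrease is quantified. To get that, I would use the minimizer obtained as the limit of the $u_{b,\mu}$ ($\mu\uparrow\mu_a^*$), or alternatively note that $\mu_b^*>\mu_a^*$ lets one apply Lemma~\ref{lem3.1} with attainment at level $b$ directly.
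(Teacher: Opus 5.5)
Your monotonicity argument, your upper bound $\lim_{\mu\uparrow\mu_a^*}m^{\pm}(a,\mu)\le\hat{\Psi}^{\pm}_{\mu_a^*}$, and your part $(b)$ are essentially the paper's. For the strict inequality in $(b)$, your second alternative is exactly what the paper does: $m^{\pm}(b,\mu_a^*)$ is attained since $\mu_a^*<\mu_b^*$, and the minimizer is then plugged into \eqref{e4.9}. Proving $\hat{\Psi}^+_{\mu_a^*}<0$ with a test function from Lemma~\ref{lem5.4} is a valid alternative to the paper's $\hat{\Psi}^+_{\mu_a^*}\le m^+(a,\mu_0)<0$.

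The flaw is in the lower bound $\hat{\Psi}^{\pm}_{\mu_a^*}\le\lim m^{\pm}(a,\mu)$, in the ``$-$'' branch. As you justify it, the inequality points the wrong way.

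\begin{itemize}
\item $s_\mu=\tau^-_{\mu_a^*}(u_\mu)$ maximizes $\Phi_{\mu_a^*,u_\mu}$ on $[\tau^+_{\mu_a^*}(u_\mu),+\infty)$, and this interval contains $1$.
\item So the local-max property gives $\Phi_{\mu_a^*,u_\mu}(s_\mu)\ge\Phi_{\mu_a^*,u_\mu}(1)$, not $\le$.
\item Your $o(1)$ would then have to come from something like $s_\mu\to1$, which you assert (``comparing the Pohozaev equations'') but do not prove.
\end{itemize}

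The two branches are not symmetric. For ``$+$'' your argument is correct. We have $1=t^+_\mu(u_\mu)<\tau^+_{\mu_a^*}(u_\mu)=s_\mu$, and $\Phi_{\mu_a^*,u_\mu}$ is nonincreasing on $(0,s_\mu]$. Hence $\Phi_{\mu_a^*,u_\mu}(s_\mu)\le\Phi_{\mu_a^*,u_\mu}(1)<\Phi_{\mu,u_\mu}(1)$.

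For ``$-$'' you must use the extremal property at level $\mu$, not at $\mu_a^*$.
\begin{itemize}
\item One has $t^+_\mu(u_\mu)<s(u_\mu)\le s_\mu<t^-_\mu(u_\mu)=1$, and $\Phi_{\mu,u_\mu}$ is increasing on $(t^+_\mu(u_\mu),1)$.
\item Also $\Phi_{\mu_a^*,u}\le\Phi_{\mu,u}$ pointwise, since $\mu<\mu_a^*$.
\item Hence $\Phi_{\mu_a^*,u_\mu}(s_\mu)\le\Phi_{\mu,u_\mu}(s_\mu)<\Phi_{\mu,u_\mu}(1)=\Psi_\mu(u_\mu)$.
\end{itemize}

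Both corrected inequalities are exact and hold for every $u\in\mathcal{P}^{\pm}_{a,\mu}$. So minimizers, uniform Gagliardo--Nirenberg bounds and $o(1)$ errors are all unnecessary.

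This is precisely what the paper extracts from Lemma~\ref{lem4.3}. Fix $u\in\mathcal{P}^{\pm}_{a,\mu}$. By $(b)$, the map $\nu\mapsto\Psi_\nu\bigl((u)_{t^{\pm}_\nu(u)}\bigr)$ is decreasing on $[\mu,\mu_a^*)$. By $(c)$, it tends to $\Psi_{\mu_a^*}\bigl((u)_{\tau^{\pm}_{\mu_a^*}(u)}\bigr)\ge\hat{\Psi}^{\pm}_{\mu_a^*}$. Therefore $m^{\pm}(a,\mu)\ge\hat{\Psi}^{\pm}_{\mu_a^*}$ for every $\mu<\mu_a^*$.
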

\begin{proof}
$(a)$\quad By $(b)$ of Lemma \ref{lem4.3}, Propositions \ref{pro3.1}, \ref{pro3.21} and \ref{pro3.2}, it is easy to see that $m^{\pm}(a,\mu)$ are decreasing in terms of $\mu\in (0,\mu_a^*)$. In fact, for any $0<\mu_1<\mu_2<\mu_a^*$, there exists $u\in \mathcal{P}_{a,\mu_1}^{\pm}$ such that $\Psi_{\mu_1}(u)=m^{\pm}(a,\mu_1)$. Thus,
\begin{equation*}
m^{\pm}(a,\mu_1)=\Psi_{\mu_1}(u)=\Psi_{\mu_1}\left((u)_{t_{\mu_1}^{\pm}(u)}\right)>\Psi_{\mu_2}\left((u)_{t_{\mu_2}^{\pm}(u)}\right)\geq m^{\pm}(a,\mu_2).
\end{equation*}
By $(c)$ of Lemma \ref{lem4.3}, $\lim_{\mu\uparrow \mu_a^*}m^{\pm}(a,\mu)\geq \hat{\Psi}_{\mu_a^*}^{\pm}$. On the other hand, for any $\epsilon>0$, we can find $v_{\epsilon}\in \mathcal{P}_{a,\mu_a^*}^{\pm}\cup \mathcal{P}_{a,\mu_a^*}^{0}$ such that $\Psi_{\mu_a^*}(v_\epsilon)<\hat{\Psi}_{\mu_a^*}^{\pm}+\epsilon$. By Proposition \ref{pro2.1} and $(c)$ of Lemma \ref{lem4.3}, there exist $t_{\mu}^{\pm}(v_\epsilon)\to 1$ as $\mu\uparrow \mu_a^*$ such that $(v_\epsilon)_{t_{\mu}^{\pm}(v_\epsilon)}\in \mathcal{P}_{a,\mu}^{\pm}$. It follows that
\begin{equation*}
\lim_{\mu\uparrow \mu_a^*}m^{\pm}(a,\mu)\leq \lim_{\mu\uparrow \mu_a^*}\Psi_{\mu}\left((v_\epsilon)_{t_{\mu}^{\pm}(v_\epsilon)}\right)=\Psi_{\mu_a^*}(v_\epsilon)<\hat{\Psi}_{\mu_a^*}^{\pm}+\epsilon.
\end{equation*}
By the arbitrariness of $\epsilon>0$, we also have $\lim_{\mu\uparrow \mu_a^*}m^{\pm}(a,\mu)\leq \hat{\Psi}_{\mu_a^*}^{\pm}$. Hence,  (\ref{e4.8}) holds true. By (\ref{e4.8}) and $(a)$ of Lemma \ref{lem3.1}, we also have $\hat{\Psi}_{\mu_a^*}^{-}\geq \hat{\Psi}_{\mu_a^*}^{+}$ and $\hat{\Psi}_{\mu_a^*}^{+}<0$.

\medskip

$(b)$ Since $b<a$, $\mathcal{P}_{b,\mu_a^*}^{\pm}\neq\emptyset$ and $\mathcal{P}_{b,\mu_a^*}^{0}=\emptyset$ by Proposition \ref{pro2.1} and $(b)$ of Proposition \ref{pro2.2}.  Let $u_b\in \mathcal{P}_{b,\mu_a^*}^{\pm}$. For any $c\in (b/2,a)$, there exists $t_{\mu_a^*}^{\pm}\left(\frac{c}{b}u_b\right)$ satisfying
\begin{equation*}
t_{\mu_a^*}^{+}\left(\frac{c}{b}u_b\right)<s\left(\frac{c}{b}u_b\right)<t_{\mu_a^*}^{-}\left(\frac{c}{b}u_b\right)
\end{equation*}
 such that $\left(\frac{c}{b}u_b\right)_{t_{\mu_a^*}^{\pm}\left(\frac{c}{b}u_b\right)}\in \mathcal{P}_{c,\mu_a^*}^{\pm}$, where $s\left(\frac{c}{b}u_b\right)$ is given by $(a)$ of Proposition~\ref{pro2.1}.
Similarly to the proof  of  Lemma \ref{lem3.3}, $t_{\mu_a^*}^{+}\left(\frac{c}{b}u_b\right)$ is increasing, $t_{\mu_a^*}^{-}\left(\frac{c}{b}u_b\right)$ is decreasing  and $\Psi_{\mu_a^*}\left(\left(\frac{c}{b}u_b\right)_{t_{\mu_a^*}^{\pm}\left(\frac{c}{b}u_b\right)} \right)$ is decreasing in terms of $c\in (b/2,a)$. Then similarly to the proof of $(c)$ of Lemma \ref{lem4.3}, we can show that $t_{\mu_a^*}^{\pm}\left(\frac{c}{b}u_b\right)\to \tau_{\mu_a^*}^{\pm}\left(\frac{a}{b}u_b\right)$ as $c\uparrow a$.  Let $\{a_n\}\subset (b,a)$ such that $a_n\uparrow a$ as $n\to+\infty$. Then
\begin{equation*}
\begin{split}
\Psi_{\mu_a^*}(u_b)&>\Psi_{\mu_a^*}\left(\left(\frac{a_1}{b}u_b\right)_{t_{\mu_a^*}^{\pm}\left(\frac{a_1}{b}u_b\right)} \right)\\
&>\Psi_{\mu_a^*}\left(\left(\frac{a_n}{b}u_b\right)_{t_{\mu_a^*}^{\pm}\left(\frac{a_n}{b}u_b\right)} \right)\\
&=\Psi_{\mu_a^*}\left(\left(\frac{a}{b}u_b\right)_{\tau_{\mu_a^*}^{\pm}\left(\frac{a}{b}u_b\right)} \right)+o_n(1).
\end{split}
\end{equation*}
Thus, (\ref{e4.9}) holds true. By the arbitrariness of $u_b\in \mathcal{P}_{b,\mu_a^*}^{\pm}$, we also have $m^{\pm}(b,\mu_a^*)\geq \hat{\Psi}_{\mu_a^*}^{\pm}$. Since  $m^{\pm}(b,\mu_a^*)$ is attained for $0<b<a$ by Propositions \ref{pro2.1}, $(b)$ of  \ref{pro2.2}, \ref{pro3.1}, \ref{pro3.21} and \ref{pro3.2}, we have  $m^{\pm}(b,\mu_a^*)> \hat{\Psi}_{\mu_a^*}^{\pm}$.
\end{proof}

\subsection{The existence of ground-state solutions}
In this section, we shall prove the following result.

\begin{proposition}\label{pro4.5}
Let $N\geq 2$, $1<p<N$, $p<q_1<p+\frac{p^2}{N}<q_2\leq p^*$, $a>0$ and $\mu=\mu_{a}^*$.  Then the variational problem
\begin{equation}\label{e4.4}
\hat{\Psi}_{\mu_a^*}^{+}:=\inf\left\{\Psi_{\mu_a^*}(u)\mid u\in \mathcal{P}_{a,\mu_a^*}^{+}\cup \partial\mathcal{P}_{a,\mu_a^*}^{+}\right\}
\end{equation}
is achieved by some
$u_{a,\mu_a^*,+}\in \mathcal{P}_{a,\mu_a^*}^{+}$, which is real valued, positive, radially symmetric and radially decreasing. Moreover, $u_{a,\mu_a^*,+}$ also satisfies the equation (\ref{e1.1}) in the weak sense
for a suitable Lagrange multiplier $\lambda_{a,\mu_a^*,+}<0$.
\end{proposition}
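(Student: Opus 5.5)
The plan is to realize $\hat{\Psi}_{\mu_a^*}^{+}$ as the limit of the ground states $u_{a,\mu,+}$ from Proposition~\ref{pro3.1} as $\mu\uparrow\mu_a^*$. Fix $\mu_n\uparrow\mu_a^*$ and let $u_n:=u_{a,\mu_n,+}\in\mathcal{P}_{a,\mu_n}^+$ be the positive, radial, radially decreasing minimizers with multipliers $\lambda_n<0$. By \eqref{e4.8} in Proposition~\ref{pro4.4}, $\Psi_{\mu_n}(u_n)=m^+(a,\mu_n)\to\hat{\Psi}_{\mu_a^*}^{+}<0$.

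The inequalities \eqref{e3.5} and \eqref{e3.56} hold uniformly because $\mu_n\le\mu_a^*$. Hence $\{u_n\}$ is bounded in $W^{1,p}_{rad}(\mathbb{R}^N)$. Since the energies stay below a fixed negative number, we also get $\|\nabla u_n\|_p^p\thickapprox\|u_n\|_{q_1}^{q_1}\thickapprox\|u_n\|_{q_2}^{q_2}\thickapprox1$, with $\|u_n\|_{q_2}$ bounded below by Hölder's inequality. Extract $u_n\rightharpoonup u_0$ weakly in $W^{1,p}$ and strongly in $L^t$ for $t\in(p,p^*)$. Then $\|u_0\|_{q_1}^{q_1}\gtrsim1$, so $u_0\ne0$. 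From $\lambda_n a^p=\mu_n(\gamma_{q_1}-1)\|u_n\|_{q_1}^{q_1}+(\gamma_{q_2}-1)\|u_n\|_{q_2}^{q_2}$ we get $\lambda_n\to\lambda_0<0$. Passing to the limit in the weak equations, as in Proposition~\ref{pro3.1} and using Lemma~\ref{lem1.3} for a.e. convergence of gradients, shows that $u_0$ solves \eqref{e1.1} with $\mu=\mu_a^*$. By Lemma~\ref{lem1.5}, $u_0\in\mathcal{P}_{a_1,\mu_a^*}$ with $a_1=\|u_0\|_p\in(0,a]$.

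Next I split $v_n=u_n-u_0$ with the Brezis--Lieb Lemma~\ref{lem1.1}, which gives $\|\nabla v_n\|_p^p=\gamma_{q_2}\|v_n\|_{q_2}^{q_2}+o(1)$.
\begin{itemize}
\item If $q_2<p^*$, this forces $\nabla v_n\to0$.
\item If $q_2=p^*$, either $\nabla v_n\to0$ or $\|\nabla v_n\|_p^p\ge S^{N/p}+o(1)$. In the second alternative,
\[
\hat\Psi^+_{\mu_a^*}\ge\Psi_{\mu_a^*}(u_0)+\tfrac1N S^{N/p}>\Psi_{\mu_a^*}(u_0).
\]
\end{itemize}
I must then show $\Psi_{\mu_a^*}(u_0)\ge\hat\Psi^+_{\mu_a^*}$ to get a contradiction. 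I do this by placing $u_0$ in the right piece of the Pohozaev manifold. The limits of the two strict inequalities defining $\mathcal{P}^+$ give $u_0\in\mathcal{P}^+_{a_1,\mu_a^*}\cup\mathcal{P}^0_{a_1,\mu_a^*}$, because the relevant quantities converge (the $q_1$ norm strongly and the $q_2$ norm by Fatou in the appropriate direction). In fact $\mu(u_0)\ge\mu^*_{a_1}\ge\mu_a^*$ by Proposition~\ref{pro2.2}.
\begin{itemize}
\item If $a_1<a$, then $\mathcal{P}^0_{a_1,\mu_a^*}=\emptyset$, and Proposition~\ref{pro4.4}(b) gives $\Psi_{\mu_a^*}(u_0)>\hat\Psi^+_{\mu_a^*}$.
\item If $a_1=a$, then $\Psi_{\mu_a^*}(u_0)\ge\hat\Psi^+_{\mu_a^*}$ by definition.
\end{itemize}
This rules out bubbling and $a_1<a$. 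So $u_n\to u_0$ strongly in $W^{1,p}$, and $\Psi_{\mu_a^*}(u_0)=\hat\Psi^+_{\mu_a^*}$.

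The main obstacle is excluding $u_0\in\mathcal{P}^0_{a,\mu_a^*}$, since the statement asserts $u_0\in\mathcal{P}^+_{a,\mu_a^*}$. First I would use the Lagrange multiplier structure: $u_0$ solves \eqref{e1.1} with $\lambda_0<0$. If in addition $u_0\in\mathcal{P}^0$, then by Proposition~\ref{pro4.1} it is a minimizer of $\mu(\cdot)$ and satisfies \eqref{e4.1}. Comparing \eqref{e1.1} and \eqref{e4.1} (multiply \eqref{e1.1} by $p$ and subtract) gives
\[
\mu_a^*(p-q_1\gamma_{q_1})|u_0|^{q_1-2}u_0+(p-q_2\gamma_{q_2})|u_0|^{q_2-2}u_0=c|u_0|^{p-2}u_0
\]
for a constant $c$. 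Since $q_1\gamma_{q_1}<p<q_2\gamma_{q_2}$, this is a pointwise identity of the form $A u_0^{q_1-p}-Bu_0^{q_2-p}=c$ with $A,B>0$. It cannot hold for a positive continuous $u_0$ that decays to $0$ at infinity and takes a continuum of values. So $u_0\notin\mathcal{P}^0$.

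If that argument has a gap, the fallback is an energy comparison. For $u_0\in\mathcal{P}^0$ we have $u_0=(u_0)_{\tau^+}=(u_0)_{\tau^-}$, so its energy would also be a limit from $\mathcal{P}^-$, which is compatible with $\hat\Psi^-\ge\hat\Psi^+$. A strict inequality would instead follow from pushing along a curve into $\mathcal{P}^+$ with lower energy. Finally, positivity follows from the strong comparison principle \cite[Theorem~1.4]{D98}, as in Proposition~\ref{pro3.1}.
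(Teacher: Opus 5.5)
Your proposal is correct and follows essentially the paper's proof. You approximate by the ground states $u_{a,\mu_n,+}$ with $\mu_n\uparrow\mu_a^*$, use Proposition~\ref{pro4.4} to rerun the compactness argument of Proposition~\ref{pro3.1}, and exclude $u_0\in\mathcal{P}^0_{a,\mu_a^*}$ by comparing \eqref{e1.1} with \eqref{e4.1}, which forces $\mu_a^*(p-q_1\gamma_{q_1})u_0^{q_1-p}-(q_2\gamma_{q_2}-p)u_0^{q_2-p}$ to be constant. Your write-up also fills in steps the paper leaves implicit (the Fatou argument placing $u_0$ in $\mathcal{P}^+_{a_1,\mu_a^*}\cup\mathcal{P}^0_{a_1,\mu_a^*}$ and the explicit exclusion of bubbling when $q_2=p^*$), and your energy-comparison fallback is not needed.
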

\begin{proof}
Let $\mu_n\uparrow \mu_a^*$ as $n\to +\infty$ and $u_n$ be the solution of the variational problem (\ref{e3.1}) constructed by Proposition \ref{pro3.1} for $\mu=\mu_n$, which are real valued, positive, radially symmetric and radially decreasing. Then by (\ref{e3.5}) and (\ref{e3.56}) with $\mu=\mu_n$, we obtain that  $\|\nabla u_n\|_p^p\lesssim 1$. Hence, $\{u_n\}$ is bounded in $W^{1,p}\left(\mathbb{R}^N\right)$. By Lemma \ref{lem3.1} and $(a)$ of Proposition \ref{pro4.4}, we obtain that there exists $C>0$ such that $m^+(a,\mu_n)<-C$ for any $n$. Hence,
$\|\nabla u_n\|_p^p\gtrsim 1$, which, together with (\ref{e3.5}), implies that $\|u_n\|_{q_1}^{q_1}\thickapprox 1$.  Recall that $u_n\in \mathcal{P}_{a,\mu_n}^+$ satisfies (\ref{e1.1}) for a suitable Lagrange multiplier $\lambda_n<0$, that is,
\begin{eqnarray*}
\int_{\mathbb{R}^N}|\nabla u_n|^{p-2}\nabla u_n\cdot\nabla\varphi dx&=&\lambda_n\int_{\mathbb{R}^N}|u_n|^{p-2}u_n\varphi dx+\mu_n\int_{\mathbb{R}^N}|u_n|^{q_1-2}u_n\varphi dx\\
&&+\int_{\mathbb{R}^N}|u_n|^{q_2-2}u_n\varphi dx
\end{eqnarray*}
for any $\varphi\in W^{1,p}\left(\mathbb{R}^N\right)$.  Hence,
\begin{equation}\label{e4.5}
\lambda_n a^p=\|\nabla u_n\|_p^p-\mu_n \|u_n\|_{q_1}^{q_1}-\|u_n\|_{q_2}^{q_2},
\end{equation}
which implies $\{\lambda_n\}$ is bounded.  Without loss of generality, we may assume that $\lambda_n\to \lambda_0$ as $n\to +\infty$ for some $\lambda_0\leq0$.  Since by $u_n\in \mathcal{P}_{a,\mu_n}^+$ once more and \eqref{e4.5}, we also have
\begin{eqnarray*}
\lambda_na^p=\mu_n(\gamma_{q_1}-1)\|u_{n}\|_{q_1}^{q_1}+(\gamma_{q_2}-1)\|u_{n}\|_{q_2}^{q_2}.
\end{eqnarray*}
By $\gamma_{q_1}<\gamma_{q_2}\leq1$ and $\|u_n\|_{q_1}^{q_1}\thickapprox 1$, we must have $\lambda_0<0$.  Then
\begin{eqnarray*}
\int_{\mathbb{R}^N}|\nabla u_n|^{p-2}\nabla u_n\cdot\nabla\varphi dx&=&\lambda_0\int_{\mathbb{R}^N}|u_n|^{p-2}u_n\varphi dx+\mu_a^*\int_{\mathbb{R}^N}|u_n|^{q_1-2}u_n\varphi dx\\
&&+\int_{\mathbb{R}^N}|u_n|^{q_2-2}u_n\varphi dx +o_n(1)\|\varphi\|_{W^{1,p}},
\end{eqnarray*}
and by Proposition \ref{pro4.4}, $\Psi_{\mu_a^*}(u_n)=m^+(a,\mu_n)+o_n(1)\to \hat{\Psi}_{\mu_a^*}^+$ as $n\to +\infty$. That is, $\{u_n\}$
is a bounded $(PS)_{\hat{\Psi}_{\mu_a^*}^+}$ sequence of $\Psi_{\mu_a^*}(u)|_{\mathcal{S}_a}$. Then by Lemma \ref{lem1.3}, there exists $u_0\in W^{1,p}_{rad}\left(\mathbb{R}^N\right)$ such that
\begin{eqnarray*}\label{e4.6}
\left\{\aligned
&w_n\rightharpoonup u_0\quad \text{weakly in } W^{1,p}\left(\mathbb{R}^N\right),\\
&w_n\to u_0\quad \text{a.e. in } \mathbb{R}^N,\\
&\nabla w_n\to \nabla u_0\quad \text{a.e. in}\ \mathbb{R}^N,\\
&w_n\to u_0\quad \text{strongly in } L^t\left(\mathbb{R}^N\right)\text{ for all } t\in \left(p,p^*\right)
\endaligned\right.
\end{eqnarray*}
as $n\to+\infty$.  In this position, by Proposition~\ref{pro4.4}, we can adapt the same arguments used in the proof of Proposition~\ref{pro3.1} to show that $u_n\to u_0$ strongly in $W^{1,p}\left(\mathbb{R}^N\right)$ as $n\to+\infty$.  It remains to prove that $u_0\in \mathcal{P}^+_{a,\mu_a^*}$.  Indeed, since $u_n\in \mathcal{P}^+_{a,\mu_n}$ and $u_n\to u_0$ strongly in $W^{1,p}\left(\mathbb{R}^N\right)$ as $n\to+\infty$, we must have $u_0\in \mathcal{P}^+_{a,\mu_a^*}\cup \mathcal{P}^0_{a,\mu_a^*}$. Now suppose the contrary that $u_0\in \mathcal{P}^0_{a,\mu_a^*}$, then $u_0$ satisfies both (\ref{e1.1}) and (\ref{e4.1}) in the weak sense, which implies that
\begin{eqnarray*}
\int_{\mathbb{R}^N}\left((q_2\gamma_{q_2}-p)u_0^{q_2-1}+\mu_a^*(q_1\gamma_{q_1}-p)u_0^{q_1-1}+\lambda' u_0^{p-1}\right)\phi dx=0
\end{eqnarray*}
for all $\phi\in W^{1,p}\left(\mathbb{R}^N\right)$.  It follows from $u_0>0$ in $\mathbb{R}^N$ that
$(q_2\gamma_{q_2}-p)|u_0|^{q_2-p}-\mu_a^*(p-q_1\gamma_{q_1})|u_0|^{q_1-p}$ is a constant a.e. in $\mathbb{R}^N$, which contradicts $u_0\in W^{1,p}\left(\mathbb{R}^N\right)$ and $\|u_0\|_p=a>0$.  Thus, $u_0\in \mathcal{P}^+_{a,\mu_a^*}$.  Then $u_{a,\mu_a^*,+}:=u_0$ is the required function and $\lambda_{a,\mu_a^*,+}:=\lambda_0<0$ is its Lagrange multiplier.
\end{proof}

\subsection{The existence of mountain-pass solutions}
In this section, we shall study the variational problem
\begin{equation}\label{e4.7}
\hat{\Psi}_{\mu_a^*}^{-}:=\inf\left\{\Psi_{\mu_a^*}(u)\mid u\in \mathcal{P}_{a,\mu_a^*}^{-}\cup \partial\mathcal{P}_{a,\mu_a^*}^{-}\right\}.
\end{equation}
As for \eqref{e3.11}, a crucial point to prove the achievement of \eqref{e4.7} for the Sobolev critical case $q_2=p^*$ is the following energy estimate.
\begin{lemma}\label{lem4.6}
Let $N\geq 2$, $1<p<N$, $p<q_1<p+\frac{p^2}{N}<q_2=p^*$ and $a>0$.  Then
\begin{equation*}
\hat{\Psi}_{\mu_a^*}^{-}<\hat{\Psi}_{\mu_a^*}^{+}+\frac{1}{N}S^{\frac{N}{p}}.
\end{equation*}
\end{lemma}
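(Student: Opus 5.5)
We will reduce Lemma~\ref{lem4.6} to the energy estimate in the regime $\mu<\mu_a^*$, namely Lemma~\ref{lem3.6}, combined with a construction at $\mu=\mu_a^*$ itself. A pure limiting argument $\mu\uparrow\mu_a^*$ in Lemma~\ref{lem3.6} only gives the non-strict inequality $\hat{\Psi}_{\mu_a^*}^{-}\leq\hat{\Psi}_{\mu_a^*}^{+}+\frac1N S^{\frac Np}$, via \eqref{e4.8}. So we will redo the test-path argument of Appendix~B directly at $\mu=\mu_a^*$, using the ground state $u_+:=u_{a,\mu_a^*,+}\in\mathcal{P}^+_{a,\mu_a^*}$ from Proposition~\ref{pro4.5}. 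This ground state is positive, radial, radially decreasing, solves \eqref{e1.1} with $\lambda_{a,\mu_a^*,+}<0$, and satisfies $\Psi_{\mu_a^*}(u_+)=\hat{\Psi}_{\mu_a^*}^{+}$.

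\textbf{Construction of the path.} Take the cut-off Talenti bubble $u_\epsilon=\varphi_\epsilon U_{\epsilon,0}$, where $\varphi_\epsilon$ is the $\epsilon$-dependent cut-off ($\varphi_\epsilon=1$ on $B_{\epsilon^\alpha}$, $\varphi_\epsilon=0$ outside $B_{2\epsilon^\alpha}$) used in Appendix~B. For $\tau\geq0$ form the radial superposition
\[
W_{\epsilon,\tau}:=\frac{a\,(u_++\tau u_\epsilon)}{\|u_++\tau u_\epsilon\|_p}\in\mathcal{S}_a .
\]
The key structural point is that $\mu(u_+)>\mu_a^*$. Indeed, $u_+\notin\mathcal{P}^0_{a,\mu_a^*}$, and by Proposition~\ref{pro4.1} the only functions with $\mu(u)=\mu_a^*$ lie (up to dilation) in $\mathcal{P}^0$. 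By continuity of $\mu(\cdot)$, we then get $\mu(W_{\epsilon,\tau})>\mu_a^*$ for $\tau$ small. For $\tau$ large, $W_{\epsilon,\tau}$ is close to a concentrating bubble, and $\mu(W_{\epsilon,\tau})\to+\infty$ because $\|u_\epsilon\|_{q_1}\to0$. In the intermediate range we will show $\mu(W_{\epsilon,\tau})>\mu_a^*$ as well. Failing that, we only need that the map $\tau\mapsto(W_{\epsilon,\tau})_{t^-_{\mu_a^*}}$, or $\tau^-_{\mu_a^*}$ from \eqref{e4.3} in the degenerate case, is defined and continuous along the path; this suffices because $\tau^-_{\mu_a^*}$ is defined on all of $\overline{\hat{\mathcal{P}}}_{a,\mu_a^*}=\mathcal{S}_a$, by Lemma~\ref{lem4.2}. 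Then, as in the Tarantello/Brezis--Nirenberg scheme, a continuity and intermediate-value argument in $\tau$ yields some $\tau_\epsilon$ with $(W_{\epsilon,\tau_\epsilon})_{\tau^-_{\mu_a^*}(W_{\epsilon,\tau_\epsilon})}\in\mathcal{P}^-_{a,\mu_a^*}\cup\mathcal{P}^0_{a,\mu_a^*}$. This uses the fact that the fibre of $u_+$ hits $\mathcal{P}^+$ at $s=1$, whereas for large $\tau$ the $\mathcal{P}^-$-point of the fibre has gradient norm near $S^{\frac Np}$. Consequently
\[
\hat{\Psi}_{\mu_a^*}^{-}\leq\sup_{\tau\geq0,\ s>0}\Psi_{\mu_a^*}\big((W_{\epsilon,\tau})_s\big).
\]

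\textbf{Energy expansion (main obstacle).} It remains to bound the right-hand side by $\Psi_{\mu_a^*}(u_+)+\frac1N S^{\frac Np}-C\epsilon^{\beta}$ for some $\beta>0$ and small $\epsilon$. This is the same computation as in Appendix~B: expand $\|\nabla(u_++\tau u_\epsilon)\|_p^p$ and control the non-quadratic cross terms, which are present since $p\neq2$. The cross terms are of order $\int|\nabla u_+|^{p-1}|\nabla u_\epsilon|$ and $\int|\nabla u_+||\nabla u_\epsilon|^{p-1}$ on $B_{2\epsilon^\alpha}$, and they are made small by the choice of $\alpha$. They are then absorbed by the good term $\tau^{p^*-1}\int u_+u_\epsilon^{p^*-1}\,dx\approx\epsilon^{\frac{N-p}{p}}u_+(0)$, since $u_+(0)>0$. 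The $L^p$-normalization is handled as in Appendix~B. The only inputs about $u_+$ used there are positivity, boundedness near $0$, the Euler--Lagrange equation with $\lambda<0$, and $u_+\in\mathcal{P}^+$. All of these hold at $\mu=\mu_a^*$ by Proposition~\ref{pro4.5}, so the estimates transfer verbatim with $\mu$ replaced by $\mu_a^*$.

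The genuinely delicate point is the uniform control of the dilation parameter $s$ along the path near the degenerate set. We will argue that $s$ stays in a compact subset of $(0,\infty)$ uniformly in $\epsilon$ and $\tau$, using the bounds \eqref{e3.55}-type estimates together with Gagliardo--Nirenberg. This makes the supremum over $s$ harmless and yields the strict inequality.
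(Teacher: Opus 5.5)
Your route is the paper's: a direct test path at $\mu=\mu_a^*$ around the ground state $u_+=u_{a,\mu_a^*,+}$ of Proposition~\ref{pro4.5}, using that $\tau^-_{\mu_a^*}$ is defined on all of $\overline{\hat{\mathcal{P}}}_{a,\mu_a^*}=\mathcal{S}_a$, followed by the Appendix~B expansion. The gap is in the reduction step.

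\textbf{The reduction step fails.} Your intermediate-value claim is empty as stated. By \eqref{e4.3}, $(W_{\epsilon,\tau})_{\tau^-_{\mu_a^*}(W_{\epsilon,\tau})}\in\mathcal{P}^-_{a,\mu_a^*}\cup\mathcal{P}^0_{a,\mu_a^*}$ holds for every $\tau$. You then pass to $\sup_{\tau\geq0,\,s>0}\Psi_{\mu_a^*}((W_{\epsilon,\tau})_s)$, and this supremum cannot be bounded by $\hat{\Psi}^+_{\mu_a^*}+\frac{1}{N}S^{\frac{N}{p}}-C\epsilon^{\beta}$. At $\tau=0$ you have $W_{\epsilon,0}=u_+$, so
$\sup_{s>0}\Phi_{\mu_a^*,u_+}(s)\geq\Phi_{\mu_a^*,u_+}(t^-_{\mu_a^*}(u_+))\geq\hat{\Psi}^-_{\mu_a^*}$,
because $(u_+)_{t^-_{\mu_a^*}(u_+)}\in\mathcal{P}^-_{a,\mu_a^*}$. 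Your intended bound would therefore already contain the lemma, at a point where no bubble is present to supply the gain $-C\epsilon^{\frac{N-p}{p}}$. Also $\Phi_{\mu_a^*,u_+}(s)\to0$ as $s\to0^+$, so the supremum is $\geq0$ and exceeds the target whenever $\hat{\Psi}^+_{\mu_a^*}+\frac{1}{N}S^{\frac{N}{p}}\leq0$. Keeping $s$ in a compact set does not help, since $t^-_{\mu_a^*}(u_+)$ is a fixed number in $(1,\infty)$. In any case, the Appendix~B expansion only controls $\Psi_{\mu_a^*}(W_{\epsilon,\tau})$ at dilation $s=1$.

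\textbf{The missing idea.} Apply the intermediate value theorem to the dilation parameter itself, $\eta(\tau):=\tau^-_{\mu_a^*}(W_{\epsilon,\tau})$.
\begin{itemize}
\item At $\tau=0$: $u_+\in\mathcal{P}^+_{a,\mu_a^*}$ forces $\mu(u_+)>\mu_a^*$ and $t^+_{\mu_a^*}(u_+)=1$. Proposition~\ref{pro2.1} then gives $\eta(0)=t^-_{\mu_a^*}(u_+)>s(u_+)>1$.
\item For large $\tau$: the Pohozaev relation and Lemma~\ref{lemN3.7} give $\eta(\tau)<1$. With the paper's dilation normalization, $\eta(\tau)\to0$. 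With your amplitude normalization the limit profile is $au_\epsilon/\|u_\epsilon\|_p$, whose $\mathcal{P}^-$-dilation is $\lesssim\|u_\epsilon\|_p/a$, so you need $\epsilon$ small.
\item Continuity: uniqueness of the $\mathcal{P}^-\cup\mathcal{P}^0$ point on each fibre makes $\eta$ continuous.
\end{itemize}
Hence $\eta(\tau_\epsilon)=1$ for some $\tau_\epsilon$, i.e.\ $W_{\epsilon,\tau_\epsilon}$ itself lies in $\mathcal{P}^-_{a,\mu_a^*}\cup\mathcal{P}^0_{a,\mu_a^*}$. This gives $\hat{\Psi}^-_{\mu_a^*}\leq\sup_{\tau\geq0}\Psi_{\mu_a^*}(W_{\epsilon,\tau})$ with no dilation, which is exactly what Appendix~B estimates, so no control of $s$ is needed.

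\textbf{Two smaller points.} First, the first-order cross term $p\tau\int_{\mathbb{R}^N}|\nabla u_+|^{p-2}\nabla u_+\cdot\nabla u_\epsilon\,dx$ is not made small by the choice of $\alpha$; for $p>2$ and small $\alpha$ it dominates $\epsilon^{\frac{N-p}{p}}$. It cancels at first order against the $q_1$-, $p^*$- and normalization terms, via the Euler--Lagrange equation of $u_+$ and $\lambda a^p=\mu_a^*(\gamma_{q_1}-1)\|u_+\|_{q_1}^{q_1}$. Second, Appendix~B only covers $p^2>N$. For $p^2\leq N$ you need the construction of \cite[Lemma~4.5]{Deng-Wu}, as the paper uses.
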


\begin{proof}
Since by Proposition \ref{pro4.5}, the variational problem (\ref{e4.4}) is achieved by some $u_{a,\mu_a^*,+}\in \mathcal{P}_{a,\mu_a^*}^+$, which is  real valued, positive, radially symmetric and radially decreasing, we only need to prove $\hat{\Psi}_{\mu_a^*}^{-}<m^{+}(a,\mu_a^*)+\frac{1}{N}S^{\frac{N}{p}}$.  Similarly, define  $W_{\epsilon,\tau}\in \mathcal{S}_a$ as in the proof of Lemma \ref{lem3.6} for $p^2>N$ and as in the proof of \cite[Lemma~4.5]{Deng-Wu} for the case $p^2\leq N$.
Since $W_{\epsilon,\tau}\in \mathcal{S}_a$, by $\overline{\hat{\mathcal{P}}}_{a,\mu_a^*}=\mathcal{S}_a$ and (\ref{e4.3}), there exists
 $\eta^-_{\mu_a^*}(W_{\epsilon,\tau})>0$ such that $(W_{\epsilon,\tau})_{\eta^-_{\mu_a^*}(W_{\epsilon,\tau})}\in \mathcal{P}_{a,\mu_a^*}^{-}\cup \mathcal{P}_{a,\mu_a^*}^{0}$.  Set $\eta^-_{\epsilon,\tau}:=\eta^-_{\mu_a^*}\left(W_{\epsilon,\tau}\right)$, then
\begin{equation}\label{e5.11}
\left(\eta^-_{\epsilon,\tau}\right)^p\|\nabla W_{\epsilon,\tau}\|_p^p=\mu\gamma_{q_1}\left(\eta^-_{\epsilon,\tau}\right)^{q_1\gamma_{q_1}}\|W_{\epsilon,\tau}\|_{q_1}^{q_1}
+\left(\eta^-_{\epsilon,\tau}\right)^{p^*}\|W_{\epsilon,\tau}\|_{p^*}^{p^*}.
\end{equation}
Since $W_{\epsilon,0}=u_{a,\mu_a^*,+}\in \mathcal{P}_{a,\mu_a^*}^{+}$, by Proposition \ref{pro2.1}, $0<\mu_a^*<\mu(u_{a,\mu_a^*,+})$ and $\eta^-_{\epsilon,0}>1$. By Lemma~\ref{lemN3.7} and (\ref{e5.11}), $\eta^-_{\epsilon,\tau}\to 0$ as $\tau\to +\infty$ uniformly for $\epsilon>0$ sufficiently small. Since  $\eta^-_{\epsilon,\tau}$ is unique by Proposition \ref{pro2.1}, it is standard to show that $\eta^-_{\epsilon,\tau}$ is continuous for  $\tau\geq 0$, which implies that there exists $\tau_{\epsilon}>0$ such that  $\eta^-_{\epsilon,\tau_{\epsilon}}=1$. Consequently,
\begin{equation*}
\hat{\Psi}_{\mu_a^*}^{-}\leq  \sup_{\tau\geq 0}\Psi_{\mu_a^*}(W_{\epsilon,\tau})
\end{equation*}
for any $\epsilon$ small enough. The rest of the proof is the same as that of Lemma~\ref{lem3.6} and \cite[Lemma~4.5]{Deng-Wu}, so we omit it here.
\end{proof}

With the above estimate of $\hat{\Psi}_{\mu_a^*}^-$, we can prove the following result.

\begin{proposition}\label{pro4.7}
Let $N\geq 2$, $1<p<N$, $p<q_1<p+\frac{p^2}{N}<q_2\leq p^*$, $a>0$ and $\mu=\mu_{a}^*$. Then the variational problem (\ref{e4.7})
is achieved by some
$u_{a,\mu_a^*,-}\in \mathcal{P}_{a,\mu_a^*}^{-}$, which is real valued, positive, radially symmetric and radially decreasing. Moreover, $u_{a,\mu_a^*,-}$ also satisfies the equation (\ref{e1.1}) in the weak sense
for a suitable Lagrange multiplier $\lambda_{a,\mu_a^*,-}<0$.
\end{proposition}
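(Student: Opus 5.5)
We follow the proof of Proposition~\ref{pro4.5}, with the mountain-pass solutions for $\mu<\mu_a^*$ as approximations. Take $\mu_n\uparrow\mu_a^*$. Let $u_n:=u_{a,\mu_n,-}\in\mathcal{P}^-_{a,\mu_n}$ be the minimizers of $m^-(a,\mu_n)$ given by Proposition~\ref{pro3.21} for $q_2<p^*$ and by Proposition~\ref{pro3.2} for $q_2=p^*$. These are positive, radial and radially decreasing, and they solve \eqref{e1.1} with multipliers $\lambda_n<0$. By $(a)$ of Proposition~\ref{pro4.4}, $\Psi_{\mu_n}(u_n)=m^-(a,\mu_n)\downarrow\hat{\Psi}^-_{\mu_a^*}$.

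The first step is uniform bounds. The estimates \eqref{e3.12} and \eqref{e3.13} with $\mu=\mu_n\le\mu_a^*$ give $\|\nabla u_n\|_p^p\thickapprox\|u_n\|_{q_2}^{q_2}\thickapprox\|u_n\|_{q_1}^{q_1}\thickapprox1$, uniformly in $n$. The identity
\[
\lambda_n a^p=\mu_n(\gamma_{q_1}-1)\|u_n\|_{q_1}^{q_1}+(\gamma_{q_2}-1)\|u_n\|_{q_2}^{q_2}
\]
then shows that $\lambda_n\to\lambda_0<0$ along a subsequence. Hence $\{u_n\}$ is a bounded radial $(PS)_{\hat{\Psi}^-_{\mu_a^*}}$ sequence for $\Psi_{\mu_a^*}|_{\mathcal{S}_a}$. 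By Lemma~\ref{lem1.3} we get $u_n\rightharpoonup u_0$ in $W^{1,p}$, with a.e. convergence of the gradients and strong convergence in $L^t$ for $p<t<p^*$.

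The second step, compactness, is the main obstacle in the critical case. For $q_2<p^*$, the lower bound $\|u_n\|_{q_2}\gtrsim1$ gives $u_0\ne0$. Then $u_0$ solves the limit equation, so $u_0\in\mathcal{P}_{a_1,\mu_a^*}$ with $a_1=\|u_0\|_p\le a$, by Lemma~\ref{lem1.5}. Setting $v_n=u_n-u_0$, Lemma~\ref{lem1.1} gives $\|\nabla v_n\|_p^p=\gamma_{q_2}\|v_n\|_{q_2}^{q_2}+o(1)$, and therefore $v_n\to0$.

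For $q_2=p^*$ we copy the splitting argument of Proposition~\ref{pro3.2}. There are two bad scenarios. If $u_0=0$, then $\|\nabla u_n\|_p^p\ge S^{N/p}+o(1)$, which forces $\hat{\Psi}^-_{\mu_a^*}\ge\frac1NS^{N/p}$. If $u_0\ne0$ but $v_n\not\to0$, the energy splits as $\hat{\Psi}^-_{\mu_a^*}\ge\Psi_{\mu_a^*}(u_0)+\frac1NS^{N/p}$. To bound $\Psi_{\mu_a^*}(u_0)$ from below we use $\Psi_{\mu_a^*}(u_0)\ge m^+(a_1,\mu_a^*)\ge\hat{\Psi}^+_{\mu_a^*}$ when $a_1<a$, by $(b)$ of Proposition~\ref{pro4.4}. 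When $a_1=a$ we use $\Psi_{\mu_a^*}(u_0)\ge\hat{\Psi}^+_{\mu_a^*}$, since $u_0$ lies in $\mathcal{P}_{a,\mu_a^*}=\mathcal{P}^+\cup\mathcal{P}^0\cup\mathcal{P}^-$ and $\hat{\Psi}^-_{\mu_a^*}\ge\hat{\Psi}^+_{\mu_a^*}$. In both scenarios we obtain $\hat{\Psi}^-_{\mu_a^*}\ge\hat{\Psi}^+_{\mu_a^*}+\frac1NS^{N/p}$. This contradicts Lemma~\ref{lem4.6}, so $v_n\to0$ in $D^{1,p}$.

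The next step is to recover the mass. Once $v_n\to0$ in $D^{1,p}$, the limit $u_0$ is nonzero. The sub-additivity inequality \eqref{e4.9} excludes $a_1<a$: if $u_0\in\mathcal{P}^-_{a_1,\mu_a^*}$, then $\hat{\Psi}^-_{\mu_a^*}=\Psi_{\mu_a^*}(u_0)>\hat{\Psi}^-_{\mu_a^*}$, a contradiction. Here we use $\mathcal{P}^0_{a_1,\mu_a^*}=\emptyset$ for $a_1<a$, so $u_0$ lies in $\mathcal{P}^\pm_{a_1}$. The case $u_0\in\mathcal{P}^+_{a_1}$ is ruled out by passing the strict inequality $p\|\nabla u_n\|_p^p<\dots$ to the limit. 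Hence $a_1=a$ and $u_n\to u_0$ strongly in $W^{1,p}$. Passing to the limit in the defining relations of $\mathcal{P}^-_{a,\mu_n}$ gives $u_0\in\mathcal{P}^-_{a,\mu_a^*}\cup\mathcal{P}^0_{a,\mu_a^*}$, and $\Psi_{\mu_a^*}(u_0)=\hat{\Psi}^-_{\mu_a^*}$.

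The final step excludes $u_0\in\mathcal{P}^0_{a,\mu_a^*}$, exactly as at the end of Proposition~\ref{pro4.5}. In that case $u_0$ would satisfy both \eqref{e1.1} and \eqref{e4.1}. Subtracting the two equations and using $u_0>0$, we get that
\[
(q_2\gamma_{q_2}-p)u_0^{q_2-p}-\mu_a^*(p-q_1\gamma_{q_1})u_0^{q_1-p}
\]
is constant a.e. This is incompatible with $u_0\in W^{1,p}$, $\|u_0\|_p=a$. Therefore $u_0\in\mathcal{P}^-_{a,\mu_a^*}$, and $u_0$ solves \eqref{e1.1} with multiplier $\lambda_0<0$. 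Positivity follows from nonnegativity, radial monotonicity and the strong comparison principle \cite[Theorem~1.4]{D98}.
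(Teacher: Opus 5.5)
Your proposal is correct and follows the paper's route. You approximate by the minimizers of $m^-(a,\mu_n)$ with $\mu_n\uparrow\mu_a^*$. You get compactness from Proposition~\ref{pro4.4}, and for $q_2=p^*$ from the splitting argument of Proposition~\ref{pro3.2} played against Lemma~\ref{lem4.6}. You then exclude $\mathcal{P}^0_{a,\mu_a^*}$ exactly as in Proposition~\ref{pro4.5}.

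One small slip: for $q_2=p^*$ the uniform bound $\|u_n\|_{q_1}^{q_1}\gtrsim1$, and hence $\lambda_0<0$, is not available in your first step. The H\"older argument from Proposition~\ref{pro3.21} needs $q_2<p^*$. Claim only $\lambda_0\le0$ there, and deduce $\lambda_0<0$ once $u_0\neq0$ is established, as the paper does.
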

\begin{proof}
The idea of the proof is to combine that of Propositions~\ref{pro3.21}, \ref{pro3.2} and \ref{pro4.5}.  Let $\mu_n\uparrow \mu_a^*$ as $n\to +\infty$ and $u_n\in \mathcal{P}_{a,\mu_n}^-$ be the solution of the variational problem (\ref{e3.11}) constructed by Propositions  \ref{pro3.21} and  \ref{pro3.2} for $\mu=\mu_n$, which are real valued, positive, radially symmetric and radially decreasing. By (\ref{e3.55}), (\ref{e3.12}) and (\ref{e3.13}) with $\mu=\mu_n$, we obtain that $\{u_n\}$ is bounded in $W^{1,p}\left(\mathbb{R}^N\right)$ with $\|\nabla u_n\|_p^p\gtrsim 1$ and $\|u_n\|_{q_2}^{q_2}\gtrsim 1$.  As in the proof of Proposition~\ref{pro4.5}, $\{u_n\}$
is a bounded $(PS)_{\hat{\Psi}_{\mu_a^*}^-}$ sequence of $\Psi_{\mu_a^*}(u)|_{\mathcal{S}_a}$.  Then by Lemma \ref{lem1.3}, there exists $u_0\in W^{1,p}_{rad}\left(\mathbb{R}^N\right)$ such that
\begin{eqnarray*}\label{e4.66}
\left\{\aligned
&w_n\rightharpoonup u_0\quad \text{weakly in } W^{1,p}\left(\mathbb{R}^N\right),\\
&w_n\to u_0\quad \text{a.e. in } \mathbb{R}^N,\\
&\nabla w_n\to \nabla u_0\quad \text{a.e. in}\ \mathbb{R}^N,\\
&w_n\to u_0\quad \text{strongly in } L^t\left(\mathbb{R}^N\right)\text{ for all } t\in \left(p,p^*\right)
\endaligned\right.
\end{eqnarray*}
as $n\to+\infty$.  Moreover, $u_0$ is real valued, positive, radially symmetric and radially decreasing, which satisfies the following equation
\begin{equation*}
-\Delta_pu=\lambda_0 |u|^{p-2}u+\mu_a^* |u|^{q_1-2}u+|u|^{q_2-2}u,\quad \text{in } \mathbb{R}^N
\end{equation*}
in the weak sense, where $\lambda_0=\lim_{n\to+\infty}\lambda_n\leq0$ and $\lambda_n<0$ are the Lagrange multipliers of $u_n$ to solve (\ref{e1.1}).  In this position, by Proposition~\ref{pro4.4} and Lemma~\ref{lem4.6}, we can adapt the same arguments used in the proofs of Propositions~\ref{pro3.21} and \ref{pro3.2} to show that $u_n\to u_0\not=0$ strongly in $W^{1,p}\left(\mathbb{R}^N\right)$ as $n\to+\infty$ and $\lambda_0<0$.  The proof of $u_0\in \mathcal{P}^-_{a,\mu_a^*}$ is the same as that in the proof of Proposition~\ref{pro4.5}, so we omit it here.  Then $u_{a,\mu_a^*,-}:=u_0$ is the required function and $\lambda_{a,\mu_a^*,-}:=\lambda_0$ is its Lagrange multiplier.
\end{proof}

\section{The existence and multiplicity theory  of (\ref{e1.1}) for $\mu>\mu_{a}^*$}
\setcounter{section}{5} \setcounter{equation}{0}
In this section, we shall construct solutions of \eqref{e1.1} for $\mu>\mu_a^*$.  To achieve this goal, we need to introduce the second extremal values.  Let
\begin{eqnarray}\label{mu1}
\hat{\mu}_{a,\pm}^{**}:=\sup\left\{\mu\geq\mu_a^*\mid m_{rad}^{\pm}(b,\mu)<m_{rad}^{0}(b,\mu)\text{ for all }0<b\leq a\right\},
\end{eqnarray}
where $m_{rad}^{0}(b,\mu)$ and $m_{rad}^{\pm}(b,\mu)$ are given by \eqref{m0} and \eqref{radial}, respectively.
\begin{lemma}\label{lem5.3}
Let $N\geq 2$, $1<p<N$, $p<q_1<p+\frac{p^2}{N}<q_2\leq p^*$ and  $a>0$.  Then
\begin{enumerate}
\item[$(a)$]\quad $m_{rad}^{+}(b,\mu_a^*)<m_{rad}^{0}(b,\mu_a^*)$ for all $0<b\leq a$.
\item[$(b)$]\quad $m_{rad}^{-}(b,\mu_a^*)<m_{rad}^{0}(b,\mu_a^*)$ for all $0<b\leq a$, provided
\begin{enumerate}
\item[$(1)$]\quad $q_2<p^*$;
\item[$(2)$]\quad $q_2=p^*$ and $q_1\in\left(q_1^*, p+\frac{p^2}{N}\right)$ where $q_1^*>p$ is given in Proposition~\ref{pro4.1}.
\end{enumerate}
\end{enumerate}
In particular, $\hat{\mu}_{a,+}^{**}$ is always well defined while, $\hat{\mu}_{a,-}^{**}$ is well defined provided $(1)$ or $(2)$ holds.
\end{lemma}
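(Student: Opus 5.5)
We split according to whether $b<a$ or $b=a$. For $0<b<a$, Proposition~\ref{pro2.2}$(b)$ gives $\mu_b^*>\mu_a^*$, so $\mathcal{P}^0_{b,\mu_a^*}=\emptyset$ by the definition of $\mu_b^*$ and Proposition~\ref{pro2.1}. Hence $m^0_{rad}(b,\mu_a^*)=+\infty$ by \eqref{m0}. On the other hand, $m^{\pm}_{rad}(b,\mu_a^*)$ are finite: $\mathcal{P}^{\pm}_{b,\mu_a^*,rad}\neq\emptyset$ by Proposition~\ref{pro2.1}$(a)$, the infimum over $\mathcal{P}^+$ is bounded below by Lemma~\ref{lem2.1}, and the one over $\mathcal{P}^-$ by Lemma~\ref{lem3.1}$(a)$ applied with mass $b$ and $\mu_a^*<\mu_b^*$. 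Indeed, Propositions~\ref{pro3.1}, \ref{pro3.21} and \ref{pro3.2} show they are attained. So the strict inequality is trivial for $b<a$ in both $(a)$ and $(b)$, with no restriction on $q_1,q_2$.

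For $b=a$, if $\mathcal{P}^0_{a,\mu_a^*}=\emptyset$ we are again done. Otherwise Proposition~\ref{pro4.1} shows that, under $(1)$ or $(2)$, $m^0_{rad}(a,\mu_a^*)$ is attained by some $u_0\in\mathcal{P}^0_{a,\mu_a^*,rad}$ that is positive and radial. For part $(a)$ we do not need attainment. Proposition~\ref{pro4.5} shows that $\hat\Psi^+_{\mu_a^*}$, the infimum over $\mathcal{P}^+\cup\mathcal{P}^0$, is achieved by some $u_{a,\mu_a^*,+}\in\mathcal{P}^+_{a,\mu_a^*}$, and this minimizer is radial. Therefore
\[
m^+_{rad}(a,\mu_a^*)\le\Psi_{\mu_a^*}(u_{a,\mu_a^*,+})=\hat\Psi^+_{\mu_a^*}\le m^0_{rad}(a,\mu_a^*).
\]
Suppose equality held throughout. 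If $\mathcal{P}^0$ is nonempty and $m^0$ is attained by $u_0$, then $u_0$ would minimize $\Psi_{\mu_a^*}$ over $\mathcal{P}^+\cup\mathcal{P}^0$. Alternatively, we avoid attainment and argue via sequences: the proof of Proposition~\ref{pro4.5} shows that every minimizing sequence of $\hat\Psi^+_{\mu_a^*}$ coming from $\mu_n\uparrow\mu_a^*$ converges to a point of $\mathcal{P}^+$.

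To obtain strictness directly, take $u_0\in\mathcal{P}^0_{a,\mu_a^*}$ and perturb it. The degenerate point $t^0=1$ is an inflection point of the fibering map $\Phi_{\mu_a^*,u_0}$, and $\Phi'_{\mu_a^*,u_0}\le 0$ there since $\mu(u_0)=\mu_a^*$ is minimal and $\Phi'\le0$ everywhere. Hence $\Phi_{\mu_a^*,u_0}$ is strictly decreasing through $s=1$. Now take $v_\mu$ close to $u_0$ with $\mu(v_\mu)>\mu_a^*$; such $v_\mu$ exist via the trajectory and a small deformation, using Lemma~\ref{lem4.2} and the fact that $\hat{\mathcal{P}}$ is dense. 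Then $t^+_{\mu_a^*}(v_\mu)\to1$, and the local minimum value satisfies $\Phi_{\mu_a^*,v_\mu}(t^+)<\Phi_{\mu_a^*,v_\mu}(s)$ for $s\in(t^+,t^-)$. This by itself only gives the non-strict inequality $\le$ in the limit.

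A cleaner route to strictness is to use the equations. A minimizer $u_0$ of $m^0$, if it also realized $m^+_{rad}=\hat\Psi^+$, would be a constrained minimizer of $\Psi_{\mu_a^*}$ on the $W^{1,p}_{rad}$-closed set $\mathcal{P}^+\cup\mathcal{P}^0$. The key claim is that $u_0$ then solves \eqref{e1.1}, and the main obstacle is justifying this. Our plan is to run the Ekeland/Lagrange multiplier argument of Proposition~\ref{pro4.5}, using a sequence $u_n\in\mathcal{P}^+_{a,\mu_n}$ with $\mu_n\uparrow\mu_a^*$ converging to the minimizer. Once $u_0$ solves \eqref{e1.1}, it also satisfies \eqref{e4.1}. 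As at the end of the proof of Proposition~\ref{pro4.5}, subtracting the two equations forces $(q_2\gamma_{q_2}-p)u_0^{q_2-p}-\mu_a^*(p-q_1\gamma_{q_1})u_0^{q_1-p}$ to be constant. Since $u_0>0$ decays at infinity this is impossible, so the inequality is strict.

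For part $(b)$ the same scheme applies with Proposition~\ref{pro4.7} in place of Proposition~\ref{pro4.5}. Proposition~\ref{pro4.7} gives a radial $u_{a,\mu_a^*,-}\in\mathcal{P}^-_{a,\mu_a^*}$ attaining $\hat\Psi^-_{\mu_a^*}\le m^0_{rad}(a,\mu_a^*)$. Strictness again comes from excluding that an element of $\mathcal{P}^0$ attains this level, by the same equation-comparison argument. Hypotheses $(1)$ and $(2)$ enter only through Proposition~\ref{pro4.1}: they guarantee that $m^0_{rad}$ is attained, and in the critical case they supply the compactness threshold of Remark~\ref{rmkq}. Together with the trivial case $b<a$, this shows that the defining set of $\hat\mu^{**}_{a,\pm}$ contains $\mu_a^*$, so both suprema are well defined.
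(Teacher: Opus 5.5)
Your case $0<b<a$ is correct and coincides with the paper's. The gap is strictness at $b=a$. There you only prove $m^{\pm}_{rad}(a,\mu_a^*)\le\hat{\Psi}^{\pm}_{\mu_a^*}\le m^{0}_{rad}(a,\mu_a^*)$. Strictness then rests on the claim that a point $u_0\in\mathcal{P}^0_{a,\mu_a^*}$ realizing $\hat{\Psi}^{\pm}_{\mu_a^*}$ solves \eqref{e1.1}, and you leave this open.

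Your proposed route would not close it. The minimizers $u_n\in\mathcal{P}^+_{a,\mu_n}$ of Proposition~\ref{pro4.5} converge to \emph{some} minimizer of $\hat{\Psi}^+_{\mu_a^*}$, and that very proof places it in $\mathcal{P}^+_{a,\mu_a^*}$. Nothing forces them, or any sequence of approximate critical points, to converge to your prescribed $u_0$; producing such a sequence is essentially the claim itself. Nor can you use a Lagrange multiplier rule at $u_0$. By \eqref{e4.1}, the derivative at $u_0$ of $P(u)=\|\nabla u\|_p^p-\mu_a^*\gamma_{q_1}\|u\|_{q_1}^{q_1}-\gamma_{q_2}\|u\|_{q_2}^{q_2}$ is a multiple of $|u_0|^{p-2}u_0$. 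So the two constraints defining $\mathcal{P}_{a,\mu_a^*}$ are dependent exactly at points of $\mathcal{P}^0$.

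What is missing is the paper's first-order perturbation of the minimizer $u_0=u_{a,\mu_a^*,0}$ of $m^0(a,\mu_a^*)$ in tangent directions, followed by projection along fibres. For $\varphi\in T_{u_0}\mathcal{S}_a$, set $u_\epsilon=s(\epsilon)u_0+\epsilon\varphi\in\mathcal{S}_a$ with $s(0)=1$, $s'(0)=0$, and write $\tau^{\pm}_{\mu_a^*}(u_\epsilon)=1+\tau^{\pm}(\epsilon)$. In the Pohozaev identity for $(u_\epsilon)_{1+\tau^{\pm}(\epsilon)}$:
\begin{itemize}
\item the term linear in $\tau^{\pm}$ vanishes because $u_0\in\mathcal{P}^0$;
\item the term linear in $\epsilon$ vanishes because $P'(u_0)[\varphi]=0$ by \eqref{e4.1};
\item the coefficient of $(\tau^{\pm})^2$ is $\frac12(p-q_1\gamma_{q_1})(p-q_2\gamma_{q_2})\|\nabla u_0\|_p^p<0$.
\end{itemize}
Hence $|\tau^{\pm}(\epsilon)|\lesssim\epsilon$ and
\[
\Psi_{\mu_a^*}\bigl((u_\epsilon)_{\tau^{\pm}_{\mu_a^*}(u_\epsilon)}\bigr)=m^0(a,\mu_a^*)+\epsilon\,\Psi_{\mu_a^*}'(u_0)[\varphi]+o(\epsilon).
\]
If this coefficient vanished for every tangent $\varphi$ (radial ones suffice, by symmetric criticality), $u_0$ would solve both \eqref{e1.1} and \eqref{e4.1}, which your equation-comparison argument rules out. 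Otherwise, choosing the sign of $\epsilon$ gives a point of $\mathcal{P}^{\pm}_{a,\mu_a^*}\cup\mathcal{P}^0_{a,\mu_a^*}$ with energy below $m^0$, hence in $\mathcal{P}^{\pm}_{a,\mu_a^*}$. This yields both strict inequalities at once and is, in effect, the proof of your key claim.

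There is a second gap. Part $(a)$ is asserted for $q_2=p^*$ and \emph{every} $q_1$. There $m^0_{rad}(a,\mu_a^*)$ need not be attained, so no argument through a minimizer $u_0$ covers it. Your remark that minimizing sequences from $\mu_n\uparrow\mu_a^*$ converge into $\mathcal{P}^+$ does not exclude $\hat{\Psi}^+_{\mu_a^*}=m^0_{rad}(a,\mu_a^*)$. The paper closes this case through $m^+_{rad}(a,\mu_a^*)<m^-_{rad}(a,\mu_a^*)\le m^0_{rad}(a,\mu_a^*)$. The quickest way to see it: let $u_-\in\mathcal{P}^-_{a,\mu_a^*}$ be the radial minimizer from Proposition~\ref{pro4.7}. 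Then $\mu(u_-)>\mu_a^*$, and Proposition~\ref{pro2.1}$(a)$ gives
\[
m^+_{rad}(a,\mu_a^*)\le\Psi_{\mu_a^*}\bigl((u_-)_{t^+_{\mu_a^*}(u_-)}\bigr)<\Psi_{\mu_a^*}(u_-)=\hat{\Psi}^-_{\mu_a^*}\le m^0_{rad}(a,\mu_a^*).
\]
This proves $(a)$ at $b=a$ in all cases, without any attainment of $m^0$.
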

\begin{proof}
Since $\mu_a^*$ is decreasing in terms of $a>0$ by Proposition \ref{pro2.2}, we have $\mu_b^*>\mu_a^*$ for all $0<b<a$.  It follows that $\mathcal{P}_{b,\mu_a^*}^0=\emptyset$ for all $0<b<a$. By Propositions~\ref{pro3.1}, \ref{pro3.21} and \ref{pro3.2}, $m^{\pm}_{rad}(b,\mu_a^*)=m^{\pm}(b,\mu_a^*)$ for any $0<b< a$.  Thus, we always have $m^{\pm}_{rad}(b,\mu_a^*)<m^{0}_{rad}(b,\mu_a^*)$ for all $0<b< a$ by the definitions.

It remains to prove
\begin{eqnarray}\label{e5.21}
m^{\pm}_{rad}(a,\mu_a^*)<m_{rad}^{0}(a,\mu_a^*),
\end{eqnarray}
which implies that $\hat{\mu}_{a,\pm}^{**}$ are well defined.  By Proposition~\ref{pro4.1},
$m^0(a, \mu_a^*)=m^0_{rad}(a, \mu_a^*)$ is attained by some $u_{a,\mu_a^*,0}$, which is real valued, positive, radially symmetric and radially decreasing, provided $(1)$ or $(2)$ holds.  For any $\varphi \in T_{u_{a,\mu_a^*,0}}\mathcal{S}_a\cap C_0^{\infty}\left(\mathbb{R}^N\right)$, where $T_{u_{a,\mu_a^*,0}}\mathcal{S}_a$ is the tangent space of $\mathcal{S}_a$ at $u_{a,\mu_a^*,0}$, since $u_{a,\mu_a^*,0}>0$ in $\mathbb{R}^N$, by the implicit function theorem, there exists
\begin{equation}\label{eq1010}
s(\epsilon)=1-\frac{p-1}{2a^p}\left(\int_{\mathbb{R}^N}|u_{a,\mu_a^*,0}|^{p-2}\varphi^2dx\right)\epsilon^2+o(\epsilon^2)
\end{equation}
for $\epsilon$ sufficiently small such that $u_{\epsilon}:=s(\epsilon)u_{a,\mu_a^*,0}+\epsilon\varphi\in \mathcal{S}_a$ for all $1<p<N$.  It follows from \eqref{eq1010} and \cite[Theorem~1.1]{DPR1999} (see also \cite[Theorem~3]{SZ1999}) that
\begin{eqnarray}\label{e5.6}
\|\nabla u_\epsilon\|_{p}^{p}&=&\|\nabla u_{a,\mu_a^*,0}\|_{p}^{p}+\epsilon p\int_{\mathbb{R}^N}|\nabla u_{a,\mu_a^*,0}|^{p-2}\nabla u_{a,\mu_a^*,0}\cdot\nabla \varphi dx \notag\\
&&+\epsilon^2\left(\frac{p}{2}\int_{\mathbb{R}^N}|\nabla u_{a,\mu_a^*,0}|^{p-2}|\nabla \varphi|^2 dx-Ap\|\nabla u_{a,\mu_a^*,0}\|_{p}^{p}\right.\notag\\
&&\left.+p\left(\frac{p}{2}-1\right)\int_{\mathbb{R}^N}|\nabla u_{a,\mu_a^*,0}|^{p-4}|\nabla u_{a,\mu_a^*,0} \cdot\nabla \varphi|^2 dx\right)+o(\epsilon^2)
\end{eqnarray}
and
\begin{eqnarray}\label{e5.5}
\|u_\epsilon\|_{q_i}^{q_i}&=&\|u_{a,\mu_a^*,0}\|_{q_i}^{q_i}+q_i\epsilon\int_{\mathbb{R}^N}u_{a,\mu_a^*,0}^{q_i-1}\varphi dx\notag\\
&&+\left(\frac{q_i(q_i-1)}{2}\int_{\mathbb{R}^N}u_{a,\mu_a^*,0}^{q_i-2}\varphi^2 dx-Aq_i\|u_{a,\mu_a^*,0}\|_{q_i}^{q_i}\right)\epsilon^2+o(\epsilon^2)
\end{eqnarray}
for $i=1,2$ and all $1<p<N$, where
\begin{eqnarray*}
A=\frac{p-1}{2a^p}\left(\int_{\mathbb{R}^N}|u_{a,\mu_a^*,0}|^{p-2}\varphi^2dx\right).
\end{eqnarray*}
On the other hand, by Proposition \ref{pro2.1}, there exists $\tau_{\mu_a^*}^{\pm}(u_{\epsilon})$ such that $(u_{\epsilon})_{\tau_{\mu_a^*}^{\pm}(u_{\epsilon})}\in \mathcal{P}_{a,\mu_a^*}^{\pm}\cup \mathcal{P}_{a,\mu_a^*}^0$, where $\tau_{\mu_a^*}^{\pm}(u_{\epsilon})$ is defined by (\ref{e4.3}).  Since $u_{a,\mu_a^*,0}\in \mathcal{P}_{a,\mu_a^*}^0$, by the continuity of $\tau_{\mu_a^*}^{\pm}(u_{\epsilon})$ which is derived by its uniqueness, $\tau_{\mu_a^*}^{\pm}(u_{\epsilon})\to 1$ as $\epsilon\to 0$.  Thus, without loss of generality, we may write $\tau_{\mu_a^*}^{\pm}(u_{\epsilon})=1+\tau^{\pm}(\epsilon)$, where $\tau^{\pm}(\epsilon)\to 0$ as $\epsilon\to 0$.  It follows from $(u_{\epsilon})_{\tau_{\mu_a^*}^{\pm}(u_{\epsilon})}\in \mathcal{P}_{a,\mu_a^*}^{\pm}\cup \mathcal{P}_{a,\mu_a^*}^0$ that
\begin{equation}\label{e5.3}
\left(1+\tau^{\pm}(\epsilon)\right)^{p}\|\nabla u_\epsilon\|_p^p-\mu_{a}^* \gamma_{q_1}\left(1+\tau^{\pm}(\epsilon)\right)^{q_1\gamma_{q_1}}\|u_\epsilon\|_{q_1}^{q_1}
-\gamma_{q_2}\left(1+\tau^{\pm}(\epsilon)\right)^{q_2\gamma_{q_2}}\|u_\epsilon\|_{q_2}^{q_2}=0.
\end{equation}
Thus, by inserting \eqref{e5.6} and \eqref{e5.5} into \eqref{e5.3} and by $u_{a,\mu_a^*,0}\in \mathcal{P}_{a,\mu_a^*}^{0}$, Proposition \ref{pro4.1} and $\varphi \in T_{u_{a,\mu_a^*,0}}\mathcal{S}_a$, we have
\begin{eqnarray*}
0=\epsilon\tau^{\pm}(\epsilon)B_1+\epsilon^2B+(\tau^{\pm}(\epsilon))^2C+o(\epsilon^2)+o((\tau^{\pm}(\epsilon))^2),
\end{eqnarray*}
where
\begin{equation*}
B_1=p^2\int_{\mathbb{R}^N}|\nabla u_{a,\mu_a^*,0}|^{p-2}\nabla u_{a,\mu_a^*,0}\nabla\varphi dx-\mu_a^*q_1^2\gamma_{q_1}^2\int_{\mathbb{R}^N}u_{a,\mu_a^*,0}^{q_1-1}\varphi dx-q_2^2\gamma_{q_2}^2\int_{\mathbb{R}^N}u_{a,\mu_a^*,0}^{q_2-1}\varphi dx,
\end{equation*}
\begin{eqnarray*}
B&=&\frac{p}{2}\int_{\mathbb{R}^N}|\nabla u_{a,\mu_a^*,0}|^{p-2}|\nabla \varphi|^2 dx-Ap\|\nabla u_{a,\mu_a^*,0}\|_{p}^{p}\\
&&+p\left(\frac{p}{2}-1\right)\int_{\mathbb{R}^N}|\nabla u_{a,\mu_a^*,0}|^{p-4}|\nabla u_{a,\mu_a^*,0}\nabla\varphi|^2 dx\\
&&-\mu_{a}^*\gamma_{q_1}\left(\frac{q_1(q_1-1)}{2}\int_{\mathbb{R}^N}u_{a,\mu_a^*,0}^{q_1-2}\varphi^2 dx-Aq_1\|u_{a,\mu_a^*,0}\|_{q_1}^{q_1}\right)\\
&&-\gamma_{q_2}\left(\frac{q_2(q_2-1)}{2}\int_{\mathbb{R}^N}u_{a,\mu_a^*,0}^{q_2-2}\varphi^2 dx-Aq_2\|u_{a,\mu_a^*,0}\|_{q_2}^{q_2}\right)
\end{eqnarray*}
and
\begin{eqnarray*}
C&=&\frac{p(p-1)}{2}\|\nabla u_{a,\mu_a^*,0}\|_{p}^{p}-\mu_a^*\gamma_{q_1}\frac{q_1\gamma_{q_1}(q_1\gamma_{q_1}-1)}{2}\|u_{a,\mu_a^*,0}\|_{q_1}^{q_1}\\
&&-\gamma_{q_2}\frac{q_2\gamma_{q_2}(q_2\gamma_{q_2}-1)}{2}\|u_{a,\mu_a^*,0}\|_{q_2}^{q_2}.
\end{eqnarray*}
Since $u_{a,\mu_a^*,0}\in \mathcal{P}_{a,\mu_a^*}^{0}$ and $q_1\gamma_{q_1}<p<q_2\gamma_{q_2}$, we have
\begin{eqnarray*}
C=\frac{(p-q_1\gamma_{q_1})(p-q_2\gamma_{q_2})}{2}\|\nabla u_{a,\mu_a^*,0}\|_{p}^{p}<0.
\end{eqnarray*}
It follows that $|\tau^{\pm}(\epsilon)|\lesssim \epsilon$ as $\epsilon\to 0$ which, together with $u_{a,\mu_a^*,0}\in \mathcal{P}_{a,\mu_a^*}^{0}$, implies that
\begin{eqnarray*}
&&\Psi_{\mu_a^*}\left((u_\epsilon)_{\tau_{\mu_a^*}^{\pm}(u_{\epsilon})}\right)\\
&=&\frac{1}{p}\left(\tau_{\mu_a^*}^{\pm}(u_{\epsilon})\right)^{p}\|\nabla u_\epsilon\|_p^p-\frac{\mu_a^*}{q_1}\left(\tau_{\mu_a^*}^{\pm}(u_{\epsilon})\right)^{q_1\gamma_{q_1}}\|u_\epsilon\|_{q_1}^{q_1}-\frac{1}{q_2}\left(\tau_{\mu_a^*}^{\pm}(u_{\epsilon})\right)^{q_2\gamma_{q_2}}\|u_\epsilon\|_{q_2}^{q_2}\\
&=&\frac{1}{p}(1+p\tau^{\pm}(\epsilon)+o(\epsilon))\left(\|\nabla u_{a,\mu_a^*,0}\|_{p}^{p}+\epsilon p\int_{\mathbb{R}^N}|\nabla u_{a,\mu_a^*,0}|^{p-2}\nabla u_{a,\mu_a^*,0}\cdot\nabla \varphi dx+o(\epsilon)\right)\\
&&-\frac{\mu_a^*}{q_1}(1+q_1\gamma_{q_1}\tau^{\pm}(\epsilon)+o(\epsilon)) \left(\|u_{a,\mu_a^*,0}\|_{q_1}^{q_1}+q_1\epsilon\int_{\mathbb{R}^N}u_{a,\mu_a^*,0}^{q_1-1}\varphi dx+o(\epsilon)\right)\\
&&-\frac{1}{q_2}(1+q_2\gamma_{q_2}\tau^{\pm}(\epsilon)+o(\epsilon)) \left(\|u_{a,\mu_a^*,0}\|_{q_2}^{q_2}+q_2\epsilon\int_{\mathbb{R}^N}u_{a,\mu_a^*,0}^{q_2-1}\varphi dx+o(\epsilon)\right)\\
&=&\frac{1}{p}\|\nabla u_{a,\mu_a^*,0}\|_{p}^{p}-\frac{\mu_a^*}{q_1}\|u_{a,\mu_a^*,0}\|_{q_1}^{q_1}-\frac{1}{q_2}\|u_{a,\mu_a^*,0}\|_{q_2}^{q_2}\\
&&+\epsilon\left( \int_{\mathbb{R}^N}|\nabla u_{a,\mu_a^*,0}|^{p-2}\nabla u_{a,\mu_a^*,0}\cdot\nabla \varphi dx- \mu_a^*\int_{\mathbb{R}^N}u_{a,\mu_a^*,0}^{q_1-1}\varphi dx-\int_{\mathbb{R}^N}u_{a,\mu_a^*,0}^{q_2-1}\varphi dx \right)\\
&&+o(\epsilon).
\end{eqnarray*}
We claim that there exists $\varphi \in T_{u_{a,\mu_a^*,0}}\mathcal{S}_a\cap C_0^{\infty}\left(\mathbb{R}^N\right)$ such that
\begin{equation}\label{e5.7}
\int_{\mathbb{R}^N}|\nabla u_{a,\mu_a^*,0}|^{p-2}\nabla u_{a,\mu_a^*,0}\cdot\nabla \varphi dx- \mu_a^*\int_{\mathbb{R}^N}u_{a,\mu_a^*,0}^{q_1-1}\varphi dx-\int_{\mathbb{R}^N}u_{a,\mu_a^*,0}^{q_2-1}\varphi dx\neq 0.
\end{equation}
Suppose the contrary that for any $\varphi \in T_{u_{a,\mu_a^*,0}}\mathcal{S}_a\cap C_0^{\infty}\left(\mathbb{R}^N\right)$,
\begin{equation*}%\label{e5.7}
\int_{\mathbb{R}^N}|\nabla u_{a,\mu_a^*,0}|^{p-2}\nabla u_{a,\mu_a^*,0}\cdot\nabla \varphi dx- \mu_a^*\int_{\mathbb{R}^N}u_{a,\mu_a^*,0}^{q_1-1}\varphi dx-\int_{\mathbb{R}^N}u_{a,\mu_a^*,0}^{q_2-1}\varphi dx=0,
\end{equation*}
then $u_{a,\mu_a^*,0}$ satisfies both (\ref{e1.1}) and (\ref{e4.1}) in the weak sense.  Thus, by similar arguments used in the proof of Proposition~\ref{pro4.5}, we can get a contradiction.  Now, by \eqref{e5.7}, we can choose $\varphi \in T_{u_{a,\mu_a^*,0}}\mathcal{S}_a\cap C_0^{\infty}\left(\mathbb{R}^N\right)$ such that
\begin{equation*}
\Psi_{\mu_a^*}\left((u_\epsilon)_{\tau_{\mu_a^*}^{\pm}(u_{\epsilon})}\right)<\Psi_{\mu_a^*}(u_{a,\mu_a^*,0})=m^0(a,\mu_a^*),
\end{equation*}
which, together with $(u_{\epsilon})_{\tau_{\mu_a^*}^{\pm}(u_{\epsilon})}\in \mathcal{P}_{a,\mu_a^*}^{\pm}\cup \mathcal{P}_{a,\mu_a^*}^0$, implies that
$(u_{\epsilon})_{\tau_{\mu_a^*}^{\pm}(u_{\epsilon})}\in \mathcal{P}_{a,\mu_a^*}^{\pm}$.  It follows that (\ref{e5.21}) holds true, provided $(1)$ or $(2)$ holds.

We complete the proof by showing that
\begin{eqnarray*}
m^{+}_{rad}(a,\mu_a^*)<m^{-}_{rad}(a,\mu_a^*)\leq m_{rad}^{0}(a,\mu_a^*)
\end{eqnarray*}
always holds true for $q_2=p^*$.  By Propositions~\ref{pro2.1} and \ref{pro4.7}, it only need to show that $m^{-}_{rad}(a,\mu_a^*)\leq m_{rad}^{0}(a,\mu_a^*)$ always holds true for $q_2=p^*$.  If $\mathcal{P}_{a,\mu_a^*,rad}^{0}=\emptyset$, then the conclusion immediately holds true by the definitions.  Thus, without loss of generality, we assume that $\mathcal{P}_{a,\mu_a^*,rad}^{0}\neq\emptyset$.  Suppose by contradiction that $m^{-}_{rad}(a,\mu_a^*)> m^{0}_{rad}(a,\mu_a^*)$. For any $\epsilon\in (0,m^{-}_{rad}(a,\mu_a^*)-m^{0}_{rad}(a,\mu_a^*))$, we choose $u_{\epsilon}\in \mathcal{P}_{a,\mu_a^*,rad}^{0}$  such that
$\Psi_{\mu_a^*}(u_{\epsilon})<m^{0}_{rad}(a,\mu_a^*)+\epsilon$. Direct calculations give that
\begin{equation*}
\Psi_{\mu_a^*}(u_{\epsilon})=-\frac{(q_2\gamma_{q_2}-p)(p-q_1\gamma_{q_1})}{pq_1\gamma_{q_1}q_2\gamma_{q_2}}\|\nabla u_{\epsilon}\|_p^p.
\end{equation*}
Let $t>1$, $b(t)>0$ satisfy  $b(t)=\frac{q_2\gamma_{q_2}-q_1\gamma_{q_1}}{q_2\gamma_{q_2}-p}t^{p-q_1\gamma_{q_1}}-\frac{p-q_1\gamma_{q_1}}{q_2\gamma_{q_2}-p}t^{q_2\gamma_{q_2}-q_1\gamma_{q_1}}$.
For any $u\in \mathcal{P}_{a,\mu_a^*,rad}^{0}$, we define the map $\mathcal{T}: u\to w_t=(u)_{t}$. Then $w_t$ satisfies
\begin{equation}\label{e7.3}
\begin{cases}
&\|\nabla w_t\|_p^p=\mu(t)\gamma_{q_1} \|w_t\|_{q_1}^{q_1}+\|w_t\|_{p^*}^{p^*},\\
&p\|\nabla w_t\|_p^p<\mu(t)q_1\gamma_{q_1}^2 \|w_t\|_{q_1}^{q_1}+p^*\|w_t\|_{p^*}^{p^*},\\
&\|w_t\|_{p^*}^{p^*}=\frac{\mu_a^*\gamma_{q_1}(p-q_1\gamma_{q_1})}{p^*-p}t^{p^*-q_1\gamma_{q_1}}\|w_t\|_{q_1}^{q_1},
\end{cases}
\end{equation}
where $\mu(t)=b(t)\mu_a^*$. Thus, by \eqref{e7.3}, the map $\mathcal{T}$ projects $\mathcal{P}_{a,\mu_a^*,rad}^0$ into $\mathcal{P}_{a,\mu(t),rad}^-$. Direct calculations give that
\begin{equation*}
\Psi_{\mu(t)}(w_t)=\frac{p-q_1\gamma_{q_1}}{q_1\gamma_{q_1}}\left(\frac{1}{q_2\gamma_{q_2}}t^{q_2\gamma_{q_2}}-\frac{1}{p}t^{p}\right)\|\nabla u\|_p^p
\end{equation*}
and
\begin{equation*}
\min_{t\geq 1}\Psi_{\mu(t)}(w_t)=\Psi_{\mu(1)}(w_1)=-\frac{(q_2\gamma_{q_2}-p)(p-q_1\gamma_{q_1})}{pq_1\gamma_{q_1}q_2\gamma_{q_2}}\|\nabla u\|_p^p,
\end{equation*}
with $\max_{t\geq 1}b(t)=b(1)=1$.  For any $t>1$ such that $b(t)>0$, we let $w_{\epsilon,t}:=\mathcal{T}(u_{\epsilon})$. Then $w_{\epsilon,t}\in \mathcal{P}_{a,\mu(t),rad}^-$ and by Proposition \ref{pro4.7}, for any $t>1$,
\begin{equation*}
\frac{p-q_1\gamma_{q_1}}{q_1\gamma_{q_1}}\left(\frac{1}{q_2\gamma_{q_2}}t^{q_2\gamma_{q_2}}-\frac{1}{p}t^{p}\right)\|\nabla u_{\epsilon}\|_p^p=\Psi_{\mu(t)}(w_{\epsilon,t})\geq m^-_{rad}(a,\mu(t))\geq m^-_{rad}(a,\mu_a^*).
\end{equation*}
Letting $t\downarrow 1$, we obtain that
\begin{equation*}
m^-_{rad}(a,\mu_a^*)>m^{0}_{rad}(a,\mu_a^*)+\epsilon>\Psi_{\mu_a^*}(u_\epsilon)\geq m^-_{rad}(a,\mu_a^*),
\end{equation*}
which is a contradiction. Thus, $m^{0}_{rad}(a,\mu_a^*)\geq m^-_{rad}(a,\mu_a^*)$ always holds true for $q_2=p^*$.
\end{proof}

With Lemma~\ref{lem5.3} in hands, we can obtain the following result.

\begin{lemma}\label{lem5.5}
Let $N\geq 2$, $1<p<N$, $p<q_1<p+\frac{p^2}{N}<q_2\leq p^*$ and  $a>0$. Then
\begin{enumerate}
\item[$(1)$]\quad $\hat{\mu}_{a,+}^{**}\geq\hat{\mu}_{a,-}^{**}>\mu_a^*$, provided
\begin{enumerate}
\item[$(a)$]\quad $q_2<p^*$;
\item[$(b)$]\quad $q_2=p^*$ and $q_1\in\left(q_1^*, p+\frac{p^2}{N}\right)$ where $q_1^*>p$ is given in Proposition~\ref{pro4.1}.
\end{enumerate}
\item[$(2)$]\quad $\hat{\mu}_{a,+}^{**}>\mu_a^*$ for all $q_1\in \left(p, p+\frac{p^2}{N}\right)$ if $q_2=p^*$.
\end{enumerate}
\end{lemma}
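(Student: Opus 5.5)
We need to prove Lemma 5.5: $\hat{\mu}_{a,+}^{**}\geq\hat{\mu}_{a,-}^{**}>\mu_a^*$ under (a) or (b), and $\hat{\mu}_{a,+}^{**}>\mu_a^*$ always when $q_2=p^*$.

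The plan is a perturbation argument around $\mu=\mu_a^*$, starting from the strict inequalities of Lemma~\ref{lem5.3} and proving that they persist for $\mu$ slightly larger than $\mu_a^*$, uniformly in $b\in(0,a]$.

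First, the ordering $\hat{\mu}_{a,+}^{**}\geq\hat{\mu}_{a,-}^{**}$. For every $u\in\mathcal{S}_b$ with $\mu<\mu(u)$, part $(a)$ of Proposition~\ref{pro2.1} gives $\Phi_{\mu,u}(t^+_\mu(u))<\Phi_{\mu,u}(t^-_\mu(u))$. If $u\in\mathcal{P}^0_{b,\mu,rad}$, perturbing as in the proof of Lemma~\ref{lem5.3} produces points of $\mathcal{P}^\pm_{b,\mu,rad}$ with energy close to $\Psi_\mu(u)$. Combining these, I would show that $m^-_{rad}(b,\mu)<m^0_{rad}(b,\mu)$ forces $m^+_{rad}(b,\mu)\le m^-_{rad}(b,\mu)<m^0_{rad}(b,\mu)$. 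Hence the set defining $\hat{\mu}_{a,-}^{**}$ is contained in the set defining $\hat{\mu}_{a,+}^{**}$.

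Second, strict inequality at $\mu_a^*$, argued by contradiction. Suppose there are $\mu_n\downarrow\mu_a^*$ and $b_n\in(0,a]$ with $m^\pm_{rad}(b_n,\mu_n)\geq m^0_{rad}(b_n,\mu_n)$. In particular $\mathcal{P}^0_{b_n,\mu_n,rad}\neq\emptyset$, so $\mu_n\ge\mu_{b_n}^*$. By $(b)$ of Proposition~\ref{pro2.2}, $\mu_b^*$ is continuous and strictly decreasing in $b$, so $b_n\to a$. Pick $u_n\in\mathcal{P}^0_{b_n,\mu_n,rad}$ with $\Psi_{\mu_n}(u_n)\le m^0_{rad}(b_n,\mu_n)+1/n$. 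Then:
\begin{itemize}
\item On $\mathcal{P}^0$ one has $\Psi=-c\|\nabla u\|_p^p$ with $c>0$, as in the proof of Lemma~\ref{lem5.3}.
\item The estimates of Propositions~\ref{pro3.1}, \ref{pro3.21} and \ref{pro4.1} give $\|\nabla u_n\|_p\approx\|u_n\|_{q_1}\approx\|u_n\|_{q_2}\approx1$.
\item Since $\mu(u_n)=\mu_n\to\mu_a^*$, $\{u_n\}$ is a minimizing sequence for $\mu_a^*$ (after rescaling by $a/b_n$).
\end{itemize}
Repeating the compactness argument of Proposition~\ref{pro4.1}, under (a) directly and under (b) via $\mathcal{T}$ and Remark~\ref{rmkq}, gives $u_n\to u_0\in\mathcal{P}^0_{a,\mu_a^*,rad}$ strongly. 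Hence
$$\liminf_n m^0_{rad}(b_n,\mu_n)\geq m^0_{rad}(a,\mu_a^*).$$
On the other side, Lemma~\ref{lem5.3} provides $v\in\mathcal{P}^\pm_{a,\mu_a^*,rad}$ with $\Psi_{\mu_a^*}(v)<m^0_{rad}(a,\mu_a^*)-\delta$. Since $\mu(v)>\mu_a^*$ and $v$ is nondegenerate, the implicit function theorem (as in Lemmas~\ref{lem3.1} and \ref{lem4.3}) yields $(\frac{b_n}{a}v)_{t^\pm_{\mu_n}}\in\mathcal{P}^\pm_{b_n,\mu_n,rad}$ with energy tending to $\Psi_{\mu_a^*}(v)$. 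Therefore
$$\limsup_n m^\pm_{rad}(b_n,\mu_n)\le m^0_{rad}(a,\mu_a^*)-\delta,$$
which contradicts the assumption.

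For part (2), where $q_2=p^*$ and $q_1$ is arbitrary, compactness of $\mathcal{P}^0$ may fail. For the $+$ branch I would avoid it: only a lower bound on $m^0_{rad}(b_n,\mu_n)$ is needed. I would show
$$\liminf_n m^0_{rad}(b_n,\mu_n)\ge\hat{\Psi}^+_{\mu_a^*}\ \text{strictly above } m^+_{rad}(a,\mu_a^*),$$
using that any limit of degenerate points either lies in $\mathcal{P}^0_{a,\mu_a^*}$, which is handled by Lemma~\ref{lem5.3}(a), or loses mass through bubbling. The bubbling case raises the energy by $\frac1N S^{N/p}$, or by an amount controlled by Lemma~\ref{lem3.1}(b) and Proposition~\ref{pro4.4}(b). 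I expect this critical-case lower bound to be the main obstacle, and I would first try the splitting argument of Proposition~\ref{pro3.2} together with the fact that $m^+<0$.
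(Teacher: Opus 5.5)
Your ordering argument and your part (1) follow the paper's proof. The defining sets are nested because $m^+_{rad}\le m^-_{rad}$ by Proposition~\ref{pro2.1}(a); the perturbation remark is not needed. The rest of part (1) is also the paper's route: the contradiction sequence $\mu_n\downarrow\mu_a^*$ with $b_n\to a$, the upper bound \eqref{e5.13} from rescaled elements of $\mathcal{P}^\pm_{a,\mu_a^*,rad}$, and the lower bound \eqref{e5.14} from the compactness argument of Proposition~\ref{pro4.1}, combined with Lemma~\ref{lem5.3}.

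Part (2), however, is not proved, and the target you state cannot work. By Proposition~\ref{pro4.5}, $\hat{\Psi}^+_{\mu_a^*}$ is attained by a radial element of $\mathcal{P}^+_{a,\mu_a^*}$, so $\hat{\Psi}^+_{\mu_a^*}=m^+_{rad}(a,\mu_a^*)$. A bound $\liminf_n m^0_{rad}(b_n,\mu_n)\ge\hat{\Psi}^+_{\mu_a^*}$ is therefore compatible with $m^+_{rad}(b_n,\mu_n)\ge m^0_{rad}(b_n,\mu_n)$; you need a strictly higher level. The bubbling route is also left open, and it is exactly where Proposition~\ref{pro4.1} needs $q_1>q_1^*$. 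Near-minimizers of $m^0_{rad}(b_n,\mu_n)$ carry no (PS) information for $\Psi_\mu$, only (after Ekeland) for $\mu(\cdot)$. In the bubbling case their weak limit satisfies only $p\|\nabla u_0\|_p^p=\mu_a^*q_1\gamma_{q_1}^2\|u_0\|_{q_1}^{q_1}+p^*\|u_0\|_{p^*}^{p^*}$, not the Pohozaev constraint. So Lemma~\ref{lem3.1}(b) and Proposition~\ref{pro4.4}(b) do not apply to it as you suggest.

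The missing idea is that no compactness is needed. Take $\tilde u_n\in\mathcal{P}^0_{b_n,\mu_n,rad}$ with $\Psi_{\mu_n}(\tilde u_n)\le m^0_{rad}(b_n,\mu_n)+o(1)$. Then
\[
\|\nabla\tilde u_n\|_p^p-\mu_a^*\gamma_{q_1}\|\tilde u_n\|_{q_1}^{q_1}-\|\tilde u_n\|_{p^*}^{p^*}=(\mu_n-\mu_a^*)\gamma_{q_1}\|\tilde u_n\|_{q_1}^{q_1}>0 .
\]
Hence Proposition~\ref{pro2.1}(a) gives $t^-_{\mu_a^*}(\tilde u_n)>1$ with $(\tilde u_n)_{t^-_{\mu_a^*}(\tilde u_n)}\in\mathcal{P}^-_{b_n,\mu_a^*,rad}$. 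Since $\mu(\tilde u_n)=\mu_n$, Proposition~\ref{pro2.1}(b) and its proof show that $\Phi_{\mu_n,\tilde u_n}$ is nonincreasing. Use the uniform bounds $\|\nabla\tilde u_n\|_p\thickapprox\|\tilde u_n\|_{q_1}\thickapprox\|\tilde u_n\|_{p^*}\thickapprox1$, which also give $t^-_{\mu_a^*}(\tilde u_n)\lesssim1$. Together with Propositions~\ref{pro4.4}(b) and \ref{pro4.7}, this yields
\[
m^0_{rad}(b_n,\mu_n)+o(1)\ge\Psi_{\mu_n}(\tilde u_n)\ge\Psi_{\mu_n}\bigl((\tilde u_n)_{t^-_{\mu_a^*}(\tilde u_n)}\bigr)=\Psi_{\mu_a^*}\bigl((\tilde u_n)_{t^-_{\mu_a^*}(\tilde u_n)}\bigr)+o(1)\ge m^-_{rad}(b_n,\mu_a^*)+o(1)\ge m^-_{rad}(a,\mu_a^*)+o(1).
\]
Moreover, $m^-_{rad}(a,\mu_a^*)>m^+_{rad}(a,\mu_a^*)$ by Proposition~\ref{pro2.1}(a) applied to the radial minimizer of Proposition~\ref{pro4.7}. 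Combined with \eqref{e5.13}, this contradicts $m^+_{rad}(b_n,\mu_n)\ge m^0_{rad}(b_n,\mu_n)$ for every $q_1\in(p,p+\frac{p^2}{N})$. This is how the paper proves part (2).
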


\begin{proof}
By Lemma \ref{lem5.4} and Proposition \ref{pro2.1}, $\mathcal{P}_{c,\mu,rad}^{\pm}\neq \emptyset$ and $m^{+}_{rad}(c,\mu)\leq m^{-}_{rad}(c,\mu)$ for any $c,\mu>0$.  Thus, we always have $\hat{\mu}_{a,+}^{**}\geq\hat{\mu}_{a,-}^{**}$, provided $\hat{\mu}_{a,\pm}^{**}$ are well defined which is guaranteed by Lemma~\ref{lem5.3}, provided $(a)$ or $(b)$ holds. Moreover, by Lemma~\ref{lem5.3} again, $\hat{\mu}_{a,+}^{**}$ is also well defined for all $q_1\in \left(p, p+\frac{p^2}{N}\right)$ if $q_2=p^*$.

It remains to prove that $\hat{\mu}_{a,\pm}^{**}>\mu_a^*$.  Suppose the contrary that $\hat{\mu}_{a,\pm}^{**}=\mu_a^*$.  Then there exists $\mu_{n}\downarrow \mu_a^*$ as $n\to +\infty$ and $0<c_n\leq a$ such that
\begin{equation}\label{e5.12}
m_{rad}^{\pm}(c_n,\mu_n)\geq m_{rad}^0(c_n,\mu_n)\ \text{for\ any\ }n.
\end{equation}
Without loss of generality, we assume that $c_0=\lim_{n\to +\infty}c_n$.  If $c_0<a$,
then by Proposition \ref{pro2.2} and $\lim_{n\to +\infty}\mu_{n}=\mu_{a}^*$, $\mu_{n}<\mu_{c_n}^*$ for $n$ sufficiently large. Thus, by Proposition \ref{pro2.1}, $\mathcal{P}^0_{c_n,\mu_n}=\emptyset$ and $m_{rad}^0(c_n,\mu_n)=+\infty$ by the definitions.  On the other hand, by Lemma \ref{lem5.4} and Proposition \ref{pro2.1}, $\mathcal{P}_{c_n,\mu_n,rad}^{\pm}\neq \emptyset$, which implies that $m_{rad}^{\pm}(c_n,\mu_n)<+\infty$.  It follows that $m_{rad}^{\pm}(c_n,\mu_n)< m_{rad}^0(c_n,\mu_n)$ for $n$ sufficiently large, which contradicts (\ref{e5.12}).  Thus, we must have $c_0=a$.  Moreover, since by Lemma \ref{lem5.4} and Proposition \ref{pro2.1}, $\mathcal{P}_{c_n,\mu_n,rad}^{\pm}\neq \emptyset$, which implies that $m_{rad}^{\pm}(c_n,\mu_n)<+\infty$, by \eqref{e5.12} and the definition of $m_{rad}^0(c_n,\mu_n)$, we always have $\mathcal{P}^0_{c_n,\mu_n,rad}\neq\emptyset$ for any $n$ and $m_{rad}^0(c_n,\mu_n)\gtrsim -1$ by similar arguments used in the proofs of Propositions~\ref{pro3.1} and \ref{pro3.21}.
 We claim that
\begin{equation}\label{e5.13}
\limsup_{n\to+\infty}m_{rad}^{\pm}(c_n,\mu_n)\leq m_{rad}^{\pm}(a,\mu_a^*)
\end{equation}
and
\begin{equation}\label{e5.14}
\liminf_{n\to+\infty}m_{rad}^0(c_n,\mu_n)\geq m_{rad}^0(a,\mu_a^*)\ \text{provided}\ (a) \ \text{or}\  (b)\ \text{holds},
\end{equation}
which, together with Lemma~\ref{lem5.3}, contradicts (\ref{e5.12}) for $n$ sufficiently large provided $(a)$ or $(b)$ holds.  It remains to  prove that $\hat{\mu}_{a,+}^{**}>\mu_a^*$ for all $q_1\in \left(p, p+\frac{p^2}{N}\right)$ if $q_2=p^*$. In this case, let $\tilde{u}_{n}\in\mathcal{P}_{c_{n},\mu_n,rad}^0$ such that $\Psi_{\mu_n}\left(\tilde{u}_{n}\right)\leq m_{rad}^{0}(c_{n},\mu_n)+o_{n}(1)$, where $o_{n}(1)\to0$ as $n\to+\infty$.  It follows from  $\tilde{u}_{n}\in\mathcal{P}_{c_{n},\mu_n,rad}^0$ and $\mu_n>\mu_a^*$ that
\begin{eqnarray*}
\|\nabla \tilde{u}_{n}\|_p^p=\mu_n\gamma_{q_1}\|\tilde{u}_{n}\|_{q_1}^{q_1}+\gamma_{q_2}\|\tilde{u}_{n}\|_{q_2}^{q_2}
>\mu_a^*\gamma_{q_1}\|\tilde{u}_{n}\|_{q_1}^{q_1}+\gamma_{q_2}\|\tilde{u}_{n}\|_{q_2}^{q_2},
\end{eqnarray*}
which, together with $\mu_a^*<\mu_n=\mu(\tilde{u}_n)$ and $\|\nabla \tilde{u}_n\|_p^p\thickapprox\|\tilde{u}_n\|_{q_1}^{q_1}\thickapprox\|\tilde{u}_n\|_{q_2}^{q_2}\thickapprox1$ obtained by similar arguments used in the proofs of Propositions~\ref{pro3.1} and \ref{pro3.21}, implies that there exists $1\lesssim t^+_{\mu_{a}^*}(\tilde{u}_{n}) < 1<t^-_{\mu_{a}^*}(\tilde{u}_{n})\lesssim1$ such that $\left(\tilde{u}_{n}\right)_{t^{\pm}_{\mu_a^*}\left(\tilde{u}_{n}\right)}\in \mathcal{P}_{c_{n},\mu_a^*,rad}^{\pm}$.  It follows that
\begin{eqnarray*}
m_{rad}^{+}(a,\mu_a^*)&\geq&\limsup_{n\to+\infty}m_{rad}^{+}(c_{n},\mu_n)\\
&\geq&\limsup_{n\to+\infty}m_{rad}^{0}(c_{n},\mu_n)=\limsup_{n\to+\infty}\Psi_{\mu_n}\left(\tilde{u}_{n}\right)\\
&\geq&\limsup_{n\to+\infty}\Psi_{\mu_n}\left(\left(\tilde{u}_{n}\right)_{t^{-}_{\mu_a^*}\left(\tilde{u}_{n}\right)}\right)\\
&=&\limsup_{n\to+\infty}\Psi_{\mu_a^*}\left(\left(\tilde{u}_{n}\right)_{t^{-}_{\mu_a^*}\left(\tilde{u}_{n}\right)}\right)\\
&\geq&\limsup_{n\to+\infty}m_{rad}^{-}(c_{n},\mu_a^*)\\
&\geq&m_{rad}^{-}(a,\mu_a^*)\\
&>&m_{rad}^{+}(a,\mu_a^*).
\end{eqnarray*}
That is a contradiction, which implies that  $\hat{\mu}_{a,+}^{**}>\mu_{a}^{*}$ for all $q_1\in \left(p, p+\frac{p^2}{N}\right)$ if $q_2=p^*$.

It remains to prove (\ref{e5.13}) and (\ref{e5.14}).  We first prove  (\ref{e5.13}).  By the definition of $m_{rad}^{\pm}(a,\mu_a^*)$, for any $\epsilon>0$ there exists $u\in \mathcal{P}^{\pm}_{a,\mu_a^*,rad}$ such that
\begin{equation*}
m_{rad}^{\pm}(a,\mu_a^*)\leq \Psi_{\mu_a^*}(u)<m_{rad}^{\pm}(a,\mu_a^*)+\epsilon.
\end{equation*}
Set $w_n:=\frac{c_n}{a}u$. Then $w_n\in \mathcal{S}_{c_n}$.  Since $u\in \mathcal{P}^{\pm}_{a,\mu_a^*,rad}$, by Proposition~\ref{pro2.1}, $\mu_a^*<\mu(u)$, which together with $\lim_{n\to+\infty}\mu_n=\mu_a^*$, $\lim_{n\to+\infty}c_n=a$ and \eqref{muu}, implies that $\mu_n<\mu(w_n)$ for $n$ sufficiently large.  It follows from Proposition \ref{pro2.1} that there exists $t^{\pm}_{\mu_n}(w_n)$ with $t^{\pm}_{\mu_n}(w_n)\to 1$ as $n\to +\infty$ such that $(w_n)_{t^{\pm}_{\mu_n}(w_n)}\in \mathcal{P}^{\pm}_{c_n,\mu_n,rad}$ and
\begin{equation*}
\limsup_{n\to+\infty}m_{rad}^{\pm}(c_n,\mu_n)\leq  \limsup_{n\to+\infty}\Psi_{\mu_n}\left((w_n)_{t^{\pm}_{\mu_n}(w_n)}\right)=\Psi_{\mu_a^*}(u)<m_{rad}^{\pm}(a,\mu_a^*)+\epsilon.
\end{equation*}
By the arbitrariness of $\epsilon$, we obtain (\ref{e5.13}).  We next prove  (\ref{e5.14}).  By the definition of $m_{rad}^0(c_n,\mu_n)$, for any $\epsilon>0$ there exists $u_n\in \mathcal{P}^{0}_{c_n,\mu_n,rad}$ such that
\begin{equation*}
m_{rad}^{0}(c_n,\mu_n)\leq \Psi_{\mu_n}(u_n)<m_{rad}^{0}(c_n,\mu_n)+\epsilon.
\end{equation*}
As before, we also have $\|\nabla u_n\|_p^p\thickapprox\|u_n\|_{q_1}^{q_1}\thickapprox\|u_n\|_{q_2}^{q_2}\thickapprox1$.
Set $v_n:=\frac{a}{c_n}u_n$.  Then $v_n\in \mathcal{S}_{a}$.  It follows from $\mu_n=\mu(u_n)$, $\lim_{n\to+\infty}\mu_n=\mu_a^*$ and $\lim_{n\to+\infty}c_n=a$  that $\mu_a^*=\lim_{n\to+\infty}\mu(v_n)$.  Thus, $\{v_n\}$ is a bounded radial minimizing sequence of $\mu_a^*$ such that
\begin{eqnarray*}
\left\{\aligned
&\|\nabla v_n\|_p^p=\mu\gamma_{q_1}\|v_n\|_{q_1}^{q_1}+\gamma_{q_2}\|v_n\|_{q_2}^{q_2}+o_n(1),\\
&p\|\nabla v_n\|_p^p=\mu q_1\gamma_{q_1}^2\|v_n\|_{q_1}^{q_1}+q_2\gamma_{q_2}^2\|v_n\|_{q_2}^{q_2}+o_n(1).
\endaligned\right.
\end{eqnarray*}
By the same arguments used for Proposition~\ref{pro4.1}, we can show that there exists $u_0\in W_{rad}^{1,p}\left(\mathbb{R}^N\right)\backslash\{0\}$ such that $v_n\to u_0$ strongly in $W^{1,p}_{rad}\left(\mathbb{R}^N\right)$ as $n\to+\infty$, provided $(a)$ or $(b)$ holds.  It follows that $u_0\in \mathcal{P}^0_{a,\mu_a^*,rad}$ and
\begin{equation*}
\liminf_{n\to+\infty}m_{rad}^{0}(c_n,\mu_n)+\epsilon\geq  \liminf_{n\to+\infty}\Psi_{\mu_n}(u_n)=\Psi_{\mu_a^*}(u_0)\geq m_{rad}^{0}(a,\mu_a^*).
\end{equation*}
By the arbitrariness of $\epsilon$, we obtain (\ref{e5.14}).  It completes the proof.
\end{proof}

We also define
\begin{eqnarray}\label{mu2}
\tilde{\mu}_{a,\pm}^{**}=\sup\left\{\mu\geq\mu_a^*\mid m^{\pm}_{rad}(c,\mu)\geq m^{\pm}_{rad}(a,\mu)\text{ for all }0<c< a\right\}.
\end{eqnarray}
Then by Proposition~\ref{pro4.4} and Lemma~\ref{lem5.3}, $\tilde{\mu}_{a,+}^{**}$ is always well defined while, $\tilde{\mu}_{a,-}^{**}$ is well defined provided
\begin{enumerate}
\item[$(1)$]\quad $q_2<p^*$;
\item[$(2)$]\quad $q_2=p^*$ and $q_1\in\left(q_1^*, p+\frac{p^2}{N}\right)$ where $q_1^*>p$ is given in Proposition~\ref{pro4.1}.
\end{enumerate}
To show that $\tilde{\mu}_{a,\pm}^{**}>\mu_a^*$, we need the following two results.
\begin{lemma}\label{lem5.7}
Assume that $N\geq 2$, $1<p<N$, $p<q_1<p+\frac{p^2}{N}<q_2\leq  p^*$ and  $a>0$.  Then
\begin{equation*}
\lim_{\mu\downarrow \mu_a^*}m_{rad}^{\pm}(b,\mu)= m_{rad}^{\pm}(b,\mu_a^*)\ \text{for\ any\ }0<b\leq a.
\end{equation*}
\end{lemma}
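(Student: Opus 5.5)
We prove the two inequalities $\limsup_{\mu\downarrow\mu_a^*}m^{\pm}_{rad}(b,\mu)\leq m^{\pm}_{rad}(b,\mu_a^*)$ and $\liminf_{\mu\downarrow\mu_a^*}m^{\pm}_{rad}(b,\mu)\geq m^{\pm}_{rad}(b,\mu_a^*)$ separately. For $0<b<a$ we have $\mu_b^*>\mu_a^*$ by $(b)$ of Proposition~\ref{pro2.2}, so $\mu$ slightly above $\mu_a^*$ still lies in $(0,\mu_b^*)$. Then the statement is just the continuity of the monotone function $\mu\mapsto m^{\pm}(b,\mu)=m^{\pm}_{rad}(b,\mu)$. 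Continuity follows from Lemma~\ref{lem4.3} applied with $a$ replaced by $b$, together with the attainment in Propositions~\ref{pro3.1}, \ref{pro3.21}, \ref{pro3.2} and the same two-sided argument as in $(a)$ of Proposition~\ref{pro4.4}. So the real case is $b=a$.

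\emph{Upper bound.} Fix $\epsilon>0$ and choose $u\in\mathcal{P}^{\pm}_{a,\mu_a^*,rad}$ with $\Psi_{\mu_a^*}(u)<m^{\pm}_{rad}(a,\mu_a^*)+\epsilon$. Since $u\in\mathcal{P}^{\pm}$, Proposition~\ref{pro2.1} gives $\mu(u)>\mu_a^*$. Hence for $\mu\in(\mu_a^*,\mu(u))$ the points $t^{\pm}_\mu(u)$ exist. The implicit function theorem, as in Lemma~\ref{lem4.3}, shows that they depend continuously on $\mu$, with $t^{\pm}_\mu(u)\to1$ as $\mu\downarrow\mu_a^*$. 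Therefore
\[
\limsup_{\mu\downarrow\mu_a^*}m^{\pm}_{rad}(a,\mu)\leq\lim_{\mu\downarrow\mu_a^*}\Psi_\mu\left((u)_{t^{\pm}_\mu(u)}\right)=\Psi_{\mu_a^*}(u)<m^{\pm}_{rad}(a,\mu_a^*)+\epsilon.
\]

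\emph{Lower bound.} This is the main obstacle, because $\mathcal{P}^{\pm}_{a,\mu,rad}$ now has nonempty boundary and a minimizing sequence could drift toward $\mathcal{P}^0$. Take $\mu_n\downarrow\mu_a^*$ and $u_n\in\mathcal{P}^{\pm}_{a,\mu_n,rad}$ with $\Psi_{\mu_n}(u_n)\leq m^{\pm}_{rad}(a,\mu_n)+\frac1n$. As in Propositions~\ref{pro3.1} and \ref{pro3.21}, we get $\|\nabla u_n\|_p^p\approx\|u_n\|_{q_1}^{q_1}\approx\|u_n\|_{q_2}^{q_2}\approx1$; the lower bounds use the bound $\lesssim1$ from the upper estimate. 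Then $\mu(u_n)>\mu_n>\mu_a^*$, and Proposition~\ref{pro2.1} gives $t^{\pm}_{\mu_a^*}(u_n)$ with $(u_n)_{t^{\pm}_{\mu_a^*}(u_n)}\in\mathcal{P}^{\pm}_{a,\mu_a^*,rad}$.

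We then project onto the Pohozaev manifold at the level $\mu_a^*$ and compare energies along the fiber. Since $\Phi_{\mu,u_n}(s)$ is decreasing in $\mu$ and differs from $\Phi_{\mu_a^*,u_n}(s)$ by $\frac{\mu_n-\mu_a^*}{q_1}s^{q_1\gamma_{q_1}}\|u_n\|_{q_1}^{q_1}=o_n(1)$ uniformly for $s$ in compact sets, it suffices to bound $t^{\pm}_{\mu_a^*}(u_n)$ above and below. This follows from the uniform norm equivalences and the explicit fiber equation $h_{\mu,u}(s)=\mu\gamma_{q_1}\|u\|_{q_1}^{q_1}$. For the "$+$" case, $1$ lies in $(0,t^-_{\mu_n}(u_n))$ and $1$ is the minimum point there. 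Monotonicity in $\mu$ then gives
\[
\Psi_{\mu_n}(u_n)=\Phi_{\mu_n,u_n}(1)\ge\Phi_{\mu_a^*,u_n}(1)-o_n(1)\ge\Phi_{\mu_a^*,u_n}\left(t^+_{\mu_a^*}(u_n)\right)-o_n(1)\ge m^+_{rad}(a,\mu_a^*)-o_n(1).
\]
Here the middle inequality needs $t^+_{\mu_a^*}(u_n)$ to be the minimizer of $\Phi_{\mu_a^*,u_n}$ on an interval containing $1$. This holds because $t^+_{\mu_a^*}(u_n)<t^+_{\mu_n}(u_n)=1<t^-_{\mu_n}(u_n)<t^-_{\mu_a^*}(u_n)$, by the monotonicity in Lemma~\ref{lem4.3}$(a)$.

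For the "$-$" case, $1=t^-_{\mu_n}(u_n)$ is the maximum of $\Phi_{\mu_n,u_n}$ on $[t^+_{\mu_n}(u_n),\infty)$, and $t^-_{\mu_a^*}(u_n)>1$ lies in that interval. This gives
\[
\Psi_{\mu_n}(u_n)\ge\Phi_{\mu_n,u_n}\left(t^-_{\mu_a^*}(u_n)\right)=\Phi_{\mu_a^*,u_n}\left(t^-_{\mu_a^*}(u_n)\right)-o_n(1)\ge m^-_{rad}(a,\mu_a^*)-o_n(1).
\]
The $o_n(1)$ here is controlled by the uniform upper bound on $t^-_{\mu_a^*}(u_n)$. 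The step to check carefully is exactly this uniform boundedness of $t^{\pm}_{\mu_a^*}(u_n)$ away from $0$ and $\infty$; we obtain it from the uniform equivalence of the three norms, as in the proof of Lemma~\ref{lem5.5}.
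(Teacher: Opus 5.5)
Your proposal is correct and is essentially the paper's proof. The upper bound comes from re-projecting a fixed near-minimizer $u\in\mathcal{P}^{\pm}_{b,\mu_a^*,rad}$ with $t^{\pm}_{\mu}(u)\to1$. The lower bound comes from projecting near-minimizers $u_n\in\mathcal{P}^{\pm}_{b,\mu_n,rad}$ onto $\mathcal{P}^{\pm}_{b,\mu_a^*,rad}$ via $t^{+}_{\mu_a^*}(u_n)<1<t^{-}_{\mu_a^*}(u_n)$, with $t^{-}_{\mu_a^*}(u_n)$ uniformly bounded.

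The paper runs this argument for every $0<b\le a$ at once. That is preferable to your separate treatment of $b<a$: the argument of Proposition~\ref{pro4.4}(a) you cite there only gives continuity from the left, and the right-limit lower bound needs exactly your $b=a$ argument, which applies verbatim to any $b$.
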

\begin{proof}
It suffices to prove that
\begin{equation}\label{e5.16}
\limsup_{n\to+\infty}m_{rad}^{\pm}(b,\mu_n)\leq m_{rad}^{\pm}(b,\mu_a^*)
\end{equation}
and
\begin{equation}\label{e5.17}
\liminf_{n\to+\infty}m_{rad}^{\pm}(b,\mu_n)\geq m_{rad}^{\pm}(b,\mu_a^*)
\end{equation}
for any $\mu_n\downarrow \mu_a^*$ as $n\to+\infty$.  The proof of (\ref{e5.16}) is similar to that of (\ref{e5.13}), so we omit it.  We next prove (\ref{e5.17}).  By the definition of $m_{rad}^{\pm}(b,\mu_n)$, for any $\epsilon>0$ there exists $u_n\in \mathcal{P}^{\pm}_{b,\mu_n,rad}$ such that
\begin{equation*}
m_{rad}^{\pm}(b,\mu_n)\leq \Psi_{\mu_n}(u_n)<m_{rad}^{\pm}(b,\mu_n)+\epsilon,
\end{equation*}
which, together with \eqref{e5.16} and similar estimates in the proofs of Propositions~\ref{pro3.1}, \ref{pro3.21} and \ref{pro3.2}, implies that $\{u_n\}$ is bounded in $W^{1,p}\left(\mathbb{R}^N\right)$ with $\|u_n\|_{q_1}^{q_1}\gtrsim 1$ for $u_n\in \mathcal{P}^{+}_{b,\mu_n,rad}$ and $\|u_n\|_{q_2}^{q_2}\gtrsim 1$ for $u_n\in \mathcal{P}^{-}_{b,\mu_n,rad}$.
Since $\mu_n<\mu(u_n)$ and $\mu_{n}\downarrow \mu_a^*$ as $n\to +\infty$, we obtain that $\mu_a^*<\mu(u_n)$ for any $n$.  Thus, there exists $t^{\pm}_{\mu_a^*}(u_n)$ such that $(u_n)_{t^{\pm}_{\mu_a^*}(u_n)}\in \mathcal{P}^{\pm}_{a,\mu_a^*,rad}$.  Note that
\begin{equation*}
\|\nabla u_n\|_p^p=\mu_n\gamma_{q_1}\|u_n\|_{q_1}^{q_1}+\gamma_{q_2}\|u_n\|_{q_2}^{q_2}>\mu_a^*\gamma_{q_1}\|u_n\|_{q_1}^{q_1}+\gamma_{q_2}\|u_n\|_{q_2}^{q_2},
\end{equation*}
by Proposition \ref{pro2.1}, $t^{+}_{\mu_a^*}(u_n)<1<t^{-}_{\mu_a^*}(u_n)$ for all $n$.  It follows from $\|u_n\|_{q_1}^{q_1}\gtrsim 1$ for $u_n\in \mathcal{P}^{+}_{b,\mu_n,rad}$, $\|u_n\|_{q_2}^{q_2}\gtrsim 1$ for $u_n\in \mathcal{P}^{-}_{b,\mu_n,rad}$ and
\begin{equation*}
(t^{\pm}_{\mu_a^*}(u_n))^p\|\nabla u_n\|_p^p=\mu_a^*\gamma_{q_1}(t^{\pm}_{\mu_a^*}(u_n))^{q_1\gamma_{q_1}}\|u_n\|_{q_1}^{q_1}+\gamma_{q_2}(t^{\pm}_{\mu_a^*}(u_n))^{q_2\gamma_{q_2}}\|u_n\|_{q_2}^{q_2}
\end{equation*}
that $1\lesssim t^{\pm}_{\mu_a^*}(u_n)\lesssim 1$.  Thus, by Proposition~\ref{pro2.1},
\begin{eqnarray*}
\liminf_{n\to+\infty}m_{rad}^{+}(b,\mu_n)+\epsilon&\geq&\liminf_{n\to+\infty}\Psi_{\mu_n}(u_n)\\
&=&\liminf_{n\to+\infty}\Psi_{\mu_a^*}(u_n)\\
&\geq&\liminf_{n\to+\infty}\Psi_{\mu_a^*}\left((u_n)_{t^{+}_{\mu_a^*}(u_n)}\right)\\
&\geq&m_{rad}^{+}(b,\mu_a^*)
\end{eqnarray*}
for $u_n\in \mathcal{P}^{+}_{b,\mu_n,rad}$ and
\begin{eqnarray*}
\liminf_{n\to+\infty}m_{rad}^{-}(b,\mu_n)+\epsilon&\geq&\liminf_{n\to+\infty}\Psi_{\mu_n}(u_n)\\
&\geq&\liminf_{n\to+\infty}\Psi_{\mu_n}\left((u_n)_{t^{-}_{\mu_a^*}(u_n)}\right)\\
&=&\liminf_{n\to+\infty}\Psi_{\mu_a^*}\left((u_n)_{t^{-}_{\mu_a^*}(u_n)}\right)\\
&\geq&m_{rad}^{-}(b,\mu_a^*)
\end{eqnarray*}
for $u_n\in \mathcal{P}^{-}_{b,\mu_n,rad}$.  By the arbitrariness of $\epsilon$, we obtain (\ref{e5.17}).  It completes the proof.
\end{proof}

\begin{lemma}\label{lem2.2}
Let $N\geq 2$, $1<p<N$, $p<q_1<p+\frac{p^2}{N}<q_2\leq p^*$ and $a>0$. Let $\{u_n\}\subset \mathcal{P}_{a,\mu_{a}^*,rad}^+$ be a minimizing sequence of $m^{+}(a,\mu_{a}^*)$, then up to a subsequence $u_n\to u_0$ strongly in $W^{1,p}(\mathbb{R}^N)$ for some $u_0\in \mathcal{P}_{a,\mu_{a}^*,rad}^+$.
\end{lemma}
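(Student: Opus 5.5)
We use the energy levels already identified at $\mu=\mu_a^*$. By Proposition~\ref{pro4.5}, $\hat{\Psi}_{\mu_a^*}^{+}$ is attained by the radial function $u_{a,\mu_a^*,+}\in\mathcal{P}_{a,\mu_a^*}^{+}$. Hence
\[
m^+(a,\mu_a^*)=m^+_{rad}(a,\mu_a^*)=\hat{\Psi}_{\mu_a^*}^{+}<0 ,
\]
and the last inequality holds by Proposition~\ref{pro4.4}. Using Lemma~\ref{lem5.3}(a), which gives $m^+_{rad}(a,\mu_a^*)<m^0_{rad}(a,\mu_a^*)$, the argument has four steps.

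\textbf{Step 1 (bounds).} Take a minimizing sequence $\{u_n\}\subset\mathcal{P}_{a,\mu_a^*,rad}^+$ with $\Psi_{\mu_a^*}(u_n)\to m^+(a,\mu_a^*)<0$. The inequalities \eqref{e3.5} hold with $\mu=\mu_a^*$. Combined with the Gagliardo--Nirenberg inequality \eqref{e3.56}, they give $\|\nabla u_n\|_p^p\lesssim1$. Negativity of the energy gives $\|\nabla u_n\|_p^p\gtrsim1$, hence $\|u_n\|_{q_1}^{q_1}\approx 1$ and $\|u_n\|_{q_2}^{q_2}\approx1$.

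\textbf{Step 2 (Palais--Smale sequence).} Next we show that $\{u_n\}$ stays uniformly away from $\partial\mathcal{P}^+_{a,\mu_a^*,rad}=\mathcal{P}^0_{a,\mu_a^*,rad}$. Suppose instead that
\[
p\|\nabla u_n\|_p^p-\mu_a^*q_1\gamma_{q_1}^2\|u_n\|_{q_1}^{q_1}-q_2\gamma_{q_2}^2\|u_n\|_{q_2}^{q_2}\to0 .
\]
Then $\mu(u_n)\to\mu_a^*$, so $\{u_n\}$ is a radial minimizing sequence of $\mu_a^*$. The compactness argument of Proposition~\ref{pro4.1} then yields a limit in $\mathcal{P}^0_{a,\mu_a^*}$ with energy $m^+(a,\mu_a^*)$. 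This contradicts $m^+<m^0$.

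In the critical case $q_2=p^*$ without the restriction on $q_1$ we may not have compactness of minimizing sequences of $\mu_a^*$. There we instead argue as in the last part of Lemma~\ref{lem5.5}: project onto $\mathcal{P}^-$ by the fibering $t^-$ and use $m^-\ge m^+$ with strict inequality to exclude this case.

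Once the sequence is separated from $\mathcal{P}^0$, the neighborhood argument of \eqref{e3.7} applies with $\mathcal{P}^{\delta,+}_{a,\mu_a^*}\cap W^{1,p}_{rad}$. Ekeland's principle then produces a bounded radial $(PS)_{m^+(a,\mu_a^*)}$ sequence $\{w_n\}$ of $\Psi_{\mu_a^*}|_{\mathcal{S}_a}$ with $w_n-u_n\to0$ in $W^{1,p}$.

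\textbf{Step 3 (limit equation).} By Lemma~\ref{lem1.3}, $w_n\rightharpoonup u_0$ weakly and strongly in $L^t$ for $t\in(p,p^*)$, with $\nabla w_n\to\nabla u_0$ a.e. From $\|u_n\|_{q_1}^{q_1}\gtrsim 1$ we get $u_0\ne0$. The Lagrange multipliers satisfy $\lambda_n a^p=\mu_a^*(\gamma_{q_1}-1)\|u_n\|_{q_1}^{q_1}+(\gamma_{q_2}-1)\|u_n\|_{q_2}^{q_2}+o(1)$, so they converge to some $\lambda_0<0$. Therefore $u_0$ solves \eqref{e1.1}, and $u_0\in\mathcal{P}_{a_1,\mu_a^*}$ with $a_1=\|u_0\|_p\in(0,a]$ by Lemma~\ref{lem1.5}.

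\textbf{Step 4 (splitting).} Set $v_n=w_n-u_0$. Lemma~\ref{lem1.1} gives $\|\nabla v_n\|_p^p=\gamma_{q_2}\|v_n\|_{q_2}^{q_2}+o(1)$. This forces $v_n\to0$ if $q_2<p^*$. If $q_2=p^*$, either $v_n\to 0$ or the bubble carries energy at least $\frac1N S^{N/p}>0$. In either case
\[
m^+(a,\mu_a^*)\ge \Psi_{\mu_a^*}(u_0)+(\text{nonnegative}).
\]
If $a_1<a$, Proposition~\ref{pro4.4}(b) gives $\Psi_{\mu_a^*}(u_0)\ge m^{\pm}(a_1,\mu_a^*)>\hat\Psi^+_{\mu_a^*}$, which is a contradiction. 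Here we need $u_0\in\mathcal{P}^+_{a_1,\mu_a^*}$, or more generally $u_0$ to be fiber-projectable; this holds because $\mathcal{P}^0_{a_1,\mu_a^*}=\emptyset$ for $a_1<a$. Hence $a_1=a$, no bubble occurs, and $w_n\to u_0$ strongly, so $u_n\to u_0$ strongly in $W^{1,p}$.

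Finally, $u_0\in\mathcal{P}^+\cup\mathcal{P}^0$ by passing to the limit. The case $\mathcal{P}^0$ is excluded as in Proposition~\ref{pro4.5}: $u_0$ would satisfy both \eqref{e1.1} and \eqref{e4.1}, which forces a nonconstant power combination of $u_0$ to be constant.

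The main obstacle is Step 2, namely separating the minimizing sequence from the degenerate boundary in the critical case.
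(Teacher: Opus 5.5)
Your proposal is correct and essentially follows the paper's proof. The paper likewise establishes the bounds, proves the neighborhood claim \eqref{e3.77} by ruling out degeneration toward $\mathcal{P}^0_{a,\mu_a^*}$ via $\min\{m^-(a,\mu_a^*),m^0(a,\mu_a^*)\}>m^+(a,\mu_a^*)$, and then reruns the Ekeland and splitting argument of Proposition~\ref{pro3.1} together with the exclusion of $\mathcal{P}^0$ from Proposition~\ref{pro4.5}. The only difference is organizational: the paper handles degeneration inside the neighborhood claim through the dichotomy $\lim\tau^-_{\mu_a^*}(\varphi_n)>1$ or $=1$, and the second case is exactly your $t^-$-projection argument, which works for all $q_1,q_2$ and so makes your appeal to compactness of $\mu_a^*$-minimizing sequences unnecessary; your separation step should be read as applying to every near-minimizer in $\mathcal{P}^+_{a,\mu_a^*,rad}$, in particular to the auxiliary $\phi_n$ in the neighborhood argument.
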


\begin{proof}
We use the arguments in the proof of Proposition \ref{pro3.1}. First we have
$\|\nabla u_n\|_p^p\thickapprox\|u_n\|_{q_1}^{q_1}\thickapprox\|u_n\|_{q_2}^{q_2}\thickapprox1$.
Next we claim that there exists $\delta>0$ sufficiently small such that
\begin{equation}\label{e3.77}
\inf_{\mathcal{P}_{a,\mu_a^*}^{\delta,+}}\Psi_{\mu_a^*}(u)=\inf_{\mathcal{P}_{a,\mu_a^*}^{+}}\Psi_{\mu_a^*}(u)=m^{+}(a,\mu_a^*).
\end{equation}
Suppose the contrary.  Then by Lemma~\ref{lem2.1}, there exists $\delta_n\to0$ as $n\to+\infty$,  $\varphi_n\in \mathcal{S}_a$ and $\phi_n\in \mathcal{P}_{a,\mu_a^*}^+$ such that $\varphi_n-\phi_n\to0$ strongly in $W^{1,p}\left(\mathbb{R}^N\right)$ as $n\to+\infty$ and
\begin{eqnarray*}
\Psi_{\mu_a^*}(\varphi_n)<m^+(a,\mu_a^*)
\end{eqnarray*}
for all $n$.  Since $\mu^{*}_{a}\leq \mu(u)$ for any $u\in \mathcal{S}_a$, by Proposition~\ref{pro2.1}, there exists $\tau_{\mu_a^*}^{\pm}(\varphi_n)>0$ such that $\left(\varphi_n\right)_{\tau^{\pm}_{\mu_a^*}(\varphi_n)}\in\mathcal{P}_{a,\mu_a^*}^{\pm}\cup \mathcal{P}_{a,\mu_a^*}^{0}$.  It follows from $\phi_n\in \mathcal{P}_{a,\mu_a^*}^+$ and $\varphi_n-\phi_n\to0$ strongly in $W^{1,p}\left(\mathbb{R}^N\right)$ as $n\to+\infty$ that
\begin{equation}\label{0266}
\left\{\aligned
&\left(\tau_{\mu_a^*}^{\pm}\left(\varphi_n\right)\right)^{p-q_1\gamma_{q_1}}\|\nabla \varphi_n\|_p^p=\mu_a^*\gamma_{q_1}\|\varphi_n\|_{q_1}^{q_1}+\left(\tau_{\mu_a^*}^{\pm}\left(\varphi_n\right)\right)^{q_2\gamma_{q_2}-q_1\gamma_{q_1}}\gamma_{q_2}\|\varphi_n\|_{q_2}^{q_2},
\\
&\|\nabla \varphi_n\|_p^p=\mu_a^*\gamma_{q_1}\|\varphi_n\|_{q_1}^{q_1}+\gamma_{q_2}\|\varphi_n\|_{q_2}^{q_2}+o(1).
\endarray\right.
\end{equation}
Since $\{\phi_n\}$ is a minimizing sequence of $m^+(a,\mu_a^*)$ and $\varphi_n-\phi_n\to0$ strongly in $W^{1,p}\left(\mathbb{R}^N\right)$ as $n\to+\infty$,
as that for $\{u_n\}$, we know that $\|\varphi_n\|_{q_1}\thickapprox\|\varphi_n\|_{q_2}\thickapprox\|\nabla \varphi_n\|_p\thickapprox1$, which, together with \eqref{0266}, implies that $\tau_{\mu_a^*}^{\pm}\left(\varphi_n\right)\thickapprox1$.  Thus, up to a subsequence, we may assume that $\tau_{\mu_a^*}^{\pm}\left(\varphi_n\right)\to \tau_{\mu_a^*}^{\pm}$ as $n\to+\infty$.  We also denote $A=\lim_{n\to+\infty}\|\nabla \phi_n\|_p^p$, $B=\|\phi_n\|_{q_1}^{q_1}$ and $C=\|\phi_n\|_{q_2}^{q_2}$.  Then by \eqref{0266}, $\|\varphi_n\|_{q_1}\thickapprox\|\varphi_n\|_{q_2}\thickapprox\|\nabla \varphi_n\|_p\thickapprox1$ and $\varphi_n-\phi_n\to0$ strongly in $W^{1,p}\left(\mathbb{R}^N\right)$ as $n\to+\infty$, we have
\begin{eqnarray*}
\left(\tau_{\mu_a^*}^{\pm}\right)^{p}A=\left(\tau_{\mu_a^*}^{\pm}\right)^{q_1\gamma_{q_1}}\mu_a^*\gamma_{q_1}B+\left(\tau_{\mu_a^*}^{\pm}\right)^{q_2\gamma_{q_2}}\gamma_{q_2}C.
\end{eqnarray*}
Since $\phi_n\in \mathcal{P}_{a,\mu_a^*}^+$, we also have
\begin{eqnarray*}
\left\{\aligned
&A=\mu_a^*\gamma_{q_1}B+\gamma_{q_2}C,\\
&pA\geq\mu_a^* q_1\gamma_{q_1}^2B+q_2\gamma_{q_2}^2C.
\endaligned\right.
\end{eqnarray*}
By similar computations in the proof of Proposition~\ref{pro2.1}, we know that $\tau_{\mu_a^*}^+=1$.  If $\tau_{\mu_a^*}^->1$, then $\left(\varphi_n\right)_{\tau^{\pm}_{\mu_a^*}(\varphi_n)}\in\mathcal{P}_{a,\mu_a^*}^{\pm}$ and  by Proposition~\ref{pro2.1} again,
\begin{equation*}
m^+(a,\mu_a^*)\le\Psi_{\mu_a^*}\left((\varphi_n)_{\tau^+_{\mu_a^*}(\varphi_n)}\right)\leq\Psi_{\mu_a^*}(\varphi_n)<m^+(a,\mu_a^*)
\end{equation*}
for $n$ sufficiently large, which is impossible.  If $\tau_{\mu_a^*}^-=1$, then
\begin{equation*}
m^+(a,\mu_a^*)\geq \lim_{n\to+\infty}\Psi_{\mu_a^*}(\varphi_n)=\lim_{n\to+\infty}\left((\varphi_n)_{\tau^-_{\mu_a^*}(\varphi_n)}\right)\geq \min\{m^-(a,\mu_a^*),m^0(a,\mu_a^*)\}>m^+(a,\mu_a^*),
\end{equation*}
which is also impossible. Thus, \eqref{e3.77} holds true for $\delta>0$ sufficiently small. In this position, we can adapt the same arguments used in the proofs of Propositions \ref{pro3.1} and \ref{pro4.5} to show that $u_n\to u_0$ strongly in $W^{1,p}(\mathbb{R}^N)$ for some $u_0\in \mathcal{P}_{a,\mu_{a}^*,rad}^+$.
\end{proof}

With Lemmas~\ref{lem5.7} and \ref{lem2.2} in hands, we can obtain a positive gap between $\tilde{\mu}_{a,\pm}^{**}$ and $\mu_a^*$.
\begin{lemma}\label{lem5.6}
Let $N\geq 2$, $1<p<N$, $p<q_1<p+\frac{p^2}{N}<q_2\leq p^*$ and  $a>0$. Then
\begin{enumerate}
\item[$(1)$]\quad  $\tilde{\mu}_{a,\pm}^{**}>\mu_a^*$ holds true, provided
\begin{enumerate}
\item[$(a)$]\quad $q_2<p^*$;
\item[$(b)$]\quad $q_2=p^*$ and $q_1\in\left(q_1^*, p+\frac{p^2}{N}\right)$ where $q_1^*>p$ is given in Proposition~\ref{pro4.1}.
\end{enumerate}
\item[$(2)$]\quad  $\tilde{\mu}_{a,+}^{**}>\mu_a^*$  for all $q_1\in \left(p, p+\frac{p^2}{N}\right)$ if $q_2=p^*$.
\end{enumerate}
\end{lemma}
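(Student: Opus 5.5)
We argue by contradiction, following the same scheme as in the proof of Lemma~\ref{lem5.5}. Suppose that $\tilde{\mu}_{a,\pm}^{**}=\mu_a^*$. Then there exist $\mu_n\downarrow\mu_a^*$ and $c_n\in(0,a)$ such that
\begin{equation*}
m^{\pm}_{rad}(c_n,\mu_n)<m^{\pm}_{rad}(a,\mu_n)\quad\text{for all }n.
\end{equation*}
Up to a subsequence, $c_n\to c_0\in[0,a]$. By Lemma~\ref{lem5.7}, $m^{\pm}_{rad}(a,\mu_n)\to m^{\pm}_{rad}(a,\mu_a^*)$.

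First we rule out $c_0<a$. If $c_0\in(0,a)$, then $\mu_n<\mu_{c_n}^*$ for large $n$ by Proposition~\ref{pro2.2}, since $\mu_{c_0}^*>\mu_a^*$. So Lemma~\ref{lem4.3} applies at mass $c_n$. Using the monotonicity in $\mu$ from Lemma~\ref{lem4.3}, the minimizers from Propositions~\ref{pro3.1}, \ref{pro3.21}, \ref{pro3.2} and continuity in $(c,\mu)$, we get
\begin{equation*}
\liminf_n m^{\pm}_{rad}(c_n,\mu_n)\geq m^{\pm}_{rad}(c_0,\mu_a^*).
\end{equation*}
Proposition~\ref{pro4.4}(b) then gives $m^{\pm}_{rad}(c_0,\mu_a^*)>\hat\Psi^{\pm}_{\mu_a^*}=m^{\pm}_{rad}(a,\mu_a^*)$, which contradicts the assumed strict inequality in the limit. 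Here we use that $\hat\Psi^{\pm}_{\mu_a^*}=m^{\pm}(a,\mu_a^*)$ is attained in the open part $\mathcal{P}^{\pm}_{a,\mu_a^*}$ by Propositions~\ref{pro4.5} and \ref{pro4.7}, and is radial. If $c_0=0$, then $m^-_{rad}(c_n,\mu_n)\to+\infty$, since the bound \eqref{e3.12} forces $\|\nabla u\|_p\to\infty$ as the mass shrinks. Also $m^+_{rad}(c_n,\mu_n)\to0$ by Gagliardo--Nirenberg as in Lemma~\ref{lem2.1}. Both contradict $m^+_{rad}(a,\mu_a^*)<0$ and the finiteness of $m^-_{rad}(a,\mu_a^*)$.

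Hence $c_0=a$, and this case is the main obstacle. We need a quantitative strict decrease of $c\mapsto m^{\pm}_{rad}(c,\mu)$ near $c=a$, uniform as $\mu\downarrow\mu_a^*$. For this, take near-minimizers $u_n\in\mathcal{P}^{\pm}_{c_n,\mu_n,rad}$ with $\Psi_{\mu_n}(u_n)\le m^{\pm}_{rad}(c_n,\mu_n)+\frac1n$. By the estimates used for Lemma~\ref{lem5.7}, they are bounded and satisfy $\|\nabla u_n\|_p^p\approx\|u_n\|_{q_1}^{q_1}\approx\|u_n\|_{q_2}^{q_2}\approx1$. Pass to the rescaled functions $\frac{a}{c_n}u_n$ and project them by $t^{\pm}_{\mu_a^*}$ as in Lemma~\ref{lem5.7}. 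This produces a minimizing sequence for $m^{\pm}_{rad}(a,\mu_a^*)$. By Lemma~\ref{lem2.2} for the $+$ case, and its analogue for the $-$ case via the arguments of Propositions~\ref{pro3.21}, \ref{pro3.2} and \ref{pro4.7}, the sequence converges strongly to some $u_0\in\mathcal{P}^{\pm}_{a,\mu_a^*,rad}$. It does not land on $\mathcal{P}^0$, because of Lemma~\ref{lem5.3} in case (a) or (b). This is exactly where the assumptions on $q_1$ enter for the $-$ branch in the critical case. For the $+$ branch, Lemma~\ref{lem2.2} holds for all $q_1$, which gives statement (2).

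Since $u_0$ is nondegenerate, the implicit function argument in Lemma~\ref{lem3.1}(b) is available in a neighborhood of $(a,\mu_a^*,u_0)$, uniformly for $u$ near $u_0$. For $u$ near $u_0$, $c$ near $a$ and $\mu$ near $\mu_a^*$, it gives
\begin{equation*}
\frac{d}{dc}\Psi_\mu\bigl((\tfrac{c}{a}u)_{t^{\pm}_\mu}\bigr)\le-\delta<0,
\end{equation*}
with $\delta$ coming from $\gamma_{q_i}<1$ and $\|u_0\|_{q_i}\gtrsim1$. Integrating from $c_n$ to $a$ gives
\begin{equation*}
m^{\pm}_{rad}(a,\mu_n)\le\Psi_{\mu_n}(u_n)-\delta(a-c_n)\le m^{\pm}_{rad}(c_n,\mu_n)+\tfrac1n-\delta(a-c_n).
\end{equation*}
Choosing the near-minimizer tolerance smaller than $\delta(a-c_n)/2$ yields $m^{\pm}_{rad}(a,\mu_n)<m^{\pm}_{rad}(c_n,\mu_n)$, a contradiction. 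The delicate point is the uniformity of the implicit function bound along the sequence, which follows from the strong convergence $u_n\to u_0$ and the nondegeneracy of $u_0$.
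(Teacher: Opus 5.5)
Your proposal has a genuine gap in the main case $c_0=a$, on the $-$ branch when $q_2=p^*$ (case (b)). There you need the projected sequence, which minimizes $m^-_{rad}(a,\mu_a^*)$, to converge strongly in $\mathcal{P}^-_{a,\mu_a^*,rad}$. You attribute this to an ``analogue'' of Lemma~\ref{lem2.2} built from Propositions~\ref{pro3.21}, \ref{pro3.2} and \ref{pro4.7}, but no such analogue exists. Propositions~\ref{pro3.2} and \ref{pro4.7} get compactness only for sequences of genuine solutions: the subcritical approximants $u_{a,\mu,q_2,-}$, and the minimizers at $\mu_n\uparrow\mu_a^*$. For those sequences, Lemma~\ref{lem1.3} gives $\nabla u_n\to\nabla u_0$ a.e. and Lemma~\ref{lem1.5} gives the Pohozaev identity for the limit. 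That is what yields the splitting $\|\nabla v_n\|_p^p=\|v_n\|_{p^*}^{p^*}+o(1)$ and the threshold $\frac1N S^{\frac Np}$. Your near-minimizers satisfy no equation. The Ekeland device of Proposition~\ref{pro3.1} and Lemma~\ref{lem2.2} (claim \eqref{e3.7}) also does not transfer to the $-$ branch. Points of $\mathcal{P}^-$ are strict maxima along their fibers; for instance $m^-(a,\mu_a^*)$ is attained by Proposition~\ref{pro4.7}, so every neighbourhood of $\mathcal{P}^-$ in $\mathcal{S}_a$ contains points with energy below $m^-$. For the $+$ branch your route works via Lemma~\ref{lem2.2}, and this is exactly the paper's proof of statement (2). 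For the $-$ branch with $q_2<p^*$, the weak-limit argument of Proposition~\ref{pro3.21} can be adapted.

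The missing idea is that you do not need compactness of the $m^\pm$-minimizing sequence at all. What must be controlled is $\mu(v_n)$ for $v_n=\frac{a}{c_n}u_n$.
\begin{itemize}
\item Since $\mu(su)=s^{-p(q_2-q_1)/(q_2\gamma_{q_2}-p)}\mu(u)$, if $\mu(v_n)>\mu_n$ then $\mu_n<\mu(\frac{c}{c_n}u_n)$ for all $c\in[c_n,a]$. So Lemma~\ref{lem3.1}(b) applies verbatim and gives $\Psi_{\mu_n}(u_n)>m^\pm_{rad}(a,\mu_n)$. Choose the tolerance as $\epsilon_n<m^\pm_{rad}(a,\mu_n)-m^\pm_{rad}(c_n,\mu_n)$, which is positive by hypothesis; this is already a contradiction. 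No uniform $\delta$ is needed, which also removes the circularity of fixing the tolerance via a $\delta$ that depends on the limit $u_0$.
\item Otherwise $\mu_a^*\le\mu(v_n)\le\mu_n\to\mu_a^*$ along a subsequence, so $\{v_n\}$ is a bounded radial minimizing sequence for $\mu_a^*$ itself. The compactness argument of Proposition~\ref{pro4.1}, which is where (a) or (b) is genuinely used, gives $v_n\to v_0\in\mathcal{P}^0_{a,\mu_a^*,rad}$ with $\Psi_{\mu_a^*}(v_0)\le m^\pm_{rad}(a,\mu_a^*)$. This contradicts Lemma~\ref{lem5.3}.
\end{itemize}

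A smaller slip concerns $c_0=0$. Your claim $m^-_{rad}(c_n,\mu_n)\to+\infty$ is false for $q_2=p^*$: since $\gamma_{p^*}=1$, the bound \eqref{e3.12} no longer depends on the mass. The paper avoids splitting off $c_0=0$. It picks $b\in(c_0,a)$ and uses Lemmas~\ref{lem3.1} and \ref{lem5.7} to get $\liminf m^\pm_{rad}(c_n,\mu_n)\ge m^\pm_{rad}(b,\mu_a^*)>m^\pm_{rad}(a,\mu_a^*)$ for every $c_0<a$. This also replaces your unproved joint continuity in $(c,\mu)$.
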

\begin{proof}
Assume by contradiction that $\tilde{\mu}_{a,\pm}^{**}=\mu_a^*$.  Then there exists $\mu_{n}^{\pm}\downarrow \mu_a^*$ as $n\to +\infty$ and $0<c_n^{\pm}< a$ such that
\begin{equation}\label{e5.15}
m_{rad}^{\pm}\left(c_n^{\pm},\mu_n^{\pm}\right)< m_{rad}^{\pm}\left(a,\mu_n^{\pm}\right)\ \text{for\ any\ }n.
\end{equation}
Up to a subsequence, we assume $c_0^{\pm}=\lim_{n\to +\infty}c_n^{\pm}$.  If $c_0^{\pm}<a$ then
by Proposition \ref{pro2.2}, there exists $c_0^{\pm}<b<a$ such that $c_n<b<a$,  $\mu_n^{\pm}<\mu_{c_n^{\pm}}^*$ and $\mu_n^{\pm}<\mu_b^*$ for $n$ sufficiently large. By Lemmas~\ref{lem3.1} and \ref{lem5.7} and Propositions~\ref{pro4.5} and \ref{pro4.7}, we obtain that
\begin{equation*}
\liminf_{n\to+\infty}m_{rad}^{\pm}\left(c_n^\pm,\mu_n^\pm\right)\geq\liminf_{n\to+\infty}m_{rad}^{\pm}\left(b,\mu_n^\pm\right)=m_{rad}^{\pm}(b,\mu_a^*)>m_{rad}^{\pm}(a,\mu_a^*),
\end{equation*}
which contradicts (\ref{e5.15}) and Lemma \ref{lem5.7} for $n$ sufficiently large.  Thus, we must have $c_0=a$. We may assume that $c_n^\pm\uparrow a$ as $n\to+\infty$ up to a subsequence.  By the definition of $m_{rad}^{\pm}\left(c_n^\pm,\mu_n^\pm\right)$, for any $\epsilon_n\to 0$ as $n\to +\infty$ with $\epsilon_n< m_{rad}^{\pm}\left(a,\mu_n^\pm\right)-m_{rad}^{\pm}\left(c_n^\pm,\mu_n^\pm\right)$, there exists $u_n^\pm\in \mathcal{P}^{\pm}_{c_n^\pm,\mu_n^\pm,rad}$ such that
\begin{equation*}
m_{rad}^{\pm}\left(c_n^\pm,\mu_n^\pm\right)\leq \Psi_{\mu_n^\pm}(u_n)<m_{rad}^{\pm}\left(c_n^\pm,\mu_n^\pm\right)+\epsilon_n.
\end{equation*}
By (\ref{e5.13}) and \eqref{e5.15}, we can adapt similar arguments used in the proof of Lemma~\ref{lem5.7} to show that $\{u_n^\pm\}$ is bounded in $W^{1,p}\left(\mathbb{R}^N\right)$ with $\|u_n^+\|_{q_1}^{q_1}\gtrsim \|\nabla u_n^+\|_{p}^{p} \gtrsim 1$ and $\|u_n^-\|_{q_2}^{q_2}\gtrsim \|\nabla u_n^+\|_{p}^{p}\gtrsim 1$.  Let $v_n^\pm=\frac{a}{c_n^\pm}u_n^\pm$.  Then $v_n^\pm\in \mathcal{S}_{a}$. It follows from $\mu_n^\pm<\mu\left(u_n^\pm\right)$, $\lim_{n\to +\infty}\mu_n^\pm=\mu_a^*$ and $\lim_{n\to +\infty}c_n^\pm=a$ that $\mu_a^*\leq \liminf_{n\to +\infty}\mu\left(u_n^\pm\right)=\liminf_{n\to +\infty}\mu\left(v_n^\pm\right)$.
If
\begin{equation*}
\mu_a^*<\liminf_{n\to +\infty}\mu\left(u_n^\pm\right)=\liminf_{n\to +\infty}\mu\left(v_n^\pm\right),
\end{equation*}
then $\mu_a^*<\mu\left(v_n^\pm\right)$,  $\mu_n<\mu\left(v_n^\pm\right)$  and $\mu_n<\mu\left(u_n^\pm\right)$ for $n$ large enough. Thus, by $\|u_n^\pm\|_p=c_n^\pm$, $\|v_n^\pm\|_p=a$, $\mu_n<\mu\left(v_n^\pm\right)$, $\mu_n<\mu\left(u_n^\pm\right)$ and similar arguments used for Lemma~\ref{lem3.1}, we obtain that
\begin{equation*}
m^{\pm}_{rad}\left(c_n^\pm,\mu_n^\pm\right)+\epsilon_n>\Psi_{\mu_n}\left(u_n^\pm\right)>\Psi_{\mu_n^\pm}\left(\left(\frac{a}{c_n^\pm}u_n\right)_{t_{\mu_n^\pm}^{\pm}\left(\frac{a}{c_n^\pm}u_n^\pm\right)}\right)\geq m^{\pm}_{rad}\left(a,\mu_n^\pm\right),
\end{equation*}
which, together with the choice of $\epsilon_n$, implies that
\begin{equation*}
m^{\pm}_{rad}\left(a,\mu_n^\pm\right)>m^{\pm}_{rad}\left(c_n^\pm,\mu_n^\pm\right)+\epsilon_n> m^{\pm}_{rad}\left(a,\mu_n^\pm\right).
\end{equation*}
It is a  contradiction.  If
\begin{equation*}
\mu_a^*=\liminf_{n\to +\infty}\mu\left(u_n^\pm\right)=\liminf_{n\to +\infty}\mu\left(v_n^\pm\right),
\end{equation*}
then the proof need to be divided into two cases.  For the first case that $(a)$ or $(b)$ holds, then $u_n^\pm\in \mathcal{P}^{\pm}_{c_n^{\pm},\mu_n^{\pm},rad}$ with
$\|u_n^\pm\|_{q_1}^{q_1}\thickapprox\|u_n^\pm\|_{q_2}^{q_2}\thickapprox\|\nabla u_n^\pm\|_{p}^{p}\thickapprox 1$. It follows that
$v_n^\pm\in \mathcal{S}_a$ is a radial minimizing sequence of $\mu_a^*$ with $\|v_n^\pm\|_{q_1}^{q_1}\thickapprox\|v_n^\pm\|_{q_2}^{q_2}\thickapprox\|\nabla v_n^\pm\|_{p}^{p}\thickapprox 1$.  Thus, by similar arguments used for Proposition~\ref{pro4.1}, we can show that there exists $v_0^\pm\in W^{1,p}_{rad}\left(\mathbb{R}^N\right)\backslash \{0\}$ such that $v_n^\pm\to v_0^\pm$ strongly in $W^{1,p}\left(\mathbb{R}^N\right)$ and $\mu_a^*=\mu(v_0^\pm)$, provided $(a)$ or $(b)$ holds.  By the definition of $v_n^\pm$ and $u_n^\pm\in \mathcal{P}^{\pm}_{c_n^{\pm},\mu_n^{\pm},rad}$, we have $v_0^\pm\in \mathcal{P}^{0}_{a,\mu_a^*,rad}$ and by Lemma~\ref{lem5.7},
\begin{equation*}
m_{rad}^{0}(a,\mu_a^*)\leq \Psi_{\mu_a^*}(v_0^\pm)=\limsup_{n\to+\infty}\Psi_{\mu_n^\pm}(u_n^\pm)\leq\limsup_{n\to+\infty} \left( m_{rad}^{\pm}\left(c_n^\pm,\mu_n^\pm\right)+\epsilon_n\right)\leq m_{rad}^{\pm}(a,\mu_a^*),
\end{equation*}
which implies that
\begin{equation*}
m_{rad}^{0}(a,\mu_a^*)\leq m_{rad}^{\pm}(a,\mu_a^*).
\end{equation*}
It contradicts Lemma~\ref{lem5.3}. For the second case $q_2=p^*$ and $q_1\in\left(q_1^*, p+\frac{p^2}{N}\right)$, we denote
$A:=\lim_{n\to+\infty}\|\nabla u_{n}^+\|_p^p$, $B:=\lim_{n\to+\infty}\|u_{n}^+\|_{q_1}^{q_1}$ and $C:=\lim_{n\to+\infty}\|u_{n}^+\|_{p^*}^{p^*}$ up to a subsequence. Then
\begin{equation}\label{e7.10}
\mu_a^*=\frac{(p^*-p)(p-q_1\gamma_{q_1})^{\frac{p-q_1\gamma_{q_1}}{p^*-p}}}
{\gamma_{q_1}(p^*-q_1\gamma_{q_1})^{\frac{p^*-q_1\gamma_{q_1}}{p^*-p}}}
\frac{A^{\frac{p^*-q_1\gamma_{q_1}}{p^*-p}}}{B\,C^{\frac{p-q_1\gamma_{q_1}}{p^*-p}}}.
\end{equation}
By  (\ref{e5.13}), we have
\begin{eqnarray}\label{e7.8}
m_{rad}^{\pm}(a,\mu_a^*)\geq \limsup_{n\to+\infty}m_{rad}^{\pm}(a,\mu_n^+).
\end{eqnarray}					
By  $u_{n}^+\in\mathcal{P}_{c_n^+,\mu_n^+,rad}^{+}$ and $\mu_n^+>\mu_a^*$, we have
\begin{eqnarray*}
\|\nabla u_{n}^+\|_p^p=\mu_n^+\gamma_{q_1}\|u_{n}^+\|_{q_1}^{q_1}+\|u_{n}^+\|_{p^*}^{p^*}>\mu_a^*\gamma_{q_1}\|u_{n}^+\|_{q_1}^{q_1}+\|u_{n}^+\|_{p^*}^{p^*},
\end{eqnarray*}
which, together with $\mu_a^*<\mu_n^+<\mu(u_n^+)$, implies that there exists $1\lesssim t^+_{\mu_{a}^*}(u_{n}^+)<1<t^-_{\mu_{a}^*}(u_{n}^+)\lesssim1$ such that $\left(u_{n}^+\right)_{t^{\pm}_{\mu_a^*}\left(u_{n}^+\right)}\in \mathcal{P}_{c_n,\mu_a^*,rad}^{\pm}$.  It follows that
\begin{eqnarray*}
\limsup_{n\to+\infty}m_{rad}^{+}(a,\mu_n^+)&\geq&\limsup_{n\to+\infty}m_{rad}^{+}(c_{n}^+,\mu_n^+)\\
&\geq&\limsup_{n\to+\infty}\Psi_{\mu_n^+}\left(u_{n}^+\right)\\
&=&\limsup_{n\to+\infty}\Psi_{\mu_a^*}\left(u_{n}^+\right)\\
&\geq&\limsup_{n\to+\infty}\Psi_{\mu_a^*}\left(\left(u_{n}^+\right)_{t^{+}_{\mu_a^*}\left(u_{n}^+\right)}\right)\\
&\geq&\limsup_{n\to+\infty}m_{rad}^{+}(c_{n}^+,\mu_a^*)\\
&\geq&m_{rad}^{+}(a,\mu_a^*),
\end{eqnarray*}
which, together with (\ref{e7.8}), implies that $\limsup_{n\to+\infty}\Psi_{\mu_a^*}\left(u_{n}^+\right)=m_{rad}^{+}(a,\mu_a^*)$ up to a subsequence.
By the definition of $\mu_a^*$, we know $\mu_a^*\leq \mu(v_n^+)$.
There exists $\tau^{+}_{\mu_a^*}(v_n^+)>0$ such that $(v_n^+)_{\tau^{+}_{\mu_a^*}(v_n^+)}\in \mathcal{P}^0_{a,\mu_a^*,rad}\cup \mathcal{P}^{+}_{a,\mu_a^*,rad}$. Hence,
\begin{equation}\label{e*}
\left\{\aligned
&\left(\tau^{+}_{\mu_a^*}\left(v_{n}^+\right)\right)^p\|\nabla v_{n}^+\|_p^p=\left(\tau^{+}_{\mu_a^*}\left(v_{n}^+\right)\right)^{q_1\gamma_{q_1}}\mu_a^*\gamma_{q_1}\|v_{n}^+\|_{q_1}^{q_1}
+\left(\tau^{+}_{\mu_a^*}\left(v_{n}^+\right)\right)^{p^*}\|v_{n}^+\|_{p^*}^{p^*},\\
&\left(\tau^{+}_{\mu_a^*}\left(v_{n}^+\right)\right)^pp\|\nabla v_{n}^+\|_p^p\geq \left(\tau^{+}_{\mu_a^*}\left(v_{n}^+\right)\right)^{q_1\gamma_{q_1}}\mu_a^* q_1\gamma_{q_1}^2\|v_{n}^+\|_{q_1}^{q_1}+\left(\tau^{+}_{\mu_a^*}\left(v_{n}^+\right)\right)^{p^*}p^*\|v_{n}^+\|_{p^*}^{p^*}.
\endaligned\right.
\end{equation}
Then $\tau^{+}_{\mu_a^*}\left(v_{n}^+\right)\sim1$ as $n\to+\infty$.  Let $\tau^{+}_{\mu_a^*}:=\lim_{n\to+\infty}\tau^{+}_{\mu_a^*}\left(v_{n}^+\right)$. Then by $u_n^+\in \mathcal{P}_{c_n^+,\mu_n^+,rad}^+$ and (\ref{e*}), we have
\begin{eqnarray*}
\left\{\aligned
&A=\mu_a^*\gamma_{q_1}B+C,\\
&pA\geq \mu_a^* q_1\gamma_{q_1}^2B+p^*C
\endaligned\right.
\end{eqnarray*}
and
\begin{equation*}
\left\{\aligned
&\left(\tau^{+}_{\mu_a^*}\right)^pA=\left(\tau^{+}_{\mu_a^*}\right)^{q_1\gamma_{q_1}}\mu_a^*\gamma_{q_1}B
+\left(\tau^{+}_{\mu_a^*}\right)^{p^*}C,\\
&\left(\tau^{+}_{\mu_a^*}\right)^ppA\geq \left(\tau^{+}_{\mu_a^*}\right)^{q_1\gamma_{q_1}}\mu_a^* q_1\gamma_{q_1}^2B+\left(\tau^{+}_{\mu_a^*}\right)^{p^*}p^*C.
\endaligned\right.
\end{equation*}
Consider the function
\begin{eqnarray*}
g(s)=\frac{s^p}{p}A-\frac{\mu_a^*s^{q_1\gamma_{q_1}}}{q_1}B-\frac{s^{p^*}}{p^*}C.
\end{eqnarray*}
In view of (\ref{e7.10}), by similar arguments for Proposition \ref{pro2.1}, we know that $g'(1)=g''(1)=0$ and $g(s)$ is strictly decreasing in $(0, +\infty)$, which implies that $1$ is the unique critical point of $g(s)$ in $(0, +\infty)$.  It follows  that $\tau^{+}_{\mu_a^*}=1$.
If $\mu_a^*=\mu(v_n^+)$, then   $(v_n^+)_{\tau^{+}_{\mu_a^*}(v_n^+)}\in \mathcal{P}^0_{a,\mu_a^*,rad}$ and
\begin{equation*}
m_{rad}^{\pm}(a,\mu_a^*)=\lim_{n\to+\infty}\Psi_{\mu_a^*}\left(u_{n}^+\right)=
\lim_{n\to+\infty}\Psi_{\mu_a^*}\left( (v_n^+)_{\tau^{+}_{\mu_a^*}(v_n^+)}\right)
\geq m_{rad}^{0}(a,\mu_a^*)>m_{rad}^{+}(a,\mu_a^*).
\end{equation*}
That is a contradiction. If $\mu_a^*<\mu(v_n^+)$, then $(v_n^+)_{\tau^{+}_{\mu_a^*}(v_n^+)}\in \mathcal{P}^{+}_{a,\mu_a^*,rad}$ and
\begin{eqnarray*}
\lim_{n\to+\infty}\Psi_{\mu_a^*}\left( (v_n^+)_{\tau^{+}_{\mu_a^*}(v_n^+)}   \right)
=\lim_{n\to+\infty}\Psi_{\mu_a^*}\left(u_{n}^+\right)=m_{rad}^{\pm}(a,\mu_a^*).
\end{eqnarray*}
Thus, $\left(v_{n}^+\right)_{\tau^{+}_{\mu_a^*}\left(v_{n}^+\right)}$ is a minimizing sequence of $m_{rad}^{\pm}(a,\mu_a^*)$ and by Lemma \ref{lem2.2}, we  know that $\left(v_{n}^+\right)_{\tau^{+}_{\mu_a^*}\left(v_{n}^+\right)}\to u_{a,\mu_a^*}^{+}\in\mathcal{P}_{a,\mu_a^*,rad}^{+}$ strongly in $W^{1,p}\left(\mathbb{R}^N\right)$ as $n\to+\infty$, which implies that $v_n^+,\ u_{n}^+\to u_{a,\mu_a^*}^{+}\in\mathcal{P}_{a,\mu_a^*,rad}^{+}$ strongly in $W^{1,p}\left(\mathbb{R}^N\right)$ as $n\to+\infty$. By $\mu_a^*=\lim_{n\to +\infty}\mu(v_n^+)$, we obtain that
$\mu_a^*=\mu\left(u_{a,\mu_a^*}^{+}\right)$, which is  a contradiction.  It completes the proof.
\end{proof}

\subsection{The existence of ground-state solutions.}
Let
\begin{eqnarray*}
\mu_a^{**}=\sup\left\{\mu\geq\mu_a^*\mid m^+_{rad}(a,\mu)<m^-_{rad}(a,\mu)\right\}.
\end{eqnarray*}
Then by Propositions~\ref{pro2.1}, \ref{pro3.1}, \ref{pro3.21}, \ref{pro3.2}, \ref{pro4.5} and \ref{pro4.7} and Lemma~\ref{lem5.7}, $\mu_a^{**}>\mu_a^*$ is well defined.  We recall that
\begin{eqnarray*}
\mu_{a,+}^{**}=\min\left\{\mu_a^{**}, \hat{\mu}_{a,+}^{**}, \tilde{\mu}_{a,+}^{**}\right\}
\end{eqnarray*}
is the second extremal value of $\mathcal{P}_{a,\mu}^{+}$, where $\hat{\mu}_{a,+}^{**}$ and $\tilde{\mu}_{a,+}^{**}$ are given by \eqref{mu1} and \eqref{mu2}, respectively.
\begin{proposition}\label{pro5.1}
Let $N\geq 2$, $1<p<N$, $p<q_1<p+\frac{p^2}{N}<q_2\leq p^*$ and $a>0$.  Then the variational problem
\begin{equation}\label{e5.54}
\hat{\Psi}_{\mu,rad}^{+}:=\inf\left\{\Psi_{\mu}(u) \mid u\in \mathcal{P}_{a,\mu,rad}^{+}\cup\mathcal{P}_{a,\mu,rad}^{0}\right\}
\end{equation}
is achieved by some
$u_{a,\mu,+}\in \mathcal{P}_{a,\mu,rad}^{+}$ for $\mu_{a}^*<\mu<\mu_{a,+}^{**}$.
Moreover, $u_{a,\mu,+}$ also satisfies the equation (\ref{e1.1}) in the weak sense
for a suitable Lagrange multiplier $\lambda_{a,\mu,+}<0$.
\end{proposition}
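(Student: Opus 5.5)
The plan is a minimization over the closed set $\mathcal{P}_{a,\mu,rad}^{+}\cup\mathcal{P}_{a,\mu,rad}^{0}$. The three defining conditions of $\mu_{a,+}^{**}$ are used to rule out escape to the boundary, loss of mass, and convergence to $\mathcal{P}^-$.

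\emph{Setup.} Fix $\mu_a^*<\mu<\mu_{a,+}^{**}$. Since $\mu<\hat{\mu}_{a,+}^{**}$, we have $m_{rad}^{+}(b,\mu)<m_{rad}^{0}(b,\mu)$ for all $0<b\le a$. Hence $\hat{\Psi}_{\mu,rad}^{+}=m_{rad}^{+}(a,\mu)$, and it suffices to minimize over $\mathcal{P}_{a,\mu,rad}^{+}$. Take a minimizing sequence $\{u_n\}\subset\mathcal{P}_{a,\mu,rad}^{+}$. The estimates \eqref{e3.5}--\eqref{e3.56} and Lemma~\ref{lem2.1} make it bounded in $W^{1,p}$. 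Since $m^+_{rad}(a,\mu)\le m^+_{rad}(a,\mu_a^*)<0$ (via $t^+_\mu$ and Lemma~\ref{lem5.7}/monotonicity), we get $\|\nabla u_n\|_p^p\thickapprox\|u_n\|_{q_1}^{q_1}\thickapprox\|u_n\|_{q_2}^{q_2}\thickapprox1$. Replacing $u_n$ by $|u_n|$ if needed, we may take it nonnegative.

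\emph{Separation from the boundary and a Palais--Smale sequence.} Next I would prove the analogue of \eqref{e3.7}: for small $\delta>0$ the infimum over the $\delta$-neighbourhood $\mathcal{P}^{\delta,+}_{a,\mu,rad}$ equals $m^+_{rad}(a,\mu)$. Argue as in Lemma~\ref{lem2.2}. Take $\varphi_n$ close to $\phi_n\in\mathcal{P}^+_{a,\mu,rad}$ with $\Psi_\mu(\varphi_n)<m^+_{rad}(a,\mu)$. For large $n$, $\mu(\varphi_n)\ge\mu$ or is close, so the fibering map has critical points, with $t^+\to1$. If $t^->1$ in the limit, we get a contradiction directly. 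If $t^+=t^-=1$, the limit levels lie on $\mathcal{P}^0$ or $\mathcal{P}^-$, which contradicts $m^+_{rad}(a,\mu)<m^0_{rad}(a,\mu)$ and $m^+_{rad}(a,\mu)<m^-_{rad}(a,\mu)$ (the latter from $\mu<\mu_a^{**}$). A subtle case is $\mu(\varphi_n)<\mu$, where the fibering map has no critical point. There I would use that $\mu(\cdot)$ is continuous and $\mu(\phi_n)>\mu$ stays bounded away, unless $\phi_n$ approaches $\mathcal{P}^0$, which is excluded by the energy gap. Ekeland's principle on this closed set then gives a radial $(PS)_{m^+_{rad}(a,\mu)}$ sequence $\{w_n\}$ for $\Psi_\mu|_{\mathcal{S}_a}$ with $w_n-u_n\to0$. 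The Lagrange multipliers satisfy $\lambda_n a^p=\mu(\gamma_{q_1}-1)\|u_n\|_{q_1}^{q_1}+(\gamma_{q_2}-1)\|u_n\|_{q_2}^{q_2}+o(1)$, so $\lambda_n\to\lambda_0<0$.

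\emph{Compactness.} By Lemma~\ref{lem1.3}, $w_n\rightharpoonup u_0$ and $\nabla w_n\to\nabla u_0$ a.e., with strong $L^t$ convergence for $p<t<p^*$. Since $\|u_n\|_{q_1}\gtrsim1$, $u_0\ne0$ solves the limit equation, and Lemma~\ref{lem1.5} gives $u_0\in\mathcal{P}_{a_1,\mu}$ with $a_1\le a$. Splitting via Lemma~\ref{lem1.1}, the remainder $v_n=w_n-u_0$ satisfies $\|\nabla v_n\|_p^p=\gamma_{q_2}\|v_n\|_{q_2}^{q_2}+o(1)$. For $q_2<p^*$ this forces $v_n\to0$. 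For $q_2=p^*$, a nonvanishing remainder costs at least $\frac1NS^{N/p}$. I would exclude it by the inequality $m^+_{rad}(a,\mu)<0<m^+_{rad}(a_1,\mu)+\frac1NS^{N/p}$ if available; otherwise use that $u_0\in\mathcal{P}^+\cup\mathcal{P}^0$ of mass $a_1$ with energy at least $m^+_{rad}(a_1,\mu)\ge m^+_{rad}(a,\mu)$, so the bubble strictly increases the energy. Thus $m^+_{rad}(a,\mu)\ge\Psi_\mu(u_0)\ge \min\{m^+_{rad}(a_1,\mu),m^0_{rad}(a_1,\mu),m^-_{rad}(a_1,\mu)\}=m^+_{rad}(a_1,\mu)\ge m^+_{rad}(a,\mu)$. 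The last step uses $\mu<\tilde\mu^{**}_{a,+}$, and the middle one uses $\hat\mu^{**}_{a,+}$ and $\mu_a^{**}$. Equality throughout kills the bubble. To get $a_1=a$, I would use a strict version: if $a_1<a$, rescale $\frac{a}{a_1}u_0$ as in Lemma~\ref{lem3.1}(b) to produce an element of $\mathcal{P}^+_{a,\mu,rad}$ with strictly lower energy; where $\mu(\frac a{a_1}u_0)\le\mu$ fails this, fall back on the monotonicity from $\tilde\mu^{**}_{a,+}$ combined with strong convergence. Hence $w_n\to u_0$ strongly.

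\emph{Conclusion.} The limit $u_0$ lies in $\mathcal{P}^+_{a,\mu,rad}\cup\mathcal{P}^0_{a,\mu,rad}$ and attains $m^+_{rad}(a,\mu)<m^0_{rad}(a,\mu)$, so $u_0\in\mathcal{P}^+_{a,\mu,rad}$. It already solves \eqref{e1.1} with $\lambda_0<0$; positivity follows from the strong comparison principle. \textbf{The main obstacle} is the separation step: near elements with $\mu(\cdot)$ close to $\mu$ the fibering map degenerates. I expect to handle it by the energy gaps encoded in $\hat\mu^{**}_{a,+}$ and $\mu_a^{**}$, and possibly by the critical-case compactness to rule out bubbling.
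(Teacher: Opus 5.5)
Your overall scheme is the paper's. You reduce $\hat\Psi^+_{\mu,rad}$ to $m^+_{rad}(a,\mu)$ via $\hat\mu^{**}_{a,+}$, then run Proposition~\ref{pro3.1}: separation from the boundary, Ekeland, Lemma~\ref{lem1.3} and Brezis--Lieb splitting. You use $\tilde\mu^{**}_{a,+}$ to kill the bubble when $q_2=p^*$, and that non-strict chain does suffice for this.

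\textbf{The separation step.} This is exactly the crux of the paper's proof, and you leave it imprecise. In your ``subtle case'' $\mu(\varphi_n)<\mu$, the sequence $\phi_n$ is itself a minimizing sequence for $m^+_{rad}(a,\mu)$, and $\mu(\phi_n)\to\mu$. Your phrasing (``$\phi_n$ approaches $\mathcal{P}^0$'') suggests excluding this via $m^+_{rad}(a,\mu)<m^0_{rad}(a,\mu)$, but that gap does not do the job. A nearly degenerate $\phi_n$ produces no actual element of $\mathcal{P}^0_{a,\mu,rad}$: there is no strong convergence, and $(\phi_n)_{s(\phi_n)}$ lies on $\mathcal{P}^0_{a,\mu(\phi_n)}$, not on $\mathcal{P}^0_{a,\mu}$.

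The paper instead first proves that every minimizing sequence $\{u_n\}\subset\mathcal{P}^+_{a,\mu,rad}$ satisfies $\mu(u_n)-\mu\gtrsim1$. Suppose $\liminf_n(\mu(u_n)-\mu)=0$. Since $\|\nabla u_n\|_p\thickapprox\|u_n\|_{q_1}\thickapprox\|u_n\|_{q_2}\thickapprox1$, passing to the limit in the two Pohozaev-type relations forces $t^-_\mu(u_n)\to1$. Hence
$m^-_{rad}(a,\mu)\le\Psi_\mu((u_n)_{t^-_\mu(u_n)})=\Psi_\mu(u_n)+o_n(1)$, which contradicts $\mu<\mu_a^{**}$. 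So the gap that does the work is the $\mu_a^{**}$ one, via $t^-_\mu\to1$.

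With this in hand, your subtle case never occurs. By continuity of $\mu(\cdot)$ where the norms are bounded above and below, also $\mu(\varphi_n)-\mu\gtrsim1$, and $t^-_\mu(\varphi_n)$ stays away from $1$. The separation argument of Proposition~\ref{pro3.1} then goes through verbatim.

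\textbf{Recovering the full mass.} Your argument for $a_1=a$ is incomplete. Rescaling $\frac{a}{a_1}u_0$ as in Lemma~\ref{lem3.1}$(b)$ needs $\mu<\mu(\frac{c}{a_1}u_0)$ along the whole path $c\in[a_1,a]$, which can fail for $\mu>\mu_a^*$. The fallback on $\tilde\mu^{**}_{a,+}$ only yields the non-strict equality $\Psi_\mu(u_0)=m^+_{rad}(a_1,\mu)=m^+_{rad}(a,\mu)$, which is no contradiction.

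The mass is recovered directly from the negative multiplier instead. Once $\nabla w_n\to\nabla u_0$ in $L^p$ and $w_n\to u_0$ in $L^{q_1}\cap L^{q_2}$, test the approximate equation of $w_n$ with $w_n$ and the limit equation with $u_0$. This gives $\lambda_n a^p-\lambda_0 a_1^p\to0$, so $\lambda_0(a^p-a_1^p)=0$, and $\lambda_0<0$ forces $a_1=a$.

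\textbf{A minor point.} The inequality $m^+_{rad}(a,\mu)\le m^+_{rad}(a,\mu_a^*)$ is not available for $\mu>\mu_a^*$, since $t^+_\mu(u)$ need not exist. But $m^+_{rad}(a,\mu)<0$ follows at once from $(a)$ of Proposition~\ref{pro2.1}, because every element of $\mathcal{P}^+_{a,\mu,rad}$ has negative energy.
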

\begin{proof}
Since this proof is similar to that of Proposition~\ref{pro3.1}, we only sketch it and point out the differences.  First of all, as that in the proof of Proposition~\ref{pro3.1} and by $\mu<\hat{\mu}_{a,+}^{**}$, we can show that any minimizing sequence of $\hat{\Psi}_{\mu,rad}^{+}$, say $\{u_n\}\subset \mathcal{P}_{a,\mu,rad}^{+}$, is bounded in $W^{1,p}\left(\mathbb{R}^N\right)$ with
$\|\nabla u_n\|_{p}\thickapprox\|u_n\|_{q_1}\thickapprox\|u_n\|_{q_2}\thickapprox1$.  We then also claim that
there exists $\delta>0$ sufficiently small such that
\begin{equation*}
\inf_{\left(\mathcal{P}_{a,\mu,rad}^{+}\cup \mathcal{P}_{a,\mu,rad}^{0}\right)^{\delta}}\Psi_{\mu}(u)=\inf_{\mathcal{P}_{a,\mu,rad}^{+}\cup \mathcal{P}_{a,\mu,rad}^{0}}\Psi_{\mu}(u)=m^{+}_{rad}(a,\mu),
\end{equation*}
where $\left(\mathcal{P}_{a,\mu,rad}^{+}\cup \mathcal{P}_{a,\mu,rad}^{0}\right)^{\delta}:=\left\{u\in \mathcal{S}_a\mid \text{dist}_{W^{1,p}_{rad}}(u,\mathcal{P}_{a,\mu,rad}^{+}\cup \mathcal{P}_{a,\mu,rad}^{0})\leq \delta\right\}$ with
\begin{equation*}
\text{dist}_{W^{1,p}_{rad}}\left(u,\mathcal{P}_{a,\mu,rad}^{+}\cup \mathcal{P}_{a,\mu,rad}^{0}\right)=\inf_{v\in \left(\mathcal{P}_{a,\mu,rad}^{+}\cup \mathcal{P}_{a,\mu,rad}^{0}\right)}\|u-v\|_{W^{1,p}_{rad}}.
\end{equation*}
This can be achieved by applying the same arguments in the proof of Proposition~\ref{pro3.1}, provided we can show that $\mu(u_n)-\mu\gtrsim1$ as $n\to+\infty$, where $\{u_n\}\subset\mathcal{P}_{a,\mu,rad}^{+}\cup \mathcal{P}_{a,\mu,rad}^{0}$ is a minimizing sequence.  Since $\mu<\hat{\mu}_{a,+}^{**}$, by the definition of $\hat{\mu}_{a,+}^{**}$, we must have $\{u_n\}\subset\mathcal{P}_{a,\mu,rad}^{+}$, which implies that $\mu(u_n)-\mu>0$.  If
\begin{eqnarray*}
\liminf_{n\to +\infty}\left(\mu\left(u_n\right)-\mu\right)=0,
\end{eqnarray*}
then by $u_n\in \mathcal{P}_{a,\mu,rad}^{+}$, Proposition~\ref{pro2.1} and $\|\nabla u_n\|_{p}\thickapprox\|u_n\|_{q_1}\thickapprox\|u_n\|_{q_2}\thickapprox1$, we must have
\begin{eqnarray*}
\left\{\aligned
&\|\nabla u_n\|_{p}^{p}-\mu \gamma_{q_1}\|u_n\|_{q_1}^{q_1}-\gamma_{q_2}\|u_n\|_{q_2}^{q_2}=0,\\
&p\|\nabla u_n\|_{p}^{p}-\mu q_1 \gamma_{q_1}^2\|u_n\|_{q_1}^{q_1}-q_2\gamma_{q_2}^2\|u_n\|_{q_2}^{q_2}=o_n(1).
\endaligned\right.
\end{eqnarray*}
Moreover, by Proposition~\ref{pro2.1} once more, there exists $t^-_{\mu}(u_n)>1$ such that $\left(u_n\right)_{t^-_{\mu}(u_n)}\in\mathcal{P}^{-}_{a,\mu,rad}$, that is,
\begin{eqnarray*}
\left\{\aligned
&\left(t^-_{\mu}(u_n)\right)^p\|\nabla u_n\|_{p}^{p}-\mu\gamma_{q_1}  \left(t^-_{\mu}(u_n)\right)^{q_1\gamma_{q_1}}\|u_n\|_{q_1}^{q_1}-\gamma_{q_2}\left(t^-_{\mu}(u_n)\right)^{q_2\gamma_{q_2}}\|u_n\|_{q_2}^{q_2}=0,\\
&p\left(t^-_{\mu}(u_n)\right)^p\|\nabla u_n\|_{p}^{p}-\mu q_1 \gamma_{q_1}^2   \left(t^-_{\mu}(u_n)\right)^{q_1\gamma_{q_1}}\|u_n\|_{q_1}^{q_1}-q_2\gamma_{q_2}^2\left(t^-_{\mu}(u_n)\right)^{q_2\gamma_{q_2}}\|u_n\|_{q_2}^{q_2}<0.
\endaligned\right.
\end{eqnarray*}
We denote $A=\liminf_{n\to+\infty}\|\nabla u_n\|_{p}^{p}$, $B=\liminf_{n\to+\infty}\|u_n\|_{q_1}^{q_1}$ and $C=\liminf_{n\to+\infty}\|u_n\|_{q_2}^{q_2}$.  Since $\{u_n\}$ is bounded in $W^{1,p}\left(\mathbb{R}^N\right)$ with $\|\nabla u_n\|_{p}\thickapprox\|u_n\|_{q_1}\thickapprox\|u_n\|_{q_2}\thickapprox1$, we can also see that $t^-_{\mu}\left(u_n\right)\thickapprox1$.  Thus, passing to the limit, we have
\begin{eqnarray*}
\left\{\aligned
&A-\mu \gamma_{q_1}  B-\gamma_{q_2}C=0,\\
&pA-\mu q_1 \gamma_{q_1}^2   B-q_2\gamma_{q_2}^2C=0
\endaligned\right.
\end{eqnarray*}
and
\begin{eqnarray*}
\left\{\aligned
&\left(t^-\right)^pA-\mu \gamma_{q_1}  \left(t^-\right)^{q_1\gamma_{q_1}}B-\gamma_{q_2}\left(t^-\right)^{q_2\gamma_{q_2}}C=0,\\
&p\left(t^-\right)^pA-\mu q_1 \gamma_{q_1}^2   \left(t^-\right)^{q_1\gamma_{q_1}}B-q_2\gamma_{q_2}^2\left(t^-\right)^{q_2\gamma_{q_2}}C\leq0,
\endaligned\right.
\end{eqnarray*}
where $t^-=\liminf_{n\to+\infty}t^-_{\mu}\left(u_n\right)$.
By the same computations in the proof of Propositioin~\ref{pro2.1}, we must have $t^-=1$.  Thus,
\begin{eqnarray*}
m^-_{rad}\left(a,\mu\right)\leq\Psi_{\mu}\left(\left(u_n\right)_{t^-_{\mu}(u_n)}\right)=\Psi_{\mu}\left(u_n\right)+o_n(1)\leq m_{rad}^{+}\left(a,\mu\right)+o_n(1),
\end{eqnarray*}
which contradicts $\mu<\mu_a^{**}$ and the definition of $\mu_a^{**}$.  Thus, we must have $\mu(u_n)-\mu\gtrsim 1$ as $n\to +\infty$ and also have $\liminf_{n\to+\infty}t^-_{\mu}\left(u_n\right)=t^->1$.  In this position, we can go through the remaining proof of Proposition~\ref{pro3.1} to show that the variational problem~\eqref{e5.54} is achieved by some
$u_{a,\mu,+}\in \mathcal{P}_{a,\mu,rad}^{+}$.  Moreover, $u_{a,\mu,+}$ also satisfies the equation (\ref{e1.1})
for a suitable Lagrange multiplier $\lambda_{a,\mu,+}<0$.
\end{proof}

\subsection{The existence of mountain-pass solutions.}
Let
\begin{eqnarray*}
\tilde{\Phi}(a,\mu)=\left\{u\in W^{1,p}_{rad}\left(\mathbb{R}^N\right)\backslash\{0\}\mid
\begin{array}{l}
\text{There\ exists\ a\ minimizing\ sequence}\ \{u_n\}\subset \mathcal{P}_{a,\mu,rad}^{-}\ \\
 \text{of}\  m_{rad}^-\left(a,\mu\right)\ \text{such\ that}\ u_n\rightharpoonup u\ \text{weakly\ in\ }  W^{1,p}_{rad}\left(\mathbb{R}^N\right)
\end{array}
\right\}
\end{eqnarray*}
and
\begin{eqnarray*}
\overline{\mu}_{a,-}^{**}=\sup\left\{\mu\geq\mu_a^*\mid \mu\leq\mu(u) \ \text{for\ all}\ u\in\tilde{\Phi}(a,\mu)\right\},
\end{eqnarray*}
where $\hat{\mu}_{a,-}^{**}$ is given by \eqref{mu1}.
\begin{lemma}\label{lem7.5}
Let $N\geq 2$, $1<p<N$, $p<q_1<p+\frac{p^2}{N}<q_2<p^*$ and  $a>0$. Then $\overline{\mu}_{a,-}^{**}>\mu_a^*$ is well defined.
\end{lemma}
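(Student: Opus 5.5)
We argue by contradiction, following the pattern of Lemmas~\ref{lem5.5} and \ref{lem5.6}. Suppose $\overline{\mu}_{a,-}^{**}=\mu_a^*$. Then there exist $\mu_n\downarrow\mu_a^*$ and $u_n\in\tilde{\Phi}(a,\mu_n)$ with $\mu(u_n)<\mu_n$. By definition of $\tilde{\Phi}(a,\mu_n)$, each $u_n$ is the nonzero weak limit in $W^{1,p}_{rad}\left(\mathbb{R}^N\right)$ of a minimizing sequence $\{u_{n,k}\}_k\subset\mathcal{P}^-_{a,\mu_n,rad}$ of $m^-_{rad}(a,\mu_n)$.

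\emph{Step 1: diagonal sequence.} By Lemma~\ref{lem5.7}, $m^-_{rad}(a,\mu_n)\to m^-_{rad}(a,\mu_a^*)$. The estimates \eqref{e3.55}, \eqref{e3.12} and \eqref{e3.13} give, uniformly in $n,k$,
\[
\|\nabla u_{n,k}\|_p\thickapprox\|u_{n,k}\|_{q_1}\thickapprox\|u_{n,k}\|_{q_2}\thickapprox1 .
\]
Since $q_2<p^*$, the compact radial embedding gives $u_{n,k}\to u_n$ in $L^{q_1}$ and $L^{q_2}$ as $k\to\infty$. We then choose $k_n$ so that $w_n:=u_{n,k_n}$ satisfies three conditions: $\Psi_{\mu_n}(w_n)\le m^-_{rad}(a,\mu_n)+1/n$; the $L^{q_i}$-norms of $w_n$ are within $1/n$ of those of $u_n$; and $w_n-u_n\rightharpoonup0$. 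Passing to a subsequence, $u_n\rightharpoonup u_0$ and $w_n\rightharpoonup u_0$ weakly in $W^{1,p}_{rad}$, and strongly in $L^{q_i}$. Hence $\|u_0\|_{q_2}\gtrsim1$, so $u_0\neq0$, and $\{w_n\}$ is a minimizing sequence of $m^-_{rad}(a,\mu_a^*)$ via the rescaling $t^-_{\mu_a^*}(w_n)$, exactly as in the proof of Lemma~\ref{lem5.7}.

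\emph{Step 2: compactness at $\mu_a^*$.} Here we rerun the argument of Proposition~\ref{pro3.21}/\ref{pro4.7} at $\mu=\mu_a^*$. Let $a_1=\|u_0\|_p\le a$ and use Fatou together with Proposition~\ref{pro2.2}, which gives $\mu(u_0)\ge\mu_{a_1}^*\ge\mu_a^*$. Then $(u_0)_{\tau^-_{\mu_a^*}(u_0)}\in\mathcal{P}^-_{a_1,\mu_a^*}\cup\mathcal{P}^0_{a_1,\mu_a^*}$, and weak lower semicontinuity of $\|\nabla\cdot\|_p$ yields $\tau^-_{\mu_a^*}(u_0)\le1+o(1)$ and
\[
m^-_{rad}(a,\mu_a^*)\ge\Psi_{\mu_a^*}\bigl((u_0)_{\tau^-_{\mu_a^*}(u_0)}\bigr).
\]
From here we split into cases:
\begin{itemize}
\item If $a_1<a$, then (4.9) of Proposition~\ref{pro4.4}(b) yields a strict inequality against $\hat{\Psi}^-_{\mu_a^*}$, a contradiction.
\item If $a_1=a$ and the limit point lies in $\mathcal{P}^0_{a,\mu_a^*}$, then $m^-_{rad}(a,\mu_a^*)\ge m^0_{rad}(a,\mu_a^*)$, contradicting Lemma~\ref{lem5.3}(b)(1).
\item Otherwise $a_1=a$, and we get $w_n\to u_0$ strongly in $W^{1,p}$ with $u_0\in\mathcal{P}^-_{a,\mu_a^*,rad}$. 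By Proposition~\ref{pro2.1} this forces $\mu(u_0)>\mu_a^*$.
\end{itemize}

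\emph{Step 3: contradiction.} In the last case, strong convergence $w_n\to u_0$ together with $w_n-u_n\rightharpoonup0$ gives $u_n\to u_0$ in $L^{q_i}$. Only weak convergence holds in $\nabla$, but weak lower semicontinuity gives
\[
\liminf_n\|\nabla u_n\|_p^p\ge\|\nabla u_0\|_p^p .
\]
Since $\mu(\cdot)$ is increasing in $\|\nabla\cdot\|_p^p$, this yields $\liminf_n\mu(u_n)\ge\mu(u_0)>\mu_a^*=\lim_n\mu_n$. This contradicts $\mu(u_n)<\mu_n$.

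The main obstacle is Step 2: showing that the diagonal minimizing sequence cannot lose mass or concentrate onto the degenerate set $\mathcal{P}^0_{a,\mu_a^*}$. This is precisely where Lemma~\ref{lem5.3}, Proposition~\ref{pro4.4}(b) and the subcriticality $q_2<p^*$ are used. We would first try to reproduce verbatim the splitting and energy-comparison argument of Proposition~\ref{pro3.21} with $t^-$ replaced by $\tau^-_{\mu_a^*}$.
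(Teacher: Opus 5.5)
Your proof is correct, but it is organized differently from the paper's.

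The paper uses the hypothesis $\mu(u_n)\le\mu_n\downarrow\mu_a^*$ immediately. By Proposition~\ref{pro2.2} it forces $\|u_n\|_p\to a$, so $\frac{a}{\|u_n\|_p}u_n$ is a radial minimizing sequence of $\mu_a^*$. The compactness argument of Proposition~\ref{pro4.1} then gives $u_n\to v_0$ strongly, with $\|v_0\|_p=a$ and $\mu(v_0)=\mu_a^*$. Next the paper writes $\Psi_\mu$ on $\mathcal{P}$ in the reduced form $\left(\frac1p-\frac1{q_2\gamma_{q_2}}\right)\|\nabla u\|_p^p-\mu\gamma_{q_1}\left(\frac1{q_1\gamma_{q_1}}-\frac1{q_2\gamma_{q_2}}\right)\|u\|_{q_1}^{q_1}$. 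This expression is weakly lower semicontinuous in the radial class, so it passes to the limit along $w_k^{(n)}\rightharpoonup u_n$. On the fiber of $v_0$ it is minimized exactly at the degenerate scaling $t^0_{\mu_a^*}(v_0)$. This yields $m^-_{rad}(a,\mu_a^*)\ge m^0_{rad}(a,\mu_a^*)$, contradicting Lemma~\ref{lem5.3}.

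You keep the hypothesis until the end. You first apply the fibering comparison of Proposition~\ref{pro3.21} to a diagonal minimizing sequence at $\mu_a^*$, and show that the common weak limit satisfies $\mu(u_0)>\mu_a^*$. Only then do you use weak lower semicontinuity of $\mu(\cdot)$ to contradict $\mu(u_n)<\mu_n$.

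Both routes rest on the same two inputs: compactness of $W^{1,p}_{rad}$ into $L^{q_i}$ for $q_2<p^*$, and Lemma~\ref{lem5.3}(b)(1). The paper's route is shorter because it borrows compactness from minimizing sequences of $\mu_a^*$. Yours isolates a more structural fact: weak limits of near-minimizers for $\mu\downarrow\mu_a^*$ stay off the degenerate set.

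Your Step~2 also does more than Step~3 needs.
\begin{itemize}
\item If $a_1<a$, then $\mu(u_0)\ge\mu_{a_1}^*>\mu_a^*$ already, so Proposition~\ref{pro4.4}(b) is not needed. As you wrote it, that step would also require $\hat{\Psi}^-_{\mu_a^*}=m^-_{rad}(a,\mu_a^*)$, which comes from Proposition~\ref{pro4.7}.
\item The strong convergence in your last case is never used, since the $L^{q_i}$ convergence already follows from the compact radial embedding.
\end{itemize}
The only case you actually have to exclude is $a_1=a$ with $\mu(u_0)=\mu_a^*$, and that is exactly where Lemma~\ref{lem5.3}(b)(1) enters.
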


\begin{proof}
For any $a,\mu>0$, since $m_{rad}^-(a,\mu)>-\infty$ by Lemmas \ref{lem3.1} and \ref{lem2.1}, we can choose $\{u_n\}\subset \mathcal{P}^{-}_{a,\mu,rad}$ such that it is a minimizing sequence of $m_{rad}^-(a,\mu)$. By similar arguments in the proof of Proposition \ref{pro3.21}, we have $\{u_n\}$ is bounded in $W^{1,p}_{rad}(\mathbb{R}^N)$ and $\|\nabla u_n\|_p^p\thickapprox \| u_n\|_{q_2}^{q_2}\thickapprox 1$.  Thus, there exists $u_0\in W^{1,p}_{rad}(\mathbb{R}^N)$ such that $u_n\to u_0$ weakly in $W^{1,p}_{rad}(\mathbb{R}^N)$ and strongly in $L^q(\mathbb{R}^N)$ for all $q\in (p,p^*)$.  In the subcritical case $q_2<p^*$, it is easy to see that $u_0\not=0$, which implies that $\tilde{\Phi}(a,\mu)\neq \emptyset$ for any $a,\mu>0$.  By Proposition \ref{pro2.2} and the proof of Proposition \ref{pro4.7}, $\overline{\mu}_{a,-}^{**}$ is well defined.

It remains to prove that $\overline{\mu}_{a,-}^{**}>\mu_a^*$.  Suppose the contrary that $\overline{\mu}_{a,-}^{**}=\mu_a^*$. Then there exists $\mu_n\downarrow \mu_a^*$ as $n\to +\infty$ and  $u_{n}\in\tilde{\Phi}(a,\mu_n)$ such that $\mu_n\geq \mu(u_n)$ for all $n$.  Set $b_{n}:=\|u_n\|_p\in(0, a]$.  Thus, by $(b)$ of Proposition~\ref{pro2.2}, we must have $\lim_{n\to+\infty}b_n=a$.  It follows form the definition of $\mu_a^*$ that $v_n:=\frac{a}{b_n}u_{n}\in\mathcal{S}_{a}$ is a radial minimizing sequence of $\mu_a^*$ as $n\to+\infty$. 	By $u_{n}\in\tilde{\Phi}(a,\mu_n)$ and the definition of $\tilde{\Phi}(a,\mu_n)$, there exists a minimizing sequence $\left\{w_k^{(n)}\right\}\subset \mathcal{P}_{a,\mu_n,rad}^-$ of $m_{rad}^-(a,\mu_n)$ such that $w_k^{(n)}\rightharpoonup u_n$ weakly in $W^{1,p}(\mathbb{R}^N)$ as $k\to+\infty$. By Lemma \ref{lem5.7} and similar arguments in the proof of Proposition \ref{pro3.21}, we obtain that $\left\{w_k^{(n)}\right\}$ is bounded in $W^{1,p}(\mathbb{R}^N)$ and $\left\|\nabla w_k^{(n)}\right\|_p^p\thickapprox \left\| w_k^{(n)}\right\|_{q_2}^{q_2}\thickapprox 1$ for all $k$ and $n$. Thus $\{u_n\}$
is  bounded in $W^{1,p}(\mathbb{R}^N)$ and $\|\nabla u_n\|_p^p\thickapprox \|u_n\|_{q_2}^{q_2}\thickapprox 1$, which implies that $\{v_n\}$
is  bounded in $W^{1,p}(\mathbb{R}^N)$ and $\|\nabla v_n\|_p^p\thickapprox \|v_n\|_{q_2}^{q_2}\thickapprox 1$.  Thus, $v_n\to v_0$ weakly in $W^{1,p}_{rad}(\mathbb{R}^N)$ and strongly in $L^q(\mathbb{R}^N)$ for all $q\in (p,p^*)$ for some $v_0\in W^{1,p}_{rad}(\mathbb{R}^N)$.  As above, we can show that $v_0\not=0$.  Thus, we also have $\left\| w_k^{(n)}\right\|_{q_1}^{q_1}\thickapprox\|u_n\|_{q_1}^{q_1}\thickapprox 1$.
Since the functional $\mu(u)$, given by \eqref{muu}, is $0$-homogenous by Proposition~\ref{pro2.2}, by similar arguments in the proof of Proposition \ref{pro4.1}, we can show that $v_n\to v_0$ strongly in $W^{1,p}_{rad}(\mathbb{R}^N)$ as $n\to+\infty$ and $\mu_a^*=\mu(v_0)$.  It follows that $u_n\to v_0$ strongly in $W^{1,p}_{rad}(\mathbb{R}^N)$ $n\to+\infty$.  By Proposition \ref{pro2.1}, there exists $t_{\mu_a^*}^0(v_0)>0$ such that $(v_0)_{t_{\mu_a^*}^0(v_0)}\in \mathcal{P}_{a,\mu_a^*,rad}^{0}$.  Let us consider the following function
 \begin{equation*}
 f(t):=\left(\frac{1}{p}-\frac{1}{q_2\gamma_{q_2}}\right)t^p\|\nabla v_0\|_p^p-\mu_a^*\gamma_{q_1}\left(\frac{1}{q_1\gamma_{q_1}}-\frac{1}{q_2\gamma_{q_2}}\right)t^{q_1\gamma_{q_1}}\|v_0\|_{q_1}^{q_1}.
 \end{equation*}
Direct calculations show that $f(t)$ has a unique critical point, which is a globally minimal point, $\tilde{t}\in[0,+\infty)$, where
 \begin{equation*}
 \tilde{t}^{p-q_1\gamma_{q_1}}=\frac{\mu_a^*\gamma_{q_1}(q_2\gamma_{q_2}-q_1\gamma_{q_1})\|v_0\|_{q_1}^{q_1}}{(q_2\gamma_{q_2}-p)\|\nabla v_0\|_p^p}.
 \end{equation*}
 On the other hand, it follows from $(v_0)_{t_{\mu_a^*}^0(v_0)}\in \mathcal{P}_{a,\mu_a^*,rad}^{0}$ that
  \begin{equation*}
(t_{\mu_a^*}^0(v_0))^p(q_2\gamma_{q_2}-p)\|\nabla v_0\|_p^p=(t_{\mu_a^*}^0(v_0))^{q_1\gamma_{q_1}}\mu_a^*\gamma_{q_1}(q_2\gamma_{q_2}-q_1\gamma_{q_1})\|v_0\|_{q_1}^{q_1},
 \end{equation*}
 which implies that
  \begin{equation*}
(t_{\mu_a^*}^0(v_0))^{p-q_1\gamma_{q_1}}=\frac{\mu_a^*\gamma_{q_1}(q_2\gamma_{q_2}-q_1\gamma_{q_1})\|v_0\|_{q_1}^{q_1}}{(q_2\gamma_{q_2}-p)\|\nabla v_0\|_p^p}.
 \end{equation*}
Hence $\tilde{t}=t_{\mu_a^*}^0(v_0)$, which, together with (\ref{e5.13}), $\{w_k^{(n)}\}\subset \mathcal{P}_{a,\mu_n,rad}^-$, the Fatou lemma and $u_n\to v_0$ strongly in $W^{1,p}_{rad}(\mathbb{R}^N)$, implies that
\begin{eqnarray*}
m_{rad}^-(a,\mu_a^*)&\geq& \lim_{n\to+\infty}m_{rad}^-(a,\mu_n)\\
&=&\lim_{n\to+\infty}\lim_{k\to+\infty}\Psi_{\mu_n}\left(w_k^{(n)}\right)\\
&=&\lim_{n\to+\infty}\lim_{k\to+\infty}\left( \left(\frac{1}{p}-\frac{1}{q_2\gamma_{q_2}}\right)\left\|\nabla w_k^{(n)}\right\|_p^p-\mu_n\gamma_{q_1}\left(\frac{1}{q_1\gamma_{q_1}}-\frac{1}{q_2\gamma_{q_2}}\right)\left\|w_k^{(n)}\right\|_{q_1}^{q_1}  \right)\\
&\geq& \lim_{n\to+\infty}\left( \left(\frac{1}{p}-\frac{1}{q_2\gamma_{q_2}}\right)\|\nabla u_n\|_p^p-\mu_n\gamma_{q_1}\left(\frac{1}{q_1\gamma_{q_1}}-\frac{1}{q_2\gamma_{q_2}}\right)\|u_n\|_{q_1}^{q_1}  \right)\\
&=&\left(\frac{1}{p}-\frac{1}{q_2\gamma_{q_2}}\right)\|\nabla v_0\|_p^p-\mu_a^*\gamma_{q_1}\left(\frac{1}{q_1\gamma_{q_1}}-\frac{1}{q_2\gamma_{q_2}}\right)\|v_0\|_{q_1}^{q_1}\\
&\geq& \left(\frac{1}{p}-\frac{1}{q_2\gamma_{q_2}}\right)\tilde{t}^p\|\nabla v_0\|_p^p-\mu_a^*\gamma_{q_1}\left(\frac{1}{q_1\gamma_{q_1}}-\frac{1}{q_2\gamma_{q_2}}\right)\tilde{t}^{q_1\gamma_{q_1}}\|v_0\|_{q_1}^{q_1}\\
&=&\Psi_{\mu_a^*}\left((v_0)_{\tilde{t}}\right)\\
&\geq& m_{rad}^0(a,\mu_a^*).
\end{eqnarray*}
It contradicts Lemma \ref{lem5.3}.  Thus, we must have $\overline{\mu}_{a,-}^{**}>\mu_a^*$, which completes the proof.
  \end{proof}

We recall that
\begin{eqnarray}\label{mu-}
\mu_{a,-}^{**}=\min\left\{\hat{\mu}_{a,-}^{**}, \tilde{\mu}_{a,-}^{**}, \overline{\mu}_{a,-}^{**}\right\}
\end{eqnarray}
is the second extremal value of $\mathcal{P}_{a,\mu,rad}^{-}$ for $q_2<p^*$, where $\hat{\mu}_{a,+}^{**}$ and $\tilde{\mu}_{a,+}^{**}$ are given by \eqref{mu1} and \eqref{mu2}, respectively.

\begin{proposition}\label{pro5.2}
Let $N\geq 2$, $1<p<N$, $p<q_1<p+\frac{p^2}{N}<q_2<p^*$ and $a>0$.  Then the variational problem
\begin{equation}\label{e5.55}
\hat{\Psi}_{\mu,rad}^{-}:=\inf\left\{\Psi_{\mu}(u)\mid u\in \mathcal{P}_{a,\mu,rad}^{-}\cup \mathcal{P}_{a,\mu,rad}^{0}\right\}
\end{equation}
is achieved by some
$u_{a,\mu,-}\in \mathcal{P}_{a,\mu,rad}^{-}$ for $\mu_{a}^*<\mu<\mu_{a,-}^{**}$.
Moreover, $u_{a,\mu,-}$ also satisfies the equation (\ref{e1.1}) in the weak sense
for a suitable Lagrange multiplier $\lambda_{a,\mu,-}<0$.
\end{proposition}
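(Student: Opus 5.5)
I will follow the proof of Proposition~\ref{pro3.21}, using the three thresholds in \eqref{mu-} to control the boundary $\mathcal{P}^0_{a,\mu,rad}$ and the loss of mass. The steps are: obtain a bounded minimizing sequence, extract a nonzero weak limit, compare energies to force full mass and strong convergence, and finally show the limit is a solution in $\mathcal{P}^-$.

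\emph{Minimizing sequence.} Since $\mu<\hat{\mu}_{a,-}^{**}$, the definition \eqref{mu1} with $b=a$ gives $m^-_{rad}(a,\mu)<m^0_{rad}(a,\mu)$. Hence $\hat{\Psi}^-_{\mu,rad}=m^-_{rad}(a,\mu)$, and any minimizing sequence $\{u_n\}$ of \eqref{e5.55} may be taken inside $\mathcal{P}^-_{a,\mu,rad}$. The estimates \eqref{e3.55}, \eqref{e3.12} and \eqref{e3.13} do not use $\mu<\mu_a^*$. They therefore give boundedness in $W^{1,p}_{rad}(\mathbb{R}^N)$ together with $\|\nabla u_n\|_p^p\thickapprox\|u_n\|_{q_2}^{q_2}\thickapprox\|u_n\|_{q_1}^{q_1}\thickapprox1$. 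Because $q_2<p^*$, compactness of the radial embedding yields $u_n\rightharpoonup u_0\neq0$, so $u_0\in\tilde{\Phi}(a,\mu)$. Set $a_1=\|u_0\|_p\in(0,a]$.

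\emph{Energy comparison.} Since $\mu<\overline{\mu}_{a,-}^{**}$, we have $\mu\le\mu(u_0)$. I first treat the case $\mu<\mu(u_0)$. Proposition~\ref{pro2.1} then supplies $t^-_\mu(u_0)$, and the lower-semicontinuity argument from the proof of Proposition~\ref{pro3.21} shows $t^-_\mu(u_0)\le1+o_n(1)$. This gives
\[
m^-_{rad}(a,\mu)\ge\Psi_\mu\bigl((u_0)_{t^-_\mu(u_0)}\bigr)\ge m^-_{rad}(a_1,\mu).
\]
If $a_1<a$, the definition \eqref{mu2} of $\tilde{\mu}_{a,-}^{**}$ gives $m^-_{rad}(a_1,\mu)\ge m^-_{rad}(a,\mu)$, so equality holds throughout. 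I then need strictness to reach a contradiction. Rescaling $(u_0)_{t^-_\mu(u_0)}$ by $a/a_1$ as in Lemma~\ref{lem3.1}(b) produces a strictly smaller energy at mass $a$. This requires $\mu<\mu\bigl(\tfrac{a}{a_1}u_0\bigr)$ along the whole path $c\in[a_1,a]$, and I expect it follows from the monotonicity in $c$ of $\mu(\tfrac{c}{a_1}u_0)$ proved in Proposition~\ref{pro2.2}(b). The contradiction gives $a_1=a$. Equality in the chain together with $\{u_n\}\subset\mathcal{P}^-$ then forces $t^-_\mu(u_0)=1$ and $\|\nabla u_n\|_p\to\|\nabla u_0\|_p$, hence strong convergence in $W^{1,p}$.

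\emph{The borderline case $\mu=\mu(u_0)$ is the main obstacle.} In this case $(u_0)_{s(u_0)}\in\mathcal{P}^0$, and the limit could land on the degenerate boundary. I will handle it as in Lemma~\ref{lem7.5}. Consider the function
\[
f(t)=\left(\tfrac1p-\tfrac1{q_2\gamma_{q_2}}\right)t^p\|\nabla u_0\|_p^p-\mu\gamma_{q_1}\left(\tfrac1{q_1\gamma_{q_1}}-\tfrac1{q_2\gamma_{q_2}}\right)t^{q_1\gamma_{q_1}}\|u_0\|_{q_1}^{q_1}.
\]
It is minimized at $t=t^0_\mu(u_0)$. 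Weak lower semicontinuity of the expression of $\Psi_\mu$ on $\mathcal{P}$ then gives
\[
m^-_{rad}(a,\mu)\ge f(1)\ge\Psi_\mu\bigl((u_0)_{t^0_\mu(u_0)}\bigr)\ge m^0_{rad}(a_1,\mu).
\]
This contradicts the definition of $\hat{\mu}_{a,-}^{**}$ combined with the inequality $m^-_{rad}(a_1,\mu)\ge m^-_{rad}(a,\mu)$ from $\tilde{\mu}^{**}_{a,-}$. So this case cannot occur.

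\emph{Conclusion.} We now have $u_n\to u_0$ strongly, with $u_0\in\mathcal{P}^-_{a,\mu,rad}\cup\mathcal{P}^0_{a,\mu,rad}$. By $m^-_{rad}<m^0_{rad}$ again, $u_0$ must lie in the open part $\mathcal{P}^-_{a,\mu,rad}$. There $\mathcal{P}^-$ is a natural constraint, so the Lagrange multiplier argument applies. The Pohozaev identity then gives
\[
\lambda a^p=\mu(\gamma_{q_1}-1)\|u_0\|_{q_1}^{q_1}+(\gamma_{q_2}-1)\|u_0\|_{q_2}^{q_2}<0,
\]
which is the required sign of the Lagrange multiplier.
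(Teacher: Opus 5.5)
You follow the paper's proof in outline. You take the minimizing sequence inside $\mathcal{P}^-_{a,\mu,rad}$ via $\hat{\mu}_{a,-}^{**}$, extract a nonzero weak limit $u_0\in\tilde{\Phi}(a,\mu)$, get $\mu\le\mu(u_0)$ from $\overline{\mu}_{a,-}^{**}$, rule out the degenerate fiber, and then redo the energy comparison of Proposition~\ref{pro3.21}. In the degenerate case $\mu=\mu(u_0)$ you use the auxiliary function $f$ of Lemma~\ref{lem7.5} instead of the paper's inequality $\Psi_\mu(u_n)\ge\Psi_\mu\bigl((u_n)_{\tau^-_\mu(u_0)}\bigr)$. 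This is valid and somewhat cleaner, and it leads to the same chain $m^-_{rad}(a,\mu)\ge m^0_{rad}(a_1,\mu)>m^-_{rad}(a_1,\mu)\ge m^-_{rad}(a,\mu)$.

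The gap is in the nondegenerate case $\mu<\mu(u_0)$ with $a_1<a$, where you rescale all the way to mass $a$. The monotonicity you invoke runs the wrong way. From \eqref{muu} one computes $\mu(ku)=k^{-\frac{p(q_2-q_1)}{q_2\gamma_{q_2}-p}}\mu(u)$ for $k>0$, so $c\mapsto\mu\bigl(\tfrac{c}{a_1}u_0\bigr)$ is strictly decreasing. Knowing $\mu<\mu(u_0)$ therefore tells you nothing at $c=a$. Since $\mu>\mu_a^*$, it may well happen that $\mu>\mu\bigl(\tfrac{a}{a_1}u_0\bigr)$, in which case $t^-_\mu\bigl(\tfrac{a}{a_1}u_0\bigr)$ does not even exist by Proposition~\ref{pro2.1}(c). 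This is exactly why Lemma~\ref{lem3.1}(b) assumes $\mu<\mu_b^*$ for the larger mass. The paper's text at this point also just defers to the rest of Proposition~\ref{pro3.21}, which uses Lemma~\ref{lem3.1}(b) in the same unavailable way, so it does not supply this step either.

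The fix is to rescale only slightly and apply $\tilde{\mu}_{a,-}^{**}$ at an intermediate mass.
\begin{enumerate}
\item Because $\mu<\mu(u_0)$ is an open condition, there is $c_0\in(a_1,a)$ with $\mu<\mu\bigl(\tfrac{c}{a_1}u_0\bigr)$ for all $c\in[a_1,c_0]$.
\item On this interval the implicit-function and derivative computation of Lemma~\ref{lem3.1}(b) applies with base point $(u_0)_{t^-_\mu(u_0)}$. It gives $\Psi_\mu\bigl((u_0)_{t^-_\mu(u_0)}\bigr)>m^-_{rad}(c_0,\mu)$.
\item Apply the definition \eqref{mu2} of $\tilde{\mu}_{a,-}^{**}$ at $c_0$ rather than at $a_1$. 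This yields $m^-_{rad}(a,\mu)\ge\Psi_\mu\bigl((u_0)_{t^-_\mu(u_0)}\bigr)>m^-_{rad}(c_0,\mu)\ge m^-_{rad}(a,\mu)$, a contradiction, so $a_1=a$.
\end{enumerate}
With $a_1=a$, your remaining steps go through: strong convergence, $u_0\in\mathcal{P}^-_{a,\mu,rad}$ via $m^-_{rad}<m^0_{rad}$, the natural-constraint argument, and $\lambda<0$.
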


\begin{proof}
Since this proof is similar to that of Proposition~\ref{pro3.21}, we only sketch it and point out the differences.  First of all, as that in the proof of Proposition~\ref{pro3.21} and by $\mu<\hat{\mu}_{a,-}^{**}$, we can show that any minimizing sequence of $\hat{\Psi}_{\mu,rad}^{-}$, say $\{u_n\}\subset \mathcal{P}_{a,\mu,rad}^{-}$, is bounded in $W_{rad}^{1,p}\left(\mathbb{R}^N\right)$ with
$\|\nabla u_n\|_{p}\thickapprox\|u_n\|_{q_2}\thickapprox1$.  It follows that there exists $u_0\in W_{rad}^{1,p}\left(\mathbb{R}^N\right)$ such that
\begin{eqnarray*}\label{e5.100}
\left\{\aligned
&u_n\rightharpoonup u_0\quad \text{weakly in } W^{1,p}\left(\mathbb{R}^N\right),\\  &u_n\to u_0\quad \text{a.e. in } \mathbb{R}^N,\\
&u_n\to u_0\quad \text{strongly in } L^t\left(\mathbb{R}^N\right) \text{ for all } t\in (p,p^*)
\endaligned\right.
\end{eqnarray*}
as $n\to+\infty$.  As that in the proof of Proposition~\ref{pro3.21}, we can show that $u_0\not=0$.  It follows from the Fatou lemma that $u_0\in \tilde{\Phi}(a,\mu)$, which implies that $\mu\leq\mu(u_0)$ by $\mu<\overline{\mu}_{a,-}^{**}$ and the definition of $\overline{\mu}_{a,-}^{**}$.  By Proposition \ref{pro2.1}, there  exists $\tau^-_{\mu}(u_0)>0$ such that $(u_0)_{\tau^-_{\mu}(u_0)}\in \mathcal{P}_{a_1,\mu,rad}^-\cup \mathcal{P}_{a_1,\mu,rad}^0$, where $a_1:=\|u_0\|_p\in (0,a]$. As that in the proof of Proposition~\ref{pro3.21},
\begin{eqnarray*}
1\lesssim t^+_{\mu}(u_n)+o_n(1)\leq \tau_{\mu}^-(u_0)\leq1+o_n(1).
\end{eqnarray*}
If $(u_0)_{\tau^-_{\mu}(u_0)}\in \mathcal{P}_{a_1,\mu,rad}^0$, then by $\mu<\hat{\mu}_{a,-}^{**}$ and $\mu<\tilde{\mu}_{a,-}^{**}$, we have
\begin{eqnarray*}
m^-_{rad}(a,\mu)&=&\lim_{n\to +\infty}\Psi_{\mu}(u_n)\\
&\geq&\lim_{n\to +\infty}\Psi_{\mu}\left(\left(u_n\right)_{\tau^-_\mu\left(u_0\right)}\right)\\
&\geq&\Psi_{\mu}\left(\left(u_0\right)_{\tau^-_\mu\left(u_0\right)}\right)\\
&\geq& m^0_{rad}(a_1,\mu)\\
&>& m^-_{rad}(a_1,\mu)\\
&\geq& m^-_{rad}(a,\mu),
\end{eqnarray*}
which is a contradiction. Thus we must have  $(u_0)_{\tau^-_{\mu}(u_0)}\in \mathcal{P}_{a_1,\mu,rad}^-$.  In this position, we can go through the remaining proof of Proposition~\ref{pro3.21} to show that the variational problem~\eqref{e5.55} is achieved by some
$u_{a,\mu,-}\in \mathcal{P}_{a,\mu,rad}^{-}$.  Moreover, $u_{a,\mu,-}$ also satisfies the equation (\ref{e1.1})
for a suitable Lagrange multiplier $\lambda_{a,\mu,-}<0$.
\end{proof}

As for $\mu<\mu_a^*$, we next consider \eqref{e3.11} for $q_2=p^*$ and $\mu>\mu_a^*$ by using the subcritical perturbation argument.  For this purpose, we re-denote $\mu_{a}^*$, $m^-_{rad}(a,\mu)$, $\Psi_{\mu}(u)$, $\mathcal{P}_{a,\mu}^-$, $t^-_\mu\left(u\right)$, $\lambda_{a, \mu,-}<0$, $\mu_{a,-}^{**}$ and $u_{a,\mu,-}$ in Proposition~\ref{pro5.2} by $\mu_{a,q_2}^*$, $m^-_{rad,q_2}(a,\mu)$, $\Psi_{\mu,q_2}(u)$, $t^-_{\mu,q_2}\left(u\right)$, $\mathcal{P}_{a,\mu,q_2}^-$, $\lambda_{a, \mu,q_2,-}<0$, $\mu_{a,-,q_2}^{**}$ and $u_{a,\mu,q_2,-}$, respectively.  We also define
\begin{eqnarray*}
\mu_{a,-}^{***}=\sup\left\{\mu\geq\mu_a^*\mid m^{-}_{rad}(a,\mu)<m^{+}_{rad}(a,\mu)+\frac{1}{N}S^{\frac{N}{p}}\right\}.
\end{eqnarray*}
By Lemmas~\ref{lem4.6} and \ref{lem5.7}, $\mu_{a,-}^{***}>\mu_a^*$ is well defined.
We recall that
\begin{eqnarray*}
\mu_{a,-}^{**}=\inf\left\{\mu_{a,-}^{***}, \mu_{a,-,0}^{**}\right\}
\end{eqnarray*}
is the second extremal value of $\mathcal{P}_{a,\mu}^-$ for $q_2=p^*$, where $\mu_{a,-,0}^{**}=\limsup_{q_2\to p^*}\mu_{a,-,q_2}^{**}$.
\begin{lemma}\label{lem7.7}
Let $N\geq 2$, $1<p<N$, $p<q_1<p+\frac{p^2}{N}<q_2=p^*$ and $a>0$.  Then $\mu_{a,-}^{**}>\mu_a^*$, provided $q_1\in\left(q_1^*, p+\frac{p^2}{N}\right)$ where $q_1^*>p$ is given in Proposition~\ref{pro4.1}.
\end{lemma}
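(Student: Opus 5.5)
Since $\mu_{a,-}^{**}=\min\{\mu_{a,-}^{***},\mu_{a,-,0}^{**}\}$ and $\mu_{a,-}^{***}>\mu_a^*$ is already known from Lemmas~\ref{lem4.6} and \ref{lem5.7}, the whole task is to show $\mu_{a,-,0}^{**}=\limsup_{q_2\to p^*}\mu_{a,-,q_2}^{**}>\mu_a^*$. It therefore suffices to find $\delta>0$ and a sequence $q_2^{(k)}\uparrow p^*$ such that $\mu_{a,-,q_2^{(k)}}^{**}\geq\mu_a^*+\delta$ for all $k$. By Lemma~\ref{lemN3.9}(1), $\mu_{a,q_2}^*\to\mu_a^*$. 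It is then enough to get a lower bound $\mu_{a,-,q_2}^{**}\geq\mu_{a,q_2}^*+\delta$ that is uniform as $q_2\to p^*$. Recall $\mu_{a,-,q_2}^{**}=\min\{\hat{\mu}_{a,-}^{**},\tilde{\mu}_{a,-}^{**},\overline{\mu}_{a,-}^{**}\}$, all at exponent $q_2$. My plan is to revisit the three contradiction arguments, Lemma~\ref{lem5.5}, Lemma~\ref{lem5.6} and Lemma~\ref{lem7.5}, and run them along a diagonal sequence $q_{2,n}\to p^*$ rather than at fixed $q_2$.

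Concretely, I argue by contradiction and suppose $\limsup_{q_2\to p^*}\mu_{a,-,q_2}^{**}=\mu_a^*$. Then there are $q_{2,n}\to p^*$ and $\mu_n\downarrow\mu_a^*$ with $\mu_n\geq\mu_{a,-,q_{2,n}}^{**}$, so for each $n$ one of the three defining conditions fails at $(q_{2,n},\mu_n)$. Passing to a subsequence, the same condition fails for every $n$, and I treat the three cases separately. In each case the failure supplies a sequence on which the argument can be run:
\begin{enumerate}
\item[(i)] if $\hat{\mu}$ fails, there is $u_n\in\mathcal{P}^0_{c_n,\mu_n,q_{2,n},rad}$ with almost-minimal energy and $m^-_{rad,q_{2,n}}(c_n,\mu_n)\geq m^0_{rad,q_{2,n}}(c_n,\mu_n)$;
\item[(ii)] if $\tilde{\mu}$ fails, there are $c_n<a$ with $m^-_{rad,q_{2,n}}(c_n,\mu_n)<m^-_{rad,q_{2,n}}(a,\mu_n)$;
\item[(iii)] if $\overline{\mu}$ fails, there is $u_n\in\tilde{\Phi}_{q_{2,n}}(a,\mu_n)$ with $\mu_n\geq\mu_{q_{2,n}}(u_n)$.
\end{enumerate}
As in those lemmas, one first gets $c_n\to a$ and the uniform bounds $\|\nabla u_n\|_p\thickapprox\|u_n\|_{q_1}\thickapprox\|u_n\|_{q_{2,n}}\thickapprox1$. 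These bounds are uniform in $q_2$ because the Gagliardo--Nirenberg constants stay bounded as $q_2\to p^*$, and the energies stay bounded by Lemma~\ref{lemN3.9}(2) and the analogue of \eqref{e5.13}. After rescaling to $\mathcal{S}_a$, the sequence $v_n=\frac{a}{c_n}u_n$ is a radial minimizing sequence for $\mu_{q_{2,n}}$ with $\mu_{q_{2,n}}(v_n)\to\mu_a^*$. By H\"older interpolation between $L^{q_1}$ and $L^{p^*}$, it is also a minimizing sequence for the critical quotient $\mu(\cdot)$, i.e. for $\mu_a^*$.

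The key step, and the main obstacle, is compactness of this mixed-exponent minimizing sequence in $L^{p^*}$. Here I would repeat the argument of Proposition~\ref{pro4.1}(b). Write $v_n\rightharpoonup v_0$ and pass to the limit in the Euler--Lagrange equations of type \eqref{RepWuNew1002}, whose $q_{2,n}$-terms converge to the $p^*$-term. Then apply the map $\mathcal{T}$ with $t_*=(p^*/p)^{1/(p^*-p)}$, which sends the sequence to one with energy tending to $0$ at the level $\overline{\mu}_a^*$. A bubbling remainder would force $0\geq m^+_{rad}(a,\overline{\mu}_a^*)+\frac1N S^{N/p}$, which Remark~\ref{rmkq} rules out for $q_1>q_1^*$. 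Hence $v_n\to v_0$ strongly with $v_0\in\mathcal{P}^0_{a,\mu_a^*,rad}$ up to the scaling $t^0$.

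With strong convergence to a critical minimizer in hand, each case closes as in the corresponding subcritical lemma, now comparing against the critical limit quantities.
\begin{enumerate}
\item[(i)] The energies converge to $\Psi_{\mu_a^*}(v_0)\geq m^0_{rad}(a,\mu_a^*)$, while $m^-_{rad,q_{2,n}}(c_n,\mu_n)$ is asymptotically bounded above by $m^-_{rad}(a,\mu_a^*)$. This contradicts Lemma~\ref{lem5.3}(b)(2).
\item[(ii)] Here one needs a limit inequality of the form $\liminf m^-_{rad,q_{2,n}}(c_n,\mu_n)\geq m^-_{rad}(b,\mu_a^*)>m^-_{rad}(a,\mu_a^*)$ when $c_n\to c_0<a$; its proof uses the same compactness. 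When $c_n\to a$, the degenerate alternative again produces a limit in $\mathcal{P}^0$, contradicting Lemma~\ref{lem5.3}.
\item[(iii)] The chain of inequalities in Lemma~\ref{lem7.5} with $\tilde{t}=t^0$ gives $m^-_{rad}(a,\mu_a^*)\geq m^0_{rad}(a,\mu_a^*)$, again contradicting Lemma~\ref{lem5.3}.
\end{enumerate}
The delicate point throughout is that every inequality used must be lower semicontinuous in $q_2$. I would handle this by always passing to the critical functional through Lemma~\ref{lemN3.9} and strong $L^{q}$ convergence of the radial sequences.
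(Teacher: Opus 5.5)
Your strategy is the paper's: reduce to $\mu_{a,-,0}^{**}>\mu_a^*$, argue by contradiction along $q_{2,n}\to p^*$, and split according to which of $\hat{\mu}_{a,-}^{**}$, $\tilde{\mu}_{a,-}^{**}$, $\overline{\mu}_{a,-}^{**}$ collapses. You then rerun Lemmas~\ref{lem5.5}, \ref{lem5.6} and \ref{lem7.5}, using H\"older to compare $\|\cdot\|_{q_{2,n}}^{q_{2,n}}$ with $\|\cdot\|_{p^*}^{p^*}$ and the compactness of Proposition~\ref{pro4.1} (via Remark~\ref{rmkq}, $q_1>q_1^*$) for minimizing sequences of $\mu_a^*$. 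Cases (i) and (iii), and the subcase $c_n\to a$ of (ii), go through this way. In those cases the lower bound on $\|\cdot\|_{q_1}$ comes from the bound on $\mu_{q_{2,n}}$, not from the H\"older step of Proposition~\ref{pro3.21}, whose interpolation weight on $L^{q_1}$ tends to $0$ as $q_{2,n}\to p^*$.

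The step that fails as written is case (ii) with $c_n\to c_0<a$. There the inequality $\liminf_n m^-_{rad,q_{2,n}}(c_n,\mu_n)\geq m^-_{rad}(b,\mu_a^*)$ does not follow from ``the same compactness''. After Lemma~\ref{lem3.1} reduces it to mass $b$, the relevant near-minimizers $w_n\in\mathcal{P}^-_{b,\mu_n,q_{2,n}}$ satisfy $\mu_{q_{2,n}}(w_n)\geq\mu^*_{b,q_{2,n}}\to\mu_b^*>\mu_a^*$. So they are never minimizing sequences of $\mu_a^*$, and the $\mathcal{T}$-argument of Proposition~\ref{pro4.1} says nothing about them. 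What threatens lower semicontinuity here is a Talenti bubble splitting off as $q_{2,n}\to p^*$, and that must be excluded by an energy comparison. This is exactly the extra ingredient the paper names for this case. It uses the perturbation argument of Proposition~\ref{pro3.2} at mass $b$, where the bubble is ruled out by Lemma~\ref{lem3.6} (applicable since $\mu_a^*<\mu_b^*$), and then Proposition~\ref{pro4.4}(b).

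Alternatively, you can keep case (ii) inside your degenerate compactness by not proving $c_n\to a$ first. Exclude $c_n\to 0$ using $m^-_{rad}(a,\mu_a^*)<\frac1N S^{\frac Np}$, which follows from Lemma~\ref{lem4.6} and $\hat{\Psi}^+_{\mu_a^*}<0$. Then run the dichotomy of Lemma~\ref{lem5.6} on $\mu_{q_{2,n}}(\frac{a}{c_n}u_n)$ instead of on $\mu_{q_{2,n}}(u_n)$.

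If $\mu_n<\mu_{q_{2,n}}(\frac{a}{c_n}u_n)$, note that $\mu_{q_2}(\lambda u)=\lambda^{\frac{p(q_1-q_2)}{q_2\gamma_{q_2}-p}}\mu_{q_2}(u)$ is decreasing in $\lambda$. Hence the fibres stay nondegenerate along $c\mapsto\frac{c}{c_n}u_n$ on $[c_n,a]$. The computation in Lemma~\ref{lem3.1}(b) then gives $\Psi_{\mu_n,q_{2,n}}(u_n)>m^-_{rad,q_{2,n}}(a,\mu_n)$, contradicting the choice of $u_n$.

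Otherwise $\frac{a}{c_n}u_n$ minimizes $\mu_a^*$ and hence converges strongly, so $u_n$ converges strongly as well. If $c_0<a$, the limit lies in $\mathcal{P}^-_{c_0,\mu_a^*}$, which contradicts Proposition~\ref{pro4.4}(b) together with $\limsup_n m^-_{rad,q_{2,n}}(a,\mu_n)\leq m^-_{rad}(a,\mu_a^*)$. If $c_0=a$, the limit lies in $\mathcal{P}^0_{a,\mu_a^*}$, which contradicts Lemma~\ref{lem5.3}.
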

\begin{proof}
Since $\mu_{a,-}^{***}>\mu_a^*$, we only need to show that $\mu_{a,-,0}^{**}>\mu_a^*$.  Suppose the contrary that $\mu_{a,-,0}^{**}=\mu_a^*$, then by \eqref{mu-}, one of the following three cases must happen:
\begin{enumerate}
\item[$(1)$]\quad $\hat{\mu}_{a,q_2,-}^{**}\to \mu_a^*$ as $q_2\to p^*$ up to a subsequence;
\item[$(2)$]\quad $\tilde{\mu}_{a,q_2,-}^{**}\to \mu_a^*$ as $q_2\to p^*$ up to a subsequence;
\item[$(3)$]\quad $\overline{\mu}_{a,q_2,-}^{**}\to \mu_a^*$ as $q_2\to p^*$ up to a subsequence.
\end{enumerate}
We can exclude the case~$(1)$ by similar arguments used for Lemma~\ref{lem5.5}.  The only additional argument lies in proving \eqref{e5.14}, where in this case, we have $\mu_{q_{2,n}}(v_{n})\to \mu_a^*$ as $q_{2,n}\to p^*$ up to a subsequence, which together with the H\"older inequality that $\|v_{q_{2,n}}\|_{q_{2,n}}^{q_{2,n}}-\|v_{q_{2,n}}\|_{p^*}^{p^*}\to0$ as $q_{2,n}\to p^*$ up to a subsequence.  Thus, we can still go through the proof of Lemma~\ref{lem5.5} to exclude the case~$(1)$.  

We can also exclude the case~$(2)$ by similar arguments used for Lemma~\ref{lem5.6}.  The only additional argument is that in this case, if $c_0^-<a$ then we have $\mu_a^*<\mu_b^*<\mu_{c_0^-}^*$ by choosing $c_0^-<b<a$.  In this situation, we need to apply the perturbation argument used for Proposition~\ref{pro3.2} to replace the usage of Lemma~\ref{lem5.7} to obtain the continuity of $m_{rad}^-(b,\mu,q_2)$ as a function of $(\mu, q_2)$ to exclude that $c_0^-<a$, since $\mu\to\mu_a^*<\mu_b^*$.  Thus, we can still go through the proof of Lemma~\ref{lem5.6} to exclude the case~$(2)$ by further noticing that $\|v_{q_{2,n}}\|_{q_{2,n}}^{q_{2,n}}-\|v_{q_{2,n}}\|_{p^*}^{p^*}\to0$ as $q_{2,n}\to p^*$ up to a subsequence by the H\"older inequality, as in excluding the case~$(1)$.  

We finally exclude the case~$(3)$ by taking $u_n=w_{k}^{(n)}=u_{q_{2,n}}$, where $u_{q_{2,n}}$ is the minimizer constructed in Proposition~\ref{pro5.2}, in the proof of Lemma~\ref{lem7.5}, by further adapting the arguments for Proposition~\ref{pro3.2}, and by noticing that $\|u_{q_{2,n}}\|_{q_{2,n}}^{q_{2,n}}-\|u_{q_{2,n}}\|_{p^*}^{p^*}\to0$ as $q_{2,n}\to p^*$ up to a subsequence by the H\"older inequality, as in excluding the case~$(1)$.  Thus, we must have $\mu_{a,-,0}^{**}>\mu_a^*$, which completes the proof.
\end{proof}

With Lemma~\ref{lem7.7} in hands, we can obtain the following result.
\begin{proposition}\label{pro5.3}
Let $N\geq 2$, $1<p<N$, $p<q_1<p+\frac{p^2}{N}<q_2=p^*$ and $a>0$.  Then the variational problem~\eqref{e5.55}
is achieved by some
$u_{a,\mu,-}\in \mathcal{P}_{a,\mu,rad}^{-}$ for $\mu_{a}^*<\mu<\mu_{a,-}^{**}$, which is real valued, positive, radially symmetric and radially decreasing.
Moreover, $u_{a,\mu,-}$ also satisfies the equation (\ref{e1.1})
for a suitable Lagrange multiplier $\lambda_{a,\mu,-}<0$.
\end{proposition}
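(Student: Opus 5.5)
We will prove Proposition~\ref{pro5.3} by the subcritical perturbation argument of Proposition~\ref{pro3.2}, now applied to the minimizers $u_{a,\mu,q_2,-}$ of Proposition~\ref{pro5.2}.

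\textbf{Step 1: approximating minimizers.} Fix $\mu\in(\mu_a^*,\mu_{a,-}^{**})$ and $q_1\in(q_1^*,p+\frac{p^2}{N})$. Since $\mu<\mu_{a,-,0}^{**}=\limsup_{q_2\to p^*}\mu_{a,-,q_2}^{**}$, we can choose a sequence $q_{2,n}\uparrow p^*$ with $\mu<\mu_{a,-,q_{2,n}}^{**}$. Using $(1)$ of Lemma~\ref{lemN3.9}, we may also assume $\mu_{a,q_{2,n}}^*<\mu$. Proposition~\ref{pro5.2} then gives radial, nonnegative, radially decreasing minimizers $u_n:=u_{a,\mu,q_{2,n},-}\in\mathcal{P}^-_{a,\mu,q_{2,n},rad}$ of $\hat\Psi^-_{\mu,rad,q_{2,n}}$, with multipliers $\lambda_n<0$.

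\textbf{Step 2: uniform bounds.} First we establish an upper bound $\limsup_n m^-_{rad,q_{2,n}}(a,\mu)\le m^-_{rad}(a,\mu)$. For this we test with a fixed $u\in\mathcal{P}^-_{a,\mu,rad}$ with $\mu<\mu(u)$, as in $(2)$ of Lemma~\ref{lemN3.9}; since $t^-_{\mu,q_2}(u)\to1$, the bound follows. Together with \eqref{e3.12}–\eqref{e3.13}, this yields:
\begin{itemize}
\item $u_n$ is bounded in $W^{1,p}$;
\item $\|\nabla u_n\|_p^p\approx\|u_n\|_{q_{2,n}}^{q_{2,n}}\approx1$;
\item $\lambda_n$ is bounded.
\end{itemize}
Passing to a subsequence, $u_n\rightharpoonup u_0$ weakly in $W^{1,p}_{rad}$, $\lambda_n\to\lambda_0\le0$, and $u_0$ solves \eqref{eqnN01} with $\mu$ in place of $\mu$. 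The identity $\lambda_0a^p=\mu(\gamma_{q_1}-1)\|u_0\|_{q_1}^{q_1}$ shows that $\lambda_0<0$ if and only if $u_0\neq0$. Repeating the truncation argument of Lemma~\ref{lem1.3}, we obtain $\nabla u_n\to\nabla u_0$ a.e., so Brezis–Lieb splitting applies. With the Pohozaev identity for $u_0$ (Lemma~\ref{lem1.5}), this gives
\[
\|\nabla v_n\|_p^p=\|v_n\|_{p^*}^{p^*}+o(1),\qquad v_n=u_n-u_0,
\]
where the Hölder inequality controls $\|u_n\|_{q_{2,n}}^{q_{2,n}}-\|u_n\|_{p^*}^{p^*}\to0$. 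Hence either $v_n\to0$ in $D^{1,p}$, or $\|\nabla v_n\|_p^p\ge S^{N/p}+o(1)$.

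\textbf{Step 3: excluding bubbling (the main obstacle).} Suppose bubbling occurs.
\begin{itemize}
\item If $u_0=0$, then $m^-_{rad,q_{2,n}}(a,\mu)\ge\frac1NS^{N/p}+o(1)$.
\item If $u_0\neq0$, then $u_0\in\mathcal{P}_{a_1,\mu}$ and $\Psi_\mu(u_0)\ge m^+_{rad}(a_1,\mu)$. The latter holds because $\mu\le\mu(u_0)$: the argument excluding case~$(3)$ in Lemma~\ref{lem7.7} places $u_0$ in the analogue of $\tilde\Phi$, and any Pohozaev point with $\mu\le\mu(u_0)$ satisfies $\Psi_\mu(u_0)\ge\Psi_\mu((u_0)_{\tau^+})$. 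Next, $m^+_{rad}(a_1,\mu)\ge m^+_{rad}(a,\mu)$ by $\mu<\tilde\mu^{**}_{a,+}$ (for $a_1<a$; this is trivial when $a_1=a$). Thus $m^-_{rad,q_{2,n}}(a,\mu)\ge m^+_{rad}(a,\mu)+\frac1NS^{N/p}+o(1)$.
\end{itemize}
Since $m^+_{rad}(a,\mu)<0$, both cases contradict the upper bound of Step~2 combined with $\mu<\mu_{a,-}^{***}$. The delicate point is exactly the comparison $\Psi_\mu(u_0)\ge m^+_{rad}(a,\mu)$ when $\mu>\mu_a^*$, where $u_0$ might lie on $\mathcal P^0$. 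If that happens we instead invoke $m^0_{rad}(a_1,\mu)>m^-_{rad}(a_1,\mu)\ge m^+_{rad}(a_1,\mu)$, using $\mu<\hat\mu^{**}_{a,-}$.

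\textbf{Step 4: identifying the limit.} With bubbling excluded, $u_n\to u_0$ in $D^{1,p}$, so $u_0\neq0$ and $\lambda_0<0$. Following the final part of Proposition~\ref{pro5.2}, we use $\mu<\hat\mu^{**}_{a,-}$ and $\mu<\tilde\mu^{**}_{a,-}$ to rule out $u_0\in\mathcal{P}^0_{a_1,\mu}$, and then $a_1<a$ via the chain
\[
m^0_{rad}(a_1)>m^-_{rad}(a_1)\ge m^-_{rad}(a).
\]
Hence $a_1=a$, the convergence is strong in $W^{1,p}$, and $u_0\in\mathcal{P}^-_{a,\mu,rad}$ attains \eqref{e5.55}. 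Positivity follows from the strong comparison principle \cite[Theorem~1.4]{D98}.
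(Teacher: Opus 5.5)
Your route is the paper's: take the minimizers $u_{a,\mu,q_2,-}$ of Proposition~\ref{pro5.2} along $q_{2,n}\to p^*$ (available since $\mu<\mu_{a,-,0}^{**}$), then rerun the compactness argument of Proposition~\ref{pro3.2} with $\mu<\mu_{a,-}^{***}$ as the energy threshold. Steps~1--2 and the case $u_0=0$ of Step~3 are fine. (In Step~2, H\"older only gives the one-sided bound $\|u_n\|_{q_{2,n}}^{q_{2,n}}\le\|u_n\|_{p^*}^{p^*}+o(1)$, but that is all the dichotomy needs.)

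The gap is in the rest of Steps~3--4. For $q_2=p^*$ the hypothesis $\mu<\mu_{a,-}^{**}$ means only $\mu<\min\{\mu_{a,-}^{***},\mu_{a,-,0}^{**}\}$. The separation conditions contained in $\mu_{a,-,0}^{**}$ hold at the exponents $q_{2,n}$, not at $p^*$, and strict inequalities such as $m^-_{rad,q_{2,n}}(b,\mu)<m^0_{rad,q_{2,n}}(b,\mu)$ do not survive $n\to\infty$. So your appeals to $\mu<\tilde{\mu}_{a,+}^{**}$, $\mu<\hat{\mu}_{a,-}^{**}$ and $\mu<\tilde{\mu}_{a,-}^{**}$ for the critical functional are not available. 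Note that $\tilde{\mu}_{a,+}^{**}$ belongs to $\mu_{a,+}^{**}$, the hypothesis of part~(2) of Theorem~\ref{thm1.2}, not of this statement. (Also, $\mu\le\mu(u_0)$ needs no reference to Lemma~\ref{lem7.7}: it holds for every element of $\mathcal{P}_{a_1,\mu}$ by Proposition~\ref{pro2.1}.)

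Most of this can be repaired without any extra threshold.

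\emph{Mass loss.} It cannot occur once $u_0\neq0$. You already derived $\lambda_0a^p=\mu(\gamma_{q_1}-1)\|u_0\|_{q_1}^{q_1}$. The Nehari identity and the Pohozaev identity of Lemma~\ref{lem1.4} for the limit equation (as in the proof of Lemma~\ref{lem1.5}) give $\lambda_0a_1^p=\mu(\gamma_{q_1}-1)\|u_0\|_{q_1}^{q_1}$ with $a_1=\|u_0\|_p$. Since $\lambda_0<0$, this forces $a_1=a$, even if a bubble splits off.

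\emph{The bubbling case.} You then only need $\Psi_\mu(u_0)\ge m^+_{rad}(a,\mu)$ for a positive radial solution $u_0\in\mathcal{P}_{a,\mu,rad}$. This also holds when $u_0\in\mathcal{P}^0_{a,\mu,rad}$. A solution of \eqref{e1.1} cannot also satisfy the Euler--Lagrange equation of $\mu(\cdot)$ on $\mathcal{S}_a$; this is the computation of Proposition~\ref{pro4.5} with $\mu$ in place of $\mu_a^*$. So a small radial perturbation $w_\epsilon\to u_0$ in $\mathcal{S}_a$, in a suitable direction, has $\mu(w_\epsilon)>\mu$. Since $t^{\pm}_\mu(w_\epsilon)\to s(u_0)=1$, the point $(w_\epsilon)_{t^+_\mu(w_\epsilon)}\in\mathcal{P}^+_{a,\mu,rad}$ has energy tending to $\Psi_\mu(u_0)$.

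\emph{What stays unproved.} This is the last claim of Step~4: that the strong limit lies in $\mathcal{P}^-_{a,\mu,rad}$ rather than in $\mathcal{P}^0_{a,\mu,rad}$, and realizes the infimum \eqref{e5.55}. That is exactly the critical-level strict inequality $m^-_{rad}(a,\mu)<m^0_{rad}(a,\mu)$. You must either add it as an assumption or derive it from the $q_{2,n}$-level information. The paper's proof (``the rest is the same as Proposition~\ref{pro3.2}'') does not address this point either, since in Proposition~\ref{pro3.2} one has $\mathcal{P}^0_{a_1,\mu}=\emptyset$.
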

\begin{proof}
By Lemma~\ref{lem7.7}, we can assume that $u_{a,\mu,q_2,-}$ is a mountain-pass solution constructed in Proposition~\ref{pro5.2} for $\mu_a^*<\mu<\mu_{a,-}^{**}$ as $q_2\to p^*$.  The rest of the proof is the same as that of Proposition~\ref{pro3.2}, so we omit it here.
\end{proof}

We close this section by the proofs of Theorems~\ref{thm1.1} and \ref{thm1.2}.

\vskip0.12in

\noindent\textbf{Proof of Theorem~\ref{thm1.1}:} \quad
It follows immediately from Propositions~\ref{pro3.1}, \ref{pro3.21}, \ref{pro4.5}, \ref{pro4.7}, \ref{pro5.1} and \ref{pro5.2}.
\hfill$\Box$

\noindent\textbf{Proof of Theorem~\ref{thm1.2}:} \quad
It follows immediately from Propositions~\ref{pro3.1}, \ref{pro3.2}, \ref{pro4.5}, \ref{pro4.7}, \ref{pro5.1} and \ref{pro5.3}.
\hfill$\Box$

\section{The computation of Morse index}
As in the introduction, we denote $u_{\pm}=u_{a,\mu,\pm}$, $\lambda_{\pm}=\lambda_{a,\mu,\pm}$ and $\mathcal{P}^{\pm}=\mathcal{P}_{a,\mu}^{\pm}$ for the sake of simplicity in this section.  We also recall that
\begin{eqnarray*}
\|v\|_{\mathbb{H}_{\pm}}=\left(\int_{\mathbb{R}^N}\left|u'_{\pm}\right|^{p-2}|\nabla v|^2+(p-2)\left|u'_{\pm}\right|^{p-2}(\partial_rv)^2-\lambda_{\pm}(p-1)\left| u_{\pm}\right|^{p-2}| v|^2dx\right)^{\frac{1}{2}}
\end{eqnarray*}
and
\begin{eqnarray*}
\left\langle w, v\right\rangle_{\mathbb{H}_{\pm}}=\int_{\mathbb{R}^N}\left|u'_{\pm}\right|^{p-2}\nabla w\nabla v+(p-2)\left|u'_{\pm}\right|^{p-2}\partial_rw\partial_rv-\lambda_{\pm}(p-1)\left| u_{\pm}\right|^{p-2}wvdx,
\end{eqnarray*}
where $r=|x|$.
Let
\begin{eqnarray*}
P(u)=\Phi_{\mu, u}'\left(1\right)=\|\nabla u\|_p^p-\mu \gamma_{q_1}\|u\|_{q_1}^{q_1}-\gamma_{q_2}\|u\|_{q_2}^{q_2}.
\end{eqnarray*}
Then we rewrite
\begin{eqnarray*}
\mathcal{P}=\left\{u\in W^{1,p}\left(\mathbb{R}^N\right) \mid P(u)=0\quad\text{and}\quad\|u\|_p^p=a^p\right\}
\end{eqnarray*}
and
\begin{eqnarray*}
\mathcal{P}_{rad}=\left\{u\in W_{rad}^{1,p}\left(\mathbb{R}^N\right) \mid P(u)=0\quad\text{and}\quad\|u\|_p^p=a^p\right\}.
\end{eqnarray*}
The tangent space of $\mathcal{P}$ at $u_{\pm}$ in $\mathbb{H}_{\pm}$ and in $\mathbb{H}_{\pm,rad}$ are given by
\begin{eqnarray*}
\mathcal{T}_{u_{\pm}}\mathcal{P}=\left\{v\in \mathbb{H}_{\pm}\cap \mathcal{S}_a \mid \left\langle u_{\pm}, v\right\rangle_{\mathbb{H}_{\pm}}-\left\langle u_{\pm}, v\right\rangle_{L^2; \pmb{w}_{\pm,*}}=\left\langle u_{\pm}, v\right\rangle_{L^2; u_{\pm}^{p-2}}=0\right\}
\end{eqnarray*}
and
\begin{eqnarray*}
\left(\mathcal{T}_{u_{\pm}}\mathcal{P}\right)_{rad}=\left\{v\in \mathbb{H}_{\pm,rad} \cap \mathcal{S}_a\mid \left\langle u_{\pm}, v\right\rangle_{\mathbb{H}_{\pm,rad}}-\left\langle u_{\pm}, v\right\rangle_{L^2; \pmb{w}_{\pm,*},rad}=\left\langle u_{\pm}, v\right\rangle_{L^2; u_{\pm}^{p-2}, rad}=0\right\},
\end{eqnarray*}
respectively, where $\pmb{w}_{\pm,*}=\frac{p-1}{p}\left(\mu q_1\gamma_{q_1}u_{\pm}^{q_1-2}+q_2\gamma_{q_2}u_{\pm}^{q_2-2}\right)$.
\begin{lemma}\label{Morse1}
Let $N\geq 3$ and $2\leq p<N$.  Suppose that $u_{\pm}$ are the solutions constructed in Theorems~\ref{thm1.1} and \ref{thm1.2}, then $\mathcal{T}_{u_{\pm}}\mathcal{P}$ and $\left(\mathcal{T}_{u_{\pm}}\mathcal{P}\right)_{rad}$ are of codimension $2$ in $\mathbb{H}_{\pm}$ and $\mathbb{H}_{\pm,rad}$, respectively.
\end{lemma}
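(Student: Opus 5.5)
The plan is to realize $\mathcal{T}_{u_{\pm}}\mathcal{P}$ as the common kernel of two continuous linear functionals on $\mathbb{H}_{\pm}$, and then show that these functionals are linearly independent. Concretely, set
\begin{eqnarray*}
\ell_1(v):=\left\langle u_{\pm}, v\right\rangle_{L^2; u_{\pm}^{p-2}}=\int_{\mathbb{R}^N}u_{\pm}^{p-1}v\,dx,\qquad \ell_2(v):=\left\langle u_{\pm}, v\right\rangle_{\mathbb{H}_{\pm}}-\left\langle u_{\pm}, v\right\rangle_{L^2; \pmb{w}_{\pm,*}}.
\end{eqnarray*}
Up to harmless constant factors, these are $\frac1p d\|u\|_p^p[v]$ and $\frac{p-1}{p}\left(P'(u_{\pm})[v]+\ldots\right)$. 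The expression for $\ell_2$ follows from the identity $|u'|^{p-2}\nabla u\cdot\nabla v+(p-2)|u'|^{p-2}\partial_ru\,\partial_rv=(p-1)|u'|^{p-2}\partial_r u\,\partial_r v$ for radial $u$, combined with $(p-1)\lambda_{\pm}$-terms that are absorbed using $\ell_1$. So $\mathcal{T}_{u_{\pm}}\mathcal{P}=\ker\ell_1\cap\ker\ell_2$.

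The first step is to check that $\ell_1$ and $\ell_2$ are bounded on $\mathbb{H}_{\pm}$. For $\ell_2$ this is immediate from Cauchy--Schwarz in $\mathbb{H}_{\pm}$, since $u_{\pm}\in\mathbb{H}_{\pm}$; this uses $p\ge 2$, $\lambda_\pm<0$ and the decay \eqref{decay}. The weighted term is controlled by the compact embedding into $L^2(\mathbb{R}^N;\pmb{w}_{\pm})$, because $\pmb{w}_{\pm,*}\lesssim\pmb{w}_{\pm}$. For $\ell_1$, boundedness follows from $-\lambda_{\pm}(p-1)\int u_{\pm}^{p-2}v^2\le\|v\|_{\mathbb{H}_{\pm}}^2$ together with Cauchy--Schwarz against the weight $u_{\pm}^{p-2}$. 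Once both are bounded, the codimension of $\ker\ell_1\cap\ker\ell_2$ is exactly the rank of $\{\ell_1,\ell_2\}$, hence at most $2$.

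The main step, and the main obstacle, is linear independence, and I expect it to reduce to the nondegeneracy $u_{\pm}\notin\mathcal{P}^0$. The idea is to test on the two natural directions: $v_1=u_{\pm}$ (mass scaling) and $v_2=\frac{d}{ds}(u_{\pm})_s|_{s=1}=\frac Np u_{\pm}+x\cdot\nabla u_{\pm}$ (the Pohozaev fibre). Both lie in $\mathbb{H}_{\pm,rad}$, thanks to the exponential decay of $u_{\pm}$ and $u'_{\pm}$ from \eqref{decay}. Along the fibre the mass is preserved, so $\ell_1(v_2)=0$, while $\ell_1(v_1)=a^p\neq0$. It then suffices to show $\ell_2(v_2)\neq0$. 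Differentiating $P((u_{\pm})_s)=s\Phi'_{\mu,u_\pm}(s)$ at $s=1$ gives
\begin{eqnarray*}
\ell_2(v_2)=c_p\left(p\|\nabla u_{\pm}\|_p^p-\mu q_1\gamma_{q_1}^2\|u_{\pm}\|_{q_1}^{q_1}-q_2\gamma_{q_2}^2\|u_{\pm}\|_{q_2}^{q_2}\right)=c_p\,\Phi''_{\mu,u_\pm}(1)
\end{eqnarray*}
for some constant $c_p\neq0$. This quantity is nonzero precisely because $u_{\pm}\in\mathcal{P}^{\pm}$, as guaranteed by Propositions~\ref{pro3.1}, \ref{pro3.21}, \ref{pro3.2}, \ref{pro4.5}, \ref{pro4.7}, \ref{pro5.1}, \ref{pro5.2} and \ref{pro5.3}. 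The $2\times2$ matrix $(\ell_i(v_j))$ is therefore triangular with nonzero diagonal, so the two functionals are independent and the codimension is exactly $2$.

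The delicate point is verifying that $\ell_2$, as written with the weight $\pmb{w}_{\pm,*}$, really coincides (modulo $\ell_1$) with $\frac{p-1}{p}$ times $P'(u_\pm)$. This requires using the equation \eqref{e1.1} to convert $\int|\nabla u_\pm|^{p-2}\nabla u_\pm\nabla v$ and to track the $\lambda_\pm$-terms. I would carry this out carefully via integration by parts, justified by the decay \eqref{decay}. Since $v_1,v_2$ are radial, the same argument gives codimension $2$ for $(\mathcal{T}_{u_{\pm}}\mathcal{P})_{rad}$ in $\mathbb{H}_{\pm,rad}$.
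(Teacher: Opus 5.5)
Your argument is correct, but the key independence step takes a different route from the paper's.

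\textbf{The paper's route.} The paper passes to the Riesz representative $\pmb{u}_{\pm}$ of $\ell_2$ in $\mathbb{H}_{\pm}$ and argues by contradiction. Linear dependence would force $u_{\pm}$ to satisfy a second Euler--Lagrange equation. Comparing that equation with \eqref{e1.1}, as in the proof of Proposition~\ref{pro4.5}, makes a nontrivial combination of powers of $u_{\pm}$ constant on $\mathbb{R}^N$, which is impossible for a positive $L^p$ function. So the paper relies on the PDE and on positivity, not on $\Phi''_{\mu,u_\pm}(1)\neq 0$. As written, it also works with $\mathrm{span}\{u_{\pm},\pmb{u}_{\pm}\}$, even though $u_{\pm}$ is not the Riesz representative of $\ell_1$.

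\textbf{Your route.} You test on $u_{\pm}$ and on the dilation generator $\phi_{\pm}=\frac{N}{p}u_{\pm}+x\cdot\nabla u_{\pm}$. This gives a triangular matrix with diagonal entries $a^p$ and $\frac{p-1}{p}\Phi''_{\mu,u_\pm}(1)$. This approach:
\begin{itemize}
\item uses only the nondegeneracy $u_{\pm}\in\mathcal{P}^{\pm}$, not the equation;
\item keeps $\ell_1$ and $\ell_2$ cleanly separate;
\item exhibits an explicit complement $\mathrm{span}\{u_{\pm},\phi_{\pm}\}$ of the tangent space, which is exactly the pair used afterwards in the proof of Theorem~\ref{thm1.3}.
\end{itemize}

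\textbf{The cost of your route.} You must verify $\phi_{\pm}\in\mathbb{H}_{\pm}$. At infinity this follows from \eqref{decay} together with the radial ODE. Near the origin, for $p>2$ the solution is in general only $C^{1,\gamma}$, so you need the ODE to see that $r\,u_{\pm}''$ stays bounded. The paper's proof of the lemma avoids this check, although the same check is implicitly needed later for Theorem~\ref{thm1.3}.

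\textbf{The step you flag as delicate.} It is in fact an exact one-line identity. For radial $u_{\pm}$,
\[
\ell_2=\frac{p-1}{p}P'(u_{\pm})-(p-1)\lambda_{\pm}\ell_1 ,
\]
and this uses neither \eqref{e1.1} nor any integration by parts.
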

\begin{proof}
Since by \cite[Theorem~6.1]{Figalli-Neumayer} and \eqref{decay}, $\mathbb{H}_{\pm}$ compactly embeds into $L^2\left(\mathbb{R}^N; \pmb{w}_{\pm,*}\right)$, 
by the Reisz representation in Hilbert spaces $\mathbb{H}_{\pm}$ and $\mathbb{H}_{\pm,rad}$, we can denote
\begin{eqnarray}\label{Wu01}
\left\langle \pmb{u}_{\pm}, v\right\rangle_{\mathbb{H}_{\pm}}=\left\langle u_{\pm}, v\right\rangle_{\mathbb{H}_{\pm}}-\left\langle u_{\pm}, v\right\rangle_{L^2; \pmb{w}_{\pm,*}}
\end{eqnarray}
for all $v\in \mathbb{H}_{\pm}$ with $0<\mu\leq \mu_a^*$ and
\begin{eqnarray}\label{Wu02}
\left\langle \pmb{u}_{\pm,rad}, v\right\rangle_{\mathbb{H}_{\pm}}=\left\langle u_{\pm}, v\right\rangle_{\mathbb{H}_{\pm,rad}}-\left\langle u_{\pm}, v\right\rangle_{L^2; \pmb{w}_{\pm,*},rad}
\end{eqnarray}
for all $v\in \mathbb{H}_{\pm,rad}$ with $\mu>\mu_a^*$.  Let 
\begin{eqnarray*}
\mathcal{H}_{\pm}=\text{span}\{u_{\pm}, \pmb{u}_{\pm}\}\quad\text{and}\quad \mathcal{H}_{\pm,rad}=\text{span}\{u_{\pm,rad}, \pmb{u}_{\pm,rad}\}.
\end{eqnarray*}
We claim that dim$(\mathcal{H}_{\pm})=2$ and dim$(\mathcal{H}_{\pm,rad})=2$.
Suppose the contrary that dim$(\mathcal{H}_{\pm})=1$ and dim$(\mathcal{H}_{\pm,rad})=1$, which implies that there exists $\widetilde{\lambda}_{\pm}\in\mathbb{R}$ such that $\pmb{u}_{\pm}=\widetilde{\lambda}_{\pm}u_{\pm}$ in $\mathbb{H}_{\pm}$ for $0<\mu\leq \mu_a^*$ and $\pmb{u}_{\pm,rad}=\widetilde{\lambda}_{\pm,rad}u_{\pm,rad}$ in $\mathbb{H}_{\pm,rad}$ for $\mu>\mu_a^*$.
It follows \eqref{Wu01} and \eqref{Wu02} that 
\begin{eqnarray*}
-p \Delta_p u_{\pm}=\widetilde{\lambda}_{\pm}u_{\pm}^{p-1}+\mu (q_1-1)\gamma_{q_1}u_{\pm}^{q_1-1}+(q_2-1)\gamma_{q_2}u_{\pm}^{q_2-1}
\end{eqnarray*}
in $\mathbb{R}^N$ in the weak sense for $0<\mu\leq \mu_a^*$ and
\begin{eqnarray*}
-p\Delta_p u_{\pm,rad}=\widetilde{\lambda}_{\pm,rad}u_{\pm,rad}^{p-1}+\mu q_1\gamma_{q_1}u_{\pm,rad}^{q_1-1}+q_2\gamma_{q_2}u_{\pm,rad}^{q_2-1}
\end{eqnarray*}
in $\mathbb{R}^N$ in the weak sense for $\mu>\mu_a^*$.  Note that $u_{\pm}$ are also solutions of \eqref{e1.1} in the weak sense.  Thus, by similar arguments used in the proof of Proposition~\ref{pro4.5}, we can get a contradiction.  Thus, we must have dim$(\mathcal{H}_{\pm})=2$ and dim$(\mathcal{H}_{\pm,rad})=2$.  Since $u_{\pm}\in \mathcal{P}^{\pm}$ are the solutions of \eqref{e1.1} and $p<q_1<q_2$, we also have that $u_{\pm}\not\in\mathcal{T}_{u_{\pm}}\mathcal{P}$ for $0<\mu\leq \mu_a^*$ and $u_{\pm}\not\in\left(\mathcal{T}_{u_{\pm}}\mathcal{P}\right)_{rad}$ for $\mu>\mu_a^*$.
Thus, by dim$(\mathcal{H}_{\pm})=2$ and dim$(\mathcal{H}_{\pm,rad})=2$, $\mathcal{T}_{u_{\pm}}\mathcal{P}$ and $\left(\mathcal{T}_{u_{\pm}}\mathcal{P}\right)_{rad}$ are of codimension $2$ in $\mathbb{H}_{\pm}$ and $\mathbb{H}_{\pm,rad}$, respectively, which completes the proof.
\end{proof}

With Lemma~\ref{Morse1} in hands, we can finally prove Theorem~\ref{thm1.3}.

\vskip0.12in

\noindent\textbf{Proof of Theorem~\ref{thm1.3}:} \quad We recall that the linear operator
\begin{eqnarray*}
\mathcal{L}_{\pm}(w)=-\text{div}\left(\left|u'_{\pm}\right|^{p-2}\nabla w\right)-\partial_r\left((p-2)\left|u'_{\pm}\right|^{p-2}\partial_rw\right)-\lambda_{\pm}(p-1)\left| u_{\pm}\right|^{p-2}w
\end{eqnarray*}
has a discrete spectrum $\{\sigma_m\}$ in the weighted Lebesgue space $L^2\left(\mathbb{R}^N; \pmb{w}_{\pm}\right)$.  We denote $\{\sigma_{m,rad}\}$ the discrete spectrum of $\mathcal{L}_{\pm}(w)$ in the radial setting.  First of all, since $u_{\pm}$ are radial solutions of \eqref{e1.1},
\begin{eqnarray}\label{RepWuNew1020}
\left\langle\mathcal{L}_{\pm}(u_{\pm}), u_{\pm}\right\rangle-\left\| u_{\pm}\right\|^2_{L^2; \pmb{w}_{\pm}}&=&\left\| u_{\pm}\right\|^2_{\mathbb{H}_{\pm}}-\left\| u_{\pm}\right\|^2_{L^2; \pmb{w}_{\pm}}\notag\\
&=&-(q_1-p)\mu\|u_{a,\mu,\pm}\|_{q_1}^{q_1}-(q_2-p)\|u_{a,\mu,\pm}\|_{q_2}^{q_2}\notag\\
&<&0
\end{eqnarray}
for $0<\mu\leq \mu_a^*$ and
\begin{eqnarray}\label{RepWuNew10201}
\left\langle\mathcal{L}_{\pm}(u_{\pm}), u_{\pm}\right\rangle-\left\| u_{\pm}\right\|^2_{L^2; \pmb{w}_{\pm}, rad}&=&\left\| u_{\pm}\right\|^2_{\mathbb{H}_{\pm,rad}}-\left\| u_{\pm}\right\|^2_{L^2; \pmb{w}_{\pm}, rad}\notag\\
&=&-(q_1-p)\mu\|u_{\pm}\|_{q_1}^{q_1}-(q_2-p)\|u_{\pm}\|_{q_2}^{q_2}\notag\\
&<&0
\end{eqnarray}
for $\mu>\mu_a^*$.  Clearly, by \eqref{RepWuNew1020} and \eqref{RepWuNew10201}, $u_{\pm}\in\mathcal{H}_{\pm}$ for $0<\mu\leq \mu_a^*$ and $u_{\pm}\in\mathcal{H}_{\pm,rad}$ for $\mu>\mu_a^*$.  Next, we introduce the function
\begin{eqnarray*}
\phi_{\pm}=\frac{N}{2}u_{\pm}+\nabla u_{\pm}\cdot x=\frac{\partial (u_{\pm})_{t}}{\partial t}|_{t=1}.
\end{eqnarray*}
Clearly, $\phi_{\pm}$ are radial since $u_{\pm}$ are radial.  Moreover, by \eqref{decay}, we also know that
\begin{eqnarray*}
\left\{\aligned
&\langle u_{\pm}, \phi_{\pm}\rangle_{L^2; u_{\pm}^{p-2}}=0,\quad\text{for }0<\mu\leq \mu_a^*,\\
&\langle u_{\pm}, \phi_{\pm}\rangle_{L^2; u_{\pm}^{p-2}, rad}=0,\quad\text{for }\mu>\mu_a^*,
\endaligned\right.
\end{eqnarray*}
according to $\|(u_{\pm})_{t}\|_p^p=a^p$ for all $t>0$.  Since $u_{\pm}\in \mathcal{P}^{\pm}$, by Proposition~\ref{pro2.1}, we must have
\begin{eqnarray*}
\frac{d^2\Phi_{\mu, u_{+}}}{d t^2}(1)>0\quad\text{and}\quad \frac{d^2\Phi_{\mu, u_{-}}}{d t^2}(1)<0,
\end{eqnarray*}
which is equivalent to
\begin{eqnarray}\label{Wu03}
\left\{\aligned
0<\Psi_{\mu}''((u_{+})_t)|_{t=1}\left(\frac{\partial (u_{+})_{t}}{\partial t}|_{t=1}\right)^2+\Psi_{\mu}'((u_{+})_t)|_{t=1}\frac{\partial^2 (u_{+})_{t}}{\partial t^2}|_{t=1},\\
0>\Psi_{\mu}''((u_{-})_t)|_{t=1}\left(\frac{\partial (u_{-})_{t}}{\partial t}|_{t=1}\right)^2+\Psi_{\mu}'((u_{-})_t)|_{t=1}\frac{\partial^2 (u_{-})_{t}}{\partial t^2}|_{t=1},
\endaligned\right.
\end{eqnarray}
where $\Phi_{\mu, u^\pm}(s)$ are the fibering maps given by \eqref{e1.9}.
By $\|(u_{\pm})_{t}\|_p^p=a^p$ for all $t>0$, we have
\begin{eqnarray*}
0&=&p\left\langle \left((u_{\pm})_t\right)^{p-1}, \frac{\partial_t\left(u_{\pm}\right)_t}{\partial t}\right\rangle\\
&=&p(p-1)\left\langle \left((u_{\pm})_t\right)^{p-2}, \left(\frac{\partial_t\left(u_{\pm}\right)_t}{\partial t}\right)^2\right\rangle+p\left\langle \left((u_{\pm})_t\right)^{p-1}, \frac{\partial^2_t\left(u_{\pm}\right)_t}{\partial t^2}\right\rangle
\end{eqnarray*}
for all $t>0$.  Since $u_{\pm}$ are radial solutions of \eqref{e1.1}, by \eqref{Wu03}, we have
\begin{eqnarray*}\label{Wu04}
\left\{\aligned
&0>\left\langle\Psi_\mu''\left(u_{-}\right), \phi_{-}^2\right\rangle+\lambda_{-}\left\langle u_{-}^{p-1}, \frac{\partial^2 (u_{-})_{t}}{\partial t^2}|_{t=1}\right\rangle=\left\|\phi_{-}\right\|^2_{\mathbb{H}_{-}}-\left\|\phi_{-}\right\|^2_{L^2; \pmb{w}_{-}},\\
&0<\left\langle\Psi_\mu''\left(u_{+}\right), \phi_{+}^2\right\rangle+\lambda_{a,\mu,+}\left\langle u_{+}^{p-1}, \frac{\partial^2 (u_{+})_{t}}{\partial t^2}|_{t=1}\right\rangle
=\left\|\phi_{+}\right\|^2_{\mathbb{H}_{+}}-\left\|\phi_{+}\right\|^2_{L^2; \pmb{w}_{+}}
\endaligned\right.
\end{eqnarray*}
for $0<\mu\leq \mu_a^*$ and
\begin{eqnarray*}\label{Wu05}
\left\{\aligned
&0>\left\langle\Psi_\mu''\left(u_{-}\right), \phi_{-}^2\right\rangle+\lambda_{-}\left\langle u_{-}^{p-1}, \frac{\partial^2 (u_{-})_{t}}{\partial t^2}|_{t=1}\right\rangle=\left\|\phi_{-}\right\|^2_{\mathbb{H}_{-,rad}}-\left\|\phi_{-}\right\|^2_{L^2; \pmb{w}_{-}, rad},\\
&0<\left\langle\Psi_\mu''\left(u_{+}\right), \phi_{+}^2\right\rangle+\lambda_{a,\mu,+}\left\langle u_{+}^{p-1}, \frac{\partial^2 (u_{+})_{t}}{\partial t^2}|_{t=1}\right\rangle
=\left\|\phi_{+}\right\|^2_{\mathbb{H}_{+,rad}}-\left\|\phi_{+}\right\|^2_{L^2; \pmb{w}_{+}, rad}
\endaligned\right.
\end{eqnarray*}
for $\mu>\mu_a^*$, where $\left\langle\cdot, \cdot\right\rangle$ is the dual product.  It follows that $\phi_{\pm}\in\mathcal{H}_{\pm}$ for $0<\mu\leq \mu_a^*$ and $\phi_{\pm}\in\mathcal{H}_{\pm,rad}$ for $\mu>\mu_a^*$.  Finally,
for every $\varphi\in \mathcal{T}_{u_{\pm}}\mathcal{P}$ with $0<\mu\leq \mu_a^*$ and $\varphi\in \left(\mathcal{T}_{u_{\pm}}\mathcal{P}\right)_{rad}$ with $\mu> \mu_a^*$, since $p\geq2$, by the implicit function theorem, there exists $s_{\pm}(\epsilon)$ of class $C^2$ for $|\epsilon|$ sufficiently small such that
\begin{eqnarray*}
a^p=\left\|(1+s(\epsilon))u_{\pm}+\epsilon\varphi\right\|_p^p\quad\text{with}\quad\left\{\aligned
&s_{\pm}'(0)=0,\\
&s''(0)=-\frac{p-1}{2a^p}\left\|\varphi\right\|^2_{L^2; u_{\pm}^{p-2}}.
\endaligned\right.
\end{eqnarray*}
We denote $v_{\pm,\epsilon}=(1+s(\epsilon))u_{\pm}+\epsilon\varphi$ for the sake of simplicity.  Since $u_{\pm}\in \mathcal{P}^{\pm}$, by Proposition~\ref{pro2.1}, we have $\mu(u_{\pm})>\mu_a^*$.  Thus, by the continuity of $s(\epsilon)$, we know that $\mu(v_{\pm, \epsilon})>\mu_a^*$ for $|\epsilon|$ small.  It follows that there exists $t^{\pm}\left(v_{\pm, \epsilon}\right)$ such that $w_{\epsilon}^{\pm}:=\left(v_{\pm, \epsilon}\right)_{t^{\pm}\left(v_{\pm, \epsilon}\right)}\in \mathcal{P}^{\pm}$, which together with $\Psi_{\mu}(u_{\pm})=m^{\pm}(a,\mu)$ for  $0<\mu\leq\mu_a^*$ and $\Psi_{\mu}(u_{\pm})=m_{rad}^{\pm}(a,\mu)$ for $\mu>\mu_a^*$ by Theorems~\ref{thm1.1} and \ref{thm1.2}, implies that $g_{\pm}(\epsilon)\geq g_{\pm}(0)$ where
\begin{eqnarray*}
g_{\pm}(\epsilon)&=&\Psi_\mu\left(w_{\epsilon}^{\pm}\right)\\
&=&\Phi_{\mu, v_{\pm,\epsilon}^{\pm}}\left(t^{\pm}\left(v_{\pm,\epsilon}\right)\right)\\
&=&\frac{\left(t^{\pm}\left(v_{\pm,\epsilon}\right)\right)^p}{p}\|\nabla v_{\pm,\epsilon}\|_2^2-\frac{\mu \left(t^{\pm}\left(v_{\pm,\epsilon}\right)\right)^{q_1\gamma_{q_1}}}{q}\| v_{\pm,\epsilon}\|_{q_1}^{q_1}-\frac{\left(t^{\pm}\left(v_{\pm,\epsilon}\right)\right)^{q_2\gamma_{q_2}}}{q_2}\| v_{\pm,\epsilon}\|_{q_2}^{q_2}\\
&:=&\widetilde{\Psi}_{\mu}\left(t^{\pm}\left(v_{\pm,\epsilon}\right), v_{\pm,\epsilon}\right).
\end{eqnarray*}
Moreover, since $s_{\pm}(\epsilon)$ of class $C^2$ for $|\epsilon|$ sufficiently small, by the implicit function theorem, we can also show that $t_{\mu}^{\pm}\left(v_{\pm, \epsilon}\right)$ are of class $C^2$ in terms of $\epsilon$, as that in the proof of Lemma~\ref{lem3.3}.  Since $0$ is a locally minimal point of $g_{\pm}(\epsilon)$, we have $g_{\pm}''(0)\geq0$
which, together with $\|\varphi\|_p^p=a^a$, implies that
\begin{eqnarray*}
0&\leq&\frac{\partial^2\widetilde{\Psi}_{\mu}\left(t^{\pm}\left(v_{\pm,\epsilon}\right), v_{\pm,\epsilon}\right)}{\partial t^{\pm}\left(v_{\pm,\epsilon}\right)^2}|_{\epsilon=0}\left(\frac{\partial\left(t^{\pm}\left(v_{\pm,\epsilon}\right)\right)}{\partial\epsilon}|_{\epsilon=0}\right)^2+\frac{\partial\widetilde{\Psi}_{\mu}\left(t^{\pm}\left(v_{\pm,\epsilon}\right), v_{\pm,\epsilon}\right)}{\partial t^{\pm}\left(v_{\pm,\epsilon}\right)}|_{\epsilon=0}\frac{\partial^2\left(t^{\pm}\left(v_{\pm,\epsilon}\right)\right)}{\partial\epsilon^2}|_{\epsilon=0}\\
&&+\frac{\partial^2\widetilde{\Psi}_{\mu}\left(t^{\pm}\left(v_{\pm,\epsilon}\right), v_{\pm,\epsilon}\right)}{\partial v_{\pm,\epsilon}^2}|_{\epsilon=0}\left(\frac{\partial v_{\pm,\epsilon}}{\partial\epsilon}|_{\epsilon=0}\right)^2+\frac{\partial\widetilde{\Psi}_{\mu}\left(t^{\pm}\left(v_{\pm,\epsilon}\right), v_{\pm,\epsilon}\right)}{\partial v_{\pm,\epsilon}}|_{\epsilon=0}\frac{\partial^2 v_{\pm,\epsilon}}{\partial\epsilon^2}|_{\epsilon=0}\\
&=&\Phi_{\mu, v_{\pm, \epsilon}}''\left(1\right)\left(\frac{\partial\left(t^{\pm}\left(v_{\pm, \epsilon}\right)\right)}{\partial\epsilon}|_{\epsilon=0}\right)^2+\Phi_{\mu, v_{\pm, \epsilon}}'\left(1\right)\frac{\partial^2\left(t^{\pm}\left(v_{\pm, \epsilon}\right)\right)}{\partial\epsilon^2}|_{\epsilon=0}\\
&&+\left\|\varphi\right\|^2_{\mathbb{H}_{\pm}}-\left\|\varphi\right\|^2_{L^2; \pmb{w}_{\pm}}
\end{eqnarray*}
for $0<\mu\leq\mu_a^*$ and similarly,
\begin{eqnarray*}
0&\leq&\Psi_{\mu, v_{\pm, \epsilon}}''\left(1\right)\left(\frac{\partial\left(t^{\pm}\left(v_{\pm, \epsilon}\right)\right)}{\partial\epsilon}|_{\epsilon=0}\right)^2+\Psi_{\mu, v_{\pm, \epsilon}}'\left(1\right)\frac{\partial^2\left(t^{\pm}\left(v_{\pm, \epsilon}\right)\right)}{\partial\epsilon^2}|_{\epsilon=0}\\
&&+\left\|\varphi\right\|^2_{\mathbb{H}_{\pm,rad}}-\left\|\varphi\right\|^2_{L^2; \pmb{w}_{\pm},rad}
\end{eqnarray*}
for $\mu>\mu_a^*$.
By $w_{\epsilon}^{\pm}\in \mathcal{P}^{\pm}$ and $\varphi\in\mathcal{T}_{u_{\pm}}\mathcal{P}$ for $0<\mu\leq\mu_a^*$ and $w_{\epsilon}^{\pm}\in \mathcal{P}_{rad}^{\pm}$ and $\varphi\in\left(\mathcal{T}_{u_{\pm}}\mathcal{P}\right)_{rad}$ for $\mu>\mu_a^*$, we have
\begin{eqnarray*}
\frac{\partial_\epsilon\left(t^{\pm}\left(v_{\pm,\epsilon}\right)\right)}{\partial\epsilon}|_{\epsilon=0}
=-\frac{\left\langle u_{\pm}, \varphi\right\rangle_{\mathbb{H}_{\pm}}-\left\langle u_{\pm}, \varphi\right\rangle_{L^2; \pmb{w}_{\pm}}+(p-1)\lambda_{\pm}\left\langle u_{\pm}, \varphi\right\rangle_{L^2; u_{\pm}^{p-2}}}{\left\|u_{\pm}\right\|^2_{\mathbb{H}_{\pm}}-\left\|u_{\pm}\right\|^2_{L^2; \pmb{w}_{\pm}}+(p-1)\lambda_{\pm}\left\|u_{\pm}\right\|^2_{L^2; u_{\pm}^{p-2}}}=0
\end{eqnarray*}
for $0<\mu\leq\mu_a^*$ and
\begin{eqnarray*}
\frac{\partial_\epsilon\left(t^{\pm}\left(v_{\pm,\epsilon}\right)\right)}{\partial\epsilon}|_{\epsilon=0}
=-\frac{\left\langle u_{\pm}, \varphi\right\rangle_{\mathbb{H}_{\pm,rad}}-\left\langle u_{\pm}, \varphi\right\rangle_{L^2; \pmb{w}_{\pm},rad}+(p-1)\lambda_{\pm}\left\langle u_{\pm}, \varphi\right\rangle_{L^2; u_{\pm}^{p-2},rad}}{\left\|u_{\pm}\right\|^2_{\mathbb{H}_{\pm,rad}}-\left\|u_{\pm}\right\|^2_{L^2; \pmb{w}_{\pm},rad}+(p-1)\lambda_{\pm}\left\|u_{\pm}\right\|^2_{L^2; u_{\pm}^{p-2}, rad}}=0
\end{eqnarray*}
for $\mu>\mu_a^*$.  It follows that
\begin{eqnarray}\label{Wu06}
\left\|\varphi\right\|^2_{\mathbb{H}_{\pm}}-\left\|\varphi\right\|^2_{L^2; \pmb{w}_{\pm}}\geq0
\end{eqnarray}
for every $\varphi\in \mathcal{T}_{u_{\pm}}\mathcal{P}$ with $0<\mu\leq \mu_a^*$ and
\begin{eqnarray}\label{Wu07}
\left\|\varphi\right\|^2_{\mathbb{H}_{\pm,rad}}-\left\|\varphi\right\|^2_{L^2; \pmb{w}_{\pm},rad}\geq0
\end{eqnarray}
for every $\varphi\in \left(\mathcal{T}_{u_{\pm}}\mathcal{P}\right)_{rad}$ with $\mu>\mu_a^*$.  The conclusions then follows immediately from Lemma~\ref{Morse1} and \eqref{RepWuNew1020}, \eqref{RepWuNew10201} and \eqref{Wu03}.
\hfill$\Box$

\bigskip

\section{Acknowledgments}
The research is supported by National Natural Science Foundation of China (No. 12171470, No. 12161093, No. 12001403), National Key R\&D Program of China
(No. 2022YFA1005602), Hong Kong General Research Fund ``On critical and supercritical Fujita equation'', Yunnan Key Laboratory of Modern Analytical Mathematics and Applications (No. 202302AN360007) and Yunnan Provincial Innovation Team on the Interdisciplinary Integration of Modern Applied Mathematics and Life Sciences (No. 202405AS350003).

\bigskip

\section{Apendix~A: Some useful results}
In this section, we list some useful lemmas, which are frequently used in this paper.  We begin with the well known Brez\'is-Lieb lemma.
\begin{lemma}\label{lem1.1}(\cite[Theorem~1]{Brezis-Lieb 1983})
Let $\{u_n\}$ be a bounded function sequence in $L^q\left(\mathbb{R}^N\right)$ for some $q\in (0,+\infty)$. If $u_n\to u$ a.e. in $\mathbb{R}^N$, then
\begin{equation*}
\lim_{n\to +\infty}(\|u_n\|_q^q-\|u_n-u\|_q^q)=\|u\|_q^q.
\end{equation*}
\end{lemma}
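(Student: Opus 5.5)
The plan is to prove Lemma~\ref{lem1.1} by reducing it to a pointwise inequality plus dominated convergence, following the original Br\'ezis--Lieb argument. Set $v_n:=u_n-u$, so that $v_n\to0$ a.e. First, Fatou's lemma gives $\|u\|_q^q\le\liminf_n\|u_n\|_q^q\le C:=\sup_n\|u_n\|_q^q<+\infty$. Hence $u\in L^q(\mathbb{R}^N)$, and $\{v_n\}$ is bounded in $L^q$, since $|a+b|^q\le c_q(|a|^q+|b|^q)$ with $c_q=\max\{1,2^{q-1}\}$.

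The key elementary ingredient is the following inequality. For every $\varepsilon>0$ there is $C_\varepsilon>0$ such that
\begin{equation*}
\bigl||a+b|^q-|a|^q\bigr|\le\varepsilon|a|^q+C_\varepsilon|b|^q\quad\text{for all } a,b\in\mathbb{R}.
\end{equation*}
For $0<q\le1$ this holds even with $\varepsilon=0$ and $C_\varepsilon=1$, because $t\mapsto t^q$ is subadditive. For $q>1$, the mean value theorem gives $\bigl||a+b|^q-|a|^q\bigr|\le q(|a|+|b|)^{q-1}|b|$. One then splits into two cases: if $|b|\le\delta|a|$, the bound is $q(1+\delta)^{q-1}\delta|a|^q\le\varepsilon|a|^q$ once $\delta$ is chosen small; if $|b|>\delta|a|$, the bound is at most $q(1+\delta^{-1})^{q-1}|b|^q$.

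Next, apply this with $a=v_n$ and $b=u$, and define
\begin{equation*}
W_{\varepsilon,n}:=\Bigl(\bigl||u_n|^q-|v_n|^q-|u|^q\bigr|-\varepsilon|v_n|^q\Bigr)^+ .
\end{equation*}
The inequality yields $0\le W_{\varepsilon,n}\le (C_\varepsilon+1)|u|^q\in L^1$. Moreover $W_{\varepsilon,n}\to0$ a.e., because $u_n\to u$ and $v_n\to0$ a.e. Dominated convergence therefore gives $\int W_{\varepsilon,n}\to0$. Consequently
\begin{equation*}
\int\bigl||u_n|^q-|v_n|^q-|u|^q\bigr|\le\int W_{\varepsilon,n}+\varepsilon\|v_n\|_q^q\le o_n(1)+\varepsilon c_q(C+\|u\|_q^q).
\end{equation*}
Letting $n\to\infty$ and then $\varepsilon\to0$ shows that $\int\bigl||u_n|^q-|v_n|^q-|u|^q\bigr|\to0$, which implies the claimed limit.

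The main (mild) obstacle is the uniform pointwise inequality for $q>1$. It needs the case split above so that the constant multiplying $|a|^q$ can be made small while only $C_\varepsilon$ blows up. Everything else is routine.
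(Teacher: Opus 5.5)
Your argument is correct and is essentially the original Br\'ezis--Lieb proof. It uses the uniform pointwise inequality $\bigl||a+b|^q-|a|^q\bigr|\le\varepsilon|a|^q+C_\varepsilon|b|^q$, the truncated functions $W_{\varepsilon,n}\le(C_\varepsilon+1)|u|^q$, dominated convergence, and finally $\varepsilon\to0$ via the $L^q$-bound on $u_n-u$. The paper gives no proof of its own and simply invokes Theorem~1 of Br\'ezis--Lieb, so your route coincides with the one the paper relies on.
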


The following estimate is also well known and useful in considering the $p$-Laplacian equation.
\begin{lemma}\label{lem1.2}
(\cite[(2.2)]{Simon1978})  There exists a constant $C(q)>0$ such that for all $x, y\in \mathbb{R}^
N$ with $|x|+|y|\neq 0$,
\begin{equation*}
\langle|x|^{q-2}x-|y|^{q-2}y,x-y\rangle\geq C(q)
\left\{\begin{array}{ll}
\frac{|x-y|^2}{(|x|+|y|)^{2-q}},&\ \text{if}\ 1\leq q<2,\\
|x-y|^{q},&\ \text{if}\ q\geq 2.\\
\end{array}
\right.
\end{equation*}
\end{lemma}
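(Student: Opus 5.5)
This is Simon's elementary inequality, a pointwise algebraic fact about vectors. I would prove it separately in the two ranges of $q$, reducing everything to scalar estimates. In both ranges the first step is the identity
\[
\langle |x|^{q-2}x-|y|^{q-2}y,x-y\rangle=\int_0^1\Big\langle A(z_t)(x-y),x-y\Big\rangle\,dt,
\]
where $z_t=y+t(x-y)$ and $A(z)=|z|^{q-2}\big(I+(q-2)\tfrac{z\otimes z}{|z|^2}\big)$ is the Jacobian of $z\mapsto |z|^{q-2}z$. The identity is valid for $|x|+|y|\neq0$: the segment meets $0$ in at most one point, and $|z|^{q-2}$ is integrable along it for $q>1$. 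The case $q=1$ is handled directly below.

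\emph{Case $1\le q<2$.} The eigenvalues of $A(z)$ are $|z|^{q-2}$ and $(q-1)|z|^{q-2}$, so $A(z)\ge (q-1)|z|^{q-2}I$. Since $|z_t|\le |x|+|y|$ and $q-2<0$, we get $|z_t|^{q-2}\ge (|x|+|y|)^{q-2}$. Hence the integral is at least
\[
(q-1)\,\frac{|x-y|^2}{(|x|+|y|)^{2-q}},
\]
which gives the claim for $1<q<2$ with $C(q)=q-1$.

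For $q=1$ this constant vanishes, so I would argue directly:
\[
\Big\langle \tfrac{x}{|x|}-\tfrac{y}{|y|},x-y\Big\rangle=(|x|+|y|)\Big(1-\tfrac{\langle x,y\rangle}{|x||y|}\Big)\ge0.
\]
This is a nonnegative lower bound only. The statement with a positive constant fails for $q=1$ in general, for instance when $x,y$ are parallel with the same direction. So I would note that the lemma is applied only for $q=p>1$.

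\emph{Case $q\ge2$.} Now $A(z)\ge |z|^{q-2}I$, so the integral is at least $|x-y|^2\int_0^1|z_t|^{q-2}\,dt$. The key step is the lower bound
\[
\int_0^1|y+t(x-y)|^{q-2}\,dt\ge c(q)|x-y|^{q-2}.
\]
To prove it, let $t_0$ minimize $|z_t|$ over $\mathbb{R}$. Then $|z_t|\ge |t-t_0|\,|x-y|$ for all $t$. On $[0,1]$, the set where $|t-t_0|\ge\tfrac14$ has length at least $\tfrac12$. Integrating over that set gives $c(q)=\tfrac12\,4^{2-q}$.

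Combining the two bounds yields $C(q)|x-y|^q$ with $C(q)=\tfrac12\,4^{2-q}$. The main point needing care is this one-dimensional distance estimate; the remaining steps are direct computation.
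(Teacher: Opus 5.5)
Your proof is correct. The paper gives no proof of this lemma; it only quotes inequality (2.2) from Simon, so there is no competing argument to compare with. Your route is the standard derivation of that inequality: write the difference as $\int_0^1 DF(z_t)(x-y)\,dt$ with $F(z)=|z|^{q-2}z$, which is justified for $q>1$ by the integrability of $|z_t|^{q-2}$. Then use that the eigenvalues of $DF(z)$ are $|z|^{q-2}$ and $(q-1)|z|^{q-2}$. For $q\ge 2$, the one-dimensional bound $|z_t|\ge |t-t_0|\,|x-y|$ finishes the argument.

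Your remark on $q=1$ is also right. As printed, the lemma includes $q=1$, where it is false. Taking $x=2y\neq 0$ makes the left side vanish while the right side is positive, and $|x|^{-1}x$ is not even defined at $x=0$. This does not affect the paper, since the lemma is only used with $q=p\in(1,N)$, in the proof of Lemma~\ref{lem1.3}.
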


We next move to a convergent lemma, which is essential obtained by Boccardo and Murat \cite[Theorem 2.1]{Boccardo-Murat1992}.
\begin{lemma}\label{lem1.3}
Let $N\geq 2$, $1<p<N$, $p<q_1<q_2\leq p^*$ and $a, \mu>0$. Let $\{u_n\}\subset \mathcal{S}_a$ be a bounded $(PS)_{c}$ sequence of $\Psi_\mu(u)|_{\mathcal{S}_a}$, where $\Psi_\mu(u)|_{\mathcal{S}_a}$ is the restriction of $\Psi_\mu(u)$ on $\mathcal{S}_a$ with $\Psi_\mu(u)$ and $\mathcal{S}_a$ given by \eqref{e1.2} and  \eqref{e1.4}, respectively.  Then, up to a subsequence, $\nabla u_n\to \nabla u$ in $\mathbb{R}^N$
for some $u\in W^{1,p}\left(\mathbb{R}^N\right)$ as $n\to+\infty$.
\end{lemma}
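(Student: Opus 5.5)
The statement to prove is Lemma~\ref{lem1.3}: a bounded $(PS)_c$ sequence of $\Psi_\mu|_{\mathcal{S}_a}$ has, up to a subsequence, $\nabla u_n\to\nabla u$ a.e. in $\mathbb{R}^N$. I would follow the Boccardo--Murat truncation scheme.

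\textbf{Step 1: extracting the limit equation.} Since $\{u_n\}$ is bounded in $W^{1,p}(\mathbb{R}^N)$, pass to a subsequence with $u_n\rightharpoonup u$ weakly in $W^{1,p}$, $u_n\to u$ in $L^t_{loc}$ for $t<p^*$, and $u_n\to u$ a.e. The $(PS)_c$ condition on $\mathcal{S}_a$ gives multipliers $\lambda_n$ such that
\begin{eqnarray*}
-\Delta_pu_n-\lambda_n|u_n|^{p-2}u_n-\mu|u_n|^{q_1-2}u_n-|u_n|^{q_2-2}u_n=:f_n\to0\quad\text{in }\left(W^{1,p}\right)^*.
\end{eqnarray*}
Testing with $u_n$ shows $\lambda_n a^p=\|\nabla u_n\|_p^p-\mu\|u_n\|_{q_1}^{q_1}-\|u_n\|_{q_2}^{q_2}+o(1)$, so $\{\lambda_n\}$ is bounded. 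Hence
\begin{eqnarray*}
-\Delta_pu_n=g_n+f_n,
\end{eqnarray*}
where $g_n$ is bounded in $L^1_{loc}$. It is in fact bounded in $L^{(p^*)'}$, since $|u_n|^{q_2-1}\in L^{p^*/(q_2-1)}\subset L^1_{loc}$.

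\textbf{Step 2: truncated monotonicity on balls.} Fix $R>0$, a cutoff $\psi\in C_0^\infty(B_{2R})$ with $\psi=1$ on $B_R$ and $0\le\psi\le1$, and the truncation $T_k(s)=\max\{-k,\min\{k,s\}\}$. Test the equation with $\psi T_k(u_n-u)$, which is bounded in $W^{1,p}$. This gives
\begin{eqnarray*}
\int\psi\left(|\nabla u_n|^{p-2}\nabla u_n-|\nabla u|^{p-2}\nabla u\right)\cdot\nabla T_k(u_n-u)\,dx\le Ck+o_n(1).
\end{eqnarray*}
The individual contributions are controlled as follows:
\begin{itemize}
\item the $g_n$ term is at most $k\|g_n\|_{L^1(B_{2R})}\le Ck$;
\item the $f_n$ term is $o_n(1)$;
\item the term containing $\nabla\psi$ tends to $0$, because $T_k(u_n-u)\to0$ strongly in $L^p_{loc}$ while $|\nabla u_n|^{p-1}$ is bounded in $L^{p'}$;
\item the term $\int\psi|\nabla u|^{p-2}\nabla u\cdot\nabla T_k(u_n-u)$ tends to $0$ by weak convergence.
\end{itemize}

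\textbf{Step 3: from truncated monotonicity to a.e.\ convergence.} Put $e_n=\left(|\nabla u_n|^{p-2}\nabla u_n-|\nabla u|^{p-2}\nabla u\right)\cdot(\nabla u_n-\nabla u)\ge0$. By Lemma~\ref{lem1.2} this expression is nonnegative, and it vanishes only when $\nabla u_n=\nabla u$. Fix $\theta\in(0,1)$ and split $B_R$ into $\{|u_n-u|\le k\}$ and $\{|u_n-u|>k\}$. On the first set H\"older's inequality gives
\begin{eqnarray*}
\int e_n^\theta\le |B_R|^{1-\theta}(Ck+o(1))^\theta.
\end{eqnarray*}
On the second set $e_n$ is bounded in $L^1$, and the measure of the set tends to $0$ as $n\to\infty$ for fixed $k$. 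Hence $\limsup_n\int_{B_R}e_n^\theta\le C(Ck)^\theta$. Letting $k\to0$ gives $e_n^\theta\to0$ in $L^1(B_R)$, so $e_n\to0$ a.e. on $B_R$ along a subsequence. Strict monotonicity of $\xi\mapsto|\xi|^{p-2}\xi$ (again Lemma~\ref{lem1.2}), together with the a.e.\ boundedness of $\nabla u_n$ that follows from $e_n\to0$, then gives $\nabla u_n\to\nabla u$ a.e. on $B_R$. A diagonal argument over $R\in\mathbb{N}$ finishes the proof.

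\textbf{Main obstacle.} The delicate point is Step 2. The critical term $|u_n|^{p^*-2}u_n$ is only bounded in $L^1_{loc}$, not compact, so it cannot be absorbed into $o_n(1)$. The truncation at level $k$ is exactly what makes its contribution $O(k)$ instead of requiring convergence. One must also check that the constraint multiplier term $\lambda_n|u_n|^{p-2}u_n$ causes no trouble, and this follows from the boundedness of $\lambda_n$ established in Step 1.
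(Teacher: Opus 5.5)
Your proposal is correct and is essentially the paper's proof. You use the same Boccardo--Murat scheme: test with $\psi\,T_k(u_n-u)$, bound the critical term by $O(k)$, split $B_R$ into $\{|u_n-u|\le k\}$ and $\{|u_n-u|>k\}$ with the $\theta$-power H\"older trick, then let $k\to0$ and conclude via Lemma~\ref{lem1.2} and a diagonal argument. The only deviation is that you dispose of $\int\psi|\nabla u|^{p-2}\nabla u\cdot\nabla T_k(u_n-u)$ directly, using the weak convergence $T_k(u_n-u)\rightharpoonup0$ in $W^{1,p}$, whereas the paper first passes to the limit equation \eqref{e1.14} for $u$. Your route is the cleaner one, since justifying \eqref{e1.14} needs $|\nabla u_n|^{p-2}\nabla u_n\rightharpoonup|\nabla u|^{p-2}\nabla u$, which is essentially what the lemma is proving.
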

\begin{proof}
Since $\{u_n\}\subset \mathcal{S}_a$ is a bounded $(PS)_{c}$ sequence of $\Psi_\mu(u)|_{\mathcal{S}_a}$ and $\mathcal{S}_a$ is nondegenerate and smooth,
by the method of Lagrange multiplier, there exists $\lambda_n\subset \mathbb{R}$ such that for any $v\in W^{1,p}\left(\mathbb{R}^N\right)$,
\begin{eqnarray}\label{e1.13}
o_n(1)&=&\langle\Psi'_{\mu}(u_n),v\rangle+\lambda_n\int_{\mathbb{R}^N}|u_n|^{p-2}u_nvdx\notag\\
      &=&\int_{\mathbb{R}^N}\left(|\nabla u_n|^{p-2}\nabla u_n\nabla v-\mu|u_n|^{q_1-2}u_n v-|u_n|^{q_2-2}u_n v\right)dx\notag\\
      &&+\lambda_n\int_{\mathbb{R}^N}|u_n|^{p-2}u_nvdx.
\end{eqnarray}
Thus, the boundedness of $\{u_n\}$ in $W^{1,p}\left(\mathbb{R}^N\right)$ implies that $\lambda_n$ is bounded. Without loss of generality, we may assume that $u_n\to u$ weakly in $W^{1,p}\left(\mathbb{R}^N\right)$, a.e. in  $\mathbb{R}^N$ and strongly in $L^q_{loc}\left(\mathbb{R}^N\right)$ for all $q\in [1,p^*)$ and some $u\in W^{1,p}\left(\mathbb{R}^N\right)$.  Moreover, we may also assume that $\lambda_n\to \lambda$ as $n\to +\infty$ for some $\lambda\in \mathbb{R}$. Then, for any $v\in W^{1,p}\left(\mathbb{R}^N\right)$,
\begin{eqnarray}\label{e1.14}
\langle\Psi'_{\mu}(u),v\rangle+\lambda\int_{\mathbb{R}^N}|u|^{p-2}uvdx
&=&\lim_{n\to+\infty}\left(\langle\Psi'_{\mu}(u_n),v\rangle+\lambda_n\int_{\mathbb{R}^N}|u_n|^{p-2}u_nvdx\right)\notag\\
&=&0.
\end{eqnarray}
For $k>0$, let $\tau_k:\mathbb{R}\to \mathbb{R}$ be the truncation given by
\begin{equation*}
\tau_k(s):=
\left\{\begin{array}{ll}
s,&\ \text{if}\ |s|\leq k,\\
ks/|s|,&\ \text{if}\ |s|>k.\\
\end{array}
\right.
\end{equation*}
Then it is easy to see that $\{\tau_k(u_n-u)\}_n$ is bounded in $W^{1,p}\left(\mathbb{R}^N\right)$. Let $\psi\in C_0^\infty\left(\mathbb{R}^N\right)$ be a cut-off function such that $0\leq \psi\leq 1$ in $\mathbb{R}^N$, $\psi(x)=1$ for $x\in B_1(0)$ and $\psi(x) = 0$ for $x\in \mathbb{R}^N \backslash B_2(0)$.  Now, take $R > 1$ and define $\psi_R(x):= \psi(x/R)$
for $x\in \mathbb{R}^N$. By (\ref{e1.13}) and (\ref{e1.14}),
\begin{eqnarray}\label{e1.15}
\begin{split}
o_n(1)&=\langle\Psi'_{\mu}(u_n),\tau_k(u_n-u)\psi_R\rangle+\lambda_n\int_{\mathbb{R}^N}|u_n|^{p-2}u_n\tau_k(u_n-u)\psi_Rdx\\
      &=\langle\Psi'_{\mu}(u_n)-\Psi'_{\mu}(u),\tau_k(u_n-u)\psi_R\rangle+\lambda_n\int_{\mathbb{R}^N}|u_n|^{p-2}u_n\tau_k(u_n-u)\psi_Rdx\\
      &\quad-\lambda\int_{\mathbb{R}^N}|u|^{p-2}u\tau_k(u_n-u)\psi_Rdx\\
      &=\int_{\mathbb{R}^N}\left(|\nabla u_n|^{p-2}\nabla u_n-|\nabla u|^{p-2}\nabla u\right)\nabla (\tau_k(u_n-u)\psi_R)dx\\
      &\quad +\lambda_n\int_{\mathbb{R}^N}|u_n|^{p-2}u_n\tau_k(u_n-u)\psi_Rdx-\lambda\int_{\mathbb{R}^N}|u|^{p-2}u\tau_k(u_n-u)\psi_Rdx\\
      &\quad-\mu\int_{\mathbb{R}^N}\left(|u_n|^{q_1-2}u_n-|u|^{q_1-2}u\right)\tau_k(u_n-u)\psi_Rdx\\
      &\quad-\int_{\mathbb{R}^N}\left(|u_n|^{q_2-2}u_n-|u|^{q_2-2}u\right)\tau_k(u_n-u)\psi_Rdx.
\end{split}
\end{eqnarray}
Since $|\tau_k(u_n-u)|\leq |u_n-u|$ and $u_n\to u$ strongly in $L^q_{loc}\left(\mathbb{R}^N\right)$ for $q\in [1,p^*)$ as $n\to+\infty$, by the H\"{o}lder inequality,
\begin{eqnarray}\label{e1.171}
\begin{split}
&\left|\int_{\mathbb{R}^N}\left(|u_n|^{q_1-2}u_n-|u|^{q_1-2}u\right)\tau_k(u_n-u)\psi_Rdx\right|\\
\leq &\int_{B_{2R}(0)}\left||u_n|^{q_1-2}u_n-|u|^{q_1-2}u\right||u_n-u|dx\\
=&o_n(1)
\end{split}
\end{eqnarray}
and
\begin{eqnarray}\label{e1.172}
\left|\int_{\mathbb{R}^N}|u_n|^{p-2}u_n\tau_k(u_n-u)\psi_Rdx\right|+\left|\int_{\mathbb{R}^N}|u|^{p-2}u\tau_k(u_n-u)\psi_Rdx\right|=o_n(1).
\end{eqnarray}
Since we also have $|\tau_k(u_n-u)|\leq k$, by the boundedness of $\{u_n\}$ in $W^{1,p}\left(\mathbb{R}^N\right)$ and the H\"{o}lder inequality,
\begin{equation}\label{e1.18}
\begin{split}
\left|\int_{\mathbb{R}^N}\left(|u_n|^{q_2-2}u_n-|u|^{q_2-2}u\right)\tau_k(u_n-u)\psi_Rdx\right|
&\leq k\int_{\mathbb{R}^N}\left||u_n|^{q_2-2}u_n-|u|^{q_2-2}u\right|dx\\
   &\lesssim  k.
\end{split}
\end{equation}
Define
\begin{equation*}
e_{k,n}(x):=\left(|\nabla u_n|^{p-2}\nabla u_n-|\nabla u|^{p-2}\nabla u\right)\nabla (\tau_k(u_n-u)\psi_R).
\end{equation*}
Then by (\ref{e1.15}), (\ref{e1.171}), (\ref{e1.172}) and (\ref{e1.18}), we have
\begin{equation}\label{e1.20}
\int_{\mathbb{R}^N}e_{k,n}(x)dx\lesssim k+o_n(1).
\end{equation}
Since $|\tau_k(u_n-u)|\leq |u_n-u|$, $|\nabla \psi_R|\lesssim\frac{1}{R}$ and $u_n\to u$ in $L^q_{loc}\left(\mathbb{R}^N\right)$ for $q\in [1,p^*)$, by the H\"{o}lder inequality,
\begin{equation}\label{e1.21}
\begin{split}
\left|\int_{B_{2R}(0)\backslash B_{R}(0)}|\nabla u_n|^{p-2}\nabla u_n\tau_k(u_n-u)\nabla \psi_Rdx\right|
      &\lesssim \frac{1}{R}\int_{B_{2R}(0)\backslash B_{R}(0)}|\nabla u_n|^{p-1}|u_n-u|dx\\
      &=o_n(1)
\end{split}
\end{equation}
and
\begin{equation}\label{e1.21111111}
\begin{split}
      \left|\int_{B_{2R}(0)\backslash B_{R}(0)}|\nabla u|^{p-2}\nabla u\tau_k(u_n-u)\nabla \psi_Rdx\right|=o_n(1).
\end{split}
\end{equation}
By Lemma \ref{lem1.2}, (\ref{e1.21}) and \eqref{e1.21111111},
\begin{eqnarray*}\label{e1.22}
      &&\int_{B_{2R}(0)\backslash B_{R}(0)}e_{k,n}(x)dx\\
      &=&\int_{(B_{2R}(0)\backslash B_{R}(0))\cap \{x:|u_n-u|\leq k\}}\left(|\nabla u_n|^{p-2}\nabla u_n-|\nabla u|^{p-2}\nabla u\right) (\nabla u_n-\nabla u)\psi_Rdx\\
      &&+\int_{B_{2R}(0)\backslash B_{R}(0)}\left(|\nabla u_n|^{p-2}\nabla u_n-|\nabla u|^{p-2}\nabla u\right) \tau_k(u_n-u)\nabla\psi_Rdx\\
      &\geq& o_n(1),
\end{eqnarray*}
which, together with (\ref{e1.20}), implies that
\begin{equation}\label{e1.23}
\int_{B_{R}(0)}e_{k,n}(x)dx\lesssim k+o_n(1).
\end{equation}
Define
\begin{equation*}
e_{n}(x):=\left(|\nabla u_n|^{p-2}\nabla u_n-|\nabla u|^{p-2}\nabla u\right)\nabla \left((u_n-u)\psi_R\right).
\end{equation*}
Then by the  H\"{o}lder inequality,
\begin{equation}\label{e1.24}
\int_{B_{R}(0)}e_{n}(x)dx=\int_{B_{R}(0)}\left(|\nabla u_n|^{p-2}\nabla u_n-|\nabla u|^{p-2}\nabla u\right)\nabla (u_n-u)dx\lesssim 1.
\end{equation}
We take $\theta\in(0,1)$ and split $B_{R}(0)$ into
\begin{equation*}
S_n^k:=\{x\in B_{R}(0)\mid|u_n-u|\leq k\} \ \text{and}\ G_n^k:=\{x\in B_{R}(0)\mid|u_n-u|> k\}.
\end{equation*}
By Lemma \ref{lem1.2}, $e_n(x)\geq 0$ and $e_{k,n}(x)\geq 0$ in $B_{R}(0)$. Thus, by the H\"{o}lder inequality,
\begin{eqnarray*}\label{e1.25}
\int_{B_{R}(0)}e_{n}^{\theta}(x)dx
      &=&\int_{S_n^k}e_{n}^{\theta}(x)dx+\int_{G_n^k}e_{n}^{\theta}(x)dx\\
      &\leq& \left(\int_{S_n^k}e_{n}(x)dx\right)^{\theta}|S_n^k|^{1-\theta}+\left(\int_{G_n^k}e_{n}(x)dx\right)^{\theta}|G_n^k|^{1-\theta}\\
      &=&\left(\int_{S_n^k}e_{k,n}(x)dx\right)^{\theta}|S_n^k|^{1-\theta}+\left(\int_{G_n^k}e_{n}(x)dx\right)^{\theta}|G_n^k|^{1-\theta}.
\end{eqnarray*}
Clearly, $|G_n^k|\to 0$ as $n\to +\infty$ for any fixed $k>0$.  Thus, by (\ref{e1.23}) and (\ref{e1.24}),
\begin{equation*}
\int_{B_{R}(0)}e_{n}^{\theta}(x)dx\lesssim (k+o_n(1))^{\theta}|B_R(0)|^{1-\theta}+o_n(1).
\end{equation*}
Let $k\to 0$ in the above estimate, we obtain that $e_n^\theta\to 0$ strongly in  $L^1(B_R(0))$ as $n\to +\infty$. By Lemma \ref{lem1.2}, we have $\nabla u_n\to \nabla u$ a.e. in $B_R(0)$. Since $R>0$ is arbitrary, we have $\nabla u_n\to \nabla u$ a.e. in $\mathbb{R}^N$ by further applying a diagonal argument.
\end{proof}

We close this section by introducing the Pohozaev identity for $p$-Laplacian in $\mathbb{R}^N$, which was obtained in \cite{Berestycki-Lions 83} for the semilinear case $p=2$ and \cite{Jeanjean-Squassina09} (see also \cite{Zhang-Zhang}) for the quasilinear case $1<p<N$.
\begin{lemma}\label{lem1.4}
Assume that $N\geq 2$, $1<p<N$, $f\in C(\mathbb{R},\mathbb{R})$ such that $f(0)=0$.  Let
$F(t)=\int_0^t f(s)ds$ and $u$ be a local weak solution of
\begin{equation*}
-\Delta_p u=f(u)\ \text{in}\ \mathcal{D}'\left(\mathbb{R}^N\right),
\end{equation*}
where $\mathcal{D}\left(\mathbb{R}^N\right)=C_{0}^{\infty}\left(\mathbb{R}^N\right)$ and $\mathcal{D}'\left(\mathbb{R}^N\right)$ is its dual space. If
\begin{equation*}
u\in L^{\infty}_{loc}\left(\mathbb{R}^N\right),\ |\nabla u|\in  L^p\left(\mathbb{R}^N\right),\ F(u)\in  L^1\left(\mathbb{R}^N\right).
\end{equation*}
Then $u$ satisfies the following Pohozaev identity
\begin{equation*}
(N-p)\int_{\mathbb{R}^N}|\nabla u|^p dx=pN\int_{\mathbb{R}^N}F(u) dx.
\end{equation*}
\end{lemma}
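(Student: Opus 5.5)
This is the classical Pohozaev identity for the $p$-Laplacian. The plan is to test the equation against the dilation field $x\cdot\nabla u$, localized by a cut-off, and then pass to the limit. The difficulty is regularity: for $p\neq 2$ the weak solution is only $C^{1,\alpha}_{loc}$. So $x\cdot\nabla u$ is not directly an admissible test function with second derivatives, and the formal integration by parts of $\text{div}(|\nabla u|^{p-2}\nabla u)(x\cdot\nabla u)$ must be justified.

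\emph{Step 1: regularize.} Since $u\in L^\infty_{loc}$ and $f$ is continuous, $f(u)\in L^\infty_{loc}$. Hence $u\in C^{1,\alpha}_{loc}$ by DiBenedetto--Tolksdorf. To get second derivatives, approximate the operator by the nondegenerate one $-\text{div}\big((\epsilon+|\nabla u_\epsilon|^2)^{\frac{p-2}{2}}\nabla u_\epsilon\big)=f(u)$ on balls $B_R$, with boundary data $u$. The solutions $u_\epsilon$ lie in $W^{2,2}_{loc}$, and $u_\epsilon\to u$ in $C^1_{loc}$ as $\epsilon\to0$. Alternatively, use directly the known fact that $|\nabla u|^{p-2}\nabla u\in W^{1,2}_{loc}$ (Lou; Cianchi--Maz'ya). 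I would take the latter as the cleanest route: it makes the vector-field identity below valid in the distributional sense.

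\emph{Step 2: local identity.} For the vector field $a(\xi)=|\xi|^{p-2}\xi$, with potential $\frac1p|\xi|^p$, one has pointwise a.e. the divergence identity
\[
\text{div}\Big(\big(a(\nabla u)\cdot(x\cdot\nabla u)\big)-\tfrac1p|\nabla u|^p x+F(u)x\Big)=\tfrac{N-p}{p}\,|\nabla u|^p\cdot(-1)\cdot(-1)+\dots
\]
More precisely, expanding with the equation gives
\[
\text{div}\Big(|\nabla u|^{p-2}\nabla u\,(x\cdot\nabla u)-\tfrac1p|\nabla u|^px+F(u)x\Big)=\tfrac{p-N}{p}|\nabla u|^p+NF(u).
\]
This uses $\partial_j(\frac1p|\nabla u|^p)=|\nabla u|^{p-2}\nabla u\cdot\partial_j\nabla u$ and $\text{div}(F(u)x)=NF(u)+f(u)x\cdot\nabla u$. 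Multiply by a cut-off $\psi_R(x)=\psi(x/R)$ and integrate. This yields
\[
\int\Big(\tfrac{N-p}{p}|\nabla u|^p-NF(u)\Big)\psi_R\,dx=\int\Big(|\nabla u|^{p-2}\nabla u\,(x\cdot\nabla u)-\tfrac1p|\nabla u|^px+F(u)x\Big)\cdot\nabla\psi_R\,dx .
\]

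\emph{Step 3: pass to the limit $R\to\infty$.} Since $|x||\nabla\psi_R|\lesssim 1$ and is supported in $B_{2R}\setminus B_R$, the right side is bounded by $C\int_{B_{2R}\setminus B_R}(|\nabla u|^p+|F(u)|)$. This tends to $0$ because $|\nabla u|\in L^p$ and $F(u)\in L^1$. By dominated convergence the left side tends to $\int\frac{N-p}{p}|\nabla u|^p-NF(u)$. Multiplying by $p$ gives the claimed identity.

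\textbf{Main obstacle.} The expected obstacle is Step 1, namely rigorously justifying the divergence identity for the degenerate or singular operator. If the regularity of $a(\nabla u)$ is unavailable for $1<p<2$, I would instead use the $\epsilon$-approximation on a large ball. One derives the identity for $u_\epsilon$ with the $C^1$ convergence. The extra boundary and gradient error terms are controlled locally inside $\text{supp}\,\psi_R$, since $\nabla\psi_R$ vanishes near $\partial B_{3R}$. The limit $\epsilon\to0$ then uses only local uniform $C^{1}$ bounds. This is the approach of Guedda--V\'eron and Degiovanni--Musesti--Squassina, cited in the paper as Jeanjean--Squassina.
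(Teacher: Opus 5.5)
Your proof is correct. Its skeleton is the same as in the source the paper relies on: $C^{1,\alpha}_{loc}$ regularity, a localized identity tested against $\psi_R$, then $R\to\infty$ using only $|\nabla u|\in L^p$, $F(u)\in L^1$ and $|x||\nabla\psi_R|\lesssim 1$. The paper itself gives no proof and simply quotes Jeanjean--Squassina.

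The real difference is how the localized identity is justified. In the cited source it comes from the Degiovanni--Musesti--Squassina form of the Pucci--Serrin identity. That identity holds for $C^1$ solutions whenever the Lagrangian is strictly convex in the gradient, and it never uses second derivatives. Its only input is the $C^{1,\alpha}$ theorem, and it applies unchanged to Lagrangians $L(x,u,\nabla u)$ for which no second-order information on the stress field is available off the shelf.

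You instead import second-order regularity, either through $|\nabla u|^{p-2}\nabla u\in W^{1,2}_{loc}$ (Cianchi--Maz'ya, applicable since $f(u)\in L^\infty_{loc}$), or by regularizing the operator with right-hand side frozen at $f(u)$ and passing to the limit via uniform $C^{1,\beta}$ bounds and uniqueness. This makes the divergence computation explicit and elementary. The cost is a heavier regularity theorem or a full approximation scheme. Also, operator regularization is not what Degiovanni--Musesti--Squassina do, so your closing attribution is slightly off.

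One step needs to be made explicit, and the first, garbled display in Step 2 should be deleted. As written, your derivation uses $\partial_j(\tfrac1p|\nabla u|^p)=|\nabla u|^{p-2}\nabla u\cdot\partial_j\nabla u$, i.e. second derivatives of $u$. Setting $V:=|\nabla u|^{p-2}\nabla u$, the fact $V\in W^{1,2}_{loc}$ does not directly provide these when $p>2$, and $D^2u$ need not lie in $L^2_{loc}$ for $p\ge 3$.

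The fix is to express the vector field through $V$ only, with $p'=p/(p-1)$:
\begin{itemize}
\item $|\nabla u|^{p-2}\nabla u\,\partial_j u=|V|^{p'-2}V\,V_j$ and $|\nabla u|^p=|V|^{p'}$ are $C^1$ functions of $V$. They are homogeneous of degree $p'>1$ with zero gradient at the origin, so they lie in $W^{1,2}_{loc}$ by the chain rule, since $V$ is continuous.
\item On the open set $\{\nabla u\neq 0\}$ one has $\nabla u=|V|^{p'-2}V\in W^{1,2}_{loc}$. There your pointwise computation, using the symmetric weak Hessian and $\mathrm{div}\,V=-f(u)$ a.e., is legitimate.
\item On $\{\nabla u=0\}$ the chain rule makes every derivative term vanish a.e., and both sides of your identity reduce to $NF(u)$.
\end{itemize}
With this, the distributional divergence identity holds on all of $\mathbb{R}^N$, and Step 3 completes the proof.
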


\section{Apendix~B: The energy estimate}
For any $\epsilon>0$, we define
\begin{equation}\label{e8.4}
u_{\epsilon}(x):=\varphi_{\epsilon}(x)U_{\epsilon}(x),
\end{equation}
where $U_{\epsilon}(x):=U_{\epsilon,0}$ with $U_{\epsilon,0}$ defined in  (\ref{e2.1}) and $\varphi_{\epsilon}(x)\in C_0^{\infty}\left(\mathbb{R}^N\right)$ is a radial cut-off function such that $0\leq \varphi_{\epsilon}(x)\leq 1$ for any $x\in \mathbb{R}^N$, $\varphi_{\epsilon}(x)\equiv 1$ in $B_{\epsilon^{\alpha}}(0)$, $\varphi_{\epsilon}(x)\equiv 0$ in $\mathbb{R}^N \backslash B_{2\epsilon^{\alpha}}(0)$ and $|\nabla \varphi_{\epsilon}(x)|\lesssim \epsilon^{-\alpha}$ for any $x\in \mathbb{R}^N$ with $\alpha\in [0,1)$ specified later.

\begin{lemma}\label{lemN3.7}
Let $N\geq 2$ and $1<p<N$, then for any $\alpha\in[0, 1)$, $\|u_\epsilon\|_{p^*}^{p^*}=S^{\frac{N}{p}}+O\left(\epsilon^{\frac{N(1-\alpha)}{p-1}}\right)$,
\begin{equation}\label{e2.111}
\begin{split}
\|u_\epsilon\|_{q}^{q}&\thickapprox
\left\{
\begin{array}{ll}
\epsilon^{N-\frac{(N-p)q}{p}}, & \  \frac{N(p-1)}{N-p}<q<p^*,\\
\epsilon^{N-\frac{(N-p)q}{p}}\log(\epsilon^{\alpha-1}), & \ q=\frac{N(p-1)}{N-p},\\
\epsilon^{\frac{(N-p)q}{p(p-1)}+\alpha\left(N-\frac{(N-p)q}{p-1}\right)}, & \ 0< q<\frac{N(p-1)}{N-p}
\end{array}
\right.
\end{split}
\end{equation}
and
\begin{equation}\label{e2.64}
\begin{split}
\|\nabla u_{\epsilon}\|_{q}^{q}\leq
 \left\{
\begin{array}{ll}
S^{\frac{N}{p}}+O\left(\epsilon^{(1-\alpha)\frac{N-p}{p-1}}\right), & \  q=p,\\
O\left(\epsilon^{N-\frac{Nq}{p}}\right), & \  \frac{N(p-1)}{N-1}<q<p,\\
O\left(\epsilon^{N-\frac{Nq}{p}}\left|\log\epsilon\right|\right), & \ q=\frac{N(p-1)}{N-1},\\
O\left(\epsilon^{\frac{(N-p)q}{p(p-1)}+\alpha\left(N-\frac{(N-1)q}{p-1}\right)}\right), & \ 0< q<\frac{N(p-1)}{N-1}.
\end{array}
\right.
\end{split}
\end{equation}
\end{lemma}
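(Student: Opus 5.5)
The plan is a direct computation with the explicit bubble \eqref{e2.1}, split over $B_{\epsilon^\alpha}(0)$, where $\varphi_\epsilon\equiv1$, and the annulus $A_\epsilon:=B_{2\epsilon^\alpha}(0)\setminus B_{\epsilon^\alpha}(0)$. Throughout I rescale $x=\epsilon y$. Since $U_\epsilon(x)=\epsilon^{-\frac{N-p}{p}}U_1(x/\epsilon)$, the ball $B_{\epsilon^\alpha}$ becomes $B_{\epsilon^{\alpha-1}}$, and $\epsilon^{\alpha-1}\to+\infty$ because $\alpha<1$. The only asymptotic input is
\[
U_1(y)\thickapprox |y|^{-\frac{N-p}{p-1}},\qquad |\nabla U_1(y)|\thickapprox |y|^{-\frac{N-1}{p-1}}\quad\text{for }|y|\geq1 .
\]

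\emph{Step 1: the $L^{p^*}$ norm.} Scale invariance gives $\|U_\epsilon\|_{p^*}^{p^*}=S^{N/p}$. Hence
\[
\Big|\|u_\epsilon\|_{p^*}^{p^*}-S^{\frac{N}{p}}\Big|\leq\int_{|y|\geq\epsilon^{\alpha-1}}U_1^{p^*}dy\thickapprox\int_{\epsilon^{\alpha-1}}^\infty r^{N-1-\frac{Np}{p-1}}dr\thickapprox \epsilon^{\frac{N(1-\alpha)}{p-1}} .
\]

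\emph{Step 2: the $L^q$ norms \eqref{e2.111}.} Write $\|u_\epsilon\|_q^q=\epsilon^{N-\frac{(N-p)q}{p}}\int \varphi_\epsilon(\epsilon y)^qU_1^q\,dy$. Because $0\le\varphi_\epsilon\le1$ and $\varphi_\epsilon=1$ on $B_{\epsilon^\alpha}$, the last integral is comparable, above and below, to $1+\int_1^{R}r^{N-1-\frac{(N-p)q}{p-1}}dr$ with $R\thickapprox\epsilon^{\alpha-1}$. There are three cases according to the sign of $N-\frac{(N-p)q}{p-1}$.
\begin{enumerate}
\item[$(i)$] If $q>\frac{N(p-1)}{N-p}$, the integral stays bounded.
\item[$(ii)$] If $q=\frac{N(p-1)}{N-p}$, it grows like $\log(\epsilon^{\alpha-1})$.
\item[$(iii)$] If $q<\frac{N(p-1)}{N-p}$, it grows like $\epsilon^{(\alpha-1)(N-\frac{(N-p)q}{p-1})}$.
\end{enumerate}
Multiplying by the prefactor gives exactly the three cases of \eqref{e2.111}. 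The exponent in case $(iii)$ is checked by collecting the powers of $\epsilon$.

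\emph{Step 3: the gradient \eqref{e2.64}.} Use $\nabla u_\epsilon=\varphi_\epsilon\nabla U_\epsilon+U_\epsilon\nabla\varphi_\epsilon$ together with $|a+b|^q\le|a|^q+C(|a|^{q-1}|b|+|b|^q)$ for $q\ge1$, or $\le C(|a|^q+|b|^q)$ in general.

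For $q=p$, the main term is at most $\|\nabla U_\epsilon\|_p^p=S^{N/p}$. The cross and cutoff terms live on $A_\epsilon$, where $U_\epsilon\thickapprox\epsilon^{\frac{N-p}{p(p-1)}}|x|^{-\frac{N-p}{p-1}}$, $|\nabla U_\epsilon|\thickapprox\epsilon^{\frac{N-p}{p(p-1)}}|x|^{-\frac{N-1}{p-1}}$ and $|\nabla\varphi_\epsilon|\lesssim\epsilon^{-\alpha}$. The key observation is that on $A_\epsilon$ one has $U_\epsilon|\nabla\varphi_\epsilon|\lesssim|\nabla U_\epsilon|$ up to constants, so every error term is bounded by $\int_{A_\epsilon}|\nabla U_\epsilon|^p$. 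Computing,
\[
\int_{A_\epsilon}|\nabla U_\epsilon|^p\thickapprox\epsilon^{\frac{N-p}{p-1}}\,\epsilon^{\alpha(N-\frac{(N-1)p}{p-1})}=\epsilon^{(1-\alpha)\frac{N-p}{p-1}} .
\]

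For $q<p$, the same domination shows that $\|\nabla u_\epsilon\|_q^q\lesssim\int_{B_{2\epsilon^\alpha}}|\nabla U_\epsilon|^q$. After rescaling this equals $\epsilon^{N-\frac{Nq}{p}}\big(1+\int_1^{R}r^{N-1-\frac{(N-1)q}{p-1}}dr\big)$ with $R\thickapprox\epsilon^{\alpha-1}$. The threshold $q=\frac{N(p-1)}{N-1}$ then separates the bounded case, the $|\log\epsilon|$ case and the power case. In the power case the exponent combines to $\frac{(N-p)q}{p(p-1)}+\alpha\big(N-\frac{(N-1)q}{p-1}\big)$.

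\emph{Main obstacle.} The only step needing care is the $q=p$ upper bound when $1<p<2$, where the elementary inequality $|a+b|^p\le |a|^p+C(|a|^{p-1}|b|+|b|^p)$ must be used on $A_\epsilon$ only. I handle this by the domination $U_\epsilon|\nabla\varphi_\epsilon|\lesssim|\nabla U_\epsilon|$ on $A_\epsilon$, which reduces everything to the single annulus integral above. Otherwise the proof is bookkeeping of exponents, and I expect no essential difficulty.
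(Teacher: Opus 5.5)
Your proposal is correct and follows essentially the same route as the paper, which also proceeds by direct calculation. The paper gets the $L^{p^*}$ estimate from $\|U_\epsilon\|_{p^*}^{p^*}-\int(1-\varphi_\epsilon^{p^*})U_\epsilon^{p^*}dx$, the $L^q$ estimates by sandwiching $\|u_\epsilon\|_q^q$ between $\int_{B_{\epsilon^\alpha}(0)}U_\epsilon^q dx$ and $\int_{B_{2\epsilon^\alpha}(0)}U_\epsilon^q dx$, and the gradient estimates from an elementary inequality of the form $(a+b)^q\le a^q+b^q+C(a^{q-1}b+ab^{q-1})$. Your pointwise domination $U_\epsilon|\nabla\varphi_\epsilon|\lesssim|\nabla U_\epsilon|$ on the annulus is a clean way to organize the error terms the paper leaves implicit, and your bound $|a+b|^q\le C(|a|^q+|b|^q)$ also covers $0<q<1$, where the paper's inequality is only stated for $q\ge1$; the case $1<p<2$ needs no extra care, since your inequality $|a+b|^p\le|a|^p+C(|a|^{p-1}|b|+|b|^p)$ holds for every $p\ge1$.
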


\begin{proof}
The estimate of $\|u_\epsilon\|_{p^*}^{p^*}$ follows from direct calculations and
\begin{eqnarray*}
\|u_\epsilon\|_{p^*}^{p^*}=\|U_\epsilon\|_{p^*}^{p^*}-\int_{\mathbb{R}^N}\left(1-\varphi_\epsilon^{p^*}\right)\left(U_\epsilon\right)^{p^*}dx,
\end{eqnarray*}
the estimate of $\|u_\epsilon\|_{q}^{q}$ follows from direct calculations and
\begin{eqnarray*}
\int_{B_{\epsilon^\alpha}(0)}\left(U_\epsilon\right)^{q}dx\lesssim\|u_\epsilon\|_{q}^{q}\lesssim \int_{B_{2\epsilon^\alpha}(0)}\left(U_\epsilon\right)^{q}dx,
\end{eqnarray*}
and the estimate of $\|\nabla u_\epsilon\|_{q}^{q}$ follows from direct calculations and
\begin{equation}\label{e2.65}
(a+b)^q\leq a^q+b^q+C(a^{q-1}b+ab^{q-1}),\quad \forall a,b\geq 0\text{ and }\forall q\geq 1,
\end{equation}
where $C>0$ is a constant.
\end{proof}

With the fundamental estimates of $u_{\epsilon}$ in Lemma~\ref{lemN3.7}, we can prove the crucial energy estimate in Lemma~\ref{lem3.6}.

\vskip0.12in

\noindent\textbf{Proof of Lemma~\ref{lem3.6}:}\quad We only prove the conclusion for $N<p^2$ since the case $p^2\leq N$ has already been proved in \cite[Lemma~4.5]{Deng-Wu}.  Let $u_{a,\mu}^+$ be a positive and radially symmetric non-increasing ground-state solution of (\ref{e1.1}) obtained in Proposition \ref{pro3.1}.  For $\tau\geq 0$ and $\epsilon>0$, we define
\begin{equation}\label{e2.12}
V_{\epsilon,\tau}(x):=u_{a,\mu}^+(x)+\tau u_{\epsilon}(x)\ \text{and}
\ W_{\epsilon,\tau}(x):=\left(a^{-1}\|V_{\epsilon,\tau}\|_p\right)^{\frac{N-p}{p}}V_{\epsilon,\tau}\left(a^{-1}\|V_{\epsilon,\tau}\|_px\right).
\end{equation}
Then
\begin{equation*}
\|W_{\epsilon,\tau}\|_p^p=a^p,\ \|\nabla W_{\epsilon,\tau}\|_p^p=\|\nabla V_{\epsilon,\tau}\|_p^p,\ \|W_{\epsilon,\tau}\|_{p^*}^{p^*}=\|V_{\epsilon,\tau}\|_{p^*}^{p^*},
\end{equation*}
and
\begin{equation*}
\|W_{\epsilon,\tau}\|_{q_1}^{q_1}=\left(a\|V_{\epsilon,\tau}\|_p^{-1}\right)^{{q_1}(1-\gamma_{q_1})}\|V_{\epsilon,\tau}\|_{q_1}^{q_1}.
\end{equation*}
Since $W_{\epsilon,\tau}\in \mathcal{S}_a$,  by Proposition \ref{pro2.1}, there exists $t^-_{\mu}\left(W_{\epsilon,\tau}\right)>0$ such that $(W_{\epsilon,\tau})_{t^-_{\mu}(W_{\epsilon,\tau})}\in \mathcal{P}_{a,\mu}^{-}$.  Set $t^-_{\epsilon,\tau}:=t^-_{\mu}(W_{\epsilon,\tau})$ for the sake of simplicity, then
\begin{equation}\label{e2.13}
(t^-_{\epsilon,\tau})^p\|\nabla W_{\epsilon,\tau}\|_p^p=\mu\gamma_{q_1}(t^-_{\epsilon,\tau})^{q_1\gamma_{q_1}}\|W_{\epsilon,\tau}\|_{q_1}^{q_1}
+(t^-_{\epsilon,\tau})^{p^*}\|W_{\epsilon,\tau}\|_{p^*}^{p^*}.
\end{equation}
Since $W_{\epsilon,0}=u_{a,\mu}^+\in \mathcal{P}_{a,\mu}^{+}$, by Proposition \ref{pro2.1}, $t^-_{\epsilon,0}>1$.  On the other hand, by Lemma~\ref{lemN3.7} and (\ref{e2.13}), $t^-_{\epsilon,\tau}\to 0$ as $\tau\to +\infty$ uniformly for $\epsilon>0$ sufficiently small. Since  $t^-_{\epsilon,\tau}$ is unique by Proposition \ref{pro2.1}, it is standard to show that $t^-_{\epsilon,\tau}$ is continuous for  $\tau\geq 0$, which implies that there exists $\tau_{\epsilon}>0$ such that  $t^-_{\epsilon,\tau_{\epsilon}}=1$. Consequently,
\begin{equation*}
m^{-}(a,\mu)\leq  \sup_{\tau\geq 0}\Psi_{\mu}(W_{\epsilon,\tau})
\end{equation*}
for any $\epsilon$ small enough.  Since
\begin{equation}\label{e2.14}
\begin{split}
\Psi_{\mu}(W_{\epsilon,\tau})&=\frac{1}{p}\|\nabla W_{\epsilon,\tau}\|_p^p-\frac{\mu}{q_1}\|W_{\epsilon,\tau}\|_{q_1}^{q_1}-\frac{1}{p^*}\|W_{\epsilon,\tau}\|_{p^*}^{p^*}\\
&=\frac{1}{p}\|\nabla V_{\epsilon,\tau}\|_p^p-\frac{\mu}{q_1}(a\|V_{\epsilon,\tau}\|_p^{-1})^{{q_1}(1-\gamma_{q_1})}\|V_{\epsilon,\tau}\|_{q_1}^{q_1}
-\frac{1}{p^*}\|V_{\epsilon,\tau}\|_{p^*}^{p^*},
\end{split}
\end{equation}
by Lemma~\ref{lemN3.7} and the inequality
\begin{equation*}\label{e2.66}
(a+b)^q\geq a^q+b^q,\quad\forall a,b\geq 0\text{ and } \forall q\geq 1,
\end{equation*}
it is easy to see that there exist $\tau_0>0$ large enough and $\epsilon_0>0$ small enough such that
\begin{equation*}
m^{-}(a,\mu)<m^{+}(a,\mu)+\frac{1}{N}S^{\frac{N}{p}}
\end{equation*}
for $\tau<\frac{1}{\tau_0}$ and $\tau>\tau_0$ uniformly for $0<\epsilon<\epsilon_0$.
To estimate $\Psi_{\mu}(W_{\epsilon,\tau})$ for $\frac{1}{\tau_0}\leq \tau\leq \tau_0$, we note that $p^*\geq 3$ for $p\in (\sqrt{N},N)$.  Thus by using the  inequalities
\begin{equation*}\label{e2.20}
(a+b)^{r}\geq a^r+ra^{r-1}b+rab^{r-1}+b^r,\quad\forall a,b\geq 0\text{ and } \forall r\geq 3
\end{equation*}
and
\begin{equation*}\label{e2.71}
(a+b)^{r}\geq a^r+ra^{r-1}b,\quad\forall a,b\geq 0\text{ and } \forall r\geq 1,
\end{equation*}
we obtain that
\begin{equation}\label{e2.21}
\|V_{\epsilon,\tau}\|_{q_1}^{q_1}\geq \|u_{a,\mu}^+\|_{q_1}^{q_1}+q_1\tau\int_{\mathbb{R}^N}(u_{a,\mu}^+)^{q_1-1}u_{\epsilon}dx
\end{equation}
and
\begin{equation}\label{e2.22}
\begin{split}
\|V_{\epsilon,\tau}\|_{p^*}^{p^*}&\geq \|u_{a,\mu}^+\|_{p^*}^{p^*}+\tau^{p^*}\|u_{\epsilon}\|_{p^*}^{p^*}+p^*\tau\int_{\mathbb{R}^N}(u_{a,\mu}^+)^{p^*-1}u_{\epsilon}dx\\
&\quad+p^*\tau^{p^*-1}\int_{\mathbb{R}^N}u_{a,\mu}^+u_{\epsilon}^{p^*-1}dx.
\end{split}
\end{equation}
By using $u_{a,\mu}^+\thickapprox 1$ in $B_{2\epsilon^{\alpha}}(0)$ and (\ref{e2.111}), we obtain that
\begin{equation}\label{e2.30}
\int_{\mathbb{R}^N}u_{a,\mu}^+u_{\epsilon}^{p^*-1}dx \thickapprox \int_{\mathbb{R}^N}u_{\epsilon}^{p^*-1}dx \thickapprox\epsilon^{\frac{N-p}{p}}.
\end{equation}
For $\|\nabla V_{\epsilon,\tau}\|_{p}^{p}$ and $\|V_{\epsilon,\tau}\|_{p}^{p}$, we need to divide their estimates into three cases.

\vskip0.12in

\textbf{The case }$1<p\leq2$.

In this case, for any $\eta\in(1, p)$, there exists a constant $C>0$ such that
\begin{eqnarray}\label{e2.72}
(1+t^2+2t\cos \beta)^{\frac{p}{2}}\leq 1+t^p+pt\cos\beta+Ct^\eta
\end{eqnarray}
for all $t\geq0$ and $\beta\in\mathbb{R}$.  Thus, by \eqref{e2.72},
\begin{eqnarray}\label{e2.74}
\|\nabla V_{\epsilon,\tau}\|_{p}^{p}&\leq& \|\nabla u_{a,\mu}^+\|_{p}^{p}+\tau^{p}\|\nabla u_{\epsilon}\|_{p}^{p}+p\tau\int_{\mathbb{R}^N}|\nabla u_{a,\mu}^+|^{p-2}\nabla u_{a,\mu}^+\cdot \nabla u_{\epsilon}dx\notag\\
&&+C\tau^{\eta_1} \int_{\mathbb{R}^N}|\nabla u_{a,\mu}^+|^{p-\eta_1}|\nabla u_{\epsilon}|^{\eta_1}dx
\end{eqnarray}
and
\begin{eqnarray}\label{e2.75}
\|V_{\epsilon,\tau}\|_{p}^{p}&\leq& \|u_{a,\mu}^+\|_{p}^{p}+\tau^{p}\| u_{\epsilon}\|_{p}^{p}+p\tau\int_{\mathbb{R}^N}(u_{a,\mu}^+)^{p-1}u_{\epsilon}dx\notag\\
&&+C\tau^{\eta_2} \int_{\mathbb{R}^N}(u_{a,\mu}^+)^{p-\eta_2}u_{\epsilon}^{\eta_2}dx
\end{eqnarray}
with $\eta_1,\eta_2\in (1,p)$ specified later.  By the classical regularity theorem of the $p$-Laplacian equation (cf. \cite{Tolksdorf84}), $u_{a,\mu}^+\in C^{1, \gamma}_{loc}\left(\mathbb{R}^N\right)$ for some $\gamma\in(0, 1)$.  Thus, $u_{a,\mu}^+\thickapprox 1$ in $B_{2\epsilon^{\alpha}}(0)$ and $p<\frac{N(p-1)}{N-p}$ for $N<p^2$. By (\ref{e2.111}), we obtain that
\begin{equation}\label{e2.80}
\left\{\aligned
&\int_{\mathbb{R}^N}\left(u_{a,\mu}^+\right)^{p-\eta_2}u_{\epsilon}^{\eta_2}dx\thickapprox \epsilon^{\frac{(N-p)\eta_2}{p(p-1)}+\alpha\left(N-\frac{\eta_2(N-p)}{p-1}\right)},\\
&\|u_\epsilon\|_p^p\thickapprox\epsilon^{\frac{N-p}{p-1}+\frac{\alpha(p^2-N)}{p-1}},\\
&\int_{\mathbb{R}^N}\left(u_{a,\mu}^+\right)^{p-1}u_{\epsilon}dx\thickapprox\epsilon^{\frac{N-p}{p(p-1)}+\alpha\left(\frac{p-N}{p-1}+N\right)}
 \endaligned\right.
\end{equation}
By (\ref{e2.75})-(\ref{e2.80}), $\|u_{a,\mu}^+\|_{p}^{p}=a^p$ and $0\leq \alpha<1$, we obtain that
\begin{equation}\label{e2.83}
\|V_{\epsilon,\tau}\|_{p}^{p}\leq a^{p}+p\tau\int_{\mathbb{R}^N}\left(u_{a,\mu}^+\right)^{p-1}u_{\epsilon}dx+M_1(\epsilon),
\end{equation}
where
\begin{equation*}
M_1(\epsilon):=O\left(\epsilon^{\frac{\eta_2(N-p)}{p(p-1)}+\alpha\left(\frac{\eta_2(p-N)}{p-1}+N\right)}\right)+O\left(\epsilon^{\frac{N-p}{p-1}+\frac{\alpha(p^2-N)}{p-1}}\right),
\end{equation*}
which, together with (\ref{e2.80}) once more and \eqref{e2.83}, implies that
\begin{eqnarray}\label{e2.84}
\left(\frac{a^p}{\|V_{\epsilon,\tau}\|_p^p}\right)^{\frac{q_1(1-\gamma_{q_1})}{p}}
&\geq& \frac{1}{\left(1+\frac{p\tau}{a^p}\int_{\mathbb{R}^N}\left(u_{a,\mu}^+\right)^{p-1}u_{\epsilon}dx
+M_1(\epsilon)\right)^{\frac{q_1(1-\gamma_{q_1})}{p}}}\notag\\
&=&1-\frac{\tau q_1(1-\gamma_{q_1})}{a^p}\int_{\mathbb{R}^N}\left(u_{a,\mu}^+\right)^{p-1}u_{\epsilon}dx
+\tilde{M}_1(\epsilon),
\end{eqnarray}
where $\tilde{M}_1(\epsilon)=M_1(\epsilon)+O\left(\epsilon^{\frac{2(N-p)}{p(p-1)}+2\alpha\left(\frac{p-N}{p-1}+N\right)}\right)$.  Since $u_{a,\mu}^+\in C^{1, \gamma}_{loc}\left(\mathbb{R}^N\right)$ for some $\gamma\in(0, 1)$, if $p\leq 2-\frac{1}{N}$, which implies that $\frac{N(p-1)}{N-1}\leq 1<p$, then by $\eta_1\in(1,p)$ and (\ref{e2.64}), we have
\begin{equation}\label{e2.85}
\begin{split}
 \int_{\mathbb{R}^N}|\nabla u_{a,\mu}^+|^{p-\eta_1}|\nabla u_{\epsilon}|^{\eta_1}dx\lesssim \int_{\mathbb{R}^N}|\nabla u_{\epsilon}|^{\eta_1}dx\lesssim
 \epsilon^{N-\frac{N\eta_1}{p}}
\end{split}
\end{equation}
while if $p>2-\frac{1}{N}$, which implies that $1<\frac{N(p-1)}{N-1}<p$, then by choosing $\eta_1=\frac{N(p-1)}{N-1}$ and (\ref{e2.64}), we have
\begin{equation}\label{e2.86}
\begin{split}
\int_{\mathbb{R}^N}|\nabla u_{a,\mu}^+|^{p-\eta_1}|\nabla u_{\epsilon}|^{\eta_1}dx\lesssim \int_{\mathbb{R}^N}|\nabla u_{\epsilon}|^{\eta_1}dx\lesssim
 \epsilon^{N-\frac{N^2(p-1)}{p(N-1)}}\left|\log \epsilon\right|.
\end{split}
\end{equation}
Noting that $u_{a,\mu}^+$ satisfies the equation
\begin{equation*}
-\Delta_pu=\lambda |u|^{p-2}u+\mu |u|^{q_1-2}u+|u|^{p^*-2}u
\end{equation*}
with $\lambda<0$ and $\lambda a^p=\mu(\gamma_{q_1}-1)\|u_{a,\mu}^+\|_{q_1}^{q_1}$.  Thus, by inserting \eqref{e2.21}, \eqref{e2.22}, \eqref{e2.30}, (\ref{e2.74}), (\ref{e2.84}), (\ref{e2.85}), (\ref{e2.86})
and Lemma~\ref{lemN3.7} into $\Psi_{\mu}(W_{\epsilon,\tau})$ and using \eqref{e2.14}, we have
\begin{eqnarray*}%\label{e2.87}
\Psi_{\mu}(W_{\epsilon,\tau})
&\leq& m^+(a,\mu)+ \left(\frac{1}{p}\tau^{p}-\frac{1}{p^*}\tau^{p^*}\right)S^{\frac{N}{p}}+O\left(\epsilon^{\frac{(1-\alpha)(N-p)}{p-1}}\right)\\
&&+O\left(\epsilon^{N-\frac{N\eta_1}{p}}\right)+O\left(\epsilon^{\frac{\eta_2(N-p)}{p(p-1)}+\alpha\left(\frac{\eta_2(p-N)}{p-1}+N\right)}\right)-C\epsilon^{\frac{N-p}{p}}
\end{eqnarray*}
for $1<p\leq 2-\frac{1}{N}$ and
\begin{eqnarray*}%\label{e2.87}
\Psi_{\mu}(W_{\epsilon,\tau})
&\leq& m^+(a,\mu)+ \left(\frac{1}{p}\tau^{p}-\frac{1}{p^*}\tau^{p^*}\right)S^{\frac{N}{p}}+O\left(\epsilon^{\frac{(1-\alpha)(N-p)}{p-1}}\right)\\
&&+O\left(\epsilon^{N-\frac{N^2(p-1)}{p(N-1)}}\left|\log \epsilon\right|\right)+O\left(\epsilon^{\frac{\eta_2(N-p)}{p(p-1)}+\alpha\left(\frac{\eta_2(p-N)}{p-1}+N\right)}\right)-C\epsilon^{\frac{N-p}{p}}
\end{eqnarray*}
for $p>2-\frac{1}{N}$.  Since $p\in (\sqrt{N},N)$, we have $\frac{N-p}{p}<N-\frac{N}{p}$, $\frac{N-p}{p}<N-\frac{N^2(p-1)}{p(N-1)}$  and $\frac{N-p}{p}<\frac{N-p}{p-1}+\alpha\left(\frac{p(p-N)}{p-1}+N\right)$.  Thus, by choosing $\alpha\in [0,\frac{1}{p})$, $\eta_1=1+\beta_1$ with $\beta_1>0$ small enough for $1<p\leq 2-\frac{1}{N}$ and $\eta_2=p-\beta_2$ with $\beta_2>0$ small enough for all $1<p\leq 2$, we obtain  that
\begin{eqnarray*}
\left\{\aligned
&\frac{N-p}{p}<\frac{(1-\alpha)(N-p)}{p-1},\\
&\frac{N-p}{p}<N-\frac{N\eta_1}{p},\quad\text{ for }1<p\leq 2-\frac{1}{N},\\
&\frac{N-p}{p}<\frac{\eta_2(N-p)}{p(p-1)}+\alpha\left(\frac{\eta_2(p-N)}{p-1}+N\right),
\endaligned\right.
\end{eqnarray*}
which implies that
\begin{equation*}
\Psi_{\mu}(W_{\epsilon,\tau})< m^+(a,\mu)+ (\frac{1}{p}\tau^{p}-\frac{1}{p^*}\tau^{p^*})S^{\frac{N}{p}}\leq m^{+}(a,\mu)+\frac{1}{N}S^{\frac{N}{p}}
\end{equation*}
for $\frac{1}{\tau_0}\leq \tau\leq \tau_0$ uniformly for $0<\epsilon<\epsilon_0$ small enough.

\vskip0.12in

\textbf{The case} $2<p<3$.

The estimate in this case is similar to that of $1<p\leq 2$, so we only sketch it.  We first notice that \eqref{e2.72} holds true for any $\eta\in [p-1,2]$ in this case.  Moreover, if $p\leq \frac{3N}{N+2}$, which implies that $p-1<\frac{N(p-1)}{N-p}\leq 2$, then by choosing $\eta_2=\frac{N(p-1)}{N-p}$,  we obtain that
\begin{equation}\label{e2.90}
\int_{\mathbb{R}^N}\left(u_{a,\mu}^+\right)^{p-\eta_2}u_{\epsilon}^{\eta_2}dx\thickapprox \epsilon^{N-\frac{N(p-1)(N-p)}{p(N-p)}}\left|\log\epsilon\right|
\end{equation}
while if $p>\frac{3N}{N+2}$, which implies that $p-1<2<\frac{N(p-1)}{N-p}$, then by choosing $\eta_2=2$,  we obtain that
\begin{equation}\label{e9.4}
\int_{\mathbb{R}^N}\left(u_{a,\mu}^+\right)^{p-\eta_2}u_{\epsilon}^{\eta_2}dx\thickapprox \epsilon^{\frac{2(N-p)}{p(p-1)}+\alpha\left(N-\frac{2(N-p)}{p-1}\right)}.
\end{equation}
Thus, by \eqref{e2.90} and \eqref{e9.4}, we improve \eqref{e2.84} by replacing $M_1(\epsilon)$ by
\begin{eqnarray*}
\hat{M}_1(\epsilon)=O\left(\epsilon^{\frac{N-p}{p-1}+\frac{\alpha(p^2-N)}{p-1}}\right)+\left\{\aligned
&O\left(\epsilon^{N-\frac{N(p-1)(N-p)}{p(N-p)}}\left|\log\epsilon\right|\right), \quad p\leq \frac{3N}{N+2},\\
&O\left(\epsilon^{\frac{2(N-p)}{p(p-1)}+\alpha\left(N-\frac{2(N-p)}{p-1}\right)}\right), \quad p>\frac{3N}{N+2}.
\endaligned
\right.
\end{eqnarray*}
If $p\leq 3-\frac{2}{N}$, which implies that $p-1<\frac{N(p-1)}{N-1}\leq 2$, then by choosing $\eta_1=\frac{N(p-1)}{N-1}$, we still have \eqref{e2.86} while if $p>3-\frac{2}{N}$, which implies that $p-1<2<\frac{N(p-1)}{N-1}$, then by choosing $\eta_1=2$ and (\ref{e2.64}), we have
\begin{equation*}
\int_{\mathbb{R}^N}|\nabla u_{a,\mu}^+|^{p-\eta_1}|\nabla u_{\epsilon}|^{\eta_1}dx\lesssim
\epsilon^{\frac{2(N-p)}{p(p-1)}+\alpha\left(N-\frac{2(N-1)}{p-1}\right)}.
\end{equation*}
Using these new estimates in $\Psi_{\mu}(W_{\epsilon,\tau})$, we also have
\begin{equation*}
\begin{split}
\Psi_{\mu}(W_{\epsilon,\tau})< m^+(a,\mu)+ \left(\frac{1}{p}\tau^{p}-\frac{1}{p^*}\tau^{p^*}\right)S^{\frac{N}{p}}\leq m^{+}(a,\mu)+\frac{1}{N}S^{\frac{N}{p}}
\end{split}
\end{equation*}
for $\frac{1}{\tau_0}\leq \tau\leq \tau_0$ uniformly for $0<\epsilon<\epsilon_0$ small enough by choosing $\alpha\in \left[0,\frac{1}{p}\right)$.

\vskip0.12in

\textbf{The case} $p\geq 3$.

In this case, there exists a constant $C>0$ such that
\begin{equation}\label{e2.23}
(1+t^2+2t\cos \beta)^{\frac{p}{2}}\leq 1+t^p+pt\cos\beta+C(t^2+t^{p-1})
\end{equation}
for all $t\geq0$ and $\beta\in\mathbb{R}$.  Thus, by replacing \eqref{e2.72} by \eqref{e2.23} in this case, we have
\begin{eqnarray*}
\|\nabla V_{\epsilon,\tau}\|_{p}^{p}&\leq& \|\nabla u_{a,\mu}^+\|_{p}^{p}+\tau^{p}\|\nabla u_{\epsilon}\|_{p}^{p}+p\tau\int_{\mathbb{R}^N}|\nabla u_{a,\mu}^+|^{p-2}\nabla u_{a,\mu}^+\cdot \nabla u_{\epsilon}dx\\
&&+C\tau^2 \int_{\mathbb{R}^N}|\nabla u_{a,\mu}^+|^{p-2}|\nabla u_{\epsilon}|^2dx+C\tau^{p-1} \int_{\mathbb{R}^N}|\nabla u_{a,\mu}^+||\nabla u_{\epsilon}|^{p-1}dx
\end{eqnarray*}
and
\begin{eqnarray*}
\|V_{\epsilon,\tau}\|_{p}^{p}&\leq& \|u_{a,\mu}^+\|_{p}^{p}+\tau^{p}\|u_{\epsilon}\|_{p}^{p}+p\tau\int_{\mathbb{R}^N}(u_{a,\mu}^+)^{p-1}u_{\epsilon}dx\\
&&+C\tau^2 \int_{\mathbb{R}^N}(u_{a,\mu}^+)^{p-2}u_{\epsilon}^2dx+C\tau^{p-1} \int_{\mathbb{R}^N}u_{a,\mu}^+u_{\epsilon}^{p-1}dx.
\end{eqnarray*}
Now, by using Lemma~\ref{lemN3.7} in estimating $\Psi_{\mu}(W_{\epsilon,\tau})$ for fixed $\eta_1$ and $\eta_2$ in this case, we arrive at
\begin{eqnarray*}
\Psi_{\mu}(W_{\epsilon,\tau})
&\leq& m^+(a,\mu)+ \left(\frac{1}{p}\tau^{p}-\frac{1}{p^*}\tau^{p^*}\right)S^{\frac{N}{p}}+O\left(\epsilon^{(1-\alpha)\frac{N-p}{p-1}}\right)\\
&&+O\left(\epsilon^{\frac{2(N-p)}{p(p-1)}+\frac{\alpha(Np+2-3N)}{p-1}}\right)+O\left(\epsilon^{\frac{N-p}{p}+\alpha}\right)-C\epsilon^{\frac{N-p}{p}}.
\end{eqnarray*}
Since $p\geq 3$, by choosing $\alpha\in \left(\frac{(N-p)(p-3)}{p(Np-3N+2)},\frac{1}{p}\right)$, we have
\begin{eqnarray*}
\left\{\aligned
&\frac{N-p}{p}<(1-\alpha)\frac{N-p}{p-1},\\
&\frac{N-p}{p}<\frac{2(N-p)}{p(p-1)}+\alpha \frac{Np+2-3N}{p-1},\\
&\frac{N-p}{p}<\frac{N-p}{p}+\alpha,
\endaligned\right.
\end{eqnarray*}
which implies that
\begin{equation*}
\Psi_{\mu}(W_{\epsilon,\tau})< m^+(a,\mu)+ \left(\frac{1}{p}\tau^{p}-\frac{1}{p^*}\tau^{p^*}\right)S^{\frac{N}{p}}\leq m^{+}(a,\mu)+\frac{1}{N}S^{\frac{N}{p}}
\end{equation*}
for $\frac{1}{\tau_0}\leq \tau\leq \tau_0$ uniformly for $0<\epsilon<\epsilon_0$ small enough.  It completes the proof.
\hfill$\Box$

%\bigskip
%\textbf{Conflict of interest}. The authors state no conflict of interest.

% BibTeX users please use one of
%\bibliographystyle{spbasic}      % basic style, author-year citations
%\bibliographystyle{spmpsci}      % mathematics and physical sciences
%\bibliographystyle{spphys}       % APS-like style for physics
%\bibliography{}   % name your BibTeX data base

% Non-BibTeX users please use

\end{document}